\pgfplotsset{compat=newest}
\crefname{hypothesis}{Hypothesis}{Hypotheses}
\def\contentsname{Contents}
\def\tableofcontents{%
    \section*{\MakeUppercase{\contentsname}}
    \@starttoc{toc}%
    }
\newcommand{\PDF}{probability density function\,}
\newcommand{\mL}{\mathcal L}
\newcommand{\mcl}{\mathcal}
\newcommand{\mbf}{\mathbf}
\newcommand{\mbb}{\mathbb}
\newcommand{\dist}{\text{dist}}
\newcommand{\eps}{\epsilon}
\newcommand{\bD}{\bar{\mcl Z}}
\newcommand{\R}{\mathbb{R}}
\newcommand{\bu}{\mbf u}
\newcommand{\bv}{\mbf v}
\newcommand{\vrhoo}{\varrho_0}
\newcommand{\Z}{\mcl Z}
\newcommand{\Dp}{\mcl{Z}'}
\renewcommand{\L}{{\mcl L}}
\newcommand{\Langle}{\left\langle}
  \newcommand{\Rangle}{\right\rangle}
\definecolor{darkred}{rgb}{.7,0,0}
\definecolor{darkgreen}{rgb}{0,0.5,0}
\definecolor{darkblue}{rgb}{0,0,0.7}
\definecolor{grey}{rgb}{0.5,0.5,0.9}
\newcommand{\fh}[1]{{#1}}
\newcommand{\as}[1]{{#1}}
\newcommand{\changed}[1]{{#1}}
\title{Spectral Analysis Of Weighted Laplacians Arising In Data Clustering
}
\author{Franca Hoffmann\and
Bamdad Hosseini\and
Assad A. Oberai
\thanks{Department of Aerospace and Mechanical Engineering, University of Southern California, Los Angeles, CA 90089, USA
(\email{aoberai@usc.edu})}
\and
Andrew M. Stuart
\thanks{Computing and Mathematical Sciences, Caltech, Pasadena, CA (\email{fkoh@caltech.edu },
\email{bamdadh@caltech.edu }, \email{astuart@caltech.edu }).}} 
\begin{document}
\date{today}
\maketitle

\begin{abstract}
Graph Laplacians computed from weighted adjacency matrices are 
widely used to identify geometric structure in data, and clusters 
in particular; their spectral properties play a central role in a 
number of unsupervised and semi-supervised learning algorithms. 
When suitably scaled, graph Laplacians approach limiting continuum
operators in the large data limit. Studying these limiting 
operators, therefore, sheds light on learning algorithms. This 
paper is devoted to the study of a parameterized family of 
divergence form elliptic operators that arise as the large 
data limit of graph Laplacians. The link between a three-parameter
family of graph Laplacians and a three-parameter family of
differential operators is explained. The spectral properties of these 
differential operators are analyzed in the situation where the data comprises
two nearly separated clusters, in a sense which is made precise. 
In particular, we investigate
how the spectral gap depends on the three parameters entering the
graph Laplacian, and on a parameter measuring the size of the perturbation 
from the perfectly clustered case.
Numerical results are presented which exemplify and extend the analysis: 
\changed{the computations study situations in which there are two
nearly separated clusters, but which violate the assumptions used in
our theory; situations in which more than two clusters are present,
also going beyond our theory; and situations which demonstrate the 
relevance of our studies of differential operators for the understanding 
of finite data problems via the graph Laplacian.}
The findings provide insight into parameter choices 
made in learning algorithms which are based on weighted adjacency matrices;
they also provide the basis for analysis of the consistency of
various unsupervised and semi-supervised learning algorithms,
in the large data limit.
\end{abstract}

\begin{keywords}
  Spectral clustering, graph Laplacian, large data limits,
elliptic differential operators, perturbation analysis, spectral gap, differential geometry.
\end{keywords}

\begin{AMS}
  47A75, 
  62H30, 
  68T10, 
  35B20, 
  05C50  
\end{AMS}





\section{Introduction}
\label{sec:I}

\subsection{Overview}
\label{ssec:O}

This article presents a spectral analysis of 
differential operators of the form
\begin{equation}
  \label{general-weighted-Laplacian}
  \left\{
    \begin{aligned}
      &\mcl{L} u  := -\frac{1}{\varrho^p} {\rm div} \left( \varrho^q \nabla \left( \frac{u}{\varrho^r} \right) \right),  &&\text{ in } \Z,\\
      &\varrho^q \frac{\partial}{\partial n} \left( \frac{u}{\varrho^r} \right) = 0,
      &&\text{ on } \partial \Z,
  \end{aligned}
\right.
\end{equation}
for parameters $p,q,r\in\R$ fixed.
The analysis is focused on the situation where the density $\varrho$ 
concentrates on two disjoint connected sets (clusters), and
numerical results extend our conclusions to multiple clusters \changed{and
to more general two cluster data densities $\varrho$ 
not covered by our analysis.} 
Our motivation is to understand a range of algorithms which
learn about geometric information in data, and clusters in
particular, by means of graph Laplacians
constructed from adjacency matrices whose edge weights reflect affinities
between data points at each vertex. Operators of the
  form \eqref{general-weighted-Laplacian}
  arise as a large data limit of graph Laplacian operators of the form
\begin{equation}\label{defLN-intro}
L_N := 
\begin{cases}
D_N^{\frac{1-p}{q-1}}\left(D_N- W_N\right)D_N^{-\frac{r}{q-1}},
&\text{ if } q\neq 1\,,\\
D_N-W_N, &\text{ if } q= 1,
\end{cases}
\end{equation}
where the symmetric weighted adjacency \changed{matrix $W_N=W_N(q)$ is constructed 
via a suitably reweighted kernel capturing the similarities between 
discrete data points and $D_N=D_N(q)$ is} an associated weighted degree matrix \changed{(see Subsection~\ref{sec:discrete-setting}
 for precise definitions of these matrices)}.

\changed{The three primary contributions of this paper are as follows:

\begin{enumerate}

\item Under assumptions on $\varrho$ capturing
the notion of data approximately clustered into two sets, we study the low
lying spectrum of $\mcl{L}$,
the corresponding eigenfunctions and their dependence on $(p,q,r)$; 
these results reveal the special properties of the parametric family 
$q=p+r$ for clustering tasks, and we refer to $\mcl{L}$ and $L_N$
as  \emph{balanced} in this case.

\item We present numerical experiments which exemplify 
the analysis in both the continuum and discrete regimes,
leading to conjectures concerning aspects of our analysis
which are not sharp, and extending our understanding to 
mixture models and to multiple clusters, situations not 
covered by the analysis.

\item We explain how $\mcl L$ arises from $L_N$, and provide 
numerical simulations illustrating that the characteristic behavior 
identified for the limiting operators $\mcl L$ in point 1 also 
manifests in the finite data setting when using $L_N$. 

\end{enumerate}

These results may also be of independent interest  
in the spectral theory of elliptic differential operators.
Subsection \ref{ssec:BLR} is devoted to the background 
to our work, and a literature review. In Subsection \ref{ssec:CONT} 
we describe the three contributions above in detail; Subsection \ref{ssec:CNUM}
contains illustrative numerical experiments which demonstrate our
 contributions; and Subsection \ref{ssec:OUT} concludes the
introduction with an outline of the paper, by section.
}

\subsection{Literature Review}
\label{ssec:BLR}

Clustering is a fundamental task in data analysis and in unsupervised 
and semi-supervised learning in particular; algorithms in these areas
seek to detect clusters, and more generally coarse structures, geometry 
and patterns in data. Our focus is on Euclidean data. Our starting point 
is a dataset  $X = \{x_1,...,x_N\}$ comprising $N$ points  
$x_i \in \mbb R^d$, assumed to be drawn i.i.d. from a (typically unknown) 
probability distribution with (Lebesgue) density $\varrho$.
The goal of clustering algorithms is to split  $X$ into meaningful clusters. Many such algorithms proceed as follows:
The data points $x_i$ are associated with the vertices of a graph
and a weighted adjacency matrix $W_N$, measuring affinities between 
data points, is defined on the edges of the graph.
From this matrix, and from a weighted diagonal degree matrix $D_N$ found
from summing edge weights originating from a given node, various 
 graph Laplacian matrices $L_N$ can be defined. 
The success of clustering algorithms 
is closely tied to the spectrum of $L_N.$ 
At a high level, $k$ clusters will manifest in $k$ small eigenvalues
of $L_N$, and then a spectral gap; and the $k$ associated eigenvectors 
will have geometry which encodes the clusters. Unsupervised
learning leverages this structure to identify clusters 
\cite{belkin2003laplacian,Ng01onspectral,spielmat1996spectral, vLuxburg2007} and semi-supervised learning
uses this structure as prior information which is enhanced
by labeled data \cite{bertozzi2012diffuse,bertozzi2017uncertainty, zhu2003semi}. It is thus of considerable interest to study 
the spectral properties of $L_N$, and the dependence of the
spectral properties  on the data and on the design
parameters chosen in constructing $L_N.$

The operator $L_N$ in \eqref{defLN-intro} corresponds to different
normalizations of the graph Laplacian. A number of special cases within
this general class arise frequently in the implementation of unsupervised 
and supervised learning algorithms. 
The \emph{unnormalized graph Laplacian} 
refers to the choice $(p,q,r)=(1,2,0),$
giving the symmetric matrix $L_N=D_N-W_N$; 
another popular choice is the {\it normalized graph Laplacian}
where 
$(p,q,r)=(3/2,2,1/2);$ the choice  $(p,q,r)=(2,2,0)$ also gives a 
widely used normalized operator. The graph Laplacian 
for $(p, q,r)= (3/2,2,1/2)$ is symmetric and studied in
\cite{trillos2016variational, Ng01onspectral, schiebinger2015geometry, shimalik2000,vLuxburg2007,vLBB2008}, whereas the choice $(2,2,0)$ gives an operator that is not symmetric, but can be interpreted as a transition probability of a random walk on a graph
\cite{CoifmanLafon2006, shimalik2000}. A number of other choices for $(p,q,r)$ appear in the literature. For example, the spectrum of the graph Laplacian with $(1,2,0)$ is related to the ratio cut, whereas $(2,2,0)$ is connected to the Ncut problem. The success of the spectral clustering procedure for the graph Laplacian with parameters $(1,1,0)$ was investigated in \cite{NGTFHBH} in the setting of non-parametric mixture models;  in 
this case the Dirichlet energy with respect to the natural density 
weighted $L^2$ inner-product  is linear in $\varrho$.
In \cite{CoifmanLafon2006, wormell2020spectral}, general choices of $p=q\ge 0$ and $r=0$ are investigated in the context of diffusion maps with \cite{wormell2020spectral} presenting sharp pointwise error
  bounds on the spectrum as well as norm convergence of $L_N$  
 to  $\mcl L$. In this case, the limiting operator $\mathcal{L}$ 
is the generator of a reversible
diffusion process, a connection first
established in the celebrated paper \cite{CoifmanLafon2006}
by Coifman and Lafon.

\changed{Whilst many different normalizations of the graph Laplacian have been used for a variety of data analysis tasks, a thorough understanding of the advantages and disadvantages of different parameter choices is still lacking. 
The papers \cite{vLuxburg2007,vLBB2008} contain comparisons between the
normalized, unnormalized and  random walk Laplacians. But, 
to the best of our knowledge, there is a gap in the current literature 
concerning a systematic understanding of the effects of the
entire family of weighted graph Laplacian matrices $L_N$
  depending on the family of parameters $(p,q,r).$ 
Of particular interest  is the case where $N$ is large,
relevant in large data applications, and in \cite{vLBB2008} the 
authors showed that the normalized and random walk Laplacians
give consistent spectral clustering as opposed to the unnormalized Laplacian
operator in this large $N$ limit. This behavior is attributed to
different integral operators to which the normalized and unnormalized 
Laplacians converge. The normalized Laplacian converges to a 
compact perturbation of the identity with a discrete spectrum 
while it is demonstrated that
the unnormalized Laplacian may not possess a purely discrete spectrum.
}

  The large data limit convergence of graph Laplacians to integral or differential operators has been
  the subject of many recent studies including \cite{Belkin2006ConvergenceOL, 
    belkin2008towards, calder2019improved, trillos2016variational,trillos2018error,
    gine2006empirical,schiebinger2015geometry, shi2009data,
    slepvcev2017analysis, vLBB2008, wormell2020spectral}.
The point of departure in  these papers is a kernel $\eta$
defined on $\R^d \times \R^d$, from which the weighted adjacency 
matrix $W_N$ defined on the edges of a graph is constructed. In
\cite{Belkin2006ConvergenceOL, belkin2008towards, schiebinger2015geometry, shi2009data, vLBB2008}
the authors fix a kernel and let $N \to \infty$ obtaining an integral operator
as the limit of graph Laplacians. These limiting integral operators are dependent on
the kernel $\eta$ and subsequently  the results of these articles also depend on the choice of
the kernel. 
The more recent articles 
\cite{calder2019improved,trillos2016variational,trillos2018error,gine2006empirical,
  slepvcev2017analysis, wormell2020spectral} consider the joint limit
as $N \to \infty$ and the width of the kernel $\eta$ vanishes sufficiently slowly thereby controlling
the local connectivity of the graph. It then follows that in taking this joint limit graph Laplacian matrices
$L_N$ converge to differential operators of a similar form to our $\mcl L$ operator; \changed{under this type of limiting procedure the resulting differential
operator is independent of the weight kernel $\eta$, up to scaling.}

\changed{The aforementioned articles suggest the potential for further analysis 
of the continuum limits of graph Laplacians as a means to advance our 
understanding of  clustering algorithms on finite but large data sets. 
Such continuum approaches, often refereed to as {\it population level analyses},
proceed by studying graph Laplacian operators and 
subsequently spectral clustering algorithms in the continuum 
regime \cite{NGTFHBH,schiebinger2015geometry, shi2009data}.
The continuum analysis may then be extended to the finite data setting
using discrete-to-continuum approximation results such as those in
\cite{calder2019improved,trillos2018error,schiebinger2015geometry,wormell2020spectral}.
We employ the same perspective in this work, focusing primarily on the
analysis of the continuum operators and providing numerical experiments
and formal calculations demonstrating the relevance of the continuum analysis
to finite data settings. We note that the paper \cite{ng2002spectral} studies
consistency of spectral clustering for finite graph problems, and that
similar ideas from linear algebra are used to study large data limits 
in \cite{de2020consistency}, albeit with very restrictive assumptions on the
clusters; no limiting operator is employed, or identified, in this analysis.}

\changed{
We also note that mathematical studies which are conceptually 
similar to the spectral
analysis that we present here have been prevalent 
in the study of metastability in chemically reacting systems 
for some time; see \cite{deuflhard1999computation,deuflhard2000identification,
  huisinga2004phase, schutte2001transfer} and the references therein for applications.
This body of work has led to very subtle and deep analyses of the generators of
Markov processes \cite{bovier2004metastability,bovier2005metastability};
this analysis might, in principle, be used to extend some of the
work undertaken here to a wider range of sampling densities.
}

\changed{
Finally, 
the tools developed in this paper may be used to study 
consistency of semi-supervised learning algorithms in \cite{HHOS2}. 
In particular, we provide the  spectral perturbation results needed  to generalize the 
work in \cite{HHRS1}, which studies consistency of graph-based 
semi-supervised learning algorithms for finite $N$ and using
the graph Laplacian $L_N$, to the large data limit where $N \to \infty$ and  $L_N$
is replaced by $\mcl{L}$ \cite{HHOS2}.}

\subsection{Our Contributions}
\label{ssec:CONT}

We now detail the three contributions outlined in Subsection \ref{ssec:O}.
\changed{Contribution 1 is summarized in our main theoretical result characterizing the
  low-lying spectrum of $\L$ and the effect of the $(p,q,r)$ parameters;
  Contribution 2 extends our theoretical analyses by various
  numerical experiments (i) in the unbalanced
  regime where $q \neq p +r$, revealing that some of our bounds on
  the eigenvalues of $\L$ can be sharpened, and (ii) to the setting of multiple clusters and more general data densities $\varrho$, suggesting that the theory provided under Contribution 1 reveals fundamental concepts that hold in more generality than the specific setting considered in Contribution 1; Contribution 3 combines formal calculations and
  numerical experiments to
  reveal the relationship between the $(p,q,r)$ parameterized family of differential
  operators $\L$ and
  various weightings of discrete graph Laplacians $L_N$.}

\subsubsection{Contribution 1}\label{sec:contribution-1}

\changed{Let us define the notion of a  \emph{perfectly separated} density.  
Let $\Z \subset\R^d$ be bounded and $\varrho_0$
be a (Lebesgue) probability density with support $\Dp \subset \Z$ 
strictly contained in $\Z$ and concentrated on two 
disjoint subsets $\Z^+$ and $\Z^-$ of $\Z$; that is, 
$\Dp=\Z^+\cup \Z^-$ and $\Z^+\cap \Z^-=\emptyset$.
We refer to $\Z^{\pm}$ as clusters, and denote the 
operator of the form \eqref{general-weighted-Laplacian} 
based on $\varrho_0$ by $\mcl{L}_0$. Consequently, a \emph{nearly separated} 
density  comprises a class of smooth densities $\varrho_\eps$
that are $\mcl{O}(\eps)$ perturbations of the perfectly 
separated case $\varrho_0$, with density supported everywhere 
on $\Z$ and such that $\varrho_\eps = C \eps$ away from $\Z'$ with $C > 0$ a constant;
we define this concept precisely in Section \ref{sec:sa}.
We denote the operator of the form \eqref{general-weighted-Laplacian}
based on $\varrho_\eps$ by $\mcl{L}_\eps$. To this end, our main 
theoretical result characterizes the low-lying spectrum of $\L_\eps$ 
in the nearly separated regime.}

\changed{
\begin{mainresult} \label{main-result}
Assume $q>0$ and $p+r>0$.

\begin{enumerate}[(i)]
\item The first eigenpair of $\mcl{L}_\eps$ is given by
\begin{align*}
  \sigma_{1,\epsilon}=0\,,\qquad
   \varphi_{1,\epsilon} = \frac{1}{|\Z|_{\varrho_\eps^{p +r}}^{1/2}}  \varrho_\eps^r(x)\mbf{1}_{\Z}(x),
  \qquad \forall x\in\Z\,
\end{align*}
where $| \Z |_{\varrho_\eps^{p+r}} : = \int_\Z \varrho_\eps^{p+r}(x) dx$.
\item The second eigenvalue scales as $\sigma_{2,\eps} = \mcl{O}( \epsilon^{q})$
and the corresponding eigenvector is given, approximately in a density weighted
$L^2$ space, by the formula
\begin{equation}
  \label{eq:FV}
\varphi_{2,\epsilon} \approx \frac{1}{ | \Z'|_{\varrho_\eps^{p+r}}^{1/2}}\varrho_\eps^r(x)\bigl(\mbf{1}_{\Z^+}(x)-\mbf{1}_{\Z^-}(x)\bigr),\qquad \forall x\in\Z\,.
\end{equation} 

\item The behavior of the third eigenvalue $\sigma_{3,\eps}$ varies depending on the relationship between the parameters $q$ and $p+r$:
\begin{itemize}
\item if $p+r<q<2(p+r)$, then a spectral ratio gap
  manifests with $\sigma_{2,\eps}/\sigma_{3,\eps}=
{\mathcal O}(\eps^{2(p+r)-q})$  as $\eps \to 0$;
\item if $q=p+r$, then $\sigma_{3,\eps} \asymp 1$ and  a uniform spectral gap manifests, i.e., $\sigma_{3,\eps}-\sigma_{2,\eps} \asymp 1$ and
    $\sigma_{2,\eps}/\sigma_{3,\eps}= \mcl O( \eps^{q})$ as $\eps \to 0$;
\item if $q < p+r<2q$, then a spectral ratio gap
  manifests with $\sigma_{2,\eps}/\sigma_{3,\eps}=
{\mathcal O}(\eps^{2q-(p+r)})$  as $\eps \to 0$.
\end{itemize}

\end{enumerate}

\end{mainresult}

We precisely state this result, with fully detailed 
assumptions, in  Section~\ref{sec:sa}; the statement is comprised
of a combination of theorems and corollaries. Part (i) is contained in 
Theorem~\ref{thm:L-eps-low-eigenvals}(i) while part (ii) follows by  combining
Theorem~\ref{thm:L-eps-low-eigenvals}(ii) with Theorem~\ref{thm:geometry-of-fiedler-vector}.
Finally part (iii) is encompassed by Corollary~\ref{cor:ratiogap}. A roadmap of the
proofs of these results is explained in Section~\ref{sec:sa} with 
the detailed proofs  postponed to Section~\ref{sec:sap}.
}

\subsubsection{Contribution 2}\label{sec:contribution-2}

\changed{We present detailed numerical experiments 
in Section \ref{sec:num} that both support our Main Result~\ref{main-result}
and make two substantial extensions. These extensions sharpen
our results in the unbalanced cases and extend our results to $K>2$
clusters. In particular, our experiments in case $K=2$ demonstrate that 
the rates for $\sigma_{2,\eps}/\sigma_{3,\eps}$ in 
Main Result~\ref{main-result}(iii) are sharp in the balanced setting 
where $q = p +r$ but show clear evidence that
the theoretical rates obtained in the unbalanced settings 
where $q \neq p +r$ are slower than the observed rates. The results
obtained by combining Main Result~\ref{main-result} and this
empirical improvement in the unbalanced case are then shown numerically 
to extend naturally to $K>2$ clusters. For clarity we summarize 
these numerical results in the conjecture that follows.


\begin{conjecture}\label{conj:SG}
  Suppose that the conditions of Main Result \ref{main-result} are satisfied with the
  data density $\varrho_\eps$ concentrating on $K \ge 2$ clusters in the small $\eps$ limit.  Then  
\begin{equation*}
  \sigma_{K, \eps} \asymp \eps^q, \qquad \frac{\sigma_{K,\eps}}{\sigma_{K+1, \eps}}
  \asymp \eps^{\min\{q, p +r\}}.
\end{equation*}
\end{conjecture}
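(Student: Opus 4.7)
The plan is to extend the strategy used for Main Result~\ref{main-result} from the two-cluster setting to arbitrary $K\ge 2$ clusters. First I would construct, for each cluster $\Z^k$ with $k=1,\dots,K$, an approximate low-energy eigenfunction $\psi_{k,\eps}\propto \varrho_\eps^{r}\mathbf{1}_{\Z^k}$, regularized across the transition layers so that it is smooth. These $K$ candidates are approximately orthonormal in the natural weighted space $L^2(\varrho_\eps^{p+r}\,dx)$, and a direct computation of the associated Dirichlet form $\int_\Z \varrho_\eps^{q}\bigl|\nabla(\psi_{k,\eps}/\varrho_\eps^{r})\bigr|^2\,dx$ shows that this energy is $\mathcal{O}(\eps^{q})$, because the only contribution arises from the low-density region between clusters where $\varrho_\eps\sim\eps$. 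Applying the Courant--Fischer principle to this $K$-dimensional trial space then delivers the upper bound $\sigma_{K,\eps}\lesssim \eps^{q}$.

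For the matching lower bound on $\sigma_{K,\eps}$, I would use spectral perturbation theory around the perfectly separated operator $\mcl{L}_0$, whose kernel is exactly $K$-dimensional and is spanned by $\{\varrho_0^{r}\mathbf{1}_{\Z^k}\}_{k=1}^{K}$. The symmetric structure of $\mcl{L}_\eps$ (self-adjoint in $L^2(\varrho_\eps^{p+r}\,dx)$ after the substitution $v=u/\varrho_\eps^{r}$) permits the use of the standard analytic perturbation framework. One then shows that the first $K$ eigenvalues of $\mcl{L}_\eps$ bifurcate from this $K$-dimensional kernel with leading-order correction of size $\eps^{q}$. This step generalizes Theorem~\ref{thm:L-eps-low-eigenvals} in a fairly direct way, since only the dimension of the unperturbed kernel changes and the transition-layer estimates are local to each cluster boundary.

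The hard part will be controlling $\sigma_{K+1,\eps}$ from below sharply enough to yield the conjectured ratio $\sigma_{K,\eps}/\sigma_{K+1,\eps}\asymp \eps^{\min\{q,p+r\}}$. In the balanced regime $q=p+r$, a Poincar\'e inequality on each cluster forces any trial function with intra-cluster oscillation to have Rayleigh quotient bounded below by an $\eps$-independent constant, so that $\sigma_{K+1,\eps}\asymp 1$; combined with $\sigma_{K,\eps}\asymp \eps^{q}$ this gives the claimed ratio. The genuine difficulty lies in the unbalanced regime, where the conjecture predicts a strictly faster decay of the ratio than the techniques behind Main Result~\ref{main-result}(iii) can deliver even at $K=2$. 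Closing this gap presumably requires either improved trial functions, concentrated in boundary layers of the clusters where the weights $\varrho_\eps^{q}$ and $\varrho_\eps^{p+r}$ interact nontrivially, or a refined spectral perturbation argument that tracks the dependence on $q-(p+r)$ explicitly. This unbalanced case is the step I expect to be the main obstacle, and it is likely the reason the statement appears as a conjecture rather than a theorem.
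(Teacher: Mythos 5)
The statement you are attempting to prove is, in the paper, explicitly left as a \emph{conjecture}: its only support there is the numerical evidence in Sections~\ref{sec:num} and \ref{sec:discrete-numerics} (Tables~\ref{tab:balanced-r-eq-p}--\ref{tab:slopes-five-clusters}), together with the proved but weaker two-cluster statements of Theorems~\ref{thm:L-eps-low-eigenvals} and~\ref{thm:geometry-of-fiedler-vector} and Corollary~\ref{cor:ratiogap}. So there is no proof in the paper to match your proposal against; the relevant comparison is between your strategy and the paper's partial theoretical machinery. With that said, your sketch is not a complete proof, and you are candid about this, but parts of the roadmap need correction.

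Your first step --- building $K$ smoothed, $\varrho_\eps^r$-scaled indicator functions of the clusters, forming an approximately orthonormal $K$-dimensional trial space, and invoking the Courant--Fischer min-max formula to get $\sigma_{K,\eps}\lesssim\eps^{q-\beta}$ --- is the right one and is precisely the $K$-cluster generalization of the argument in Proposition~\ref{prop:speceps}. However, be aware that even for $K=2$ this only gives the \emph{upper} bound $\sigma_{2,\eps}\lesssim\eps^{q-\beta}$ for all small $\beta>0$; Theorem~\ref{thm:L-eps-low-eigenvals}(ii) does not prove $\sigma_{2,\eps}\gtrsim\eps^q$, so the asserted two-sided asymptotic $\sigma_{K,\eps}\asymp\eps^q$ is itself part of what must be established.

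The real gap is in your second step. You propose to get the lower bound $\sigma_{K,\eps}\gtrsim\eps^q$ and the sharp control of $\sigma_{K+1,\eps}$ by ``standard analytic perturbation theory'' around $\mcl L_0$, saying this generalizes Theorem~\ref{thm:L-eps-low-eigenvals} ``in a fairly direct way.'' That mischaracterizes the paper's method and, more seriously, the perturbation here is not one to which Kato-type analytic perturbation theory applies. The unperturbed operator $\mcl L_0$ lives on $\Z'$ with Neumann conditions on $\partial\Z'$, while $\mcl L_\eps$ lives on the larger domain $\Z$; the $\eps$-dependent inner product $\langle\cdot,\cdot\rangle_{\varrho_\eps^{p-r}}$ degenerates on $\Z\setminus\Z'$; and the coefficient $\varrho_\eps^q$ vanishes there in the limit. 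There is no single Hilbert space and no relatively bounded, holomorphic family of forms in which to write $\mcl L_\eps=\mcl L_0+\eps A$. The paper deliberately avoids this by working instead with explicit trial functions and a weighted Cheeger inequality (Lemma~\ref{weighted-isoperimetric-inequality}, Proposition~\ref{spectral-gap-general}), and even that machinery only yields $\sigma_{3,\eps}\gtrsim\eps^{\max\{p+r-q,\,2(q-p-r)\}}$, which is off by a square from the conjectured rate when $q>p+r$. Your honest observation that the unbalanced case is the hard part is correct; but note that the difficulty is structural (the Cheeger route squares the isoperimetric constant), so replacing it with unjustified perturbation theory does not resolve it, and the lower bound $\sigma_{K,\eps}\gtrsim\eps^q$ faces the same quadratic loss. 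Any complete proof will need a genuinely sharper lower-bound mechanism --- for instance a direct boundary-layer analysis of trial functions that saturate the isoperimetric cut, or a careful matched-asymptotics treatment of $\varrho_\eps$ across $\partial\Z^\pm$ --- rather than an appeal to analytic perturbation theory.
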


Our numerical simulations in Section~\ref{sec:num},
and in particular Tables~\ref{tab:balanced-r-eq-p} to
\ref{tab:unbalanced-qlepr}, suggest the above conjecture in the binary
  cluster setting 
that sharpens the decay rate of $\sigma_{2,\eps}/\sigma_{3,\eps}$
as a function of $\eps$, in the unbalanced settings when $q \neq p +r $.
Put simply, this  conjecture states that  when $K=2$ and   $q < p +r$ 
the third eigenvalue $\sigma_{3,\eps}$ exhibits similar behavior to 
the balanced setting where $q = p +r$ and hence
a uniform gap in the spectrum manifests as $\eps \to 0$. However, 
when  $q>p+r$ the third eigenvalue $\sigma_{3,\eps}$ vanishes 
like $\eps^{q-p -r}$ and a spectral ratio gap manifests. 
Moreover, if this conjecture holds then it
allows us to sharpen the approximation error of the
second eigenfunction $\varphi_{2,\eps}$ in Theorem~\ref{thm:geometry-of-fiedler-vector},
as this result heavily depends on a lower bound for $\sigma_{3,\eps}$.
We attribute this discrepancy to the lower bound on $\sigma_{3,\eps}$
obtained in Theorem~\ref{thm:L-eps-low-eigenvals}(iii) that in turn
relies on a generalization of Cheeger's inequality from
Appendix~\ref{sec:proof-isoperim-ineqa}.
}

\subsubsection{Contribution 3}\label{sec:contribution-3}

\changed{
We demonstrate the relationship between the $(p,q,r)$ dependent family of
operators $\L$ in \eqref{general-weighted-Laplacian} showing how they
arise as the limit of graph Laplacian matrices $L_N$ of the form 
\eqref{defLN-intro}.  Subsection~\ref{sec:cvEnergies} presents an informal 
limiting argument
to identify the operator $\L$ by considering the large data $N$ limit, followed
by small kernel bandwidth $\delta$ limit of $L_N=L_N(\delta)$. 
Our informal calculations in 
Subsection~\ref{sec:discrete-vs-continuum-eigenproblems} extend these
arguments from Dirichlet energies to eigenvalue problems,
and indicate that the spectrum of
the matrix $C \delta^{-2} N^{2r- q} L_N$ converges to that of $\L$,
for a suitable constant $C >0$,
as $(N,\delta^{-1}) \to \infty$. 
Our numerical experiments in Subsection~\ref{sec:discrete-numerics}
support these informal calculations, demonstrating the convergence of 
the eigenvalues of $L_N$
to numerically computed eigenvalues of $\L$ for different choices of $(p,q,r)$
and for two different types of mixture models.
The numerical experiments and informal arguments are developed
in the following setting: we assume that the data at the
$N$ vertices of the graph, $\{x_1,...,x_N\}$, 
are sampled i.i.d. from the probability density 
$\varrho$ and we suppose that the resulting weight matrix $W_N$ is constructed
using a kernel $\eta_\delta$ with the parameter $\delta >0$ controlling
the local connectivity of the vertices; see Subsection~\ref{sec:discrete-setting} for details.

To make a precise theory supporting these observations requires
specification of the relationship between $N$ and $\delta$ in
the limiting process $(N,\delta^{-1}) \to \infty$.
The convergence of $L_N$ to $\L$  for specific choices of
$(p,q,r)$ has been established in the literature, and this issue was
addressed in those papers. In particular, in
\cite{trillos2016variational} convergence of the spectrum of 
$L_N$ $\Gamma$-converges to that of $\mcl L$,
and that the eigenfunctions of $L_N$  converge to those of $\mcl L$ in 
the $TL^2$ topology.
More recently, the articles \cite{calder2019improved, trillos2018error, wormell2020spectral}
further extend these results
giving rates for the convergence of eigenvalues and eigenfunctions 
  for $(p,q,r) = (1, 2, 0)$ and also for the convergence of $L_N$ on $k$-nearest neighbor ($k$-NN) graphs
  to $\mcl L$ with $(p,q,r) = (1, 1 - 2/d, 0)$. We postulate that
the methods of proof
introduced in \cite{calder2019improved, trillos2018error},
and extensions to spectral convergence properties proved there, can be
generalized to the $(p,q,r)-$dependent family of graph Laplacian 
operators introduced here; with the analysis for $k$-NN graphs departing from the
proximity graphs considered here in particular in the construction of the
discrete operator $L_N$ and its normalization with different choices of $(p,q,r)$. However space considerations preclude a full analysis within the
confines of this paper.
}

\subsection{Illustrative Numerical Experiments}
\label{ssec:CNUM}

\changed{The contributions detailed in the preceding subsection
demonstrate that the manner in which clustering is manifest 
in the spectral properties of the graph Laplacian
depend subtly on the choice of the parameters $(p,q,r)$. 
Making the balanced choice $q= p+r$ one obtains a 
family of operators whose second eigenvalue decays rapidly, while 
the gap between the second and third eigenvalues remains of
order one as the parameter $\epsilon$, measuring closeness to
perfect clustering, decreases to zero; this uniform separation
of second and third eigenvalues does not happen when $q > p+r.$
Furthermore the form of the  \emph{Fiedler vector} (the second 
eigenfunction), whilst always exhibiting 
the clusters present in the data, can have different behavior away from the
clusters, depending on $(p,q,r)$. We demonstrate these facts
in Example \ref{ex:ex}, exemplifying Contributions 1 and 2. Additionally, Example~\ref{ex:mixture-model} shows that
our theory likely applies without the rather specific assumptions used to
define clustering as mentioned in Contribution 2; furthermore, Example~\ref{ex:mixture-model}
illustrates that the spectral properties of the
limiting operator $\L$  reflect the properties of the
discrete graph Laplacian arising when $N < + \infty$ as outlined in our Contribution 3. } 

\begin{example}[Comparison of unnormalized and normalized graph Laplacians]
\label{ex:ex}
\changed{We study the spectral properties of operator $\mcl{L}_{\eps}$
with parameter choices $(p,q,r)$ given by
 $(1, 2, 0)$ and $(3/2, 2, 1/2)$ respectively,
corresponding to the unnormalized and normalized graph Laplacians
respectively. All our numerical experiments are for
a data density $\varrho_\eps$ of the form \eqref{numerics-rho-def} with two distinct clusters; see Figure~\ref{fig:density-plots-two-clusters}(a) for a plot of $\varrho_\eps$ with $\eps= 0.0125$. 

In the unnormalized case $q > p +r$ it follows 
from our Main Result~\ref{main-result} that as $\eps \downarrow 0$ the
second eigenvalue of $\mcl L_\eps$ scales as $\eps^{2}$ and that 
a spectral gap is present only in ratio form. 
In Figure~\ref{fig:density-plots-two-clusters}(b)
we plot the second and third eigenvalues $\sigma_2$ and $\sigma_3$ against
$\eps$, on a log scale, and calculate best linear fits to the data;
this demonstrates that they converge to zero like $\eps^2$ and
$\eps$ respectively, in agreement with our Main Result~\ref{main-result} (second eigenvalue)
and the first component of Conjecture \ref{conj:SG} (third eigenvalue). 
We also compute the second eigenfunction (Fielder vector)
$\varphi_{2,\eps}$ shown in Figure~\ref{fig:density-plots-two-clusters}(d).
Note that in this case the pointwise distance between $\varphi_{2,\eps}$ and
the right hand side of \eqref{eq:FV} in Main Result~\ref{main-result}(ii) is only small  within the
clusters; this reflects the fact that the weighted $L^2(\Z,\varrho_\eps^{p-r})$-norm arising
in Theorem~\ref{thm:geometry-of-fiedler-vector} for this choice of $(p,q,r)$ is 
not sensitive to large pointwise values of functions in areas 
where $\varrho_\eps$ is small. 
}

 \begin{figure}[htp!]
   \centering
   \subfloat[]{%
   \includegraphics[height=.35\textwidth]{./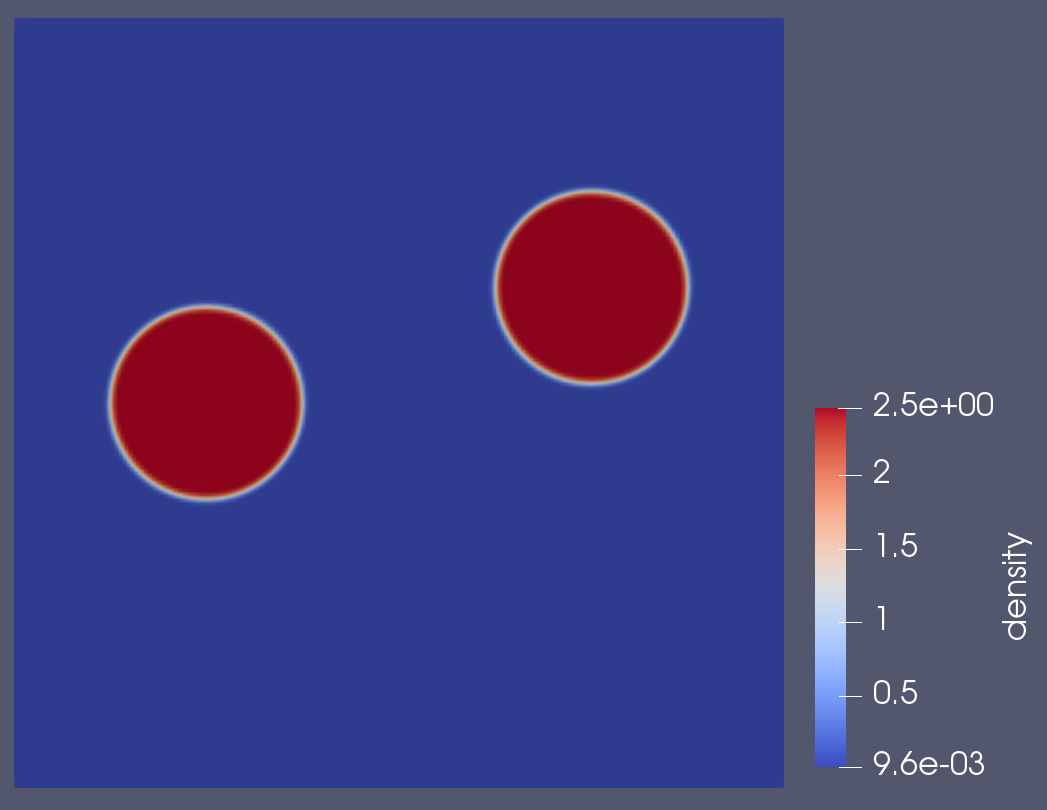}
 } \\
  \subfloat[]{%
    \includegraphics[height=.37\textwidth, clip=true, trim=1cm 6cm 2cm 7cm]{./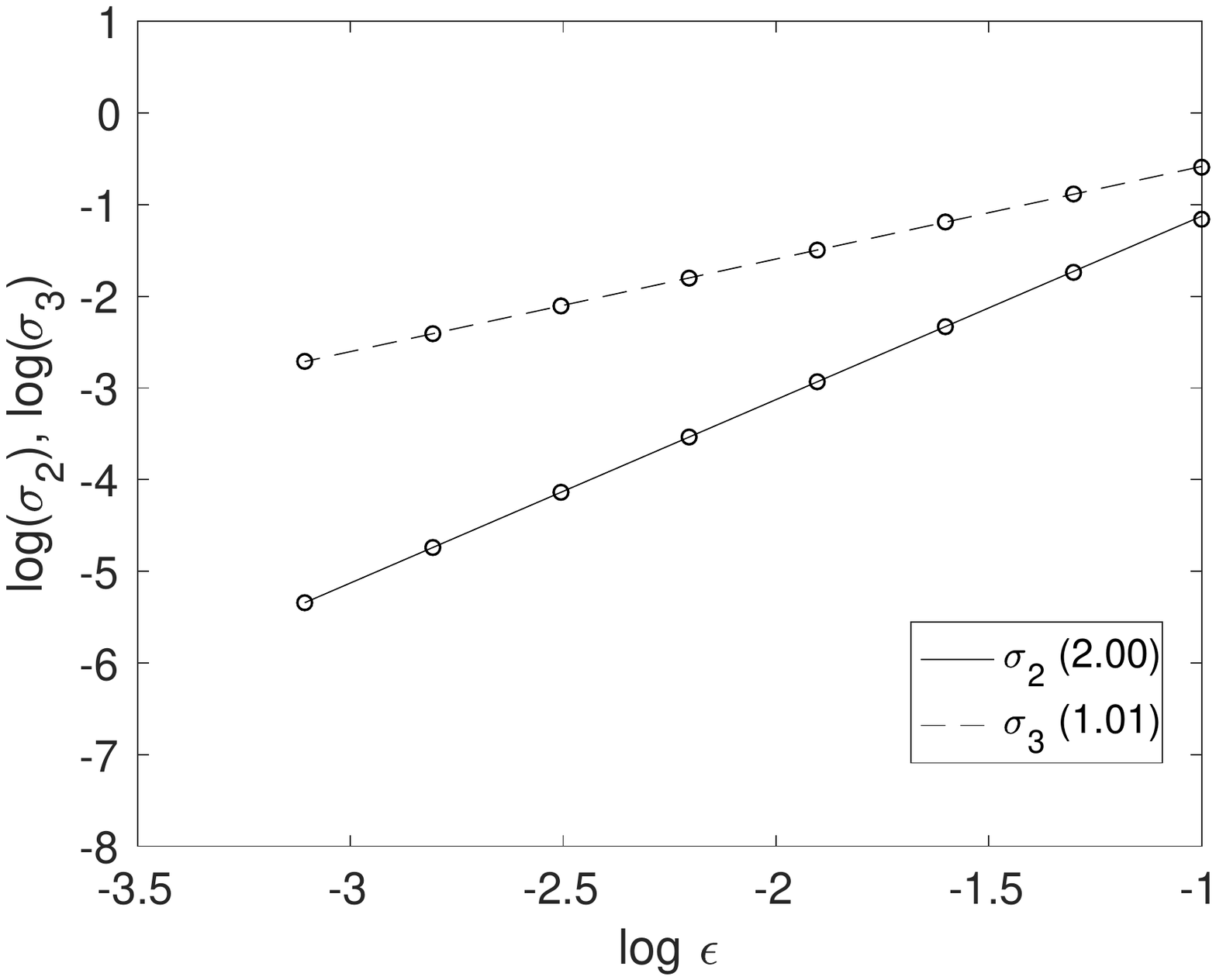}
    } \hspace{2ex}
  \subfloat[]{%
   \includegraphics[height=.37\textwidth, clip=true, trim=1cm 6cm 2cm 7cm]{./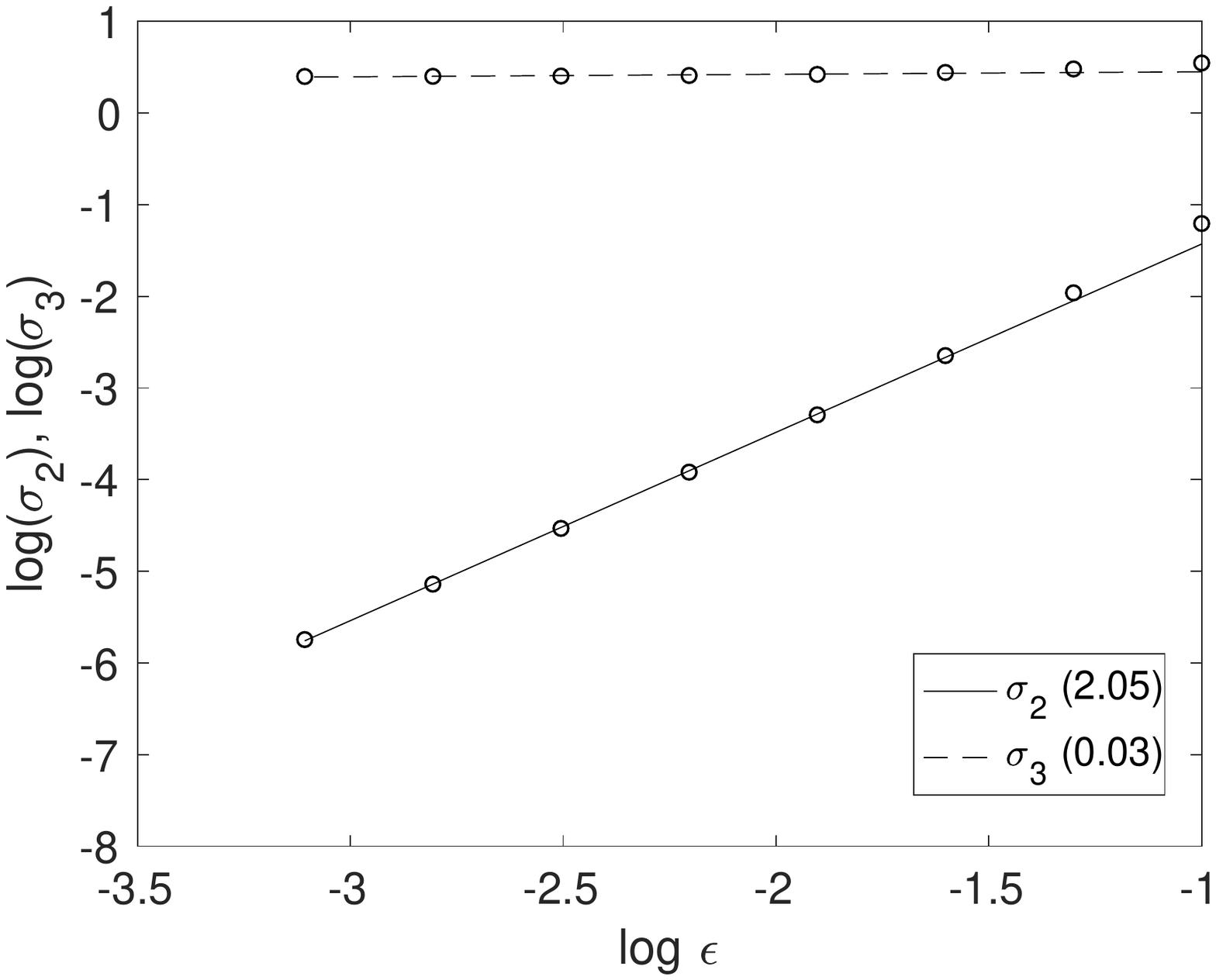}
 }\\
 \subfloat[]{%
    \includegraphics[height=.35\textwidth, clip=true, trim=18cm 5cm 8cm 5cm]{./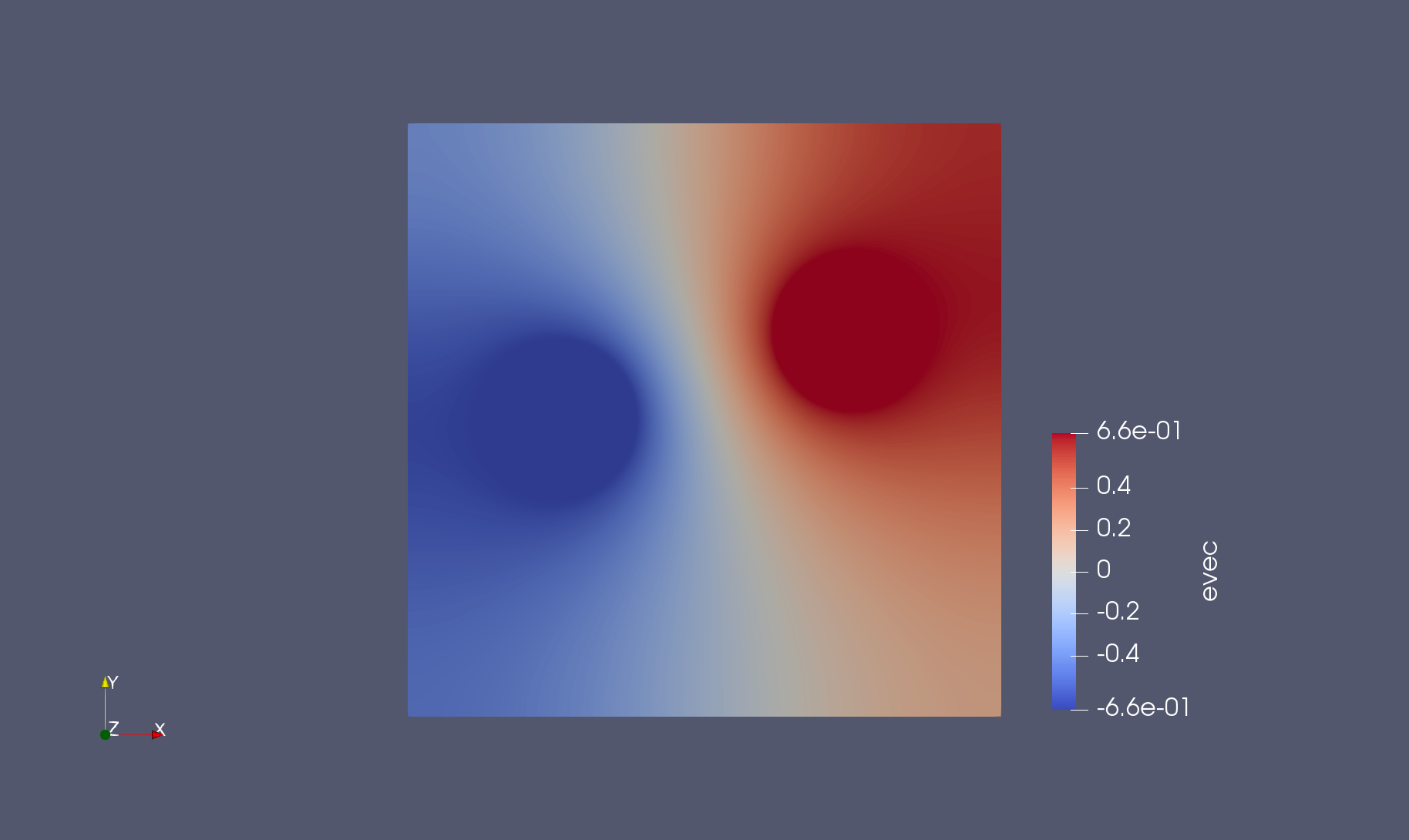}
 }
\subfloat[]{%
   \includegraphics[height=.35\textwidth, clip=true, trim=18cm 5cm 8cm 5cm]{./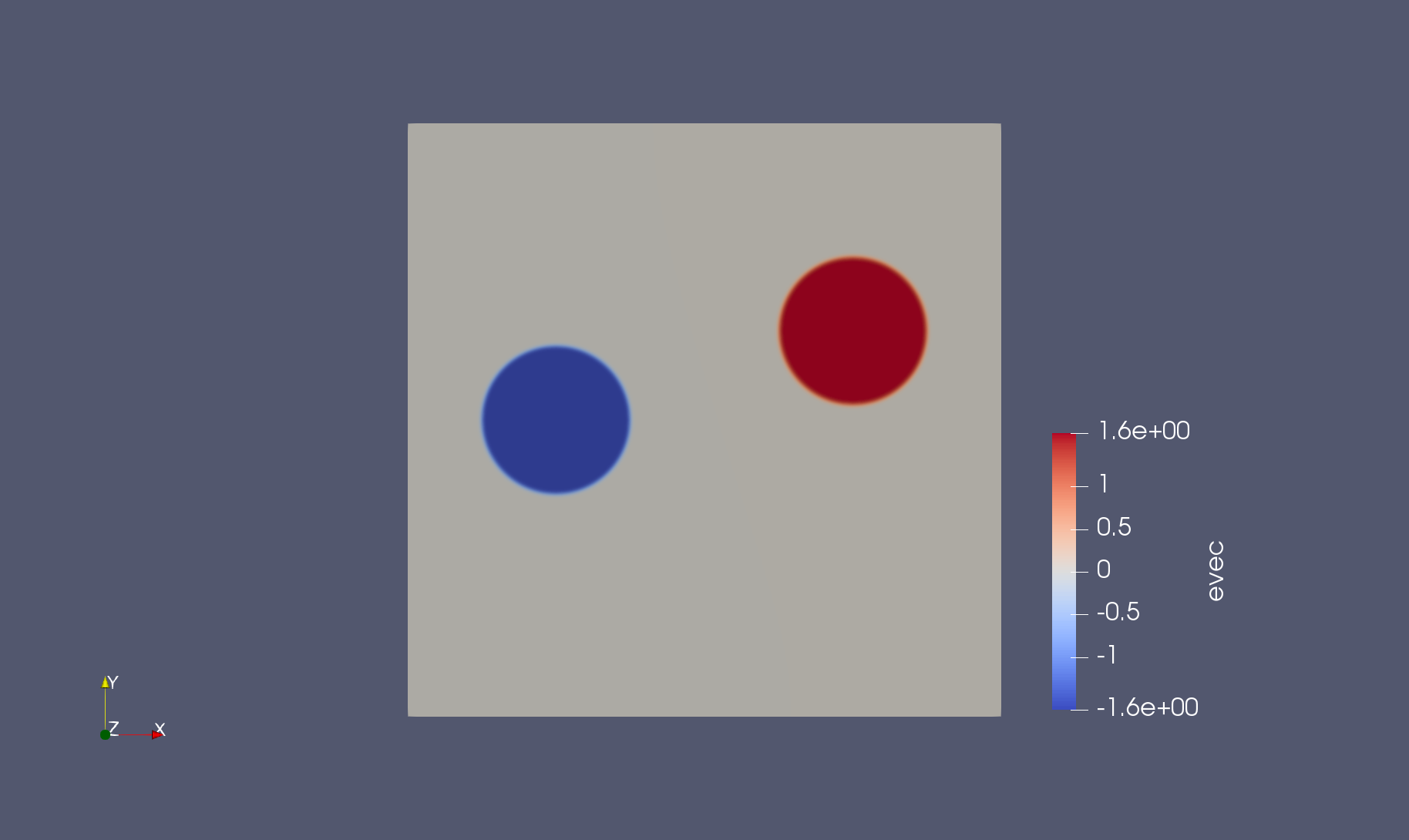}
 }
 \caption{
     (a) Plot of a density $\varrho_\eps$ of the form \eqref{numerics-rho-def} with two distinct clusters for $\eps= 0.0125$. (b) Showing $\log(\sigma_2)$ and $\log(\sigma_3)$,
     the second
     and third eigenvalues of the unnormalized operator
     $\mcl L_\eps$ with $(p,q,r) = (1, 2, 0)$ as functions of $\eps$.
     Values in brackets in the legends indicate numerical slope of the lines.
     (c) Showing $\log(\sigma_2)$ and $\log(\sigma_3)$
     for the normalized operator $\mcl L_\eps$ for $(p,q,r) = (3/2, 2, 1/2)$, as functions of $\eps$. (d) and (e) The Fiedler vector of 
   $\mcl L_\eps$ with $(p,q,r) = (1, 2, 0)$ and $(p,q,r) = (3/2, 2, 1/2)$ respectively for $\eps= 0.0125$.}
     \label{fig:density-plots-two-clusters}
 \end{figure}

\changed{For comparison we now consider the 
normalized setting. For $q = p+ r$ 
our Main Result~\ref{main-result} predicts that,
as $\eps \downarrow 0$, there exists a uniform spectral gap between the first 
two eigenvalues of $\mcl L_\eps$: for $(p,q,r)=(3/2, 2, 1/2)$, the second eigenvalue scales
as $\eps^2$ and the third is of order one with respect to $\eps$.
In Figure~\ref{fig:density-plots-two-clusters}(c)
we plot the second and third eigenvalues of $\L_\eps$ against $\eps$ in that case,
on a log-scale, and provide best fits to the data; the results support the theory. 
The corresponding Fiedler vector $\varphi_{2,\eps}$ is shown in Figure~\ref{fig:density-plots-two-clusters}(e).
In this case $\varphi_{2,\eps}$  appears to converge pointwise to the
right hand side of \eqref{eq:FV}, in contrast to the unnormalized case.

It is well-known that the Fiedler vectors
encode information on the clusters $\Z^\pm$ 
that we are trying to detect. They play
a significant role in the context of spectral clustering
and binary classification \cite{vLuxburg2007}. However, it
is noteworthy that the Fiedler vectors in the unnormalized and
normalized cases differ substantially
within $\Z\setminus\Z':$ in the unnormalized case a smooth transition is
made between $\Z^+$ and $\Z^-$, whereas in the normalized case abrupt transitions
are made to near zero on the boundaries of  $\Z^+$ and $\Z^-$. 
}

\end{example}

\changed{Since our primary motivation is data clustering, it is relevant
to interpret our contributions in that context.
In the following example we demonstrate that
although our theory is developed under rather strict assumptions
on the sampling density of the data and in the limit $N \to \infty$, 
our results concerning the dependence of spectral ratio gaps on 
the $(p,q,r)$ parameters appear to generalize to mixture models 
that violate some of our assumptions. The mixture model assumption
is a natural model for population level analysis of clustering 
algorithms and is considered in the articles 
\cite{NGTFHBH,schiebinger2015geometry}. It can be argued to be a 
more realistic data model for the density $\varrho$ than the one
for which our theory is developed and it is therefore of
interest to demonstrate that our theory is predictive in this setting.

\begin{example}[Clustering a mixture model]\label{ex:mixture-model}
  Consider the following mixture on the unit square
  \begin{equation}\label{exp-mixture-model}
    \varrho_\omega(t) := \frac{1}{2\omega} \left(1-\exp \left( -\frac{1}{\omega} \right)\right)^{-1}
    \left[ \exp \left(  -\frac{t_1}{\omega} \right)
        + \exp \left(\frac{t_1-1 }{\omega} \right) \right], \qquad t = (t_1, t_2)^T \in [0,1]^2. 
    \end{equation}
This density is simply the mixture of two exponential distributions
restricted to the unit interval $[0, 1]$ in the $t_1$ direction,  with a  
uniform distribution in the $t_2$ direction; see
Figure~\ref{fig:exp-mixture-example}(a). The parameter $\omega$ controls the
overlap of the mixture components. This model clearly violates our assumptions
on the density $\varrho$ outlined in Section~\ref{ssec:pod}, most notably, (i) letting
    $\omega \to 0$ the density $\varrho_{\omega}$  concentrates on sets of measure zero as opposed to
    clusters $\Z^\pm$ of positive measure, and (ii) we cannot ensure that $\varrho_\omega = C \omega$
    outside of clusters since the tails of the exponential  components decay exponentially
    as we let $\omega \to 0$.

    We generate $N$ samples from $\varrho_\omega$ and construct a weighted proximity
    graph on this dataset using a weight kernel of width $\delta >0$ as detailed 
    in Subsection~\ref{sec:discrete-numerics}. We then proceed to define a discrete
    graph Laplacian $L_N$ of the form \eqref{defLN-intro} and compute the first four
    non-trivial eigenvalues $\sigma_{N, \delta}$ of this discrete operator
    (this  notation for the eigenvalues is defined in Subsection~\ref{sec:cvEnergies}).
    Figure~\ref{fig:exp-mixture-example}(b,c,d) show the variation of the
    first few eigenvalues as a function of $\omega$ for $N= 2^{13}$ vertices.
    We consider three choices of the $(p,q,r)$ parameters, a balanced case with
    $(1, 2,1)$ and two unbalanced cases with $(1/2, 2, 1/2)$ and $(1, 3/2, 1)$.
    While our theory does not make a prediction regarding the rate at which the second eigenvalue
    vanishes with $\omega$,  we can still use our theoretical insights to postulate uniform or ratio gaps between the second and third eigenvalues.
    
    In the balanced case where $q = p +r$ we observe that the second eigenvalue
    vanishes with $\omega $ while the rest of the spectrum remains bounded away
    from zero; in contrast, in the unbalanced case $q > p +r$ the third eigenvalue also
    vanishes and only a spectral ratio gap manifests. The results in the unbalanced
    case $q < p +r$ are less clear since the higher eigenvalues still vanish, 
but they do so rather slowly; this may be attributed to numerical error. 
    The results are in agreement with our analysis and numerical results
    in the continuum limit and suggest that the characteristic behavior we 
prove for our specific construction of the sampling density $\varrho$ is 
in fact a more general phenomenon that applies for other type of clustered
data and on finite data sets. Further details regarding
this experiment are summarized in Subsection~\ref{sec:discrete-numerics}.

    \begin{figure}[htp]
      \centering
         \subfloat[]{%
           \includegraphics[width=.45\textwidth]{./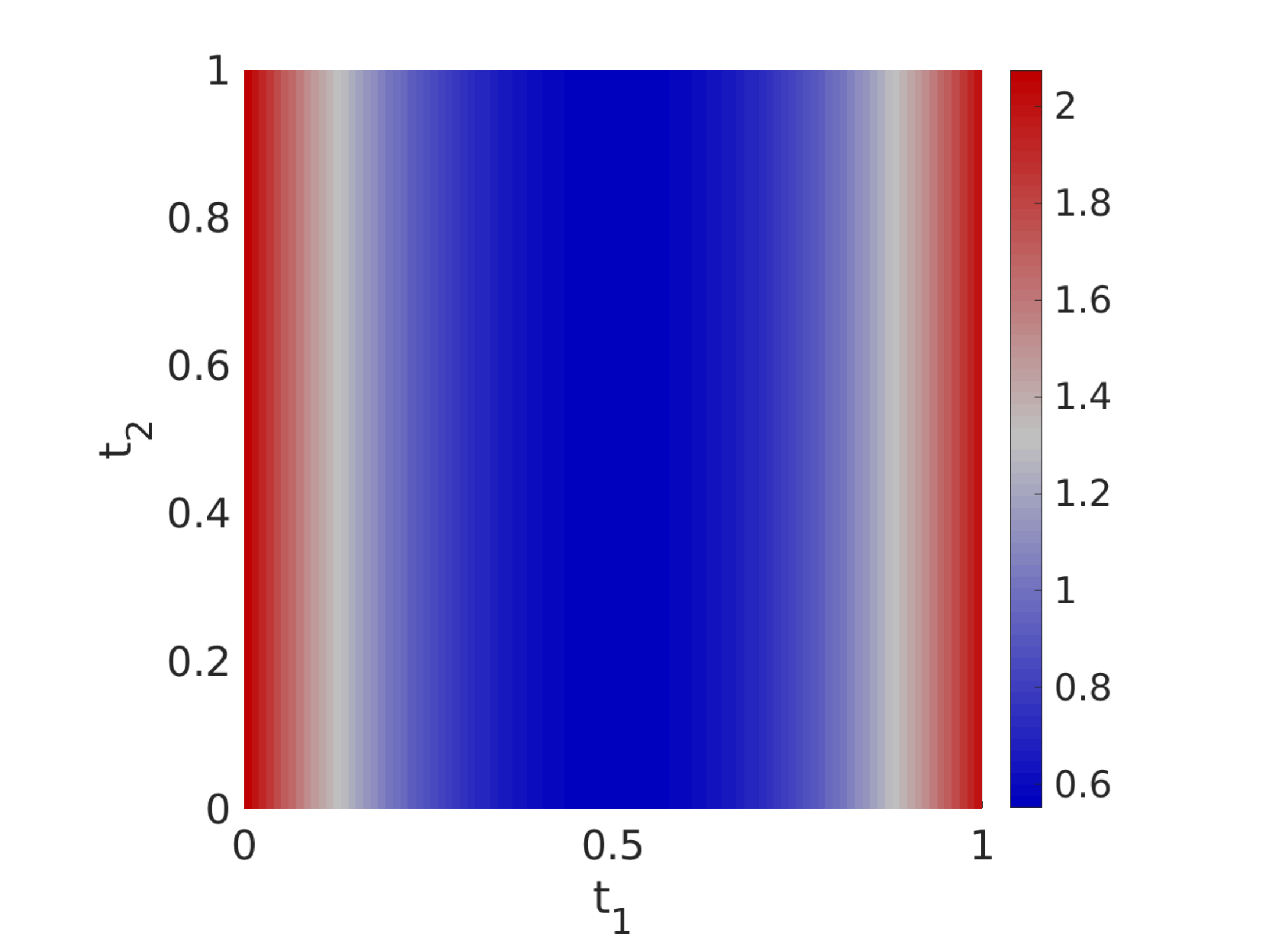}
         } 
         \subfloat[]{%
           \includegraphics[width=.45\textwidth]{./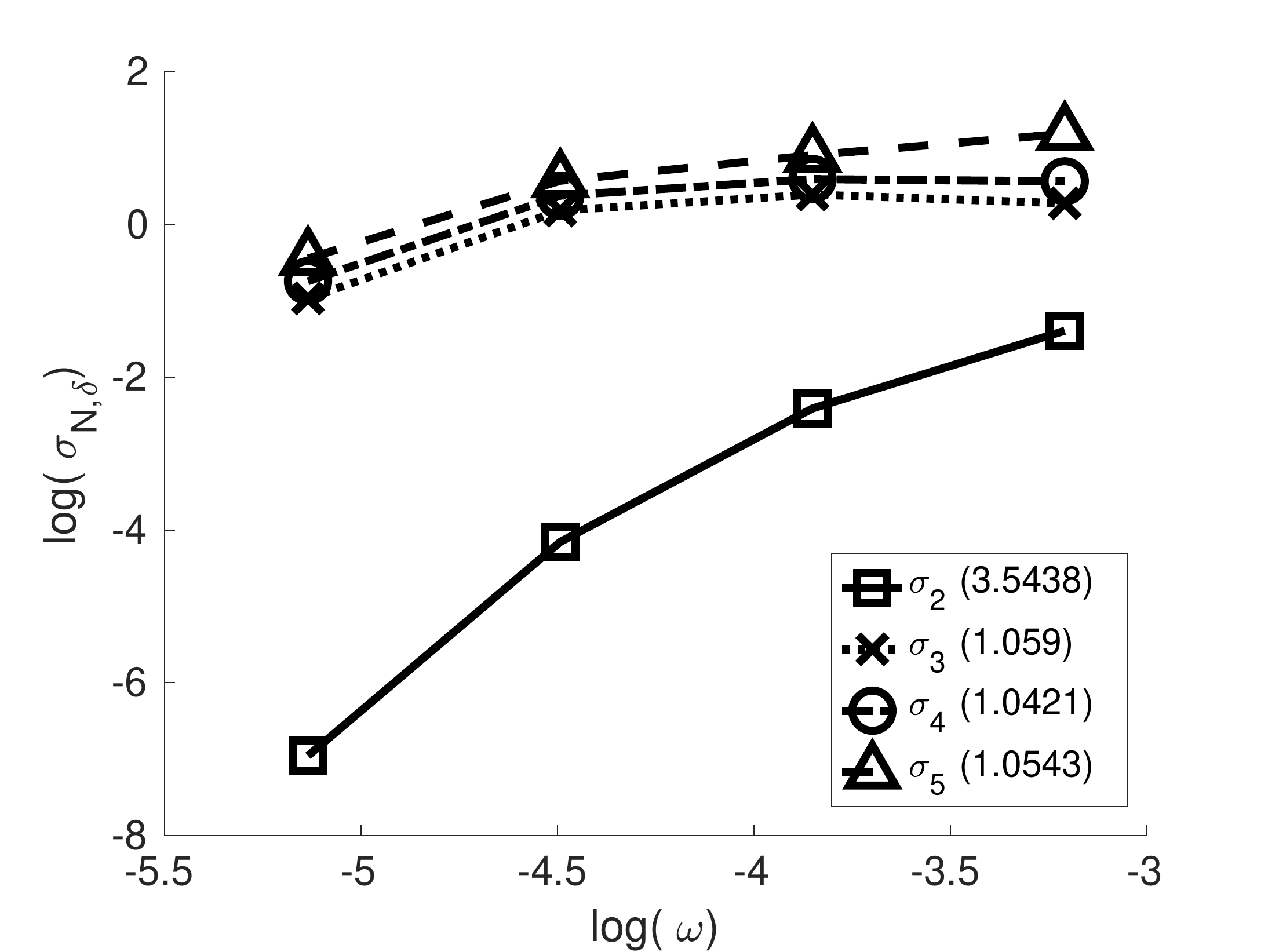}
         } \\
         
         \subfloat[]{%
           \includegraphics[width=.45\textwidth]{./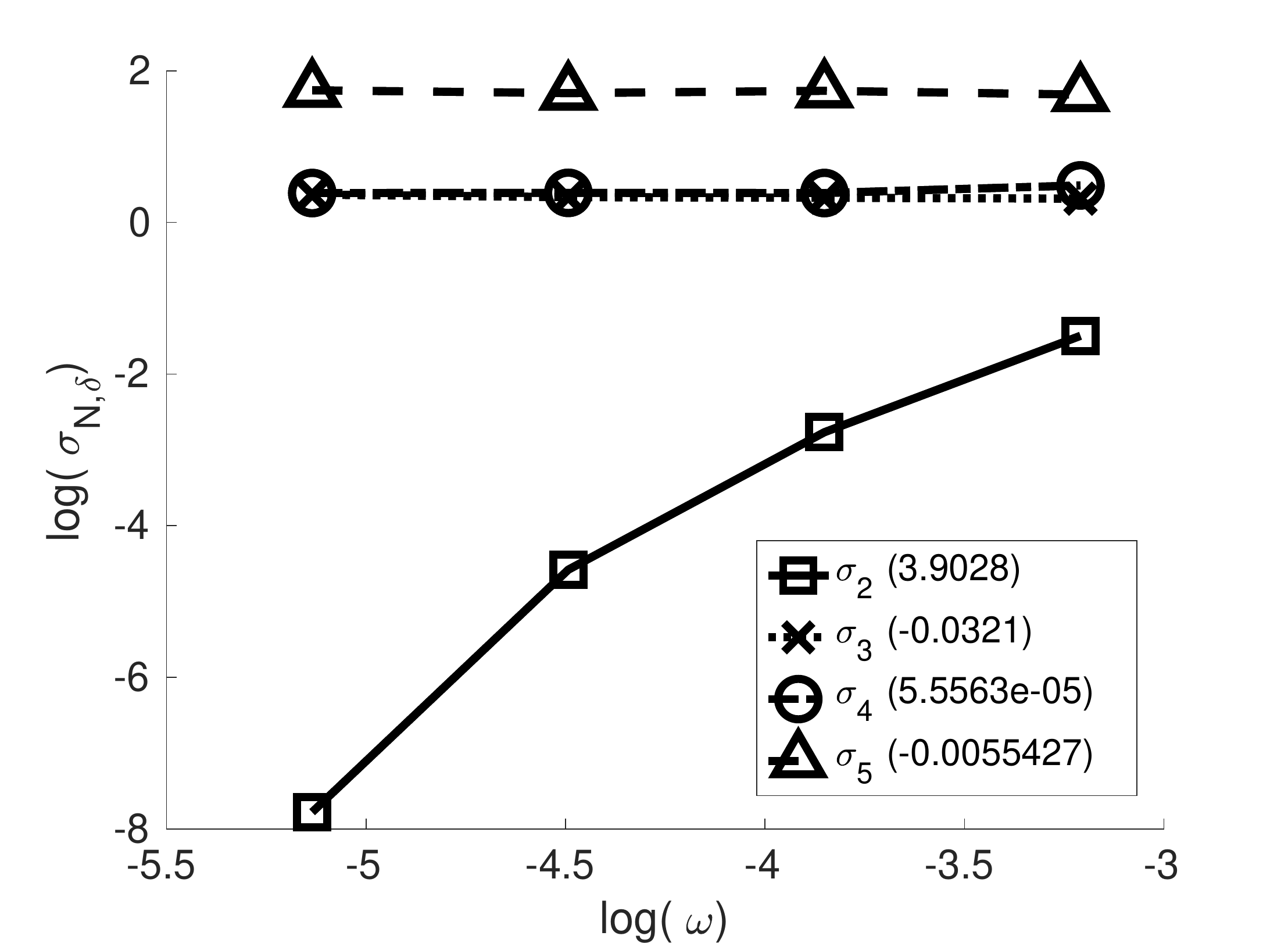}
         }
         \subfloat[]{%
           \includegraphics[width=.45\textwidth]{./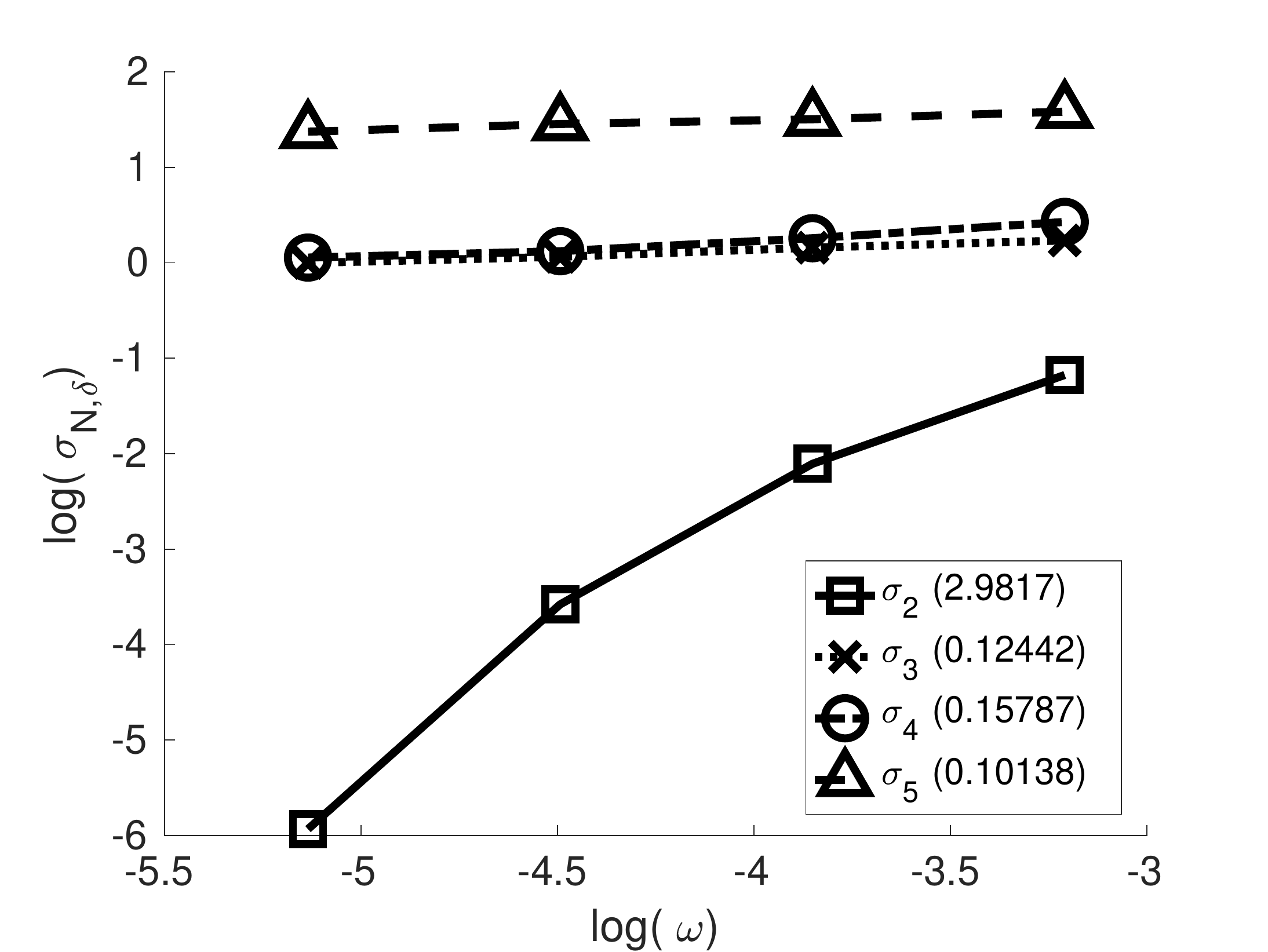}
         }

         \caption{(a) A plot of the mixture density \eqref{exp-mixture-model} for $\omega = 0.25$.
           (b) The first four non-trivial eigenvalues of the discrete graph Laplacian $L_N$
           with parameters $(p,q,r) = (1/2, 2, 1/2)$ as a function of
           the mean parameter $\omega$. Values reported in brackets in the legends
           indicate numerical slope of the lines fitted to the data. (c)  Showing the
           first four non-trivial eigenvalues of $L_N$ with $(p,q,r) = (1, 2, 1)$. (d)
           Showing the same results for parameters $(p,q,r) = (1, 3/2, 1)$. 
         }
         \label{fig:exp-mixture-example}
    \end{figure}

\end{example}
}

\subsection{Outline}
\label{ssec:OUT}

\changed{
The remainder of the paper is organized as follows.
Section~\ref{sec:set} sets up the necessary framework and notation. Section~\ref{sec:sa} contains the precise statements 
of the key results Theorems \ref{thm:L-eps-low-eigenvals} and \ref{thm:geometry-of-fiedler-vector},
relating to  Main Result~\ref{main-result}; 
proofs of these results are postponed to Section~\ref{sec:sap}. 
Numerical results illustrating, and extending  the Main Result~\ref{main-result} and leading to the 
  Conjecture~\ref{conj:SG}
  are presented in Section \ref{sec:num}. Section~\ref{app:AA} 
contains the informal derivation 
of \eqref{general-weighted-Laplacian} 
from the  parameterized family of graph Laplacians \eqref{defLN-intro},
and presents the formal calculations and numerical experiments that were summarized under
Contribution 3. 
Our conclusions are given in Section \ref{sec:con}.
Appendices \ref{sec:diffmaps}, \ref{app:AC}, \ref{app:AB}, and
\ref{sec:proof-isoperim-ineqa} contain, respectively:
connections between the diffusion maps and $\mcl L$;
discussion of function spaces;
the min-max principle; and a weighted
Cheeger inequality.
}




\section{The Set-Up}
\label{sec:set}

In this section we set-up the functional analytic
framework for our theory and numerics. Subsection
\ref{ssec:prelim} describes the notation  and introduces weighted Laplacian
operators in this framework, and Subsection \ref{ssec:pod}
is devoted to our precise formulation of binary clustered
data in the perfect or nearly separated clustered data setting.

\subsection{Preliminaries}
\label{ssec:prelim}

For an open subset $\Omega \subseteq \Z \subset \mbb{R}^d$ with $C^{1,1}$ boundary, consider a  probability density function $\varrho$ satisfying
\begin{equation}
  \label{conditions-on-rho}
\varrho \in C^\infty(\bar{\Omega}), \qquad
  \int_\Omega \varrho(x) dx = 1, \qquad 
  \varrho^-< \varrho(x) < \varrho^+, \qquad \forall x \in \bar{\Omega}, 
\end{equation}
with constants $\varrho^-, \varrho^+ >0$. We also denote  the measure of subsets  $\Omega'$
of $\Omega$
with respect to $\varrho$ with the following notation
\begin{equation}\label{measure-of-sets}
|\Omega'|_\varrho:=\int_{\Omega'} \varrho(x)dx.
\end{equation}

Given a continuous probability density function $\varrho$ as above
with full support on $\Omega \subseteq \Z$
we define the weighted space
 \begin{equation}
  \label{L2-rho-definition}
  L^2(\Omega, \varrho^s) := \left\{ u :  \int_\Omega |u(x)|^2 \varrho(x)^s dx< +\infty  \right\},  
\end{equation}
with inner product 
\begin{equation}
 \label{L2-rho-inner-product}
  \langle u,v \rangle_{\varrho^s} := \int_\Omega u(x) v(x) \varrho^s(x) dx,
\end{equation}
for any $s \in \R.$
This reduces to the standard $L^2(\Omega)$ space 
with norm $\| \cdot \|_{L^2(\Omega)}$ and inner product
$\langle \cdot, \cdot \rangle$ if
  $\varrho = 1$ on $\Omega$.
Furthermore, for $\varrho>0$ a.e. on $\Omega$ and parameters $(p, q, r)\in \mbb R^3$, we define the weighted Sobolev spaces
\begin{equation*}
    H^1(\Omega, \varrho):=\left\{ \frac{u}{\varrho^r} \in L^2(\Omega, \varrho^{p+r}) :
      \| u\|_{H^1(\Omega, \varrho)}:= \Langle u, u \Rangle_V < +\infty
    \right\},
  \end{equation*}
  where the $\langle \cdot, \cdot \rangle_V$ inner product is defined as
\begin{equation}\label{V1-inner-prod}
  \Langle u ,v \Rangle_{V}  := \Langle \nabla \left( \frac{ u}{\varrho^r} \right)
  ,  \nabla \left(  \frac{ v}{\varrho^r} \right)
  \Rangle_{\varrho^q} 
  +  \Langle  \frac{u}{\varrho^r}, \frac{v}{\varrho^r}  \Rangle_{\varrho^{p+r}},
\end{equation}
which is the natural inner product induced by the bilinear form
$\Langle (\L +  \frac{1}{\varrho^{r}}) u,  \frac{v}{\varrho^r}\Rangle_{\varrho^{p+r}}$. We then introduce the following subspaces of
$L^2(\Omega, \varrho^{p+r})$ and $H^1(\Omega, \varrho)$:
\begin{equation*}
  \begin{aligned}
    V^0(\Omega, \varrho)&:= \left\{ \frac{u}{\varrho^r} \in L^2(\Omega, \varrho^{p+r})
      : \left\langle \frac{u}{\varrho^r}, 1 \right\rangle_{\varrho^{p+r}}
      = \left\langle u, \varrho^p \right\rangle= 0 \right\}, \\
    V^1(\Omega, \varrho)&:= \left\{u \in H^1(\Omega, \varrho) :
      \Langle u, \varrho^r \Rangle_{V} = 0 
  \right\} \subset V^0(\Omega, \varrho)\,.
\end{aligned}
\end{equation*}
We use $H^1(\Omega)$ and $V^1(\Omega)$ to denote
the standard $H^1$ space, and its subspace excluding constants,
given by $H^1(\Omega, \mbf{1}_{\Omega})$ and $V^1(\Omega, \mbf{1}_{\Omega})$. The
former coincides with the usual Sobolev spaces while the latter coincides with 
the subspace of $H^1(\Omega)$ consisting of mean zero functions.


In this work, we focus on the class of weighted Laplacian operators 
defined by equation \eqref{general-weighted-Laplacian}, 
for an appropriate density $\varrho$ and 
parameters $(p,q,r)\in \R^3 $. We generally suppress the dependence of $\mcl L$ on
$\varrho$ and the
constants $p,q,r$ for convenient notation and make the choice of these parameters
explicit in our statements. As we show next, the operator $\mcl L$
is positive semi-definite and since the first eigenpair 
$(\sigma_1,\varphi_1)=(0,\varrho^r \mbf{1}_\Omega)$ is known it is convenient 
to work orthogonal to $\varphi_1$ so as to make the operator strictly
positive; in other words, we consider the operator $\mcl L$ on the 
space $V^1(\Omega,\varrho)$.

\begin{lemma}\label{L-rho-is-self-adjoint-symmetric}
  If $\varrho$ satisfies \eqref{conditions-on-rho}, then the
  bilinear form
  \begin{equation}
\label{eq:BLF}
    \Langle \mcl{L} u, v \Rangle_{\varrho^{p-r}}=  \Langle \varrho^q \nabla \left( \frac{u}{\varrho^r} \right), \nabla \left( \frac{v}{\varrho^r} \right) \Rangle,
  \end{equation}
  is symmetric and positive definite on $V^1(\Omega, \varrho) \times V^1(\Omega, \varrho)$. In particular,
  the  operator
  \begin{equation*}
  \mcl L: V^1(\Omega, \varrho) \mapsto V^0(\Omega, \varrho),
\end{equation*}
defined in
the weak sense, is self-adjoint and strictly positive definite and
  the inverse operator
  \begin{equation*}
  \mcl L^{-1}: V^0(\Omega, \varrho) \mapsto V^0(\Omega, \varrho),
\end{equation*}
  exists and is compact. 
\end{lemma}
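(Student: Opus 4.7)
The plan is to first establish the bilinear form identity via integration by parts, from which symmetry follows at once, and then deduce strict positive definiteness by exploiting the orthogonality constraint defining $V^1$. Starting from the strong form, for $u,v \in V^1(\Omega,\varrho)$ smooth enough,
\begin{equation*}
  \Langle \mcl{L} u, v \Rangle_{\varrho^{p-r}}
  = -\int_\Omega \mathrm{div}\!\left(\varrho^q \nabla(u/\varrho^r)\right) \frac{v}{\varrho^r}\,dx
  = \int_\Omega \varrho^q \nabla(u/\varrho^r) \cdot \nabla(v/\varrho^r)\,dx,
\end{equation*}
where the boundary term vanishes by the Neumann-type condition $\varrho^q \partial_n (u/\varrho^r)=0$ on $\partial\Z$. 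A standard density argument then extends \eqref{eq:BLF} to all of $V^1(\Omega,\varrho)\times V^1(\Omega,\varrho)$. The right-hand side is manifestly symmetric in $(u,v)$, so symmetry of the bilinear form is immediate.

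For strict positive definiteness, assume $\Langle \mcl{L}u,u\Rangle_{\varrho^{p-r}}=0$. Since $\varrho^q \geq (\varrho^-)^q>0$ by \eqref{conditions-on-rho}, this forces $\nabla(u/\varrho^r)=0$ a.e., hence $u/\varrho^r\equiv c$ for some constant $c$. The inclusion $V^1\subset V^0$ then imposes $\langle u/\varrho^r, 1\rangle_{\varrho^{p+r}} = c\int_\Omega \varrho^{p+r}\,dx = 0$, forcing $c=0$ and hence $u=0$. The same computation with $v=\varrho^r$ shows $\mcl{L}u \in V^0(\Omega,\varrho)$, since $\Langle \mcl{L}u, \varrho^r\Rangle_{\varrho^{p-r}} = \int_\Omega \varrho^q\nabla(u/\varrho^r)\cdot \nabla(1)\,dx = 0$, confirming the stated mapping property.

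Self-adjointness of $\mcl{L}:V^1(\Omega,\varrho)\to V^0(\Omega,\varrho)$, with respect to the natural $\langle\cdot,\cdot\rangle_{\varrho^{p-r}}$ pairing on $V^0(\Omega,\varrho)$, is then a direct consequence of the symmetry of \eqref{eq:BLF}. For the existence and compactness of $\mcl{L}^{-1}$, I would invoke Lax--Milgram: coercivity of the bilinear form on $V^1(\Omega,\varrho)$ follows from $\varrho^q\geq(\varrho^-)^q$ together with a weighted Poincar\'e inequality on $V^1(\Omega,\varrho)$, which in turn reduces to the standard Poincar\'e inequality for mean-zero functions in $H^1(\Omega)$ because the bounds \eqref{conditions-on-rho} make the weighted norms $\|\cdot\|_{V}$ equivalent to the unweighted $H^1$ norm on the subspace defined by the orthogonality constraint. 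Given $f\in V^0(\Omega,\varrho)$, Lax--Milgram yields a unique $u\in V^1(\Omega,\varrho)$ solving the weak formulation, so $\mcl{L}^{-1}:V^0(\Omega,\varrho)\to V^1(\Omega,\varrho)$ is well defined and bounded. Composing with the Rellich--Kondrachov compact embedding $H^1(\Omega)\hookrightarrow L^2(\Omega)$ (again transferred to the weighted setting via \eqref{conditions-on-rho}) gives compactness of $\mcl{L}^{-1}:V^0(\Omega,\varrho)\to V^0(\Omega,\varrho)$.

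The main technical obstacle is securing the weighted Poincar\'e inequality used for coercivity: one must verify that the orthogonality constraint $\langle u,\varrho^p\rangle=0$ defining $V^0$ is strong enough, after the change of unknown $w=u/\varrho^r$, to rule out constants in a mean-zero-like way compatible with the weighted inner product. Once this is in place the remainder reduces to routine applications of Lax--Milgram and Rellich--Kondrachov, both of which transfer cleanly from the standard to the weighted setting because $\varrho^\pm$ bound $\varrho$ uniformly above and below.
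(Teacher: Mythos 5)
Your proposal is correct and follows essentially the same route as the paper: direct verification of symmetry and positivity via the weak form, Lax--Milgram for invertibility, and compact embedding of $H^1(\Omega,\varrho)$ in $L^2(\Omega,\varrho^{p-r})$ (which the paper packages as Proposition~\ref{V1-compact-embedding}, itself just Rellich--Kondrachov transferred via the two-sided bounds on $\varrho$). One small caveat: the lower bound $\varrho^q\ge(\varrho^-)^q$ should read $\varrho^q\ge\min\{(\varrho^-)^q,(\varrho^+)^q\}$ since $q$ may be negative, and the boundary in your integration by parts should be $\partial\Omega$ rather than $\partial\Z$; neither affects the argument.
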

\begin{proof}
    The fact that $\mcl L$ is self-adjoint and strictly positive on $V^1(\Omega, \varrho)$
    can be verified directly.
   The fact that $\mcl L^{-1}$ is well-defined follows from the
  Lax-Milgram Lemma \cite[Lem.~2.32]{mclean2000strongly}. Compactness 
  follows from Proposition~\ref{V1-compact-embedding}.
\end{proof}
 Following the spectral theorem \cite[Thms.~D.6, D.7]{Evans} we then have:
\begin{proposition}\label{L-has-discrete-spectrum}
  Let $(p,q,r)\in\R^3$, and suppose $\varrho$  satisfies \eqref{conditions-on-rho}. Then $\mcl L: V^1(\Omega, \varrho) \mapsto V^0(\Omega, \varrho)$ has a
discrete spectrum with eigenvalues $0\le\sigma_2\le \sigma_3\le \dots$ 
  and eigenfunctions  $\{\varphi_j\}_{j \ge 2} \in V^1(\Omega, \varrho)$  
  that form an orthogonal basis in both
$V^1(\Omega, \varrho)$ and $V^0(\Omega, \varrho)$. 
Furthermore, we may extend $\mcl L$ to the operator 
$\mcl L: H^1(\Omega, \varrho) \mapsto L^2(\Omega, \varrho^{p-r})$
and include the eigenpair 
$(\sigma_1,\varphi_1)=(0, | \Omega |_{\varrho^{p+r}}^{1/2}\varrho^r \mbf{1}_\Omega)$.
\end{proposition}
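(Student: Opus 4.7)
The plan is to derive the conclusion from the spectral theorem for compact self-adjoint operators applied to $\mcl L^{-1}$. By Lemma \ref{L-rho-is-self-adjoint-symmetric} the inverse operator $\mcl L^{-1} : V^0(\Omega,\varrho) \to V^0(\Omega,\varrho)$ already exists, is strictly positive, and is compact. First I would verify that $\mcl L^{-1}$ is self-adjoint in the $V^0$ inner product, which we take to be $\Langle\cdot,\cdot\Rangle_{\varrho^{p-r}}$ (equivalently, the $L^2(\Omega,\varrho^{p+r})$ pairing applied to $u/\varrho^r$ and $v/\varrho^r$). Symmetry of the bilinear form \eqref{eq:BLF} shows $\mcl L$ is symmetric in this inner product on $V^1(\Omega,\varrho)$, which transfers directly to $\mcl L^{-1}$ on $V^0(\Omega,\varrho)$. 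The Hilbert--Schmidt spectral theorem \cite[Thms.~D.6, D.7]{Evans} then produces a sequence of real eigenvalues $\mu_j \to 0$ with associated eigenfunctions $\{\varphi_j\}_{j\ge 2}$ forming an orthonormal basis of $V^0(\Omega,\varrho)$. Strict positivity of $\mcl L$ from Lemma \ref{L-rho-is-self-adjoint-symmetric} forces $\mu_j > 0$, so setting $\sigma_j := 1/\mu_j$ yields an increasing sequence of positive eigenvalues for $\mcl L$ with the same eigenfunctions, and each $\varphi_j = \sigma_j \mcl L^{-1}\varphi_j$ lies in $V^1(\Omega,\varrho)$ since it belongs to the range of $\mcl L^{-1}$.

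Next I would establish orthogonality in the $V^1$ inner product via the identity
\begin{equation*}
\Langle \varphi_i, \varphi_j \Rangle_V = \Langle \mcl L \varphi_i, \varphi_j \Rangle_{\varrho^{p-r}} + \Langle \varphi_i, \varphi_j \Rangle_{\varrho^{p-r}} = (\sigma_i + 1)\,\Langle \varphi_i, \varphi_j \Rangle_{\varrho^{p-r}},
\end{equation*}
which follows from \eqref{eq:BLF} together with the definition \eqref{V1-inner-prod} after rewriting $\Langle\cdot,\cdot\Rangle_{\varrho^{p-r}}$ as $\Langle u/\varrho^r, v/\varrho^r \Rangle_{\varrho^{p+r}}$. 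The right-hand side vanishes for $i\neq j$ by the $V^0$-orthogonality already established, and rescaling yields an orthogonal system in $V^1(\Omega,\varrho)$. Completeness in $V^1$ follows along the same lines: any $u\in V^1(\Omega,\varrho)$ with $\Langle u,\varphi_j\Rangle_V = 0$ for all $j\ge 2$ satisfies $\Langle u,\varphi_j\Rangle_{\varrho^{p-r}} = 0$, and completeness of $\{\varphi_j\}$ in $V^0(\Omega,\varrho)$ then forces $u=0$.

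Finally, to extend $\mcl L$ to $H^1(\Omega,\varrho) \to L^2(\Omega,\varrho^{p-r})$ and include the first eigenpair, I would substitute $u = c\,\varrho^r$ directly into \eqref{general-weighted-Laplacian}: since $\nabla(u/\varrho^r) = 0$, both the equation $\mcl L(c\,\varrho^r) = 0$ and the Neumann boundary condition hold trivially. The orthogonal decomposition $H^1(\Omega,\varrho) = \mathrm{span}(\varrho^r) \oplus V^1(\Omega,\varrho)$ with respect to $\Langle\cdot,\cdot\Rangle_V$ is immediate from the defining condition $\Langle u,\varrho^r\Rangle_V = 0$ for $u\in V^1(\Omega,\varrho)$, so adjoining $\varphi_1$ (normalized by the convention stated) to $\{\varphi_j\}_{j\ge 2}$ produces the desired orthogonal basis of the enlarged space.

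I do not anticipate a serious obstacle; the only delicate point is to keep the Hilbert space structure straight, identifying the $V^0$ inner product with the density-weighted $L^2(\Omega,\varrho^{p+r})$ pairing through the change of variables $u \mapsto u/\varrho^r$, so that symmetry of the bilinear form \eqref{eq:BLF} is indeed self-adjointness in the correct Hilbert space. Everything else is a routine consequence of the spectral theorem and of the orthogonal splitting of $H^1(\Omega,\varrho)$ induced by $\varphi_1$.
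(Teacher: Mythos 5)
Your proof is correct and follows the same route the paper intends: Lemma~\ref{L-rho-is-self-adjoint-symmetric} already furnishes $\mcl L^{-1}$ as a compact, self-adjoint, strictly positive operator on $V^0(\Omega,\varrho)$, and the paper's proof is precisely a one-line invocation of the spectral theorem \cite[Thms.~D.6, D.7]{Evans}; you simply spell out the details (the Hilbert-Schmidt diagonalization of $\mcl L^{-1}$, the identity $\Langle\varphi_i,\varphi_j\Rangle_V = (\sigma_i+1)\Langle\varphi_i,\varphi_j\Rangle_{\varrho^{p-r}}$ giving $V^1$-orthogonality, and the trivial kernel $u = c\varrho^r$ for the extension to $H^1$) that the paper leaves implicit.
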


\begin{remark}
\label{rem:sde}
Writing $u=\varrho^r u'$ and $v=\varrho^r v'$ we note that the identity
\eqref{eq:BLF} may be written as
\begin{equation*}
    \Langle \varrho^{p-q}\mcl{L}(\varrho^r u'), v' \Rangle_{\varrho^{q}}=  \Langle  \nabla u', \nabla v' \Rangle_{\varrho^q}.
  \end{equation*}
From this we see \cite{pavliotis2014stochastic} that the operator 
$$\mcl{G}:=-\varrho^{p-q} \circ \mcl{L} \circ \varrho^r$$
is the generator of the reversible diffusion process
$$dX_t=-\nabla  \Psi(X_t)dt+\sqrt{2} dB,$$
where $\Psi = -\log (\varrho^q)$, and $B$ denotes a $d$ dimensional Brownian motion. This diffusion process has invariant
measure proportional to $\exp(-\Psi)=\varrho^q$. This observation thus
establishes a connection between the operator $\mcl{L}$ and diffusion processes
which, when $q>0$, concentrate in regions where $\varrho$ is large and sampling density of the data
is high. For a  more detailed discussion on the connections between diffusion maps
and the operators weighted elliptic operators $\mcl L$, see Appendix~\ref{sec:diffmaps}.
\end{remark}

\subsection{Perturbations Of Densities}
\label{ssec:pod}

We now consider a specific setting of a  density $\vrhoo$ that is supported on a
strict subset $\Dp \subset \Z$, consisting of two disjoint sets $\Z^+$ and $\Z^-$. We
then consider a sequence of
probability densities  $\varrho_\eps$ supported on the whole set $\Z$ that
approximate $\vrhoo$. 
In the next two  subsections we outline
our assumptions regarding $\Dp$, $\vrhoo$ and $\varrho_\eps$ and introduce 
weighted Laplacian operators using these densities.

\subsubsection{Assumptions On The Clusters And  Densities}

We begin by introducing a set of assumptions on the domains $\Z, \Dp$,
the density $\vrhoo$, and the approximating sequence of densities $\varrho_\eps$.

\begin{assumption}\label{Assumptions-on-D}
The sets $\Z,\Dp=\Z^+\cup \Z^-\subset \R^d$ satisfy the following:
\begin{enumerate}[(a)]
\item $\Z$  is open, bounded and connected. 
\item $\Dp$ is a subset of $\Z$ consisting of 
two open connected subsets $\Z^+$ and $\Z^-$.
\item $ \Z^\pm$ are disjoint from one another and from 
 $\partial \Z$, the boundary of $\Z$:
$\exists l,l' > 0$ so that
$$
\dist(\Z^+, \Z^-) > l >0, \qquad \text{and} \qquad \dist(\Z^\pm, \partial{\Z}) > l' > 0.
$$
\item $\partial \Z$ and $\partial \Dp$ are at least $C^{1,1}$. 
\end{enumerate}
\end{assumption}

The assumption that $\Z^\pm$ are well separated from $\partial \Z$ in Assumption~\ref{Assumptions-on-D}(c) is not crucial but allows for more convenient presentation of our
results. 
 We think of $\Z^\pm$ as ``clusters'' in the continuum limit.

\begin{assumption}\label{Assumptions-on-rho}
  The density $\vrhoo$ satisfies the following: 
  \begin{enumerate}[(a)]

  \item ({\it Supported on clusters})  $\vrhoo = 0$ on $\Z \setminus \bar{\Dp}$.
     
    \item  ({\it Probability density function})
      $
      \int_{\Dp} \vrhoo(x) dx = 1.
      $
\item ({\it Uniformly bounded within clusters}) $\exists \varrho^\pm >0$ so that 
      $
      \varrho^- \le \vrhoo(x) \le \varrho^+$, for all $ x \in \bar{\Dp}$.      
      \item 
        ({\it Smoothness})
      $\vrhoo \in C^\infty(\bar{\Dp}).$
    \item ({\it Equal sized clusters}) Given  $p,r \ge 0$,
      the density $\vrhoo^{p+r}$ assigns equal mass to $\Z^+$ and $\Z^-$, i.e., 
        $$
        \int_{\Z^+} \vrhoo^{p+r}(x) dx = \int_{\Z^-}\vrhoo^{p+r}(x)dx \,.
        $$
  \end{enumerate}
\end{assumption}
We highlight that Assumption~\ref{Assumptions-on-rho}(b) and (e)
are not crucial to our analysis. Condition (b) is natural  when considering
limits of  graph Laplacian operators defined from data distributed
according to a measure with density $\varrho_0$, but
all of our analysis can be generalized to integrable $\varrho_0$
simply by observing that the eigenfunctions of $\mcl{L}$ are invariant
under scaling of $\varrho_0$ by a constant $\lambda$, whilst the  eigenvalues
scale by $\lambda^{q-p-r}.$
Condition (e) allows for a more convenient presentation with less 
cumbersome notation but can be removed at the price of a lengthier
exposition; see Remark~\ref{what-if-rho-didnt-assign-equal-weight} below.

Given a density $\vrhoo$  satisfying Assumption~\ref{Assumptions-on-rho},
we consider a sequence of densities $\varrho_\eps$ with full support on $\bD$ that  converge to
$\vrhoo$ as $\epsilon \to 0$ in a suitable sense. We have in mind densities $\varrho_\epsilon$ that become more and more concentrated in $\Dp$ as $\epsilon \to 0$. 
  In what follows, we define
\begin{equation}\label{D-eps-definition}
\Omega_\delta := \{ x : \dist(x, \Omega) \le \delta \},
\end{equation}
for any set $\Omega\subseteq \bD$ and denote the Minkowski (exterior) boundary
measure of $\Omega$ as
\begin{equation*}
  | \partial \Omega | := \liminf_{\delta \downarrow 0} \frac{1}{\delta} \left[ | \Omega_\delta| -
  |\Omega| \right].
\end{equation*}
It follows that when $\eps$ is sufficiently small, $\exists \theta >0$ so that 
 \begin{equation}\label{eqn:Depssize}
  |\Omega_\eps\setminus \Omega| \le \theta \eps | \partial \Omega |.
 \end{equation}

\begin{assumption}\label{Assumptions-on-rho-eps-3}
Let $0<L:= \min \dist(\Z^\pm, \partial \Z)$. Then there is 
$\eps_0 \in (0,L/4)$ and constants $K_1, K_2>0$ such that, 
for all $\epsilon \in (0, \eps_0)$, 
the densities $\varrho_\epsilon$ satisfy:
\begin{enumerate}[(a)]
\item ({\it Full support}) $ \text{supp} \varrho_\eps = \bD$.
    \item ({\it Probability density function}) $\int_{\Z} \varrho_\eps(x) dx = 1.$
    \item ({\it Approximation within clusters})$ \exists K_1>0$ so that
      $\| \varrho_\eps - \vrhoo\|_{C^\infty(\bar{\Z}')} \le K_1 \eps$ 
as $\eps\downarrow 0$.
\item ({\it Vanishing outside clusters}) $ \exists K_2>0$ so that
$\varrho_\eps (x) =  K_2 \eps$ for  $ x \in \Z \setminus \Dp_\eps.$
\item ({\it Controlled derivatives}) $\exists K_3>0$ so that 
  \begin{equation*}
    | \nabla \varrho_\epsilon (x) |\le K_3 \epsilon^{-1}, \qquad \forall x \in \Dp_\epsilon \setminus \Dp.
\end{equation*}

\end{enumerate}
\end{assumption}
Once again Assumption~\ref{Assumptions-on-rho-eps-3}(b) is not crucial to our analysis
but is needed to make sure the operator $\mcl L_\eps$ defined in \eqref{Leps-definition}
is the continuum limit of a
graph Laplacian.
As a consequence of Assumptions~\ref{Assumptions-on-rho}(c) and \ref{Assumptions-on-rho-eps-3}(c)-(e), it follows that $\varrho_\eps$ is uniformly bounded above and below inside $\Dp$:
there exist constants $\varrho_{\eps_0}^\pm >0$ so that 
      \begin{equation}\label{uniform-rho-bound}
        \varrho_{\eps_0}^- \le \varrho_\eps(x) \le \varrho_{\eps_0}^+,
        \qquad \forall x \in \bar{\Z}' \text{ and } \forall \eps\in(0,\eps_0)\,.
      \end{equation}
    Note that the upper bound holds on all of $\Z$ as well, whereas the lower bound clearly does not in view of Assumption~\ref{Assumptions-on-rho-eps-3}(d).

    \begin{remark}\label{rem:rho-eps-assumptions}
\changed{The above set of assumptions on $\varrho_\epsilon$ may seem very specific; 
however, the analysis we present is robust to changes in the exact construction
of the perturbed densities so long as the condition that $\varrho_\eps = K_2\epsilon$
  away from the clusters is satisfied. 
For example, given a density $\vrhoo$ we  can always construct a density $\varrho_\eps$ satisfying our assumptions by the procedure outlined in the following example.}
\end{remark}

\begin{example}\label{ex:conv-density-construction}
Consider the
standard mollifier
\begin{equation}
  \label{standard-mollifier}
  g(x) := \left\{
    \begin{aligned}
      &C^{-1} \exp\left( - \frac{1}{1 - |x|^2} \right) \quad &&|x| \le 1, \\
      & 0 \quad &&|x| > 1.   
    \end{aligned}
    \right., \qquad g_\epsilon(x) :=\frac{1}{\epsilon^d} g\left(\frac{x}{\eps}\right), 
  \end{equation}
  where $C = \int_{|x| \le 1} \exp\left( - \frac{1}{1 - |x|^2}\right) dx$ is a normalizing
  constant. Now, given $\epsilon >0$ and the density $\varrho_0$ (extended
  by zero to all of $\Z$)  define
  \begin{equation}\label{rho-eps-explicit-construction}
    \varrho_\epsilon(x) := \frac{1}{K_{\epsilon}} \Big(\epsilon + g_\epsilon \ast \vrhoo (x)\Big), \qquad K_\epsilon := \int_{\Z} \Big( \epsilon + g_\epsilon \ast
    \vrhoo(x) \Big) dx.
  \end{equation}
  One can directly verify that the above construction of $\varrho_\epsilon$ satisfies
  Assumption~\ref{Assumptions-on-rho-eps-3}.
\end{example}

\subsubsection{Assumptions On The Weighted Laplacian Operators}\label{sec:Lops}
With the densities $\vrhoo$ and $\varrho_\eps$ identified we then consider the  operators
$\mcl L_0$ and $\mcl L_\eps$ in the same form as  \eqref{general-weighted-Laplacian} as follows: 
\begin{equation}\label{L0-definition}
\left\{
    \begin{aligned}
      &\mcl L_0 u  := -\frac{1}{\vrhoo^p} {\rm div} \left( \vrhoo^q \nabla \left( \frac{u}{\vrhoo^r} \right) \right),  &&\text{ in } \Dp\\
      &\vrhoo^q \frac{\partial}{\partial n} \left( \frac{u}{\vrhoo^r} \right) = 0, 
      &&\text{ on } \partial \Dp.
  \end{aligned}
\right.
\end{equation}
Similarly for $\varrho_\eps$,
\begin{equation}\label{Leps-definition}
\left\{
    \begin{aligned}
      &\mcl L_\eps u  := -\frac{1}{\varrho_\eps^p} {\rm div} \left( \varrho_\eps^q
        \nabla \left( \frac{u}{\varrho_\eps^r} \right) \right),  &&\text{ in } \Z\\
      &\varrho_\eps^q \frac{\partial}{\partial n} \left( \frac{u}{\varrho_\eps^r} \right) = 0, 
      &&\text{ on } \partial \Z.
  \end{aligned}
\right.
\end{equation}
By Lemma~\ref{L-rho-is-self-adjoint-symmetric} and Proposition~\ref{L-has-discrete-spectrum}, the operators
\begin{equation*}
\mcl L_0: H^1(\Dp, \varrho_0) \mapsto
L^2(\Dp, \varrho_0^{p-r})
\qquad \text{ and } \qquad 
\mcl L_\eps: H^1(\Z, \varrho_\eps) \mapsto
L^2(\Z, \varrho_\eps^{p-r})
\end{equation*}
are self-adjoint and positive semi-definite. Furthermore,
these operators have positive, real, discrete eigenvalues after the first eigenvalue, which is zero.
For $j =1,2,3,...$
let $\sigma_{j,0}$ and  $\sigma_{j,\epsilon}$ denote the eigenvalues of $\mcl L_0$ 
and $\mcl L_\eps$ respectively (in increasing order  and accounting for repetitions)
and let $\varphi_{j,0}$ and  $\varphi_{j,\epsilon}$ denote the corresponding
eigenfunctions.  Recall that
$\varphi_{1,0}= | \Z'|_{\varrho_0^{p+r}}^{-1/2} \varrho_0^r\mbf{1}_{\Z'}$
and $\varphi_{1,\eps}=  | \Z|_{\varrho_\eps^{p+r}}^{-1/2}\varrho_\eps^r\mbf{1}_{\Z}$, both with corresponding zero
eigenvalues. Since we are interested in the eigenpairs for $j\ge 2$
it is more convenient to work orthogonal to the first eigenfunctions from now on, that is, to consider the spaces $V^1(\Z', \vrhoo)$ and $V^1(\Z, \varrho_\eps)$ respectively.
Thus, we consider the pairs $\{\sigma_{j,0}, \varphi_{j,0}\}$ and $\{\sigma_{j,\epsilon}, \varphi_{j,\epsilon}\}$ for $j\ge 2$ that
solve the  eigenvalue problems
\begin{align}
  &\Langle \vrhoo^{q} \nabla \left( \frac{\varphi_{j,0}}{\vrhoo^r} \right),
  \nabla \left( \frac{v}{\vrhoo^r}\right) \Rangle =  \sigma_{j,0} \Langle \vrhoo^{p-r} \varphi_{j,0} , v \Rangle,
  \qquad \varphi_{j,0}, v \in V^1(\Z', \vrhoo), \label{weak-EVP-L-rho} 
    \end{align}
    and
    \begin{align}
  &\Langle \varrho_\epsilon^{q} \nabla \left( \frac{\varphi_{j, \epsilon}}{\varrho_\epsilon^r} \right),
  \nabla \left( \frac{v}{\varrho_\epsilon^r}\right) \Rangle =
   \sigma_{j,\epsilon}  \Langle \varrho_\epsilon^{p-r} \varphi_{j, \epsilon} , v \Rangle, \qquad \varphi_{j,\epsilon},v \in V^1(\Z, \varrho_\eps).\label{weak-EVP-L-rho-eps}
    \end{align}
    Throughout the article we take  $\varphi_{j,0}$ and $\varphi_{j,\eps}$ to be normalized
      in $L^2(\Z', \vrhoo^{p-r})$ and $L^2(\Z, \varrho_\eps^{p-r})$ respectively.

We  collect some  definitions and notation concerning
the spectral gaps of the  operators $\mcl L_0$ and $\mcl L_\eps$
and Poincar{\'e} constants 
on certain subsets of $\Z$ and $\Z'$; these are used throughout the article.


\begin{definition}[Standard spectral gap $\Lambda_\Delta$]\label{Assumption-standard-gap}
  We say that the standard spectral gap condition holds for a subset $\Omega$ of $\Z$
  if  the Poincar{\'e} inequality is satisfied on $\Omega$ with an optimal
   constant $\Lambda_\Delta(\Omega) >0$, i.e., 

    \begin{align}\label{exterior-spectral-gap}
    \int_{\Omega} \left|\nabla u\right|^2 dx \ge \Lambda_\Delta(\Omega) \int_{\Omega}| u|^2 dx, \qquad \forall u \in V^1(\Omega).
    \end{align}
\end{definition}
We also define a certain
$\vrhoo$ weighted version of the above spectral gap definition.

\begin{definition}[$\mcl L_0$  spectral gap $\Lambda_0$]\label{Assumption-cluster-gap}
  We say that the $\mcl L_0$ spectral gap condition holds for a
  subset $\Omega$ of $\Z'$ if the following weighted Poincar{\'e} inequality
  is satisfied with  an optimal constant $\Lambda_0(\Omega) >0$  
     \begin{equation}\label{indivisibility-L-rho}
      \int_{\Omega} \varrho_0^{q}\left|  \nabla \left( \frac{u}{\varrho_0^r} \right) \right|^2 dx
      \ge \Lambda_0(\Omega)
      \int_{\Omega} \left| \frac{u}{\varrho_0^r} \right|^2 \varrho_0^{p + r} dx, \qquad \forall u \in V^1(\Omega, \varrho_0).
     \end{equation}
\end{definition}

 Observe that condition
  \eqref{indivisibility-L-rho} is equivalent to the
  assumption that the second eigenvalue of the operator $\mcl L_0$  restricted
  to the set $\Omega$  is bounded away from zero. Finally, we
  define the notion of a uniform spectral gap for   $\mcl L_\eps$.
  
  \begin{definition}[$\mcl L_\eps$ uniform spectral gap $\Lambda_{\eps}$]\label{Assumption-subset-gap}
    Given $\eps_0 >0$ 
    we say that the $\mcl L_\eps$ uniform spectral gap condition holds for
     a subset $\Omega$ of $\Z$ if 
     $\forall \eps \in (0, \eps_0)$ there exists an optimal constant
    $\Lambda_{\eps}(\Omega)>0$  so that
  \begin{equation}\label{spectral-gap-on-subsets}
    \int_{\Omega}  \left| \nabla \left( \frac{u}{\varrho_\epsilon^r} \right) \right|^2 \varrho_\epsilon^q dx
    \ge  \Lambda_{\eps}(\Omega)
    \int_{\Omega}  \left| \frac{u}{\varrho_\eps^r} \right|^2 \varrho_\epsilon^{p+r} dx, \qquad
    \forall u \in V^1(\Omega, \varrho_\eps).
  \end{equation}
\end{definition}
  
\begin{remark}\label{rmk:gaplimit}
To connect the spectral gaps of $\mcl{L}_{\eps}$ restricted to the clusters $\Z^\pm$ with the spectral gaps of the limiting operator $\mcl{L}_0$ on these clusters, 
one can make use of the knowledge that $\varrho_\eps$ converges to $\vrhoo$ on $\Z^\pm$ by Assumption~\ref{Assumptions-on-rho-eps-3}(c). More precisely, let us suppose \eqref{indivisibility-L-rho} holds. We show in Theorem~\ref{t:0} that $\sigma_{1,0}=\sigma_{2,0}=0$ and $ \sigma_{3,0}>0$. Since $\varrho_\eps(x)$ converges to $\vrhoo(x)$ pointwise for every $x\in \Dp$, this spectral gap translates to $\mcl{L}_{\eps}$ for small enough $\eps$ within the set $\Dp$, and so we can assert \eqref{spectral-gap-on-subsets} for $\Omega= \Z^\pm$. The assumption that the restriction of $\mcl L_{\eps}$ to $\Z^\pm$ has a spectral gap is related to the indivisibility parameter in the context of well-separated mixture models of \cite{NGTFHBH}.
\end{remark}

 \begin{remark}\label{rmk:constantgap}
   Note that for subsets $\Omega$ where $\varrho_\eps$ is constant, say $\varrho_\eps(x)=c_\eps$,
   condition \eqref{spectral-gap-on-subsets} reduces to a spectral gap of the standard Laplacian restricted to $\Omega$, with the constant
   $\Lambda_\Delta$ in \eqref{exterior-spectral-gap} replaced by $\Lambda_\Delta c_\eps^{p+r-q}$. This
    becomes important when investigating the behavior of
   $\mcl{L}_{\eps}$ away from the clusters $\Z^\pm$ and is precisely the reason why we obtain a condition on the sign of $q - p -r$ in our main theorems, see for example Theorem~\ref{thm:L-eps-low-eigenvals}.
  \end{remark}

\section{Spectral Analysis: Statement Of Theorems}\label{sec:sa}

In this section we describe the spectral properties 
of the operators $\mcl L_0$ and $\mcl L_\epsilon$ in relation to
certain  geometric features in the data summarized in the
densities $\varrho_0$ and $\varrho_\eps$.
We present precise statements of our key theoretical results, 
postponing the proofs to Section~\ref{sec:sap}.
We define, and then identify, gaps between 
the second and third eigenvalues of $\L_\eps$ together with
concentration properties of the second eigenfunction $\varphi_{2,\eps}$
as $\epsilon\downarrow 0$. More precisely, we show that the nature
and existence of a spectral gap is dependent upon the choice of $p,q$ 
and $r$ and, under general conditions,
concentration properties of $\varphi_{2,\eps}$ are
 directly related to concentration properties 
 of  $\varrho_\epsilon$.
 \changed{
In Subsection
\ref{ssec:psc} we consider the
perfectly clustered case pertaining the operator $\L_0$
while Subsection \ref{ssec:nsc} perturbs 
this setting and considers the nearly clustered case corresponding to the operator $\L_\eps$.
}


\subsection{Perfectly Separated Clusters}
\label{ssec:psc}

Recall the concept of perfectly separated clusters from the introduction,
the density $\varrho_0$ and the resulting operator $\mcl L_0$ defined on
$\mcl{Z}'$. 
The corresponding low-lying spectrum of $\mcl L_0$ can be characterized explicitly: 

  \begin{theorem}[Low-lying spectrum of $\mcl L_0$ and Fiedler vector]\label{t:0}
    \changed{Suppose $(p,q,r)\in\R^3$ and 
      Assumptions~\ref{Assumptions-on-D} and \ref{Assumptions-on-rho} hold.
    Then $\mcl L_0$ is positive semi-definite and self-adjoint
on the weighted Sobolev space $H^1(\Z', \varrho_0)$. Denote its eigenvalues by  $\sigma_{1,0} \le \sigma_{2,0} \le
\cdots$ with corresponding eigenfunctions $\varphi_{j,0}$, $j\ge 1$. Then it  holds that:}
  \begin{enumerate}[(i)]
\item The first eigenpair is given by
\begin{align*}
\sigma_{1,0}=0\,,\qquad \varphi_{1,0}= \frac{1}{| \Z'|_{\varrho_0^{p+r}}^{1/2}}\varrho_0^r(x)\mbf{1}_{\Z'}(x),\qquad \forall x\in\Z'\,.
\end{align*}

\item The second eigenpair is given by
\begin{align*}
  \sigma_{2,0}=0\,,\qquad
  \varphi_{2,0}= \frac{1}{| \Z'|_{\varrho_0^{p+r}}^{1/2}} \varrho_0^r(x)\left(\mbf{1}_{\Z^+}(x)-\mbf{1}_{\Z^-}(x)\right),\qquad \forall x\in\Z'\,.
\end{align*}

  \item $\mcl L_0$ has a uniform spectral gap, i.e.,  $\sigma_{3,0}>0$.

 \end{enumerate}
\end{theorem}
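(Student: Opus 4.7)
The proof splits naturally into three parts, one for each assertion, and the overall strategy is to exploit the disconnected structure of $\Z' = \Z^+ \cup \Z^-$ together with the density-weighted lifting $u \mapsto u/\varrho_0^r$.

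The plan for part (i) is a direct verification: Lemma~\ref{L-rho-is-self-adjoint-symmetric} and Proposition~\ref{L-has-discrete-spectrum} apply on $\Z'$ since $\varrho_0$ restricted to $\Z'$ satisfies \eqref{conditions-on-rho} by Assumption~\ref{Assumptions-on-rho}(c),(d). Hence $\mcl{L}_0$ is self-adjoint, positive semi-definite, and has a purely discrete spectrum. Plugging $\varphi_{1,0} = C \varrho_0^r \mbf{1}_{\Z'}$ into the operator gives $u/\varrho_0^r \equiv C$, so $\nabla(u/\varrho_0^r)=0$ and therefore $\mcl{L}_0 \varphi_{1,0} = 0$; the Neumann condition is automatic. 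The normalization constant $C = |\Z'|_{\varrho_0^{p+r}}^{-1/2}$ is fixed by requiring $\|\varphi_{1,0}\|_{L^2(\Z',\varrho_0^{p-r})}=1$, using $\varrho_0^{2r} \cdot \varrho_0^{p-r} = \varrho_0^{p+r}$.

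For part (ii), the key observation is that $\Z^+$ and $\Z^-$ are disjoint open sets separated by a positive distance (Assumption~\ref{Assumptions-on-D}(c)), so $\mbf{1}_{\Z^+} - \mbf{1}_{\Z^-}$ is locally constant on $\Z'$, hence smooth there, and belongs to $H^1(\Z',\varrho_0)$. For $\varphi_{2,0}$ as stated, $\varphi_{2,0}/\varrho_0^r = \mbf{1}_{\Z^+} - \mbf{1}_{\Z^-}$, whose gradient vanishes on $\Z'$, giving $\mcl{L}_0 \varphi_{2,0} = 0$ and the Neumann condition holding trivially. Orthogonality to $\varphi_{1,0}$ in $L^2(\Z',\varrho_0^{p-r})$ reduces to
\begin{equation*}
\int_{\Z^+} \varrho_0^{p+r} \, dx - \int_{\Z^-} \varrho_0^{p+r} \, dx = 0,
\end{equation*}
which is precisely Assumption~\ref{Assumptions-on-rho}(e). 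Linear independence from $\varphi_{1,0}$ together with membership in the zero-eigenspace forces $\sigma_{2,0}=0$.

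Part (iii) is the most delicate and is what I expect to be the main obstacle. The plan is to exploit that $\Z^+$ and $\Z^-$ are separated by a positive gap, so
\begin{equation*}
H^1(\Z',\varrho_0) = H^1(\Z^+,\varrho_0) \oplus H^1(\Z^-,\varrho_0),
\end{equation*}
and the Neumann boundary $\partial \Z' = \partial \Z^+ \cup \partial \Z^-$ decouples. Consequently $\mcl{L}_0$ is unitarily equivalent to the direct sum of its restrictions to each cluster, and its spectrum is the union of the two cluster spectra. On each connected cluster $\Z^\pm$, the lowest eigenvalue is $0$ with one-dimensional eigenspace spanned by $\varrho_0^r \mbf{1}_{\Z^\pm}$, since $\Z^\pm$ is connected and $\nabla(u/\varrho_0^r)=0$ forces $u/\varrho_0^r$ to be constant. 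The next eigenvalue of $\mcl{L}_0$ on each cluster is strictly positive: combining the $C^{1,1}$ regularity of $\partial \Z^\pm$ (Assumption~\ref{Assumptions-on-D}(d)), the uniform bounds $\varrho^- \le \varrho_0 \le \varrho^+$ on $\bar{\Z^\pm}$, and the compact embedding from Proposition~\ref{V1-compact-embedding} yields the weighted Poincar\'e inequality \eqref{indivisibility-L-rho} on each cluster with constants $\Lambda_0(\Z^\pm) > 0$. The min-max principle then gives $\sigma_{3,0} \ge \min\{\Lambda_0(\Z^+), \Lambda_0(\Z^-)\} > 0$, completing the proof. The only subtlety to carefully handle is that the second zero-eigenvalue of $\mcl{L}_0$ on $\Z'$ has exactly multiplicity two (one from each cluster), so the third eigenvalue indeed lies strictly above zero.
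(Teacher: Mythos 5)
Your proposal is correct and follows essentially the same route as the paper: parts (i) and (ii) are direct verifications, and part (iii) reduces to a Poincar\'e inequality on each cluster separately (the paper's Proposition~\ref{low-lying-spectrum-K-eps-tau}). Your direct-sum formulation of part (iii) is a slightly cleaner packaging of the paper's max-min argument---it makes the multiplicity-two of the zero eigenvalue transparent and actually yields the sharper identity $\sigma_{3,0} = \min\{\Lambda_0(\Z^+), \Lambda_0(\Z^-)\}$---but it rests on the same ingredient, namely the weighted Poincar\'e inequality \eqref{indivisibility-L-rho} on each connected cluster, which holds by the uniform bounds on $\varrho_0$ and the compact embedding.
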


\changed{
Part (i,ii) of Theorem~\ref{t:0} can be verified directly by substituting
$\varphi_{1,0}$ and $\varphi_{2,0}$ into \eqref{weak-EVP-L-rho}. Then
it remains to show (iii), the lower bound on the third eigenvalue $\sigma_{3,0}$
which follows from Proposition~\ref{low-lying-spectrum-K-eps-tau}, 
stating that $\L_0$ has a spectral gap on $\Z'$ so long as its
restriction to each of the clusters $\Z^\pm$ has a spectral gap.
Since $\varrho_0$ is bounded away from zero on the clusters
this condition holds since $\Z^\pm$ are assumed to be
connected sets of positive Lebesgue measure. 
}


\subsection{Nearly Separated Clusters}
\label{ssec:nsc}

We now turn our attention to the densities $\varrho_\epsilon$ that have full support on $\bD$, but concentrate around  $\Dp$ as $\epsilon$ decreases. 
This represents the practical setting 
where we do not have perfect clusters 
$\Z^\pm$ and so the density $\vrhoo$ is perturbed. 
A central question here is whether
 the second eigenpair $\{ \sigma_{2,\eps},
\varphi_{2, \eps}\}$ of
$\mcl L_\eps$ exhibits behavior similar to the second eigenpair $\{ \sigma_{2,0}, \varphi_{2,0}\}$ of $\mcl L_0$ as $\varrho_\eps \to \vrhoo$; that is,
in the limit as we approach the ideal case of perfect clusters $\Z^\pm$.

In order to establish such a result we first need to approximate
the first three eigenvalues of $\L_\eps$:
\changed{
\begin{theorem}[Low-lying eigenvalues of $\L_\eps$]\label{thm:L-eps-low-eigenvals}
Let $(p,q,r)\in\R^3$ satisfy $p+r>0$ and $q>0$, and suppose  Assumptions~\ref{Assumptions-on-D}, \ref{Assumptions-on-rho},
  and \ref{Assumptions-on-rho-eps-3} hold and that $\Lambda_\Delta(\Z \setminus \Z'_{\eps_0})>0$
  for a sufficiently small $\eps_0 >0$.
 Then  the following holds for all $(\eps, \beta) \in (0, \eps_0) \times (0,1)$:
 \begin{enumerate}[(i)]
 \item  The first eigenpair is given by
   \begin{align*}
\sigma_{1,\eps}=0\,,\qquad 
\varphi_{1,\eps}= \frac{1}{| \Z|_{\varrho_\eps^{p+r}}^{1/2}}\varrho_\eps^r(x)\mbf{1}_{\Z}(x)\qquad \forall x\in\Z\,.
   \end{align*}

 \item The second eigenvalue $\sigma_{2,\eps}$  tends to zero as $\eps \to 0$,
   $$
0 \le \sigma_{2,\epsilon} \le \Xi_1 \epsilon^{{q-\beta}},
$$
with $\Xi_1 >0$ a uniform constant independent of $\eps$.

\item The third  eigenvalue behaves differently depending on the $(p,q,r)$ parameters:
  \begin{itemize}
   \item if    $q>p+r$,
    then  $ \exists \,\Xi_2,\Xi_3>0$
  independent of $\eps$ such that,
    $$
    \Xi_2 \eps^{2(q-p-r)}\le\sigma_{3,\epsilon} \le \Xi_3 \eps^{ q - p-r - 2\beta}\,,
    $$
    and so $\mcl L_\eps$ does not have a uniform spectral gap on $\Z$;
    
  \item if  $q = p+r$
    then 
  there exist  constants $\Xi_4,\Xi_5 >0$, independent of $\eps$, so that
    $$
    \Xi_4\le{\sigma}_{3, \epsilon} \le \Xi_5,
    $$
    and so
     $\mcl L_\eps$ has a uniform  spectral gap on $\Z$;
\item if $q<p+r$, then there exist constants $\Xi_6, \Xi_7>0$, independent of $\eps$, so that
    $$
   \Xi_6 \eps^{p+r-q}\le {\sigma}_{3, \epsilon} \le \Xi_7\,.
    $$
  \end{itemize}

 \end{enumerate}
  
\end{theorem}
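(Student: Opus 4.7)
The plan begins with part (i), which is immediate: since $\varphi_{1,\eps}/\varrho_\eps^r = |\Z|_{\varrho_\eps^{p+r}}^{-1/2}$ is constant on $\Z$, its gradient vanishes identically, so the weak eigenvalue equation \eqref{weak-EVP-L-rho-eps} is satisfied with $\sigma_{1,\eps}=0$, and the prefactor is chosen to enforce $\|\varphi_{1,\eps}\|_{L^2(\Z,\varrho_\eps^{p-r})}=1$.

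For part (ii), I would apply the min-max principle (Appendix~\ref{app:AB}) to a trial function mimicking the ideal Fiedler vector $\varphi_{2,0}$ from Theorem~\ref{t:0}(ii). Concretely, take $w_\eps = \varrho_\eps^r \chi_\eps$, where $\chi_\eps$ is a smooth cutoff equal to $+1$ on a neighborhood of $\bar{\Z^+}$ and $-1$ on a neighborhood of $\bar{\Z^-}$, smoothly interpolated across a transition band of width $\ell=\eps^\beta$ lying entirely in $\Z\setminus\Z'_\eps$ (feasible for small $\eps$ by Assumption~\ref{Assumptions-on-D}(c)). By Assumption~\ref{Assumptions-on-rho-eps-3}(d), $\varrho_\eps=K_2\eps$ on this band, so $|\nabla\chi_\eps|$ is of order $\eps^{-\beta}$ while $\varrho_\eps^q=(K_2\eps)^q$; hence
\[
\int_\Z \varrho_\eps^q|\nabla\chi_\eps|^2\,dx \le C\, \eps^q\cdot\eps^{-2\beta}\cdot\eps^\beta = C\eps^{q-\beta},
\]
whereas Assumption~\ref{Assumptions-on-rho-eps-3}(c) forces $\int_\Z\chi_\eps^2\varrho_\eps^{p+r}\,dx\ge \int_{\Z^+\cup\Z^-}\varrho_\eps^{p+r}\,dx$, bounded below uniformly in $\eps$. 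A small additive correction to $\chi_\eps$ enforces orthogonality to $\varphi_{1,\eps}$, leveraging the near-balance provided by Assumption~\ref{Assumptions-on-rho}(e), so the min-max principle delivers the claimed $\sigma_{2,\eps}\le \Xi_1\eps^{q-\beta}$.

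For the upper bounds in part (iii), I would extend the argument to three trial functions $\{\varphi_{1,\eps},\,w_\eps,\,z_\eps\}$, choosing $z_\eps=\varrho_\eps^r\psi_\eps$ case by case. When $q>p+r$, taking $\psi_\eps$ supported in the exterior region $\Z\setminus\Z'_{\eps_0}$ and oscillating at scale $\eps^\beta$ produces numerator of order $\eps^{q-\beta}$ (since $\varrho_\eps^q=(K_2\eps)^q$ there) and denominator of order $\eps^{p+r}$ (since $\varrho_\eps^{p+r}=(K_2\eps)^{p+r}$); the corrections enforcing orthogonality against $\varphi_{1,\eps}$ and $w_\eps$ contribute a further $\eps^{-\beta}$ factor, giving a Rayleigh quotient of order $\eps^{q-p-r-2\beta}$. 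When $q=p+r$ the same construction is uniformly bounded; when $q<p+r$ a bulk trial function supported on $\Z'$ suffices for an order-one bound. For the lower bounds I would decompose any $u\in V^1(\Z,\varrho_\eps)$ into restrictions to $\Z^\pm$ and $\Z\setminus\Z'$, combining the $\mcl L_\eps$ spectral gaps on each cluster (inherited from Theorem~\ref{t:0}(iii) via Remark~\ref{rmk:gaplimit}) with the standard Poincar\'e gap on the exterior; the latter, rescaled as in Remark~\ref{rmk:constantgap} by the effective weight $(K_2\eps)^{p+r-q}$, directly yields a uniform constant when $q=p+r$ and a factor $\eps^{p+r-q}$ when $q<p+r$.

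The hard part will be the lower bound in the regime $q>p+r$: admissible functions may concentrate almost entirely on the exterior region $\Z\setminus\Z'$ where $\varrho_\eps$ is small, and the rescaled Poincar\'e argument provides no useful information. To reach the claimed $\eps^{2(q-p-r)}$ bound I would invoke the weighted Cheeger-type isoperimetric inequality of Appendix~\ref{sec:proof-isoperim-ineqa}, which controls the Dirichlet form from below by a combinatorial quantity over level sets of $u$. Since Cheeger contributes a squared lower bound, this route is provably off by a factor $\eps^{q-p-r}$ from the matching upper bound, which is precisely why Conjecture~\ref{conj:SG} posits that the true rate agrees with the test-function construction. A secondary technical nuisance throughout is ensuring that the small corrections enforcing mutual orthogonality of the trial functions never spoil the leading-order scaling of the Rayleigh quotients.
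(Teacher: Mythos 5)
Parts (i), (ii), and the upper bounds in (iii) track the paper closely: trial-function constructions via the min-max principle, with cutoff constructions that differ only cosmetically from the paper's $\varphi_{F,\eps}$ (Proposition~\ref{prop:speceps}) and $\tilde\varphi_{F,\eps}$ (Proposition~\ref{spectral-gap-unbalanced-case}). The substantive gap is in your lower bound for part~(iii), and it has two parts. First, the three-piece decomposition $\Z^+$, $\Z^-$, $\Z\setminus\Dp$ cannot directly bound $\sigma_{3,\eps}$: orthogonality to $\{\varphi_{1,\eps},\varphi_{F,\eps}\}$ gives only two linear constraints on the three piecewise means, so a one-parameter family of near-piecewise-constant trial functions survives whose Rayleigh quotient is governed by the transition cost alone, and the sum of Poincar\'e inequalities you propose degenerates on that direction. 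The paper avoids this by using a \emph{two}-piece decomposition $\Z^+_{\eps_0}$, $\Z\setminus\Z^+_{\eps_0}$ (Proposition~\ref{subset-spectral-gap}), matching the two orthogonality constraints, with the residual mean terms controlled to $O(\eps^t)$ by careful use of those constraints.

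Second, your rescaling of the exterior Poincar\'e constant by $(K_2\eps)^{p+r-q}$ is inverted. Setting $w=u/\varrho_\eps^r$ with $\varrho_\eps\equiv K_2\eps$ gives $\Lambda_\eps(\Z\setminus\Dp_{\eps_0})\ge\Lambda_\Delta (K_2\eps)^{q-p-r}$, as the paper computes in the paragraph directly after the theorem statement (you appear to have trusted the power printed in Remark~\ref{rmk:constantgap}, which is inconsistent with that in-text calculation). For $q<p+r$ this exterior constant \emph{blows up} rather than vanishes, so your proposed route would not reproduce the theorem's $\eps^{p+r-q}$ lower bound. In the paper that factor arises instead from the prefactor $\inf_{\Omega'}\varrho_\eps^{p+r-q}$ in the weighted Cheeger inequality (Lemma~\ref{weighted-isoperimetric-inequality}) applied to the mixed-density sets $\Z^+_{\eps_0}$ and $\Z\setminus\Z^+_{\eps_0}$; Proposition~\ref{spectral-gap-general} in fact routes \emph{all three} parameter regimes through that Cheeger inequality, not just $q>p+r$. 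Your observation that Cheeger squares the bottleneck and hence falls short of the matching upper bound when $q>p+r$, motivating Conjecture~\ref{conj:SG}, is exactly what the paper says.
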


Once again
part (i) can be verified directly by substituting $\varphi_{1,\eps}$ in \eqref{weak-EVP-L-rho-eps}.
Part (ii)
is a consequence of Proposition~\ref{prop:speceps} that obtains an upper bound on $\sigma_{2,\eps}$
using a perturbation argument. More precisely, 
 we first construct an explicit approximation $\varphi_{F,\eps}$ of $\varphi_{2,\eps}$
as a smoothed out version of $\varphi_{2,0}$,
normalized in $V^1(\Z, \varrho_\eps)$ and supported on a set slightly larger than $\Dp$.
We choose a parameter $\beta>0$ such that $|\nabla \varphi_{F,\eps}|$ is controlled by $\eps^{-\beta}$ at the boundary of $\Dp$. This is precisely the parameter $\beta$ appearing in
Theorem~\ref{thm:L-eps-low-eigenvals}. By construction, we then have that $\varphi_{F,\eps}$
converges to the normalization of $\varphi_{2,0}$ as $\eps\to 0$.
Using this approximate eigenfunction as well as $\varphi_{1,\eps}$ from part (i) in
the min-max principle (see Proposition~\ref{thm:max-min})
yields the desired upper bound on $\sigma_{2,\eps}$.

Part (iii) requires more elaborate
arguments as outlined in Subsection~\ref{ssec:t35}. The lower bounds on $\sigma_{3,\eps}$
follow from Proposition~\ref{spectral-gap-general} that is in turn based on a generalization of
Cheeger's inequality (see Proposition~\ref{thm:cheeger}). The upper bounds
follow from Proposition~\ref{spectral-gap-unbalanced-case} the proof of which uses
similar ideas as for the upper bound of $\sigma_{2,\eps}$, applying the min-max principle but with a
different candidate eigenfunction.}

Several interesting conclusions can be drawn from our arguments in Subsection~\ref{ssec:nscp}
aimed at proving  Theorem~\ref{thm:L-eps-low-eigenvals}. The
existence of spectral gaps for $\mcl{L}_{\eps}$ inside the clusters and away from the
clusters separately allows us to formally deduce bounds on the low-lying spectrum. Consider the set
\begin{equation*}
\Dp_\epsilon := \{ x : \dist(x, \Dp) \le \epsilon \},
\end{equation*}
and suppose that for some  fixed $\eps_0 >0$, we have $\Lambda_{\Delta}(\Z \setminus \Z'_{\eps_0}) >0$,
that is, the standard Laplacian has a spectral gap away from the clusters according to Definition~\ref{Assumption-standard-gap}.
Since $\varrho_\eps(x) = K_2 \eps$ for $x \in \Z \setminus \Z'_{\eps_0}$, 
we have for all 
 $u \bot \mbf{1}_{\Z \setminus \Z_{\eps_0}'}$ in $V^1(\Z \setminus \Z_{\eps_0}')$
$$
(K_2 \eps)^{2r- q}\int_{\Z \setminus \Z_{\eps_0}'} \left| \nabla \left( \frac{u}{\varrho_\eps^r} \right) \right|^2
\varrho_\eps^{q} dx
\ge \Lambda_{\Delta}(\Z \setminus \Z'_{\eps_0}) (K_2 \eps)^{r-p} \int_{\Z \setminus \Z_{\eps_0}'} \left|\frac{u}{\varrho_\eps^r} \right|^2 \varrho_\eps^{p+r} dx\,.
$$
This simple calculation shows that $ \Lambda_{\eps}(\Z \setminus \Z'_{\eps_0}) = \mcl O( \eps^{q - p -r})$,
and so the
existence of a uniform $\mcl L_\eps$ spectral gap away from the clusters is dependent
on the relation between $q$ and $p+r$, in fact we need  $q \le p +r$ 
to ensure $ \Lambda_{\eps}(\Z \setminus \Z'_{\eps_0}) >0$ independent of $\eps$ which is
in line with 
the conditions in Theorem~\ref{thm:L-eps-low-eigenvals}(iii).

\changed{
Combining parts (ii, iii) of Theorem~\ref{thm:L-eps-low-eigenvals} yields
the following corollary concerning the existence of uniform or ratio gaps in
the spectrum of $\L_\eps$ depending on $(p,q,r)$. This corollary is a
detailed statement of Main Result~\ref{main-result}(iii).

\begin{corollary}[Spectral ratio gap when $q \neq p +r $]\label{cor:ratiogap}

Suppose that the conditions of Theorem~\ref{thm:L-eps-low-eigenvals} are satisfied and that  
$q \neq p+r$. Then the following holds for all $(\eps, \beta) \in (0, \eps_0) \times (0,1)$:

      \begin{enumerate}[(i)]
      \item if $q > p +r$  then
 there exists  a constant $\Xi_1 >0$ 
 independent of $\eps$, so that 
    \begin{equation*}
     \frac{\sigma_{2,\eps} }{ \sigma_{3,\eps}} \le \Xi_1
        \eps^{2(p+r)-q - \beta};
      \end{equation*}

    \item if $q< p +r$ then
there exists  a constant $\Xi_2 >0$ 
independent of $\eps$,
\begin{equation*}
     \frac{\sigma_{2,\eps} }{ \sigma_{3,\eps}} \le \Xi_2
        \eps^{2q - p -r - \beta}.
      \end{equation*}
      
      \end{enumerate}
  \end{corollary}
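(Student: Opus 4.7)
The corollary is essentially an immediate combination of the bounds assembled in Theorem~\ref{thm:L-eps-low-eigenvals}, so the approach is simply to divide the upper bound on $\sigma_{2,\eps}$ from part~(ii) by the lower bounds on $\sigma_{3,\eps}$ from part~(iii), handling each case separately. There is no new analytical ingredient required; the plan is just to track exponents carefully.

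First I would invoke Theorem~\ref{thm:L-eps-low-eigenvals}(ii), which asserts the existence of a constant $\Xi_1 > 0$, independent of $\eps \in (0,\eps_0)$, such that $\sigma_{2,\eps} \le \Xi_1 \eps^{q-\beta}$. Note this bound does not depend on the sign of $q - (p+r)$, so the same estimate will be used in both cases~(i) and~(ii) of the corollary.

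For case~(i), where $q > p+r$, I would apply Theorem~\ref{thm:L-eps-low-eigenvals}(iii), which supplies a constant $\Xi_2 > 0$, independent of $\eps$, such that $\sigma_{3,\eps} \ge \Xi_2 \eps^{2(q-p-r)}$. Dividing the two bounds then yields
\begin{equation*}
  \frac{\sigma_{2,\eps}}{\sigma_{3,\eps}} \;\le\; \frac{\Xi_1}{\Xi_2}\,\eps^{(q-\beta) - 2(q-p-r)} \;=\; \frac{\Xi_1}{\Xi_2}\,\eps^{2(p+r) - q - \beta},
\end{equation*}
which is the asserted bound after relabeling the constant. The positivity of $\sigma_{3,\eps}$ (so that the ratio is well-defined) is guaranteed by the lower bound itself, since $\Xi_2 > 0$ and $\eps > 0$.

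For case~(ii), where $q < p+r$, I would analogously invoke the third bullet of Theorem~\ref{thm:L-eps-low-eigenvals}(iii) to obtain $\sigma_{3,\eps} \ge \Xi_6 \eps^{p+r-q}$ with $\Xi_6 > 0$ independent of $\eps$. Dividing gives
\begin{equation*}
  \frac{\sigma_{2,\eps}}{\sigma_{3,\eps}} \;\le\; \frac{\Xi_1}{\Xi_6}\,\eps^{(q-\beta) - (p+r-q)} \;=\; \frac{\Xi_1}{\Xi_6}\,\eps^{2q - p - r - \beta},
\end{equation*}
again yielding the claim after relabeling. Since no step beyond arithmetic of exponents and invocation of the parent theorem is involved, there is no real obstacle; the only item requiring slight care is ensuring that $\beta \in (0,1)$ is admissible in both invocations simultaneously, which is immediate from the fact that Theorem~\ref{thm:L-eps-low-eigenvals} allows a common parameter $\beta$ across parts~(ii) and~(iii).
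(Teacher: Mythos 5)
Your proof is correct and takes essentially the same approach as the paper: the corollary follows directly by dividing the upper bound on $\sigma_{2,\eps}$ from Theorem~\ref{thm:L-eps-low-eigenvals}(ii) by the appropriate lower bound on $\sigma_{3,\eps}$ from part~(iii), and the exponent arithmetic you carry out is exactly what the paper intends when it says "combining parts (ii, iii) of Theorem~\ref{thm:L-eps-low-eigenvals} yields the following corollary."
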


  We note that while this corollary suggests that there may be no spectral ratio  
gap when $q > 2(p +r)$ or $2q < p +r$, our numerical experiments in 
Section~\ref{ssec:bp} (and in particular Tables~\ref{tab:balanced-r-eq-p} to \ref{tab:unbalanced-qlepr})
suggest that these bounds on the ratio gaps are not sharp
due to the fact that our lower bounds on $\sigma_{3,\epsilon}$ from
Theorem~\ref{thm:L-eps-low-eigenvals}(iii)  can be improved to 
match the upper bounds when $q \neq p +r$. We then conjecture that, when $q> p+r$, 
$    \frac{\sigma_{2,\eps} }{ \sigma_{3,\eps}} \le \Xi_1
\eps^{p + r - \beta},$  and when $q < p +r $
we have $\frac{\sigma_{2,\eps} }{ \sigma_{3,\eps}} \le \Xi_2 \eps^{q - \beta}$, as
summarized in Conjecture~\ref{conj:SG} in Subsection~\ref{sec:contribution-2}.

  Finally with the spectral gap results established we can characterize the geometry
  of the second eigenfunction $\varphi_{2,\eps}$ and show that
  as $\eps \downarrow 0$ this eigenfunction is nearly aligned with the second eigenfunction $\varphi_{2,0}$ of $\mcl L_0$ for certain choices of $(p,q,r)$.

\begin{theorem}[Geometry of the second eigenfunction $\varphi_{2,\eps}$]
  \label{thm:geometry-of-fiedler-vector} Suppose  the conditions of Theorem~\ref{thm:L-eps-low-eigenvals}
  are satisfied. 
  Then there exists $\Xi, \eps_0>0$ so that $\forall (\eps, \beta) 
  \in (0, \eps_0)\times(0, 1)$
\begin{equation*}
  \left| 1 -  \left\langle \frac{\varphi_{2,\eps}}{\varrho_\eps^r},
    \frac{\bar \varphi_{2,0}}{\varrho_\eps^r} \right\rangle^2_{\varrho^{p+r}_\eps} \right| 
\le \Xi \eps^{\min\{\frac{1}{2},\frac{p+r}{2}, -|q-(p+r)|+ \min\{q,p+r\}-\beta \}}     \,.
\end{equation*}
where $\bar\varphi_{2,0}$ denotes the normalization of $\varphi_{2,0}$ in $L^2(\Z,\varrho_\eps^{p-r})$.
\end{theorem}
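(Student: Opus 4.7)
The plan is to proceed by a spectral perturbation argument, using the smoothed approximate eigenfunction $\varphi_{F,\eps}$ from the proof of Theorem~\ref{thm:L-eps-low-eigenvals}(ii) as an intermediate object. Let $\tilde\varphi_{F,\eps}$ denote the renormalization of $\varphi_{F,\eps}$ in $L^2(\Z,\varrho_\eps^{p-r})$, and introduce
\begin{equation*}
\mcl I := \Langle \varphi_{2,\eps}/\varrho_\eps^r,\,\bar\varphi_{2,0}/\varrho_\eps^r \Rangle_{\varrho_\eps^{p+r}},
\qquad
\mcl I_F := \Langle \varphi_{2,\eps}/\varrho_\eps^r,\,\tilde\varphi_{F,\eps}/\varrho_\eps^r \Rangle_{\varrho_\eps^{p+r}}.
\end{equation*}
Writing $|1-\mcl I^2|\le |1-\mcl I_F^2|+|\mcl I_F^2-\mcl I^2|$ cleanly separates the two error sources: the spectral gap of $\mcl L_\eps$ controls the first term, while the $L^2(\varrho_\eps^{p-r})$-proximity of $\tilde\varphi_{F,\eps}$ to $\bar\varphi_{2,0}$ controls the second.

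First I expand $\tilde\varphi_{F,\eps}=\sum_{j\ge 1}c_j\varphi_{j,\eps}$ in the eigenbasis of $\mcl L_\eps$, which is $L^2(\Z,\varrho_\eps^{p-r})$-orthonormal by Proposition~\ref{L-has-discrete-spectrum}. Then $\sum_j c_j^2 = 1$ and $c_2 = \mcl I_F$, so $|1-\mcl I_F^2| = c_1^2 + \sum_{j\ge 3}c_j^2$. The coefficient $c_1$ reduces, via $\varphi_{1,\eps}=|\Z|_{\varrho_\eps^{p+r}}^{-1/2}\varrho_\eps^r$, to a weighted integral of $\tilde\varphi_{F,\eps}$ against $\varrho_\eps^p$; the antisymmetric construction of $\varphi_{F,\eps}$, the equal-mass Assumption~\ref{Assumptions-on-rho}(e), the convergence $\|\varrho_\eps-\varrho_0\|_{C^\infty(\bar{\Z}')}=\mcl O(\eps)$ and the $\mcl O(\eps)$ Lebesgue measure of $\Z'_\eps\setminus\Z'$ together yield $c_1^2 = \mcl O(\eps^2)$, an error that will be absorbed by the other terms. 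For the tail I invoke the Rayleigh identity
\begin{equation*}
\Langle \varrho_\eps^q\nabla\bigl(\tilde\varphi_{F,\eps}/\varrho_\eps^r\bigr),\,\nabla\bigl(\tilde\varphi_{F,\eps}/\varrho_\eps^r\bigr)\Rangle = \sum_{j\ge 1}c_j^2\sigma_{j,\eps}\ge\sigma_{3,\eps}\sum_{j\ge 3}c_j^2.
\end{equation*}
The left-hand side is $\mcl O(\eps^{q-\beta})$ by exactly the estimate used to bound $\sigma_{2,\eps}$ in Theorem~\ref{thm:L-eps-low-eigenvals}(ii): the Dirichlet energy of $\varphi_{F,\eps}$ is concentrated on the annulus $\Z'_\eps\setminus\Z'$, where $|\nabla(\varphi_{F,\eps}/\varrho_\eps^r)|=\mcl O(\eps^{-\beta})$ and $\varrho_\eps^q=\mcl O(\eps^q)$. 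Dividing by the lower bounds on $\sigma_{3,\eps}$ from Theorem~\ref{thm:L-eps-low-eigenvals}(iii) in each of the three regimes $q>p+r$, $q=p+r$, and $q<p+r$, and verifying that all three cases collapse to the same formula, I obtain $\sum_{j\ge 3}c_j^2 = \mcl O\bigl(\eps^{\min\{q,p+r\}-|q-(p+r)|-\beta}\bigr)$.

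Next I estimate $\|\tilde\varphi_{F,\eps}-\bar\varphi_{2,0}\|_{L^2(\varrho_\eps^{p-r})}$. On $\Z'$ both functions coincide with a renormalization of $\varphi_{2,0}$ up to an $\mcl O(\eps)$ multiplicative error coming from $\|\varrho_\eps^{p-r}-\varrho_0^{p-r}\|$ and the discrepancy between the two normalizations, giving an $\mcl O(\eps^2)$ contribution. Only the thin annulus $\Z'_\eps\setminus\Z'$ contributes to leading order; there $\bar\varphi_{2,0}\equiv 0$ while $\tilde\varphi_{F,\eps}$ equals the mollified extension used in the construction of $\varphi_{F,\eps}$, and the weighted integral $\int_{\Z'_\eps\setminus\Z'}|\tilde\varphi_{F,\eps}|^2\varrho_\eps^{p-r}\,dx$ produces the scaling $\mcl O(\eps^{\min\{1,p+r\}})$ after accounting for the behaviour of $\varrho_\eps$ on this layer, which interpolates between $\mcl O(1)$ near $\partial\Z'$ and $K_2\eps$ near the outer boundary. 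By Cauchy--Schwarz and $\|\varphi_{2,\eps}\|_{L^2(\varrho_\eps^{p-r})}=1$ this gives $|\mcl I-\mcl I_F|=\mcl O(\eps^{\min\{1/2,(p+r)/2\}})$, and since $|\mcl I|,|\mcl I_F|\le 1+\mcl O(\eps)$ we obtain $|\mcl I^2-\mcl I_F^2|=\mcl O(\eps^{\min\{1/2,(p+r)/2\}})$. Combining all three error contributions delivers the claim.

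The main obstacle is aligning the explicit construction of $\varphi_{F,\eps}$ with the behaviour of the weight $\varrho_\eps^{p-r}$ on the transition annulus $\Z'_\eps\setminus\Z'$. When $p>r$ the weight is bounded and the estimate is clean, but when $p<r$ the weight blows up precisely where $\varrho_\eps\sim\eps$; ensuring both regimes collapse into the single bound $\mcl O(\eps^{\min\{1,p+r\}})$, which is the origin of the $(p+r)/2$ term in the final exponent, requires delicate pointwise control of the mollifier defining $\varphi_{F,\eps}$ together with Assumptions~\ref{Assumptions-on-rho-eps-3}(d,e). By comparison, the spectral-gap bookkeeping above is a direct application of Theorem~\ref{thm:L-eps-low-eigenvals}(iii) and the Rayleigh identity.
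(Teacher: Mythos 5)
Your proposal follows essentially the same route as the paper's own proof: the triangle-inequality decomposition through $\varphi_{F,\eps}$, the eigenbasis/Rayleigh-quotient bound on $|1-\mcl I_F^2|$ driven by the lower bounds on $\sigma_{3,\eps}$, and an $L^2(\Z,\varrho_\eps^{p-r})$-proximity estimate between $\varphi_{F,\eps}$ and $\bar\varphi_{2,0}$ — these are precisely Propositions~\ref{varphi-F-is-close-to-fiedler-vector} and \ref{varphi-F-is-close-to-varphi-2} combined. The rate bookkeeping in the three regimes of $(p,q,r)$ also matches. However, two points in your account of the $L^2$-proximity step reflect a misreading of the construction and are worth correcting before you attempt the details. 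First, the coefficient $c_1$ is identically zero, not merely $\mcl O(\eps)$: the balancing constants $b_\eps^\pm$ in \eqref{eig1Fbot} are chosen precisely so that $\varphi_{F,\eps}\in V^0(\Z,\varrho_\eps)$, i.e.\ exactly orthogonal to $\varphi_{1,\eps}$. Second, your ``main obstacle'' — that $\varrho_\eps^{p-r}$ blows up on the transition layer when $p<r$ — is illusory: since $\varphi_{F,\eps}\propto\varrho_\eps^r(\chi_\eps^+-\chi_\eps^-)$, the integrand is $|\varphi_{F,\eps}|^2\varrho_\eps^{p-r}\propto\varrho_\eps^{p+r}|\chi_\eps^+-\chi_\eps^-|^2$, which is bounded for any $p+r>0$. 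As a consequence, the annulus integral alone yields $\mcl O\bigl(\eps^{\min\{1,\,p+r+\beta\}}\bigr)$, which is better than the exponent $\min\{1,p+r\}$ you attribute to it; the $\min\{1,p+r\}$ rate in Proposition~\ref{varphi-F-is-close-to-varphi-2} is instead driven by the bounds $|b_\eps^\pm-1|,\;|b_\eps^0-c|,\;|b_\eps^F-c|=\mcl O(\eps^{\min\{1,p+r\}})$ from Lemmas~\ref{b-min-max-are-bounded} and \ref{lem:normalization-bound}, which your sketch dismisses as an $\mcl O(\eps^2)$ interior correction. These slips do not invalidate the claimed final bound (your intermediate estimates remain valid upper bounds), but they misattribute where the work actually lies.
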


We prove this theorem in Subsection~\ref{ssec:t39} by bounding
the difference between $\varphi_{2,\eps}$ and $\varphi_{F, \eps}$ in Proposition~\ref{varphi-F-is-close-to-fiedler-vector} and then the difference between $\bar{\varphi}_{2,0}$ and $\varphi_{F, \eps}$
in Proposition~\ref{varphi-F-is-close-to-varphi-2} and invoking the triangle inequality.
  Note that the  above bound blows up if $2q < p +r$ in the unbalanced case where $q < p +r$ and
  if $2(p +r) < q$ in the unbalanced case where $q > p +r$. Put simply, if the difference between
  $q$ and $p+r$ is too large then we may lose convergence of the second eigenfunctions.
  However, we also expect these conditions are not sharp since they 
rely on our lower
bounds on $\sigma_{3,\eps}$ in Theorem~\ref{thm:L-eps-low-eigenvals}(iii) that we conjectured can be
sharpened above. Theorem~\ref{thm:geometry-of-fiedler-vector} is a detailed statement of
Main Result~\ref{main-result}(ii).
}

\changed{
\begin{remark} \label{rem:add}
  Two concrete messages follow from Theorems~\ref{thm:L-eps-low-eigenvals} and
    \ref{thm:geometry-of-fiedler-vector}:
(1) Theorem \ref{thm:L-eps-low-eigenvals}(iii) tells us that particular care is needed when looking for
a spectral gap characterizing the number of  clusters if $q\neq p+r$
as the gap may only be
manifest in ratio form, not absolutely, leading to potential
overestimation of the number of clusters; 
(2) Theorem \ref{thm:geometry-of-fiedler-vector} tells us the form  and geometry of the Fiedler vector
which characterizes the two clusters, and its dependence on $\varrho_0$
and on $\eps$; whether or not the problem is balanced determines whether 
the Fielder vector is approximately piecewise constant, or whether it
exhibits smoother transitions across the data. These two observations
may be useful to practitioners when interpreting graph Laplacian based
 analysis of large data sets.
\end{remark}
}

\section{Numerical Experiments In The Continuum}\label{sec:num}
In this section we exemplify, and extend, the main theoretical results
stated in the previous section. In Subsections \ref{ssec:b}
and \ref{ssec:bp} 
we study binary clustered data. The numerical results in these
subsections highlight the effects of the parameters $(p,q,r)$
on spectral properties: Subsection \ref{ssec:b} addresses the
balanced case where $q=p+r$ and Subsection \ref{ssec:bp} the
unbalanced case where $q>p+r$. 
In Subsection \ref{ssec:3c} we also
extend the main theoretical results by considering data comprised
of three clusters and five clusters, showing that the intuition from the binary case
extends naturally to more than two clusters.

Our numerical simulations in the binary, unbalanced case
extend the main theoretical results as they demonstrate the
 spectral ratio gap of \changed{Corollary~\ref{cor:ratiogap}}, arising when
$q>p+r$ is indeed of $\mcl O(\eps^{p+r})$ and\changed{ when $q < p +r$ is of $\mcl O(\eps^{q})$
suggesting the lower bound on $\sigma_{3,\eps}$ can be sharpened.}

We proceed by outlining the setting of the numerical experiments. 
Consider the eigenvalue problem (\ref{weak-EVP-L-rho-eps}) :
  \begin{equation}
\label{weak-EVP-L-rho-eps2}
\Langle \varrho_\epsilon^{q} \nabla \left( \frac{\varphi_{j, \epsilon}}{\varrho_\epsilon^r} \right),
  \nabla \left( \frac{v}{\varrho_\epsilon^r}\right) \Rangle =
  \sigma_{j,\epsilon} \Langle \varrho_\epsilon^{p+r} \frac{\varphi_{j, \epsilon}}{\varrho_\epsilon^r},\frac{v}{\varrho_\epsilon^r}  \Rangle, \qquad \varphi_{j,\epsilon},v \in V^1(\Z, \varrho_\eps).
\end{equation}
Our numerics are all performed in dimension $d=2.$
We solve this  by the finite element method using the
FEniCS software package \cite{fenics}.
We work with the variables 
$\varphi_{j, \epsilon}/\varrho_\epsilon^r$ 
and $v/\varrho_\epsilon^r$, rather than directly with
$\varphi_{j, \epsilon}$ and $v$, and discretize these $\varrho_\epsilon^r$
scaled variables using the standard linear finite element basis functions 
in $H^1(\Z)$. We approximate $\varrho_\epsilon$ using
quadratic finite element basis functions. 
Throughout  we take
$\Z \equiv (-1,1) \times (-1,1)$.
We consider $\epsilon$ in the range  $(1/1280, 1/10)$. 
For each value of $\epsilon$, we approximate 
the eigenvalue problem \eqref{weak-EVP-L-rho-eps2} using a
mesh of $1.28 \times 10^6$ triangular elements defined 
on a uniform grid of $800 \times 800$ nodes.
This finite element discretization leads to a generalized
matrix eigenvalue problem which is solved using a Krylov-Schur eigenvalue solver
in PETSc \cite{petsc} with a tolerance of $10^{-9}$.

Throughout this section
we use densities of the form 
\begin{equation}\label{numerics-rho-def}
  \varrho_\eps(s) = C^{-1} \left( \epsilon + \sum_{i = 1}^{K} \frac{{\rm erf}
      \big( \epsilon^{-1}(\theta_i - |s-c_{i}|) \big)}{4 \pi \theta_i^2} \right)\,,
  \qquad \forall s \in \Z,
\end{equation}
where $|\cdot|$ is the two dimensional Euclidean norm,
$K$ is the number of circular clusters, $c_i$ denotes  
the $i^{th}$ cluster center,  $\theta_i$ the $i^{th}$ cluster 
radius, and $C$ is a normalizing parameter to make sure that
$\varrho_{\eps}$ is a probability distribution.
In Subsections~\ref{ssec:b} and \ref{ssec:bp} we consider two clusters
with parameters
$c_1 = (-0.5,0.0)$, $\theta_1=0.25$, $c_2 = (0.5,0.3)$, and $\theta_2=0.25$ as shown in Figure~\ref{fig:density-plots-two-clusters}(a).
In Subsection~\ref{ssec:3c} we consider three and five clusters
adding the point 
 $c_3 = (0.4,-0.5)$ with radius $\theta_3=0.15$, to make
three clusters, and then adding $c_4 = (-0.35,0.65)$
and $c_5 = (-0.6,-0.6)$ with radii $\theta_4=0.20$ and 
$\theta_5=0.15$, to generate five clusters.
 We plot the resulting densities in Figure~\ref{fig:density-plots-three-five-clusters}.


\subsection{Binary Balanced Case: $q = p + r$}
 \label{ssec:b}
In Figure~\ref{fig:balanced01}(a) we plot $\sigma_{2, \eps}$ in
the balanced case $r=p$, $q = p+r$ and $p \in [0.5,2]$. For a given value of $p$ each symbol denotes the numerical approximation to $\sigma_{2,\eps}$, and the line denotes the best fit determined via linear regression; in the
regression we only use data from $\eps \le 0.025$ as consistent asymptotic
behavior for $\eps \downarrow 0$ is observed in this regime. 
Theorem~\ref{thm:L-eps-low-eigenvals}(ii) predicts that $\sigma_{2,\eps} = \mcl O( \eps^{q - \beta})$
for arbitrarily small $\beta >0$. Then we expect to observe a slope of  approximately $2p$ for each
set of simulations. We report the numerical slopes in brackets in the legend of Figure~\ref{fig:balanced01}(a), and compare the numerical slopes to the 
analytic prediction in the first four rows 
of Table~\ref{tab:balanced-r-eq-p}.

In Figure~\ref{fig:balanced01}(b), we plot the ratio $\sigma_{2,\epsilon}/\sigma_{3,\epsilon}$
for different values of $\eps$. By Corollary~\ref{cor:ratiogap}
we expect $\sigma_{3, \eps}$ to be uniformly bounded away from zero
implying that $\sigma_{2,\eps}/\sigma_{3,\eps}  =  \mcl O(\eps^{q- \beta})$
and so the numerical slopes  in Figure~\ref{fig:balanced01}(b) should be close to $2p$ 
We compare the numerical slopes to the analytic slopes for the  spectral ratio gap in
the first four rows 
of Table~\ref{tab:balanced-r-eq-p}.

In Figure \ref{fig:balanced01}(c,d) we repeat the above study of the second and third
   eigenvalues for the balanced case $q = p +r$
   but this time we fix  $r=0.5$  and vary $p \in (0.5,2)$.
   We see similar results to Figure~\ref{fig:balanced01}(a,b) in that the numerical slopes
   are in good agreement with the predicted slopes  of $q = p +r$. We
   compare the numerical and analytic slopes for  
   this experiment in the last three rows 
of  Table~\ref{tab:balanced-r-eq-p}.

In summary we note that, in this binary balanced
setting the numerical experiments match the theory,
quantitatively. 
 The slopes are less accurate for 
higher values of $p$. We attribute this to the smaller 
values of the eigenvalues in these cases, which are
evaluated with less numerical precision.

\subsection{Binary Unbalanced Case: $q > p +r $}
\label{ssec:bp}
We now turn our attention to the spectrum of $\mcl L_\eps$ when $q > p +r$.
In Figure~\ref{fig:unbalanced01}(a, b)
we plot the
second eigenvalue $\sigma_{2,\eps}$ and the ratio
$\sigma_{2,\eps}/\sigma_{3,\eps}$
 for $p = r = 0.5$ and vary $q$ in the range $(1.5,3)$.
As before we fit a line to the computed values of the eigenvalue and the ratio
for each value of $q$ and report the numerical slope in brackets in the legend; once again we
fit the line to data points with $\eps \le 0.025$ where the 
$\eps \downarrow 0$ regime is manifest.
We observe that $\sigma_{2,\eps} = \mcl O(\eps^q)$  as in the
balanced case while the ratio  $\sigma_{2,\eps} /\sigma_{3,\eps} = \mcl O(\eps^{p+r})$
  which is better than  the predicted $\mcl O(\eps^{ 2(p +r) -q})$ rate 
  in Corollary~\ref{cor:ratiogap}. As mentioned earlier, these results suggest
that the lower bound on $\sigma_{3,\eps}$ in Theorem~\ref{thm:L-eps-low-eigenvals}(iii) can be sharpened to match the upper bound.
In Figure~\ref{fig:unbalanced01}(c, d), we consider another case with $q > p+r$ but this
time we fix $r=0.5$ vary $ p \in (0.5,2)$ and take  $q = p +1$. Once again we observe that $\sigma_{2,\epsilon} \sim \epsilon^q $, which is consistent with Theorem~\ref{thm:L-eps-low-eigenvals}(ii),
and $\sigma_{2,\epsilon}/\sigma_{3,\epsilon} \sim \epsilon^{p+r}$, which is better than the 
predicted rate in Theorem~\ref{thm:L-eps-low-eigenvals}(iii); again 
the results suggest
that the lower bound on $\sigma_{3,\eps}$  can be sharpened
to match the upper bound.
We compare the numerical slopes with the analytic upper bounds 
and with the conjectured $\mcl O(\eps^{p+r})$ rate for the  spectral ratio gap in Table~\ref{tab:unbalanced}.

In summary we note that, in this binary unbalanced
setting the numerical experiments are consistent with
the theory insight that only a spectral ratio gap will manifest between
the second and third eigenvalues. Furthermore, these experiments suggest that the lower
and upper bounds on the third eigenvalue should match,
suggesting  tighter bounds on the spectral ratio gap could be achievable \changed{forming the foundation for the first component of
 Conjecture~\ref{conj:SG} in Subsection~\ref{sec:contribution-2}.}

\changed{
\subsection{Binary Unbalanced Case: $q < p +r $}
\label{ssec:bp2}

Next we turn our attention to the spectrum of $\L_\eps$ when $q < p +r$. Figure~\ref{fig:unbalanced01-qlepr}(a,b) shows the
second eigenvalues $\sigma_{2,\eps}$ as well as the ratio $\sigma_{2,\eps}/\sigma_{3,\eps}$  for $p =r =1$ and $q \in [0.5, 1.5]$. 
Once again we fit a line to the computed values of the eigenvalues and the ratios and report the slopes within brackets in the legends.
We observe that $\sigma_{2,\eps} = \mcl O(\eps^q)$ as in the $q \ge p +r$  
cases; however we also notice that the ratio $\sigma_{2,\eps}/\sigma_{3,\eps} = \mcl O(\eps^q)$,
an observations which suggests that Corollary~\ref{cor:ratiogap}(ii) can be
improved; this in turn would be possible if we could sharpen
our lower bound on $\sigma_{3,\eps}$ in Theorem~\ref{thm:L-eps-low-eigenvals}(iii)
to match the upper bound, resulting in a uniform spectral gap.

Figure~\ref{fig:unbalanced01-qlepr}(c,d) shows further examples with $q < p +r$ this time with $r =1$ fixed
and taking $q = p \in [0.5, 2.0]$. Once again we observe that $\sigma_{2,\eps} = \mcl O(\eps^q)$ while $\sigma_{2,\eps}/\sigma_{3,\eps} = \mcl O(\eps^q)$
as well, further reaffirming our conjecture that the lower bound in Theorem~\ref{thm:L-eps-low-eigenvals}(iii)
is too pessimistic. We compare the
analytic and numerical slopes for the second eigenvalues as well as the spectral ratio in Table~\ref{tab:unbalanced-qlepr}.

\changed{
To summarize we derive two  conclusions in this unbalanced case:  First, our bounds on the second eigenvalue $\sigma_{2,\eps}$ are
sharp but our bounds on the spectral ratio $\sigma_{2,\eps}/\sigma_{3,\eps}$ are not sharp similarly to the $q > p+r $ case and due to the fact that our lower bound on $\sigma_{3,\eps}$
is too pessimistic. Second, followed by this observation we expect a uniform spectral gap to manifest
between the second and third eigenvalues in the unbalanced regime where $q < p +r$, similarly to the
balanced regime $q = p +r$. These observations further support the
first component of Conjecture\ref{conj:SG}
from Subsection~\ref{sec:contribution-2}.
}
}

\subsection{Multiple Clusters}
\label{ssec:3c}
\changed{
We now consider two densities $\varrho_\eps$ which concentrate, 
respectively, on three and five clusters for small $\eps$; the
quantitative details are given in \eqref{numerics-rho-def}
and the text following; see  Figure~\ref{fig:density-plots-three-five-clusters}.
In Figures~\ref{fig:balanced01-three-clusters} and \ref{fig:balanced-five-clusters} we
display the behavior of the $K^{th}$ eigenvalue
and the  spectral ratio gap related to it, for $K=3$ and $K=5$
respectively.  In both cases we let $q = p +r$ and
plot $\log( \sigma_{K, \eps})$ and $\log( \sigma_{K, \eps }/ \sigma_{K+1, \eps})$ against
$\log(\eps)$. The numerics are consistent with the hypothesis that 
$\sigma_{K, \eps} \sim \sigma_{K,\eps}/\sigma_{K+1,\eps} \sim \mcl O(\eps^q)$. 
  This suggests a natural extension of 
Theorem~\ref{thm:L-eps-low-eigenvals} and Corollary~\ref{cor:ratiogap} from the binary case to multiple clusters.
}

\changed{
In Figures~\ref{fig:unbalanced01-three-clusters}-~\ref{fig:unbalanced-five-clusters-qlepr}  we collect
similar results for the unbalanced regime where $q \neq p +r$. Once again we
see strong evidence that the multi-cluster setting behaves similarly to the binary case
in that $\sigma_{K, \eps} \sim \eps^q$ while $\sigma_{K,\eps}/\sigma_{K+1,\eps} \sim \eps^{p+r}$ when $q  > p +r$ and
$\sigma_{K,\eps}/\sigma_{K+1,\eps} \sim \eps^{q}$ when $q < p +r$ 
in both the three and five cluster cases.
We provide further evidence for this conjecture in Tables~\ref{tab:slopes-three-clusters}
and \ref{tab:slopes-five-clusters} where we collect numerical approximations to the
above rates for different choices of $p,q,r$ in the balanced and unbalanced regimes.
The above results lead to second component of 
Conjecture~\ref{conj:SG} appearing in 
Subsection~\ref{sec:contribution-2}.
}

\section{From Discrete To Continuum}\label{app:AA}

\changed{
In this section we present formal calculations, and numerical
experiments, demonstrating that the operators
of the form $\L$ in \eqref{general-weighted-Laplacian} arise as
the large data limit of $L_N$ as in \eqref{defLN-intro} for parameters
$(p,q,r) \in \mbb R^3$, and for a density $\varrho$ supported on $\Z$ 
according to which the
vertices $\{x_n\}_{n=1}^N$ are i.i.d. Subsection~\ref{sec:discrete-setting}
discusses the construction of the discrete operators $L_N$ and their properties including
self-adjointness and invariance of the spectrum under parameter choices.
Subsection~\ref{sec:cvEnergies} outlines a roadmap for rigorous proof of convergence of
$L_N$ to $\L$ in the framework of \cite{trillos2016variational, slepvcev2017analysis, trillos2018error}
through study of the convergence of Dirichlet energies, using
the law of large numbers and localization of the weights. These arguments reveal the relationship between the discrete and continuum eigenproblems as well as the correct scaling needed in the discrete setting for the spectra to converge, the topic of Subsection~\ref{sec:discrete-vs-continuum-eigenproblems}. 
In Subsection~\ref{sec:discrete-numerics} we present numerical experiments
demonstrating the convergence of discrete graph Laplacians to continuum limit operators
of the form \eqref{general-weighted-Laplacian}, as well as manifestations of
the theoretical results of Section~\ref{sec:sa} in the discrete $N < +\infty$ setting.

\subsection{The Discrete Operator $L_N$}\label{sec:discrete-setting}
Let $X_N \in \mbb R^{d \times N}$ denote the matrix with columns $\{x_n\}_{n=1}^N$
sampled i.i.d. from a density $\varrho$ on some domain $\Z$. 
Following \cite{stuart-zeronoiseSSL}, we define a similarity graph on $X_N$
by defining a weighted  similarity matrix
$\tilde W_N$  with entries
$$
\tilde{W}_{ij} = \left\{
  \begin{aligned}
    &\eta_\delta(|x_i - x_j|)\,, \quad & i \neq j,\\
      & 0 & i = j,
  \end{aligned}
\right.
$$ 
where $|\cdot|$ denotes the Euclidean norm, $\eta_\delta(\cdot)=\delta^{-d}\eta(\cdot/\delta)$ for a suitably chosen edge weight profile $\eta:\R_{\ge 0}\to\R_{\ge 0}$ 
that is non-increasing, continuous at zero and has bounded second moment.
Furthermore, let $\tilde D_N = \text{diag}(\tilde d_i)$ where $\tilde d_i := \sum_{j=1}^N \tilde W_{ij}$ is the degree of node $i$. Since  $\eta_\delta$
is approximately a Dirac distribution for small $\delta>0$ it follows
that $\tilde d_i$ is an empirical approximation of  $\varrho(x_i)$. Without loss of generality we assume that the resulting similarity graph has no isolated points: $\tilde d_i>0$ for all $i$.
For $q\in \R$, we  introduce
the matrix $W_N=W_N(q)$, a re-weighting of  $\tilde W_N$, with entries
$$
 W_{ij}= \frac{\tilde W_{ij}}{\tilde{d_i}^{1-q/2}\tilde{d_j}^{1-q/2}},
$$
with corresponding degree matrix  $D_N = \text{diag}(d_i)$ where $d_i := \sum_{j=1}^N  W_{ij}$.
We now define the graph Laplacian  $L_N$ as in \eqref{defLN-intro}
for $(p,q,r) \in \mbb R^3$,
\begin{equation*}
L_N := 
\begin{cases}
D_N^{\frac{1-p}{q-1}}\left(D_N- W_N\right)D_N^{-\frac{r}{q-1}},
&\text{ if } q\neq 1\,,\\
D_N-W_N, &\text{ if } q= 1.\\
\end{cases}
\end{equation*}
Let $\langle \cdot, \cdot \rangle$ denote the usual Euclidean inner product.
Given a symmetric matrix $A\in\R^{N\times N}$ and vectors $\bu,\bv\in\R^N$, 
we define
$$ \langle \bu, \bv\rangle_A := \bu^TA \bv\,.$$
The matrix $L_N$ is not self-adjoint with respect to the
Euclidean inner product for general $(p, q, r)$ but it is self-adjoint 
with respect to the following  $(p,q,r)$-weighted inner product:
$$
\langle\cdot\,,\,\cdot\rangle_{(p,q,r)}:=
\begin{cases}
\langle\cdot\,,\,\cdot\rangle_{D_N^{\frac{p-1-r}{q-1}}} &\text{ if } q\neq 1\,,\\
\langle\cdot\,,\,\cdot\rangle &\text{ if } q=1\,.
\end{cases}
$$
More precisely, in the case $q\neq 1$, writing $\bv=D_N^{-\frac{r}{q-1}}\bu$ yields
\begin{align}
    \langle \bu, L_N \bu\rangle_{(p,q,r)}
    &= \langle  D_N^{\frac{p-1}{q-1}}\bv,D_N^{\frac{1-p}{q-1}}\left(D_N-W_N\right) \bv \rangle 
    = \langle \bv,\left(D_N-W_N\right) \bv \rangle \notag\\
    &=\frac{1}{2}\sum_{i,j} W_{ij}\left|v_i-v_j\right|^2
    =\frac{1}{2}\sum_{i,j} W_{ij}\left|\frac{u_i}{d_i^{r/(q-1)}}-\frac{u_j}{d_j^{r/(q-1)}}\right|^2\,.\label{Dirichlet1}
\end{align}
If $q=1$, we have instead
\begin{align*}
    \langle \bu, L_N \bu\rangle_{(p,1,r)}
    = \langle \bu,\left(D_N-W_N\right) \bu\rangle
    =\frac{1}{2}\sum_{i,j} W_{ij}\left|u_i-u_j\right|^2\,.
\end{align*}
It immediately follows that the first eigenvalue of $L_N$ is zero with corresponding eigenvector
$\pmb \varphi_1=D_N^{r/(q-1)}\mbf{1}$ if $q\neq 1$ and $\pmb \varphi_1=\mbf{1}$ if $q=1$, where $\mbf{1}$ denotes the constant vector of ones. The symmetric expression \eqref{Dirichlet1} also shows why the graph Laplacian is a useful tool 
for spectral clustering: If the corresponding similarity graph has more than one disconnected component, then
choices of $u_i$ that take different constant multiples of $d_i^{r/(q-1)}$ 
(if $q\neq 1$; different constants if $q=1$) on each component of the graph 
set $ \langle \bu, L_N \bu\rangle_{(p,q,r)}$ to zero. As a consequence, a
simple continuity argument (highlighted in \cite{Ng01onspectral}) 
demonstrates that the eigenvectors corresponding 
to the low lying spectrum of $L_N$ contain information about the clusters 
in  $X_N$. Note also that for the more common parameter choices $(p,q,r)= (1,2,0)$, $(3/2,2,1/2)$ and $(1,1,0)$ discussed in the introduction (see Subsection~\ref{ssec:BLR}), the weighted inner product $\langle\cdot\,,\,\cdot\rangle_{(p,q,r)}$ reduces to the usual Euclidean inner product. We say $(\sigma,\bu)$ is an eigenpair of  $L_N$ for parameters $(p,q,r)$ if
\begin{equation*}
\langle  L_N \bu,\bv\rangle_{(p,q,r)}=\sigma \langle \bu, \bv\rangle_{(p,q,r)}\qquad \forall \bv \in \R^N\,,
\end{equation*}
and thanks to the assumption that $\tilde d_i>0$ for all $i$, this statement is equivalent to the matrix equality $L_N \bu = \sigma \bu$.

\begin{remark}\label{rmk:discrspec}
The spectra of two graph Laplacians with  parameters $(p_1,q_1,r_1)$ and $(p_2,q_2,r_2)$  are identical if
\begin{equation}\label{1=2}
p_1+r_1=p_2+r_2\,,\qquad q_1=q_2\,.
\end{equation}
This is true both in the discrete setting for the family $L_N$ defined in \eqref{defLN-intro}, and in the continuum limit for the family of weighted elliptic operators $\mcl L$ defined in \eqref{general-weighted-Laplacian}. Here, we focus on the discrete setting; the argument in the continuum limit is analogous. 

To see that this result holds, let $L^i_N$ denote the graph Laplacian defined by 
\eqref{defLN-intro} with parameters $(p_i,q_i,r_i)$, for $i=1,2.$ 
The second condition in \eqref{1=2} ensures that the weights 
$W_N$ and degrees $D_N$ are the same for both graph Laplacians 
and the first condition suffices to make their spectra identical.

Indeed, assume that $(\sigma,\bu)$ is an eigenpair of $L_N^1$ 
in the $(p_1,q_1,r_1)$-inner product,
$$
\langle L_N^1 \bu\,,\,\bu\rangle_{(p_1,q_1,r_1)}=\sigma\langle \bu\,,\,\bu \rangle_{(p_1,q_1,r_1)} \,.
$$
Defining
$\tilde \bu:= D_N^{\frac{1}{2}\left(\frac{p_1-1-r_1}{q_1-1}-\frac{p_2-1-r_2}{q_2-1}\right)}\bu
= D_N^{\frac{p_1-p_2}{q_1-1}}\bu\,,$
we have
\[ \langle \bu\,,\,\bu \rangle_{(p_1,q_1,r_1)} =\langle \tilde \bu\,,\,\tilde \bu \rangle_{(p_2,q_2,r_2)} \,.\]
Now writing 
$ \bv:=D_N^{-\frac{r_1}{q_1-1}} \bu\,$ and 
$\tilde \bv := D_N^{-\frac{r_2}{q_2-1}} \tilde \bu$
we realize that $\tilde \bv=\bv$ for parameter choices $(p_1,q_1,r_1)$ and $(p_2,q_2,r_2)$ satisfying \eqref{1=2}. We conclude that
\begin{align*}
\langle L_N^2 \tilde \bu\,,\,\tilde \bu \rangle_{(p_2,q_2,r_2)} 
&= \langle (D_N-W_N) \tilde \bv\,,\,\tilde \bv \rangle
    = \langle (D_N-W_N) \bv\,,\,\bv \rangle\\
    &= \langle L_N^1 \bu\,,\,\bu \rangle_{(p_1,q_1,r_1)} 
    =  \sigma \langle \bu\,,\,\bu \rangle_{(p_1,q_1,r_1)} \\
    &= \sigma  \langle \tilde \bu\,,\,\tilde \bu \rangle_{(p_2,q_2,r_2)} 
\end{align*}
and so $(\sigma,\tilde \bu)$ is an eigenpair of $L_N^2$ in the $(p_2,q_2,r_2)$-inner product.
\end{remark}

\begin{remark}
There are a number of graph-based algorithms which proceed
by making a preliminary density estimate via a preliminary weight
matrix $\tilde{W}$. In the approach described above, and when $q<2$, 
the rescaling of the weights from $\tilde{W}$ to $W$ enlarges 
affinities between points in regions of low sampling density; this 
adds robustness to graph-based algorithms, minimizing unwanted impact 
from outliers in the tails of $\varrho$. This is sometimes
also achieved through a rescaling within $\eta_{\delta}$ defining
$$
{W}_{ij} = \left\{
  \begin{aligned}
    &\eta_\delta\bigl(\tilde{d_i}^{1-q/2}\tilde{d_j}^{1-q/2}|x_i - x_j|\bigr)\,, \quad & i \neq j,\\
      & 0 & i = j.
  \end{aligned}
\right.
$$
This idea of variable bandwidth
originates in the statistical density estimation literature
\cite{loftsgaarden1965nonparametric,terrell1992variable} and 
was introduced to the machine learning community, in the context of
graph based data analysis, in \cite{zelnik2005self}. It would be of
interest to study limiting continuum operators in this context. Analysis
that is relevant to this question is undertaken in \cite{berry2016variable}
where aspects of the work of \cite{CoifmanLafon2006} are generalized 
to the variable bandwidth setting.  

\end{remark}


\subsection{Convergence of Dirichlet Energies}\label{sec:cvEnergies}

In this subsection, we describe why we expect the spectra of discrete operators $L_N$
to converge to the weighted Laplacian operator $\mL$. In simple terms, the limit rests on using
the law of large numbers to capture the large data limit $N \to \infty$, 
in tandem with localizing the weight functions $\eta_{\delta}$
by sending $\delta \to 0$ so that they behave like Dirac measures. To make
these ideas rigorous the two limits need to be carefully linked. Here,
however, we simply provide intuition about the role of the two limiting
processes, considering first large $N$ and then small $\delta$.

For a vector $\bu\in\R^N$, we define the \emph{discrete weighted Dirichlet energy} $E_{N,\delta}:\R^N\to [0,\infty)$,
$$
E_{N,\delta}(\bu)
    := \frac{N^{2r-q}}{\delta^2} \langle \bu,L_N \bu\rangle_{(p,q,r)},
$$
This  energy can be extended to functions defined
on $\Z$. To achieve this, for $u: \Z \to \R$, we write $u_i:=u(x_i)$.
Our aim is to study the limiting behavior of the functional $E_{N,\delta}$ as $N\to \infty$ and $\delta\to 0$ on a formal level. 
In the limit, we obtain 
the \emph{continuous weighted Dirichlet energy}
$E:L^2(\Z, \varrho^{p-r})\to[0,\infty]$
defined
as
\begin{equation*}
    E(u):=
    \begin{cases}
\frac{1}{2}   \langle u,\mcl L u\rangle_{\rho^{p-r}}
    &\text{ if }\, u\in H^1(\Z, \varrho) \,,\\
    \infty
    &\text{ if }\, u\in  L^2(\Z, \varrho^{p-r})\setminus  H^1(\Z, \varrho)\,,
    \end{cases}
  \end{equation*} 
Once the convergence of the Dirichlet energies has been established, generalizations of the results in \cite{calder2019improved, trillos2018error, trillos2016variational, wormell2020spectral} is possible. 

The set of feature vectors $X_N$ induces the empirical measure
$
\mu_N= \frac{1}{N}\sum_{i=1}^N\delta_{x_i}
$,
which allows to define the weighted Hilbert space $L^2(\Z,\mu_N)$ with inner product
$$
\langle u,v\rangle_{L^2(\Z,\mu_N)}=\int_\Z u(x)v(x)\, d\mu_N(x)=\frac{1}{N} \sum_{i=1}^Nu(x_i)v(x_i)\,.
$$
Since the feature vectors $x_i$ are i.i.d. according to the law $\varrho$,
we have $d\mu_N(x)\rightharpoonup\varrho(x)dx$ as $N\to\infty$. Further, we introduce the functions $\tilde{d}^{N,\delta},d^{N,\delta}: \Z \to\R$ as follows:
\begin{align*}
    &\tilde{d}^{N,\delta}(x):=\int_{\Z}\eta_\delta(|x-y|)\,d\mu_N(y)\,,\\
    &d^{N,\delta}(x):=\int_{\Z}\frac{\eta_\delta(|x-y|)}{\left(\tilde{d}^{N,\delta}(x)\right)^{1-q/2}\left(\tilde{d}^{N,\delta}(y)\right)^{1-q/2}}\,d \mu_N(y)\,.
\end{align*}
Note that
\begin{equation*}
    \tilde{d_i}=N\tilde{d}^{N,\delta}(x_i)\,,\qquad 
    d_i=N^{q-1}d^{N,\delta}(x_i)\,.
\end{equation*}
For a vector $\bu\in\R^N$, we can then rewrite the discrete weighted Dirichlet energy $E_{N,\delta}$
using \eqref{Dirichlet1} (case $q\neq 1$):
\begin{align*}
    E_{N,\delta}(\bu)
    &:= \frac{N^{2r-q}}{\delta^2} \langle u,L_N u\rangle_{(p,q,r)}
    = \frac{N^{2r-q}}{2\delta^2} \sum_{i,j} W_{ij}\left|\frac{u_i}{d_i^{r/(q-1)}}-\frac{u_j}{d_j^{r/(q-1)}}\right|^2\\
    &= \frac{N^{2r-q}}{2\delta^2} \sum_{i,j}  \left(\frac{\tilde{W}_{ij}}{\tilde{d_i}^{1-q/2}\tilde{d_j}^{1-q/2}}\right)\left|\frac{u_i}{d_i^{r/(q-1)}}-\frac{u_j}{d_j^{r/(q-1)}}\right|^2\\
    &= \frac{1}{2\delta^2N^2} \sum_{i,j}  \left(\frac{\eta_\delta(|x_i-x_j|)}{\left(\tilde{d}^{N,\delta}(x_i)\right)^{1-q/2}\left(\tilde{d}^{N,\delta}(x_j)\right)^{1-q/2}}\right)  \\
   & \qquad \qquad \times \left|\frac{u_i}{\left(d^{N,\delta}(x_i)\right)^{r/(q-1)}}-\frac{u_j}{\left(d^{N,\delta}(x_j)\right)^{r/(q-1)}}\right|^2\,.
\end{align*}
This formulation allows us to extend $E_{N,\delta}$ from vectors to functions on $\Z$. More precisely, for $u:\Z \to \R$, we have
\begin{align}\label{Dirichlet2}
    E_{N,\delta}(u)
     = \frac{1}{2\delta^2} \iint_{\Z\times\Z}& \left(\frac{\eta_\delta(|x-y|)}{\left(\tilde{d}^{N,\delta}(x)\right)^{1-q/2}\left(\tilde{d}^{N,\delta}(y)\right)^{1-q/2}}\right)\notag\\
   &\times \left|\frac{u(x)}{\left(d^{N,\delta}(x)\right)^{r/(q-1)}}-\frac{u(y)}{\left(d^{N,\delta}(y)\right)^{r/(q-1)}}\right|^2\,d\mu_N(x) d\mu_N(y)\,.
\end{align}
Now notice that, by the law of large numbers,
$$
\tilde{d}^{N,\delta}(x)\to \tilde{d}^{\delta}(x)\,,\qquad
d^{N,\delta}(x)\to d^{\delta}(x) \qquad \text{ as } N\to \infty\quad \forall x \in \Z\,,
$$
where the functions $\tilde{d}^{\delta},d^{\delta}: \Z \to\R$ are given by
\begin{align*}
    &\tilde{d}^{\delta}(x):=\int_{\Z}\eta_\delta(|x-y|)\varrho(y)\,dy\,,\qquad
    d^{\delta}(x):=\int_{\Z}\frac{\eta_\delta(|x-y|)}{\left(\tilde{d}^{\delta}(x)\right)^{1-q/2}\left(\tilde{d}^{\delta}(y)\right)^{1-q/2}}\varrho(y)\,dy\,.
\end{align*}
Define
\begin{equation}\label{kernelpara}
s_0:=\int_{\Z}\eta(|x|)\,dx\,,\qquad
s_2:=\int_{\Z}|e_1\cdot x|^2\eta(|x|)\, dx\,,
\end{equation}
with $e_1$ denoting the first unit standard normal vector in $\R^d$.
Taking $\delta \to 0$ as a second step, we obtain
$$
\tilde{d}^{\delta}(x)\to s_0 \varrho(x)\,,\qquad
{d}^{\delta}(x)\to s_0^{q-1} \varrho^{q-1}(x)\qquad \forall x\in \Z \,.
$$
Therefore, for smooth enough $u: \Z \to \R$, expression \eqref{Dirichlet2} allows us to estimate
\begin{align*}
E_{N,\delta}(u)&= \frac{1}{2\delta^2} \iint_{\Z\times\Z} \left(\frac{\eta_\delta(|x-y|)}{\left(\tilde{d}^{N,\delta}(x)\right)^{1-q/2}\left(\tilde{d}^{N,\delta}(y)\right)^{1-q/2}}\right) \\ 
  & \qquad \qquad \times \left|\frac{u(x)}{\left(d^{N,\delta}(x)\right)^{r/(q-1)}}-\frac{u(y)}{\left(d^{N,\delta}(y)\right)^{r/(q-1)}}\right|^2\,d\mu_N(x) d\mu_N(y)\\
&\stackrel{N\gg 1}{\approx}
 \frac{1}{2\delta^2} \iint_{\Z\times\Z} \left(\frac{\eta_\delta(|x-y|)}{\left(\tilde{d}^{\delta}(x)\right)^{1-q/2}\left(\tilde{d}^{\delta}(y)\right)^{1-q/2}}\right) \\
  & \qquad \qquad \times  \left|\frac{u(x)}{\left(d^{\delta}(x)\right)^{r/(q-1)}}-\frac{u(y)}{\left(d^{\delta}(y)\right)^{r/(q-1)}}\right|^2\varrho(x)\varrho(y)\,dxdy\\
&\stackrel{\delta \ll 1}{\approx}
\frac{1}{2\delta^2} \iint_{\Z\times\Z} \left(\frac{\eta_\delta(|x-y|)}{\left(\tilde{d}^{\delta}(x)\right)^{1-q/2}\left(\tilde{d}^{\delta}(y)\right)^{1-q/2}}\right) \\
  &\qquad \qquad \times  \left|\nabla\left(\frac{u(x)}{\left(d^{\delta}(x)\right)^{r/(q-1)}}\right)\cdot(x-y)\right|^2\varrho(x)\varrho(y)\,dxdy\\
&\stackrel{\delta \ll 1}{\approx}
\frac{1}{2}\frac{s_2}{s_0^{2r+2-q}} \int_{\Z} \frac{1}{\varrho(x)^{2-q}}
    \left|\nabla\left(\frac{u(x)}{\varrho(x)^r}\right)\right|^2\varrho(x)^2\,dx\\
&\,\,= \frac{1}{2}\frac{s_2}{s_0^{2r+2-q}} \int_{\Z}
    \left|\nabla\left(\frac{u(x)}{\varrho(x)^r}\right)\right|^2\varrho(x)^q\,dx
= \frac{s_2}{s_0^{2r+2-q}}E(u)\,.
\end{align*}
This is the desired result. To develop a theorem based on these 
calculations requires taking $N \to\infty$ concurrently with $\delta\to 0$,
and may be done in the framework of \cite{calder2019improved,trillos2018error,wormell2020spectral}.

\begin{remark}\label{k-nn-graph-remark}
 While the above arguments  primarily concern  proximity graphs; the
  method of proof in \cite{calder2019improved} is more general and can be applied to
  $k$-NN graphs as well. However, the resulting limiting process gives a different relationship
  between the continuum operator $\mcl L$ with a certain choice of $(p,q,r)$ and the
  correct normalization of the discrete Laplacian $L_N$. 
\end{remark}

\begin{remark}
Not all graph Laplacian normalizations lead to differential operators of the type \eqref{general-weighted-Laplacian} in the large data limit, and this is the motivation for introducing the parameters $(p,q,r)$ as graph Laplacian weightings of type ~\eqref{defLN-intro}. For example, the operator $D_N^{-s}(D_N-W_N)D_N^{-t}$ with $q=1$ does \emph{not} correspond to a continuum operator of type \eqref{general-weighted-Laplacian} in the same large data limit, for any choice of $s,t\in \R \setminus \{0\}$.
\end{remark}

\subsection{Discrete vs Continuum Eigenproblems}
\label{sec:discrete-vs-continuum-eigenproblems}

  In this subsection, we make explicit the relationship between the discrete and continuum eigenproblems
 and highlight
  the correct scaling needed in the discrete setting for the spectra to converge. Let $(\sigma, \varphi)$
  be an eigenpair of $\mL$ and take a test function $\phi \in H^1(\Z, \varrho)$. The arguments in  Subsection~\ref{sec:cvEnergies} show that for vectors $\bu, \bv \in \mbb R^N$ where $u_i = \varphi(x_i), v_i = \phi(x_i)$, we have
$$
 \frac{N^{2r-q}}{\delta^2} \langle L_N \bu,\bv\rangle_{(p,q,r)} 
\stackrel{N\gg 1, \delta \ll 1}{\approx}
\frac{s_2  }{2 s_0^{ 2r + 2 -q}} \Langle \mL\varphi,\phi\Rangle_{\varrho^{p-r}}.
$$
With a similar argument, one can identify the continuum analogue of the weighted inner product
$\langle\bu,\bv\rangle_{(p,q,r)}$ by rewriting it in terms of  $\varphi$ and $\phi$:
\begin{align*}
N^{r-p} \langle \bu,\bv\rangle_{(p,q,r)}
&= N^{r-p} \langle \bu,D^{\frac{p-r-1}{q-1}} \bv\rangle
= N^{r-p} \sum_{i=1}^N \bu_i \bv_i d_i^{\frac{p-r-1}{q-1}}\\
&=N^{r-p} \sum_{i=1}^N \varphi(x_i) \phi(x_i) N^{p-r-1} \left(d^{N,\delta}(x_i)\right)^{\frac{p-r-1}{q-1}}\\
&=\int_\Z \varphi(x) \phi(x) \left(d^{N,\delta}(x)\right)^{\frac{p-r-1}{q-1}}\,d\mu_N(x)\,.
\end{align*}
Recall from Subsection~\ref{sec:cvEnergies} that by the law of large numbers,
$
d^{N,\delta}(x)\to d^{\delta}(x)$ as $ N\to \infty,$
and taking $\delta \to 0$ as a next step, we obtain
${d}^{\delta}(x)\to s_0^{q-1} \varrho^{q-1}(x)$.
Therefore,
\begin{align*}
N^{r-p} \langle \bu,\bv\rangle_{(p,q,r)}
&\stackrel{N\gg 1}{\approx} \int_\Z \varphi(x) \phi(x) \left(d^{\delta}(x)\right)^{\frac{p-r-1}{q-1}}\varrho(x)\,dx\\
&\stackrel{\delta\ll 1}{\approx} s_0^{p-r-1} \int_\Z \varphi(x) \phi(x)\varrho(x)^{p-r}\,dx\,.
\end{align*}
In other words, for an eigenpair $(\tilde \sigma_{N,\delta},\bu)$ of the weighted graph Laplacian matrix $L_N$ solving
\begin{align}\label{epb_discrete}
\langle L_N \bu, \bv\rangle_{(p,q,r)} 
= \tilde \sigma_{N,\delta} \langle \bu,\bv\rangle_{(p,q,r)}\,,\qquad \forall \bv\in\R^N
\end{align}
we expect that 
\begin{equation*}
  \frac{2 s_0^{p + r -q + 1}}{\delta^2 N^{q- p - r} s_2} \tilde{\sigma}_{N,\delta} \to \sigma, \text{ as } N \to \infty, \delta \to 0,
\end{equation*}
where $\sigma$ is an eigenvalue of $\mL$,
\begin{equation*}
\langle\mL\varphi,\phi\rangle_{\varrho^{p-r}}=\sigma \langle\varphi,\phi\rangle_{\varrho^{p-r}}\,.
\end{equation*}
These considerations imply that the discrete eigenvalues of $L_N$ need to be
scaled appropriately in order to converge to the eigenvalues of $\mL$.

\begin{remark}
It is shown in the papers
\cite{calder2019improved, trillos2016variational, slepvcev2017analysis, trillos2018error} 
that for the parameter choices $(p,q,r)=(1,2,0)$ and $(3/2,2,1/2)$ 
and in the limit as $N \to \infty$ and $\delta:=\delta_N\to 0$ at 
an appropriate rate with $N$, the discrete operators $L_N$ converge
to  $\mcl{L}$ on $\Z.$ Those papers analyze the  convergence of
 the Dirichlet forms associated with $L_N$ (defined with respect to
real-valued functions on the vertices $X_N$) to those associated
with $\mcl{L}$ (defined with respect to real-valued functions on
$\Z$). In particular,  \cite{trillos2016variational, slepvcev2017analysis}
use $\Gamma$-convergence arguments based on the $TL^2$ topology to prove convergence.
This topology may be used to study $\Gamma-$limits of other
non-quadratic functionals defined with respect to
real-valued functions on the graph -- see \cite{stuart-zeronoiseSSL},
for example.
A similar methodology can be applied to show convergence of $L_N$ 
to  $\mcl{L}$
for any choice of parameters $(p,q,r)\in \R^3$. However, the $\Gamma$-convergence
framework does not result in rates of convergence for eigenvalues and eigenvectors of $L_N$
making it difficult to extend continuum analyses, such as our Main Result~\ref{main-result}, to practical discrete problems. In contrast,
the more recent articles \cite{calder2019improved,trillos2018error,wormell2020spectral}
take a more direct approach to proving the convergence of $L_N$ to $\mcl L$ and obtain
rates. The rigorous study of this limiting procedure for the general $(p,q,r)$ family of operators is the subject of future research.
\end{remark}

\begin{remark}
  The fact that the scaling factor in front of $\tilde \sigma_{N,\delta}$  has a dependence on $N^{p + r -q}$
    once again highlights  the special role of the balanced case $q=p+r$.
\end{remark}

}

\changed{
\subsection{Numerical Experiments In The Discrete Setting}\label{sec:discrete-numerics}
In this subsection we present a set of numerical experiments concerning the spectrum of discrete graph
Laplacian matrices $L_N$. Our goal here is twofold: 1) we support the theoretical findings in Subsection~\ref{sec:discrete-vs-continuum-eigenproblems}
by showing that as $N \to \infty$ and $\delta \to 0$, the  eigenvalues of $L_N$ 
converge to those of $\mL$ after appropriate scaling by $N, \delta$
and for different choices of $(p,q,r)$; 2) we show that the continuum spectral analysis
of Section~\ref{sec:sa} manifests for the setting of finitely many samples as well. In particular, we show that
a uniform spectral gap for $L_N$ exists when $q = p +r$ but disappears when $q > p + r$.

In what follows, we display two numerical examples: choosing $\varrho$ to be (i) a piecewise constant mixture model, and (ii) a mixture model with exponential components.}

\subsubsection{A Piecewise Constant Mixture}

\changed{
For the set-up of our numerical experiments, we choose $\Z = (0,1) \times (0,1) \subset \mbb R^2$ and define
the sequence of densities
\begin{equation}\label{piecewise-constant-mixture}
  \varrho_\eps(t) = \left\{
    \begin{aligned}
      & \epsilon, && t_1 \in (0.2, 0.8), \\
      & 2.5 - 1.5 \epsilon , && t_1 \in [0, 0.2] \cup [0.8, 1],
    \end{aligned}
  \right.
  \qquad \forall t = (t_1, t_2)^T \in \Z.
\end{equation}
Thus as $\epsilon \to 0$ the density $\varrho_\epsilon$ vanishes inside a strip in the middle of
$\Z$ while the rest of the probability mass is split equally between two rectangles to the sides
of $\Z$. Note that $\varrho_\eps$ is discontinuous by definition and so it does not satisfy
all of our assumptions from Subsection~\ref{ssec:pod}.  For fixed values of $\epsilon$
we  sample vertices 
$\{ x_i \}_{i=1}^N$ i.i.d. with respect to $\varrho_\epsilon$ and construct a weighted graph
$\tilde W$ with entries $\tilde W_{ij} = \eta_\delta(| x_i - x_j|)$ as in Section~\ref{sec:discrete-setting}.
As for the kernel $\eta_\delta$ we choose
\begin{equation}\label{eta-disc-exp}
  \eta_\delta(t) = \frac{1}{\pi \delta^2} \mbf{1}_{[0, \delta)}(t), \qquad \forall t \in [0, +\infty),
\end{equation}
for which we can easily compute the  normalizing constants defined in \eqref{kernelpara} to be  $s_0 = 1$ and $s_2 =1/4$.
We can then proceed to define the graph Laplacian matrices $L_N$ as outlined in
Subsection~\ref{sec:discrete-setting}  for different choices of $(p,q,r) \in \R^3$.
It remains to choose a relationship between $\delta, N$ to ensure convergence of the
spectrum of $L_N$ as $N \to \infty$ and $\delta \to 0$. Following \cite{calder2019improved}
we choose
\begin{equation}\label{delta-disc-exp}
  \delta = \left( \frac{\log(N)}{N} \right)^{1/3}.
\end{equation}
Although this choice is not justified theoretically at this point we find that it is sufficient
numerically to achieve convergence of the eigenvalues.

In Figure~\ref{fig:eval-conv} we plot the first four non-trivial  eigenvalues $\sigma_{N,\delta}$ of
$L_N$ as a function of $N$ for  $\epsilon = 2^{-3}$and various choices of $(p,q,r)$
in both balanced and unbalanced cases. Each reported eigenvalue was averaged over
twenty redraws of the vertices. We clearly observe that as $N\to \infty$ the
eigenvalues converge although the larger eigenvalues appear to converge more slowly.
In Figure~\ref{fig:eval-conv-rel-error} we plot the relative errors between the discrete
eigenvalues $\sigma_{N,\delta}$ and the continuum eigenvalues $\sigma$ computed using our
finite element solver from Section~\ref{sec:num} with the density $\varrho_\eps$ as in \eqref{piecewise-constant-mixture}.
We observe that in both balanced and unbalanced regimes the discrete eigenvalues converge to their continuum
counterparts although the convergence plateau's in the $q > p +r$ case at around $1e-3$ most likely due to
numerical errors. We observed that convergence improves for larger values of $\epsilon$.

For our next set of experiments we consider the behavior of the discrete eigenvalues $\sigma_{N,\delta}$
as  $\epsilon$ vanishes. We fix $N= 2^{13}$ and choose $\epsilon = 2^{-2}, \dots, 2^{-4}$. Here we redraw
the vertices five times and average the computed eigenvalues over these five trials.
Figure~\ref{fig:eval-eps-dependence} shows results that are analogous to
Figure~\ref{fig:density-plots-two-clusters}(b,c). We observe that in the balanced case
where $q = p +r $ the second eigenvalue vanishes like $\epsilon^q$ while
the larger eigenvalues remain bounded away from zero as predicted
by Theorem~\ref{thm:L-eps-low-eigenvals} and confirmed by our numerical experiments in Subsection~\ref{ssec:b}.
The case where $q > p +r$ also agrees with Theorem~\ref{thm:L-eps-low-eigenvals}
as well as our continuum numerical experiments
in Subsection~\ref{ssec:bp} and in turn with the first component
of Conjecture~\ref{conj:SG},
as we observe that the second eigenvalue vanishes like $\epsilon^q$ while the third eigenvalue
vanishes like $\epsilon^{p + r}$.
Finally, in the $q < p +r$ case we observe a similar behavior to the balanced case where a
uniform spectral gap manifests while the second eigenvalue 
appears to vanish at a rate that is slightly faster than $\epsilon^q$ which we attribute to numerical errors.
Hence, our discrete experiments are once again in line with continuum experiments
from Subsection~\ref{ssec:bp2} and further support the first component
of Conjecture~\ref{conj:SG}.
}

\subsubsection{An Exponential Mixture}

\changed{
Here we give full details of the numerical experiments presented in Example~\ref{ex:mixture-model}
in Subsection~\ref{ssec:CONT}. We use the same kernel $\eta_\delta$ and parameterization of
$\delta(N)$ as in \eqref{eta-disc-exp} and \eqref{delta-disc-exp} respectively. Similarly we choose
$\Z = (0,1) \times (0,1) \subset \mbb R^2$ but sample the vertices of the graph
from the density $\varrho_\omega$ as in \eqref{exp-mixture-model}, see Figure~\ref{fig:exp-mixture-example}(a) for a plot of $\varrho_\omega$ with $\omega=1/4$.

In Figure~\ref{fig:exp-mixture-example}(b,c,d) we fix $N = 2^{13}$ and choose $\omega = (1.9)^{-5},
\dots (1.9)^{-8}$. Each data point is obtained by averaging the first four eigenvalues of $L_N$ over
five trials where the vertices of the graph are redrawn from $\varrho_\omega$. As we
already discussed in Example~\ref{ex:mixture-model} our numerical results indicate that the
relationship between $p$, $q$ and $r$ has a major impact on the gap between the second and third eigenvalues of $L_N$.
In particular, when $q \le p + r$ a uniform gap is observed while when $q > p +r$ only a ratio gap
manifests. We also note that the rate of decay of the second and third eigenvalues as
a function of $\omega$ in Figure~\ref{fig:exp-mixture-example}(b,c,d )  is different from
the rates we obtained as a function of the perturbation parameter $\epsilon$  since $\varrho_\omega$ vanishes
exponentially fast in the middle of the domain which violates our assumption that the density satisfies $\varrho = K \eps$
away from the clusters.
Finally, in Figure~\ref{fig:eval-conv-mixture} we plot the first four non-trivial eigenvalues $\sigma_{N,\delta}$
of $L_N$ for $\epsilon = 1.9^{-6}$ and for different values of $N$. Analogously to
Figure~\ref{fig:eval-conv} our results show that the first few eigenvalues of $L_N$ converge
as $N \to \infty$ for the exponential mixture model as well.
}
\section{Spectral Analysis: Proofs}\label{sec:sap}

In this section we present proofs of the theorems 
in Section~\ref{sec:sa}.
The essential  analytical tools
in our  spectral analysis are the min-max and max-min formulas from
Appendix~\ref{app:AB}, together with a new weighted version of 
Cheeger's inequality given in Appendix~\ref{sec:proof-isoperim-ineqa}. 
We adopt the same organizational format as Section~\ref{sec:sa}. 
In Subsection~\ref{ssec:pscp} 
we discuss the perfectly clustered case, and then  consider small
perturbations of this setting, the nearly clustered case,
in  Subsection~\ref{ssec:nscp}. Theorem~\ref{thm:L-eps-low-eigenvals}
is proved in
Subsections \ref{ssec:t34}, \ref{ssec:t35} and while the proof of
Theorem~\ref{thm:geometry-of-fiedler-vector} is outlined in Subsection~\ref{ssec:t39}.

\subsection{Proof Of Theorem~\ref{t:0}}
\label{ssec:pscp}
\changed{
As detailed in the discussion following Theorem~\ref{t:0} it only remains to characterize the third eigenvalue of $\mcl L_0$.}
\begin{proposition}\label{low-lying-spectrum-K-eps-tau} Suppose Assumptions~\ref{Assumptions-on-D}
  and \ref{Assumptions-on-rho}
 are satisfied and 
  the $\mcl{L}_0$  spectral gap condition holds  on the clusters $\Z^\pm$ with
  optimal constants $\Lambda_0^\pm :=\Lambda_0(\Z^\pm)>0 $ separately. 
      Then $ \sigma_{3,0} \ge \min\{ \Lambda_0^+, \Lambda_0^-\}   >0$.
\end{proposition}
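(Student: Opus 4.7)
The plan is to exploit the fact that Neumann boundary conditions on $\partial\Z'$ (together with Assumption~\ref{Assumptions-on-D}(c), which ensures $\Z^+$ and $\Z^-$ are well separated and each has its own boundary) decouple $\mcl{L}_0$ into two independent operators, one on each cluster. The min-max principle of Proposition~\ref{thm:max-min} will then give $\sigma_{3,0}$ as an infimum of a Rayleigh quotient over a subspace of codimension two, and the two orthogonality conditions should precisely correspond to enforcing the $V^1$-type mean-zero condition separately on $\Z^+$ and on $\Z^-$.

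First, I would translate the two constraints $\langle u,\varphi_{1,0}\rangle_{\varrho_0^{p-r}}=\langle u,\varphi_{2,0}\rangle_{\varrho_0^{p-r}}=0$ using the explicit formulas from parts (i)--(ii) of Theorem~\ref{t:0}. Since $\varrho_0^r\cdot\varrho_0^{p-r}=\varrho_0^p$, these rewrite as
\[
\int_{\Z^+}u\,\varrho_0^p\,dx+\int_{\Z^-}u\,\varrho_0^p\,dx=0,\qquad \int_{\Z^+}u\,\varrho_0^p\,dx-\int_{\Z^-}u\,\varrho_0^p\,dx=0,
\]
which together force $\int_{\Z^\pm}u\,\varrho_0^p\,dx=0$ on each cluster. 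Writing $u^\pm:=u|_{\Z^\pm}$, this is precisely the statement that $u^\pm/\varrho_0^r$ has zero mean with respect to $\varrho_0^{p+r}\,dx$ on $\Z^\pm$, so $u^\pm\in V^1(\Z^\pm,\varrho_0)$.

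Next, because $\bar{\Z}^+$ and $\bar{\Z}^-$ are disjoint and together make up $\bar{\Z}'$, the bilinear form \eqref{eq:BLF} splits as a sum of integrals over $\Z^+$ and $\Z^-$; the Neumann boundary condition in \eqref{L0-definition} coincides with the natural boundary condition for each restricted operator. Applying the hypothesized weighted Poincar\'e inequality \eqref{indivisibility-L-rho} with optimal constant $\Lambda_0^\pm$ on each cluster separately yields
\[
\langle\mcl{L}_0 u,u\rangle_{\varrho_0^{p-r}}=\sum_{\pm}\int_{\Z^\pm}\varrho_0^q\bigl|\nabla(u^\pm/\varrho_0^r)\bigr|^2dx\;\ge\;\min\{\Lambda_0^+,\Lambda_0^-\}\sum_{\pm}\int_{\Z^\pm}\bigl|u^\pm/\varrho_0^r\bigr|^2\varrho_0^{p+r}dx,
\]
and the right-hand side equals $\min\{\Lambda_0^+,\Lambda_0^-\}\,\|u\|_{\varrho_0^{p-r}}^2$. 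Minimizing over admissible $u$ and invoking the min-max characterization of $\sigma_{3,0}$ (Proposition~\ref{thm:max-min}) completes the proof.

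There is no serious obstacle here; the only delicate point is the step translating the two global orthogonality constraints into a single mean-zero constraint on each $\Z^\pm$, which is the mechanism by which a spectral gap on each cluster passes to a spectral gap for the third eigenvalue on the disconnected domain $\Z'$. Strictness of the lower bound follows immediately from the standing assumption $\Lambda_0^\pm>0$.
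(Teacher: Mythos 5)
Your proof is correct and follows essentially the same route as the paper's: both decouple $\mcl L_0$ across $\Z^\pm$, translate the two orthogonality conditions against $\varphi_{1,0},\varphi_{2,0}$ into the per-cluster mean-zero condition $\int_{\Z^\pm}u\,\varrho_0^p\,dx=0$, apply the hypothesized weighted Poincar\'e inequality on each cluster, and close with the max-min formula from Proposition~\ref{thm:max-min}. The only cosmetic difference is that you write out the ``direct calculation'' the paper leaves implicit.
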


\begin{proof}
  Note that Assumption~\ref{Assumptions-on-rho}(e) ensures that
  $\varphi_{2,0}= |\Z'|_{\varrho_0^{p-r}}^{1/2}\varrho^r\left(\mbf 1_{\Z^+}    - \mbf 1_{\Z^-}\right)$
  belongs to  $V^0(\Dp, \vrhoo)$.
  Let $u \in V^1(\Z', \vrhoo)$ so that
  $ u \bot \text{span} \{  \varphi_{1,0},\varphi_{2,0}\}$
  in $ L^2(\Dp, \vrhoo^{p-r})$.
A direct calculation shows that this means the restrictions  $u|_{\Z^\pm}$ of $u$
to the clusters $\Z^\pm$
  are orthogonal (with respect to the $L^2(\Z^\pm, \vrhoo^{p-r}|_{\Z^\pm})$
  inner products) to the restrictions   $\vrhoo^r|_{\Z^\pm}$ of $\vrhoo^r$ and
  belong to $V^0(\Z^\pm,\vrhoo|_{\Z^\pm})$.
  Thus following the $\mcl L_0$ spectral gap assumption, see Definition~\ref{Assumption-cluster-gap}, $u|_{\Z^\pm}$  satisfy
  Poincar{\'e} inequalities of the form \eqref{indivisibility-L-rho} on $\Z^\pm$
  with optimal constants $\Lambda_0^\pm$. Hence
  \begin{align*}
    \int_{\Dp}   \left| \nabla \left( \frac{u}{\vrhoo^r} \right) \right|^2 \vrhoo^{q} dx
    &=
      \int_{\Z^+}   \left| \nabla \left( \frac{u}{\vrhoo^r} \right) \right|^2 \vrhoo^{q}dx
      +
      \int_{\Z^-}   \left| \nabla \left( \frac{u}{\vrhoo^r} \right) \right|^2 \vrhoo^{q} dx \\
    &
      \ge \min\{ \Lambda_0^+, \Lambda_0^-\} \left( \int_{\Z^+} \left| \frac{u}{\varrho_0^r} \right|^2
      \vrhoo^{p+r} dx
      + \int_{\Z^-} \left| \frac{u}{\varrho_0^r} \right|^2 \vrhoo^{p+r} dx \right)\\
    & = \min\{ \Lambda_0^+, \Lambda_0^-\} \int_{\Dp} \left| \frac{u}{\varrho_0^r} \right|^2
      \vrhoo^{p+r} dx.
  \end{align*}
  The result now follows from the max-min formula \eqref{max-min-principle} in Theorem~\ref{thm:max-min}.
\end{proof}

\begin{remark}\label{what-if-rho-didnt-assign-equal-weight}
  If Assumption~\ref{Assumptions-on-rho}(e) is dropped then 
the two terms in the definition of $\varphi_{2,0}$  need to be weighted 
by appropriate constants to ensure $\int_{\Z'} \varphi_{2,0}(x) \vrhoo^p(x) dx =0$
so that $\varphi_{2,0} \in V^0(\Z', \vrhoo)$.
\end{remark}

\subsection{Proof Of Theorem~\ref{thm:L-eps-low-eigenvals}}
\label{ssec:nscp}
We now turn our attention to the densities $\varrho_\epsilon$ that have full support on $\bD$, but concentrate around  $\Dp$ as $\epsilon$ decreases.
Throughout this section, we routinely assume that   Assumptions~\ref{Assumptions-on-D},
\ref{Assumptions-on-rho} and  \ref{Assumptions-on-rho-eps-3} are
satisfied by the domains $\Z, \Z'$ and densities $\vrhoo$ and $\varrho_\eps$. Throughout,
the constants $\Xi$ and $\Xi_j$ for any $j$ are arbitrary and can change from one line to the next.

We start by constructing an approximation for $\varphi_{2,\epsilon}$ (the second
eigenfunction of $\mcl L_\eps$) that is used throughout this section. Fix $\eps>0$ and define  the sets $\Z^\pm_{\epsilon_1}$ and $\Z^\pm_\eps$ as in \eqref{D-eps-definition}, 
where $\epsilon_1 = \epsilon + \epsilon^\beta$ with a parameter $0<\beta<1$. We choose $\eps$ small enough so that $\Z_{\eps_1}^+$ and $\Z_{\eps_1}^-$ are disjoint. Consider functions
$\xi_\epsilon^\pm\in C^\infty(\bD)$ 
that satisfy
\begin{equation*}
\begin{aligned}
  &\xi^\pm_{\epsilon}(x) = 1, \qquad &x \in \Z^\pm_{\epsilon}, \\
  &0<\xi_{\epsilon}^\pm(x) < 1, \quad |\nabla\xi_{\epsilon}^\pm(x)| \le \vartheta \epsilon^{-\beta}, \qquad &x \in \Z^\pm_{\epsilon_1} \setminus \Z^\pm_{\epsilon},\\
  & \xi_{\epsilon}^\pm (x) = 0, \qquad &x \in \Z \setminus \Z^\pm_{\epsilon_1},
\end{aligned}
\end{equation*}
for some constant $\vartheta>0$ independent of $\beta$.
The $\xi_\epsilon^\pm$ are  smooth extensions of the set functions $\mbf{1}_{\Z^\pm_\epsilon}$.
They can be constructed by convolution with the standard mollifier $g_\epsilon$
in the same manner in which $\varrho_\epsilon$ was
constructed in \eqref{rho-eps-explicit-construction} (also see \cite[Thm.~3.6]{mclean2000strongly}).
  Now define the functions $\chi_\eps^\pm \in C^\infty (\bD)$ by renormalizing $\xi_\eps^\pm$ in $L^2(\bD,\varrho_\eps^{p-r})$,
\begin{equation}\label{chi-definition}
\chi_\eps^+ := b_\eps^+ \xi_\eps^+, \qquad \chi_\eps^- := b_\eps^- \xi_\eps^-,
\end{equation}
where the coefficients $b^\pm_\eps \in \mbb R_+$ are chosen to satisfy
\begin{equation}\label{eig1Fbot}
  \begin{aligned}
   \int_{\Z_{\eps_1}^+} \varrho_\eps^{p+r}\chi_\eps^+\, dx &= 
   \int_{\Z_{\eps_1}^-} \varrho_\eps^{p+r}\chi_\eps^-\, dx, \\
   b_\eps^+  + b_\eps^- &= 2.
 \end{aligned}
\end{equation}
The first condition ensures that $\varrho_\eps^r\left(\chi_\eps^+-\chi_\eps^-\right) \in V^0(\Z, \varrho_\eps)$, whereas the second condition is not necessary and chosen for closure and
convenience in the calculations that follow.  For a schematic depiction of these constructions, see Figure~\ref{fig:epszones}. 

    
    


\begin{figure}[htp]
  \centering{
      \begin{overpic}[width=1 \textwidth, clip = true, trim = 12ex 3ex 12ex 3ex]{./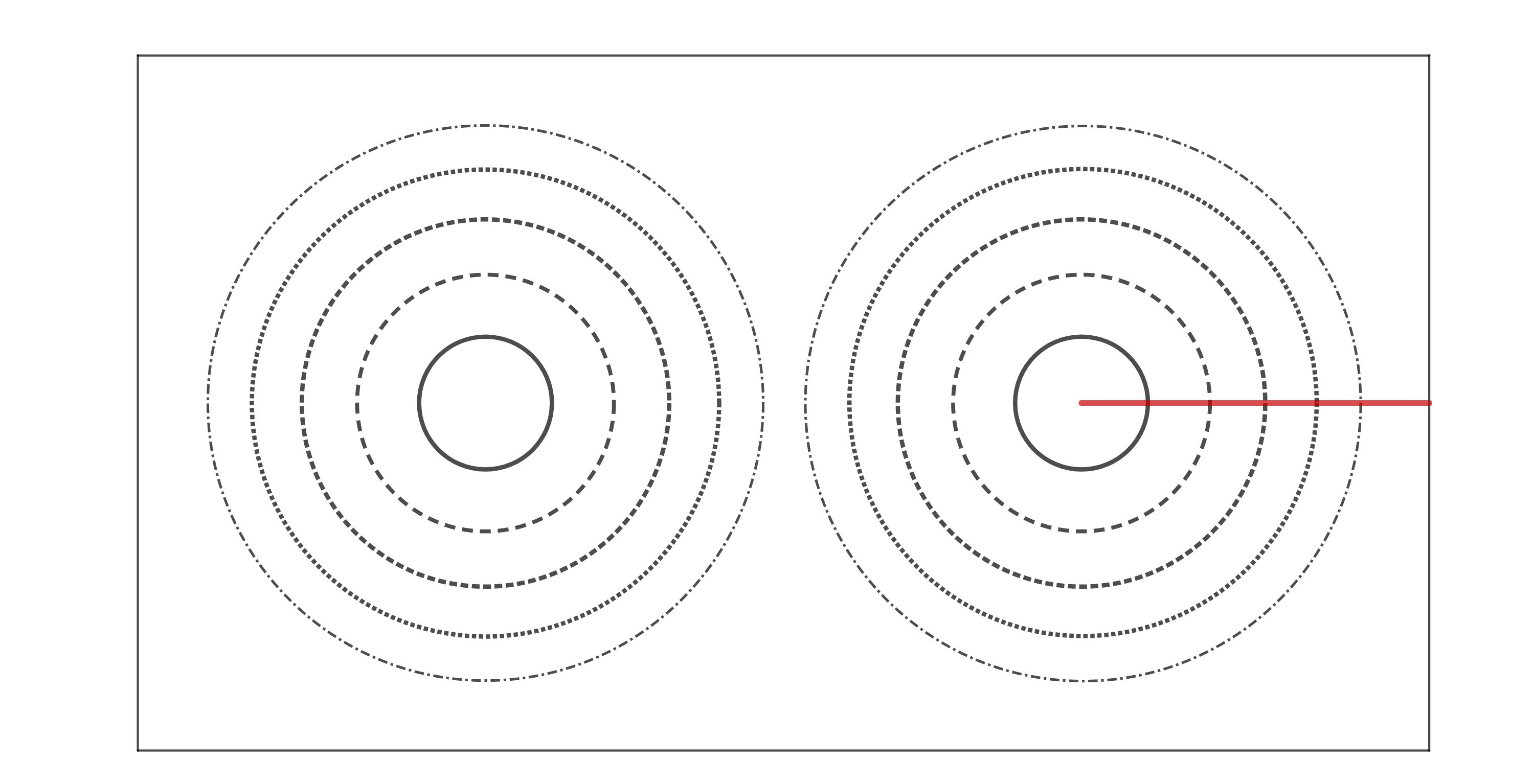}
    \put(28,25){$\Z^-$}
    \put(29,32){$\Z^-_\eps$}
    \put(30,36.5){$\Z^-_{\eps_1}$}
    \put(35.8,39){$\Z^-_{\eps_2}$}
    \put(28,45){$\Z^-_{\eps_3}$}
    
    \put(70,23.5){$\Z^+$}
    \put(65,28.5){$\Z^+_\eps$}
    \put(70,37.25){$\Z^+_{\eps_1}$}
    \put(72,41.5){$\Z^+_{\eps_2}$}
    \put(78,43.5){$\Z^+_{\eps_3}$}
    
    \put(50, 10,){$\Z$}
  \end{overpic}\\ \vspace{5ex}
  
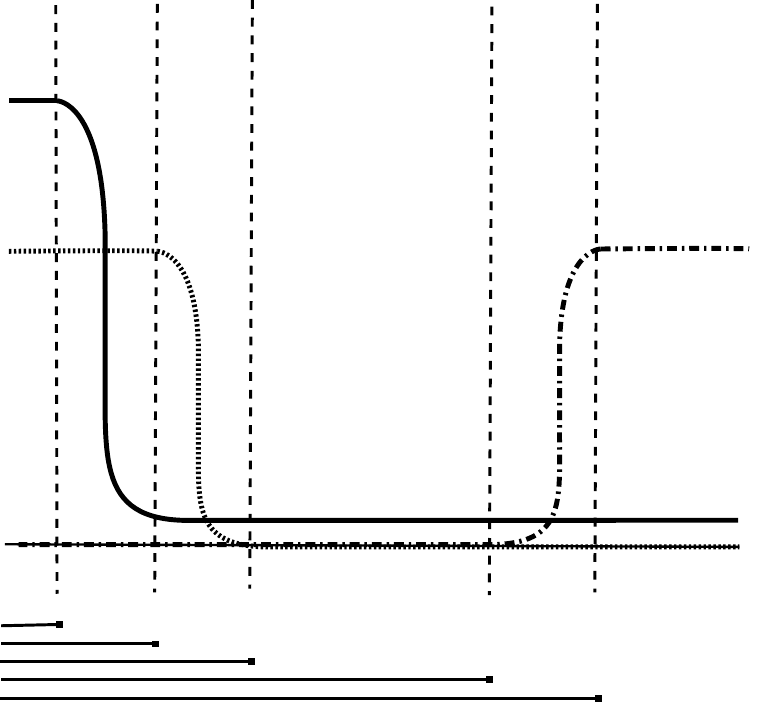
\caption{Schematic depiction of the different sets and functions used in construction of $\varphi_{F,\eps}$ from \eqref{approxFiedler} and $\tilde\varphi_{F,\eps}$ from \eqref{tilde-phi-outside-def}. Top:
  overhead schematic of the sets $\Z, \Z^\pm, \Z^\pm_\eps, \Z^\pm_{\eps_1}, \Z^\pm_{\eps_2},$ and $\Z^\pm_{\eps_3}$. Bottom:
  cross-section view of $\rho_\eps$, $\xi_\eps^+$ and $\tilde\xi_\eps$  close to the subset $\Z^+$ along the red line in the top figure.
  Here, $\eps_1:=\eps+\eps^\beta$, $\eps_2:=\eps_0+\eps^\beta$, and $\eps_3:=\eps_0+2\eps^\beta$ for 
$\eps\in (0,\eps_0)$ and $\beta\in (0,1)$. The function $\varphi_{F,\eps}$ is constructed using $\eps_1$, concentrates on the clusters, and allows to prove an upper bound on $\sigma_{2,\eps}$; the function $\tilde\varphi_{F,\eps}$ is constructed using $\eps_2$ and $\eps_3$, concentrates away from the clusters, and allows to prove an upper bound on $\sigma_{3,\eps}$.
The vertical dashed lines indicate the boundaries of the different sets as indicated below the figure.}
\label{fig:epszones}
}
\end{figure}

We define the following  ansatz as an approximation to $\varphi_{2,\eps}$ 
\begin{align}\label{approxFiedler}
\varphi_{F, \epsilon}(x) = \frac{ \varrho_\epsilon^r(x) \left[ \chi_{\epsilon}^+(x) - \chi_\epsilon^-(x) \right]}
  {\left\| \varrho_\epsilon^r(x) \left[ \chi_{\epsilon}^+(x) - \chi_\epsilon^-(x) \right]
  \right\|_{L^2(\Z, \varrho^{p-r}_\epsilon)}}.
\end{align}
Observe that $\varphi_{F,\epsilon}$ is simply a smooth approximation to the zero extension of 
$\varphi_{2,0}$ to all of $\Z$ by an element of $V^0(\Z, \varrho_\eps)$. The dependence on $\beta>0$ has been omitted in $\varphi_{F,\eps}$ for notational convenience. One should choose $\beta$ large enough in order for the set $\Z_{\eps_1}'$ to be close to $\Z_\eps'$. However, this has to be balanced with small enough $\beta$ such that the derivatives $\nabla\chi_\eps^\pm$ are allowed to be steep enough for $\varphi_{F,\eps}$ to be a good approximation of the Fiedler vector $\varphi_{2,0}$.
  The following lemma is useful throughout the rest of this section.
\begin{lemma}\label{b-min-max-are-bounded}
  Suppose that $p+ r \ge 0$ and that Assumptions~\ref{Assumptions-on-D}, \ref{Assumptions-on-rho} and
  \ref{Assumptions-on-rho-eps-3} hold and let $b_\eps^\pm$ be as in \eqref{chi-definition}.
  Suppose $\eps \in (0,\eps_0)$ for a sufficiently small $\eps_0>0$.
  Then there exists a constant $\Xi >0$, independent of $\eps$ so that
  \begin{equation*}
    |b_\eps^\pm - 1| \le \Xi \eps^{\min\{1,p+r\}}\,.
  \end{equation*}
\end{lemma}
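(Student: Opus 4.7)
The plan is to reduce the problem to estimating the integrals
\[
I_\eps^\pm := \int_{\Z^\pm_{\eps_1}} \varrho_\eps^{p+r}(x)\, \xi_\eps^\pm(x)\, dx, \qquad I_0^\pm := \int_{\Z^\pm} \varrho_0^{p+r}(x)\, dx,
\]
and then to show $|I_\eps^+ - I_\eps^-| = O(\eps^{\min\{1,p+r\}})$, while $I_\eps^+ + I_\eps^-$ stays uniformly bounded away from $0$. First I would unwind the two constraints in \eqref{eig1Fbot}: the first reads $b_\eps^+ I_\eps^+ = b_\eps^- I_\eps^-$, and combined with $b_\eps^+ + b_\eps^- = 2$ this yields the closed-form expressions
\[
b_\eps^+ = \frac{2 I_\eps^-}{I_\eps^+ + I_\eps^-}, \qquad b_\eps^- = \frac{2 I_\eps^+}{I_\eps^+ + I_\eps^-}, \qquad b_\eps^\pm - 1 = \mp\,\frac{I_\eps^+ - I_\eps^-}{I_\eps^+ + I_\eps^-}.
\]
By Assumption~\ref{Assumptions-on-rho}(e) we have $I_0^+ = I_0^-$, so $I_\eps^+ - I_\eps^- = (I_\eps^+ - I_0^+) - (I_\eps^- - I_0^-)$, reducing the task to estimating each $I_\eps^\pm - I_0^\pm$.

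Next I would partition $\Z^\pm_{\eps_1}$ into three zones: the core cluster $\Z^\pm$; the transition annulus $\Z^\pm_\eps \setminus \Z^\pm$; and the outer annulus $\Z^\pm_{\eps_1}\setminus \Z^\pm_\eps$. On the core, using $\xi_\eps^\pm = 1$, Assumption~\ref{Assumptions-on-rho-eps-3}(c) ($\|\varrho_\eps-\varrho_0\|_{C^\infty(\bar\Z')} \le K_1\eps$) together with the uniform lower bound $\varrho_\eps,\varrho_0 \ge \varrho_{\eps_0}^- >0$ makes $t\mapsto t^{p+r}$ Lipschitz on the relevant range, giving $\int_{\Z^\pm}|\varrho_\eps^{p+r}-\varrho_0^{p+r}|dx = O(\eps)$. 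On the transition annulus, Assumption~\ref{Assumptions-on-rho-eps-3}(d)–(e) together with integration of $|\nabla \varrho_\eps|\le K_3\eps^{-1}$ inward from $\partial \Z'_\eps$ shows that $\varrho_\eps$ is bounded by a constant independent of $\eps$, and by \eqref{eqn:Depssize} this annulus has measure $O(\eps)$, contributing $O(\eps)$. On the outer annulus, $\varrho_\eps \equiv K_2\eps$, so $\varrho_\eps^{p+r} = (K_2\eps)^{p+r}$, and \eqref{eqn:Depssize} applied with $\delta = \eps^\beta$ gives measure $O(\eps^\beta)$, contributing $O(\eps^{p+r+\beta})$.

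Summing the three contributions yields $I_\eps^\pm - I_0^\pm = O(\eps) + O(\eps^{p+r+\beta})$, hence $|I_\eps^+ - I_\eps^-| \le \Xi\,\eps^{\min\{1,\,p+r+\beta\}}$. Since $\beta > 0$, we have $\min\{1,p+r+\beta\} \ge \min\{1,p+r\}$ and therefore $\eps^{\min\{1,p+r+\beta\}} \le \eps^{\min\{1,p+r\}}$, giving the claimed bound on the numerator. For the denominator, the same estimates show $I_\eps^+ + I_\eps^- = 2 I_0^+ + o(1)$; since $I_0^+ > 0$ by Assumption~\ref{Assumptions-on-rho}, it follows that $I_\eps^+ + I_\eps^-$ is bounded below by $I_0^+$ once $\eps_0$ is small enough. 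Dividing closes the argument.

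I do not expect a major obstacle: every estimate is a direct consequence of the assumptions already in place. The only delicate bookkeeping is on the outer annulus, where one must track both the $(K_2\eps)^{p+r}$ factor and the $O(\eps^\beta)$ Minkowski measure bound. This is also what pins the rate to $\min\{1,p+r\}$ rather than $O(\eps)$ alone, and it is the only point where the case $p+r<1$ would, in principle, be visible in the estimate (before being absorbed into the weaker stated bound via the $\min$).
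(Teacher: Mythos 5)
Your proposal is correct, and in fact proves a marginally sharper estimate than stated. The overall plan matches the paper's: reduce to estimating the integrals $\int_{\Z^\pm_{\eps_1}}\varrho_\eps^{p+r}\xi_\eps^\pm\,dx$, exploit the symmetrization from Assumption~\ref{Assumptions-on-rho}(e), and split the domain of integration into core, transition, and outer annuli. The paper works with the ratio $\Xi_\eps = I_\eps^+/I_\eps^-$ and solves $b_\eps^\pm = 2/(1+\Xi_\eps)$ and $b_\eps^- = 2\Xi_\eps/(1+\Xi_\eps)$, then bounds $|\Xi_\eps - 1|$; your sum/difference reformulation $b_\eps^\pm - 1 = \mp(I_\eps^+ - I_\eps^-)/(I_\eps^+ + I_\eps^-)$ is algebraically equivalent but handles the upper and lower bounds symmetrically in one stroke, whereas the paper establishes the upper bound and then gestures at the lower one.

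The two substantive departures are on the core and outer zones. On the core, the paper splits into $0 \le p+r \le 1$ (sub-additivity, $(a+b)^{p+r} \le a^{p+r} + b^{p+r}$) and $p+r > 1$ (Taylor), yielding a contribution of $\mcl O(\eps^{\min\{1,p+r\}})$; your Lipschitz argument, valid because $\varrho_0$ and $\varrho_\eps$ are bounded above and below on $\Z^\pm$, gives $\mcl O(\eps)$ uniformly and eliminates the case split — a cleaner and slightly stronger step. On the outer annulus, the paper bounds $|\Z^\pm_{\eps_1}\setminus\Z^\pm|$ by an $\eps$-independent constant to get $\mcl O(\eps^{p+r})$; you track the Minkowski estimate more carefully to get $\mcl O(\eps^{p+r+\beta})$. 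The net effect is that you actually prove $|b_\eps^\pm - 1| \le \Xi\,\eps^{\min\{1,\,p+r+\beta\}}$, which is strictly stronger for $p+r < 1$ and absorbs into the stated bound. Your justification of the uniform upper bound on $\varrho_\eps$ in the transition annulus via path integration of Assumption~\ref{Assumptions-on-rho-eps-3}(e) replicates the content of the remark following \eqref{uniform-rho-bound}; the paper invokes that remark directly, which is a touch cleaner, but your derivation is sound. No gaps.
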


\begin{proof}
Consider the ratio
  \begin{equation*}
    \Xi_\eps := \frac{\int_{\Z_{\eps_1}^+} \varrho_\eps^{p+r}\xi_\eps^+\, dx} 
    {\int_{\Z_{\eps_1}^-} \varrho_\eps^{p+r} \xi_\eps^-\, dx}\,.
  \end{equation*}
 Solving \eqref{eig1Fbot} for $b_\eps^\pm$ we obtain $b_\eps^+= \frac{2}{1 + \Xi_\eps}$
 and $b_\eps^- = \frac{2 \Xi_\eps}{1 + \Xi_\eps}$. Thus if we can show
 that
 \begin{equation}\label{thetaeps}
    |\Xi_\eps - 1| \le \Xi_1 \eps^{\min\{1,p+r\}},
 \end{equation}
then $|\Xi_\eps+1|=|(-2)-(\Xi_\eps-1)|\ge 2-|\Xi_\eps-1|$, and so
\begin{align*}
 |b_\eps^\pm - 1|=\frac{|\Xi_\eps-1|}{|\Xi_\eps+1|}
\le \frac{|\Xi_\eps-1|}{2-|\Xi_\eps-1|}
\le\frac{\Xi_1\eps^{\min\{1,p+r\}}}{2-\Xi_1\eps^{\min\{1,p+r\}}}\le \Xi \eps^{\min\{1,p+r\}},
\end{align*}
for some $\Xi>0$, which concludes the proof of the lemma.
  It remains to show \eqref{thetaeps}. Following Assumption~\ref{Assumptions-on-rho-eps-3}(c, d), for
  sufficiently small $\eps$, 
  \begin{align*}
    \Xi_\eps
    &\le   \frac{\int_{\Z_\eps^+} \varrho_\eps^{p+r}dx +
       K_2^{p+r} \eps^{p+r}  |\Z^+_{\eps_1} \setminus \Z^+|} 
      {\int_{\Z^-} \varrho_\eps^{p+r}  dx}
    \\
    &\le   \frac{\int_{\Z^+} (\varrho_0 + K_1 \eps)^{p+r}dx + \int_{\Z_\eps^+\setminus \Z^+} \varrho_\eps^{p+r}dx
+       K_2^{p+r} \eps^{p+r}  |\Z^+_{(\eps_0+\eps_0^\beta)} \setminus \Z^+|} 
      {\int_{\Z^-} (\varrho_0 - K_1 \eps)^{p+r}  dx}\,.
  \end{align*}
Note that $ \int_{\Z_\eps^+\setminus \Z^+} \varrho_\eps^{p+r}dx\le (\varrho_{\eps_0}^+)^{p+r}|\Z_\eps^+\setminus \Z^+|\le (\varrho_{\eps_0}^+)^{p+r}\theta \eps | \partial \Z^+|$ following the remark after \eqref{uniform-rho-bound} and using \eqref{eqn:Depssize}. For \fh{$0\le p+r\le 1$}, we use the inequality $(a+b)^{p+q}\le \left(a^{p+r}+b^{p+r}\right)$ for any $a,b\ge0$, and obtain
  \begin{align*}
    \Xi_\eps
    & \le \frac{\int_{\Z^+} \varrho_0^{p+r}dx + \Xi_2\eps^{p+r}} 
      {\int_{\Z^-} \varrho_0^{p+r}  dx - \Xi_3 \eps^{p+r}}\,.
  \end{align*}
Thanks to Assumption~\ref{Assumptions-on-rho}(e), $\int_{\Z^+} \vrhoo^{p+r}\, dx = \int_{\Z^-}\vrhoo^{p+r}\,dx$, and so Taylor expanding in $\Xi_3 \eps^{p+r}$ yields
  \begin{align*}
    \Xi_\eps
\le 1+ \left(
\frac{ \Xi_2}{\int_{\Z^-} \varrho_0^{p+r}  dx }
+ \Xi_3\right) \eps^{p+r} 
+\mcl O\left(\eps^{2(p+r)}\right)
\le \Xi_1\eps^{p+r}
  \end{align*}
since $\varrho_0$ is bounded below uniformly on $\Z^-$ by Assumption~\ref{Assumptions-on-rho}(c).

If $p+r>1$ on the other hand, we simply Taylor expand $(\varrho_0 + K_1 \eps)^{p+r}$ and $(\varrho_0 - K_1 \eps)^{p+r}$ directly, and obtain
  \begin{align*}
    \Xi_\eps
    & \le \frac{\int_{\Z^+} \varrho_0^{p+r}dx + \Xi_2\eps} 
      {\int_{\Z^-} \varrho_0^{p+r}  dx - \Xi_3 \eps}
\le 1+\Xi_1 \eps\,,
  \end{align*}
  again using the uniform upper and lower bounds for $\varrho_0$ on $\Z^\pm$.
  The lower bound on $\pm(\Xi_\eps-1)$ follows in a similar manner.
  {}
\end{proof}

\subsubsection{Proof of Theorem~\ref{thm:L-eps-low-eigenvals}(ii) (Second Eigenvalue of $\mcl L_\eps$)} 
\label{ssec:t34}

\begin{proposition}[Second eigenvalue of $\mcl L_\eps$] \label{prop:speceps}
Let $(p,q,r)\in\R^3$ satisfying $p+r>0$ and $q>0$, and suppose  Assumptions~\ref{Assumptions-on-D}, \ref{Assumptions-on-rho},
  and \ref{Assumptions-on-rho-eps-3} hold.
Then $\exists \,\eps_0 >0$ so that 
   $\forall (\eps, \beta) \in (0, \eps_0) \times (0, 1)$,
$$
0 \le \sigma_{2,\epsilon} \le \Xi \epsilon^{{q-\beta}},
$$
 where $\Xi>0$ is a uniform constant independent of $\epsilon$.
\end{proposition}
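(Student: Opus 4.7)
The plan is to apply the min-max (Rayleigh quotient) characterization of $\sigma_{2,\eps}$ from Proposition~\ref{thm:max-min}, using $\varphi_{F,\eps}$ defined in \eqref{approxFiedler} as a single admissible test function. Since $\mcl{L}_\eps$ is self-adjoint and positive semi-definite on $V^1(\Z,\varrho_\eps)$, with the first eigenfunction being $\varphi_{1,\eps} \propto \varrho_\eps^r$, we will obtain
\begin{equation*}
\sigma_{2,\eps}\;\le\;\frac{\int_\Z \varrho_\eps^q\, \bigl|\nabla(\varphi_{F,\eps}/\varrho_\eps^r)\bigr|^2\,dx}{\int_\Z (\varphi_{F,\eps}/\varrho_\eps^r)^2\,\varrho_\eps^{p+r}\,dx},
\end{equation*}
provided that $\varphi_{F,\eps}\in V^1(\Z,\varrho_\eps)$. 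The admissibility is essentially built into the construction: the first normalization condition in \eqref{eig1Fbot} enforces $\int_\Z \varrho_\eps^{p+r}(\chi_\eps^+-\chi_\eps^-)\,dx=0$, which is exactly the orthogonality to $\varrho_\eps^r$ required for membership in $V^1(\Z,\varrho_\eps)$.

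For the denominator I will use that $\varphi_{F,\eps}$ is normalized in $L^2(\Z,\varrho_\eps^{p-r})$ by definition, so the denominator equals $1$; the key point here is that this normalization is well-defined uniformly in $\eps$, because Lemma~\ref{b-min-max-are-bounded} bounds $b_\eps^\pm$ above and below by constants independent of $\eps$, and the $\varrho_\eps^{p+r}$-mass of $\xi_\eps^\pm$ on $\Z_{\eps_1}^\pm$ converges to the mass of $\varrho_0^{p+r}$ on the clusters, which is strictly positive by Assumption~\ref{Assumptions-on-rho}. Thus the quantitative estimate reduces entirely to bounding the numerator.

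For the numerator, note that $\varphi_{F,\eps}/\varrho_\eps^r$ is (up to a bounded normalizing constant) the smooth function $b_\eps^+\xi_\eps^+ - b_\eps^-\xi_\eps^-$, whose gradient is supported in the thin shells $(\Z_{\eps_1}^+\setminus \Z_\eps^+)\cup(\Z_{\eps_1}^-\setminus \Z_\eps^-)$ and pointwise bounded there by $\vartheta\eps^{-\beta}$ times a constant coming from $b_\eps^\pm$. Crucially, these shells lie in $\Z\setminus \Z'$ once $\eps$ is small enough (using Assumption~\ref{Assumptions-on-D} to guarantee disjointness of the $\eps_1$-neighborhoods), so on their support Assumption~\ref{Assumptions-on-rho-eps-3}(d) gives $\varrho_\eps = K_2\eps$. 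Combining these ingredients with the Minkowski-type volume bound \eqref{eqn:Depssize}, $|\Z_{\eps_1}^\pm\setminus \Z_\eps^\pm|\lesssim \eps^\beta$, yields
\begin{equation*}
\int_\Z \varrho_\eps^q\,\bigl|\nabla(\varphi_{F,\eps}/\varrho_\eps^r)\bigr|^2\,dx\;\lesssim\;(K_2\eps)^q\cdot\eps^{-2\beta}\cdot\eps^{\beta}\;=\;\Xi\,\eps^{q-\beta}.
\end{equation*}

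The only real subtlety I anticipate is the bookkeeping around the normalization: one must carefully check that the $L^2(\Z,\varrho_\eps^{p-r})$ norm of $\varrho_\eps^r(\chi_\eps^+-\chi_\eps^-)$ is bounded below uniformly in $\eps$, which requires the hypothesis $p+r>0$ (so that the $\eps^{p+r}$ tail contribution away from the clusters is subdominant to the bulk contribution from $\Z^\pm$) together with Assumptions~\ref{Assumptions-on-rho}(c), (e). The condition $q>0$ is used only to make the prefactor $(K_2\eps)^q$ a positive power of $\eps$; otherwise the upper bound would degenerate. Everything else is a direct computation, and the lower bound $0\le\sigma_{2,\eps}$ is immediate from positivity of $\mcl{L}_\eps$.
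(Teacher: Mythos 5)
Your proposal is correct and follows essentially the same route as the paper's proof: same test function $\varphi_{F,\eps}$, same reduction to the gradient integral over the thin shells $\Z_{\eps_1}^{\pm}\setminus\Z_\eps^{\pm}$, same use of Assumption~\ref{Assumptions-on-rho-eps-3}(d) together with the Minkowski bound \eqref{eqn:Depssize}, and the same appeal to Lemma~\ref{b-min-max-are-bounded} for uniform control of the normalization. The only cosmetic difference is that you test directly with $\varphi_{F,\eps}\in V^1(\Z,\varrho_\eps)$, whereas the paper considers the two-dimensional subspace $\mathrm{span}\{\varphi_{1,\eps},\varphi_{F,\eps}\}$ before noting that the maximum is attained at $\varphi_{F,\eps}$; both yield the same bound. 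One small imprecision worth flagging: the shells must lie in $\Z\setminus\Z'_\eps$ (not merely $\Z\setminus\Z'$) for Assumption~\ref{Assumptions-on-rho-eps-3}(d) to give $\varrho_\eps=K_2\eps$ there; this holds because the shells are outside $\Z_\eps^\pm$ by construction and, for $\eps$ small, the $\eps_1$-neighborhoods of the two clusters are disjoint by Assumption~\ref{Assumptions-on-D}(c).
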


\begin{proof}
  Fix an $\eps_0>0$ and let
  $\epsilon \in (0, \epsilon_0]$.
Recall that
$\varphi_{F,\eps} \in V^0(\Z, \varrho_\eps)$ thanks to \eqref{eig1Fbot}
and is normalized with respect to the
$L^2(\Z, \varrho_\epsilon^{p-r})$ norm.
   Now consider the Rayleigh quotient 
      \begin{equation*}
     \mcl R_\eps(u):= \frac{\int_\Z  \left| \nabla \left( \frac{u}{\varrho^r_\eps} \right)\right|^2\varrho_\eps^q dx }{ \int_\Z
      \left| \frac{u}{\varrho_\eps^r} \right|^2 \varrho_\eps^{p+r} dx},
 \end{equation*}
   for   functions $u \in \text{span} \{ \varphi_{1,\eps}, \varphi_{F,\eps}\}$.
Note that $\mcl R_\eps(\varphi_{1,\eps})=0$, and so $\mcl R_\eps(u)\le \mcl R_\eps(\varphi_{F,\eps})$. Therefore, we can consider $u \in V^1(\Z, \varrho_\eps)$.
 Following the min-max principle \eqref{min-max-principle} we simply
 need to bound $\mcl R_\eps (\varphi_{F,\eps})$ to
 find an upper     bound for $\sigma_{2,\eps}$.
   Let
   \begin{equation*}
     \begin{aligned}
       \Xi_0 &= \inf_{\epsilon \in (0, \eps_0]} \| \varrho_\epsilon^{r}[\chi^+_\epsilon  - \chi^-_\epsilon] \|_{L^2(\Z, \varrho_\epsilon^{p-r})} \\
 \end{aligned}
 \end{equation*}
 and note that provided $\eps_0$ is small enough, $\Xi_0 >0$ following Lemma~\ref{b-min-max-are-bounded}, the fact that $\chi_\eps^\pm$ have disjoint supports,  $p +r \ge 0$ and  using that $\varrho_\eps$ is bounded above and  below on $\Z'$ by \eqref{uniform-rho-bound} (see also Lemma~\ref{lem:normalization-bound} in Section~\ref{ssec:t39} for a more detailed argument).
Using $0< b_\eps^\pm<2$ and Assumption~\ref{Assumptions-on-rho-eps-3}(d),
 we have
   \begin{align}\label{R-eps-bound-for-phi-F-eps}
     \mcl R_\eps(\varphi_{F,\eps})   
     &\le   \frac{4}{\Xi_0}  \int_\Z  \left| \nabla \left( \xi^+_\epsilon  - \xi^-_\epsilon  \right) \right|^2 \varrho^q_\epsilon dx\notag  \\    
     &=   \frac{4}{\Xi_0}\int_{\Z_{\eps_1}' \setminus {\Z_\epsilon}'}
     \left| \nabla \left( \xi^+_\epsilon  - \xi^-_\epsilon  \right) \right|^2\varrho^q_\epsilon  dx \\
     & \le \frac{16K_2^q \vartheta^2}{\Xi_0}
     | \Z_{\eps_1}' \setminus {\Z_\epsilon}'|\epsilon^{q - 2\beta}                                                                                     \le \Xi \eps^{q - \beta},\notag
    \end{align}
  since $ | \Z_{\eps_1}' \setminus {\Z_\epsilon}'|\le | \Z_{\eps_1}' \setminus {\Z}'|\le \theta (\eps+\eps^\beta) | \partial \Z'|\le  \Xi_1\eps^\beta$ by \eqref{eqn:Depssize} and since $\beta<1$.
 It now follows from \eqref{min-max-principle} that $\sigma_{2,\eps}\le \Xi \eps^{q -\beta}$. \\
\end{proof}

 \subsubsection{Proof of Theorem~\ref{thm:L-eps-low-eigenvals} (Third Eigenvalue of $\mcl L_\eps$)}
\label{ssec:t35}

\changed{We prove the bounds on the
  third eigenvalue of $\L_\eps$ in a series of propositions and corollaries. In particular,
  part (iii) of Theorem~\ref{thm:L-eps-low-eigenvals} follows by combining 
  Propositions~\ref{spectral-gap-general} and \ref{spectral-gap-unbalanced-case} below.}
We start with a general result that ties the existence of a $\mcl L_\eps$
spectral gap on $\Z$ to spectral gaps on subsets of $\Z$.

\begin{proposition}\label{prop:lambda23lowerbound}\label{subset-spectral-gap}
Let $(p,q,r)\in\R^3$ satisfying $p+r>0$ and $q>0$, and suppose  Assumptions~\ref{Assumptions-on-D}, \ref{Assumptions-on-rho},
  and \ref{Assumptions-on-rho-eps-3} hold.
 Let
  \begin{equation}\label{Lambda-infimum-spectral-gap}
    \Lambda(\eps):=
    \min\{  \Lambda_{\eps}(\Z_{\eps_0}^+),  \Lambda_{\eps}( \Z \setminus \Z_{\eps_0}^+)\}\ge 0\,,
  \end{equation}
  for some $\eps_0 >0$.
  Then there exist constants $s,t, \Xi_1, \Xi_2, \Xi_3 >0$ independent of $\eps$
  so that $\forall \eps \in (0, \eps_0)$,
  \begin{equation*}
    {\sigma}_{3, \epsilon} \ge \min \left\{  \frac{\Lambda(\eps)(1 -  \Xi_1 \eps^t)}
      {1 + \Xi_2 \Lambda(\eps) \eps^{s}},  \Lambda(\eps)\left(1 -  \Xi_3  \eps^{\min\{t,s\}} \right) \right\}.
\end{equation*}
\end{proposition}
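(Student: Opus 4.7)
The plan is to apply the max--min principle (Proposition~\ref{thm:max-min} in Appendix~\ref{app:AB}) with the explicit two-dimensional test subspace $\text{span}\{\varphi_{1,\eps}, \varphi_{F,\eps}\} \subset L^2(\Z, \varrho_\eps^{p-r})$ built from the approximate Fiedler vector $\varphi_{F,\eps}$ of \eqref{approxFiedler}. This immediately gives $\sigma_{3,\eps} \ge \inf\{\mcl R_\eps(u) : u \in V^1(\Z, \varrho_\eps),\ \Langle u, \varphi_{F,\eps}\Rangle_{\varrho_\eps^{p-r}} = 0\}$, reducing the problem to a coercivity estimate for $\mcl R_\eps$ on this class.

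For such a $u$, I would first take $\eps_0$ small enough that $\text{supp}\,\chi_\eps^+ \subset A := \Z^+_{\eps_0}$ and $\text{supp}\,\chi_\eps^- \subset B := \Z \setminus A$ for all $\eps \in (0, \eps_0)$, and then decompose $u$ on each subdomain into its $\varrho_\eps^{p+r}$-weighted mean plus a mean-zero remainder: $u = c_A \varrho_\eps^r + w_A$ on $A$ and $u = c_B \varrho_\eps^r + w_B$ on $B$, with $w_A \in V^1(A, \varrho_\eps|_A)$ and $w_B \in V^1(B, \varrho_\eps|_B)$. Since $\nabla(u/\varrho_\eps^r) = \nabla(w_A/\varrho_\eps^r)$ on $A$, and analogously on $B$, the subdomain spectral gap assumption \eqref{spectral-gap-on-subsets} applied separately yields
\begin{equation*}
\int_\Z \varrho_\eps^q \bigl|\nabla(u/\varrho_\eps^r)\bigr|^2 dx \ge \Lambda(\eps)\,W, \qquad W := \|w_A/\varrho_\eps^r\|_{L^2(A,\varrho_\eps^{p+r})}^2 + \|w_B/\varrho_\eps^r\|_{L^2(B,\varrho_\eps^{p+r})}^2,
\end{equation*}
while orthogonality of $w_{A,B}$ to $\varrho_\eps^r$ gives the Pythagorean identity $\|u/\varrho_\eps^r\|_{L^2(\Z,\varrho_\eps^{p+r})}^2 = W + C$ with $C := c_A^2 |A|_{\varrho_\eps^{p+r}} + c_B^2 |B|_{\varrho_\eps^{p+r}}$. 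Everything therefore reduces to controlling $C$ in terms of $W$ and, if needed, the Dirichlet energy of $u$.

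This is where the two orthogonality constraints enter. Testing against $\varphi_{1,\eps}$ gives the linear relation $c_A |A|_{\varrho_\eps^{p+r}} + c_B |B|_{\varrho_\eps^{p+r}} = 0$. Testing against $\varphi_{F,\eps}$, substituting \eqref{approxFiedler} and using Lemma~\ref{b-min-max-are-bounded}, Assumption~\ref{Assumptions-on-rho-eps-3}(c,d), and the transition-layer volume bound \eqref{eqn:Depssize}, produces a second relation of the form $c_A|A|_{\varrho_\eps^{p+r}} - c_B|B|_{\varrho_\eps^{p+r}} = \mcl O(\eps^t)(c_A + c_B) + \mcl R(w_A, w_B)$, where the remainder $\mcl R$ is supported in the transition annulus $\Z^\pm_{\eps_1} \setminus \Z^\pm$ and, by Cauchy--Schwarz together with the boundedness \eqref{uniform-rho-bound} of $\varrho_\eps$ on the clusters, satisfies $|\mcl R| \le \Xi\,\eps^{s/2}\sqrt{W}$. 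Solving the resulting $2 \times 2$ system for $(c_A, c_B)$ and squaring yields an inequality of the form $C \le \Xi_1 \eps^t W + \Xi_2 \eps^s \int_\Z \varrho_\eps^q|\nabla(u/\varrho_\eps^r)|^2 dx$, where the second summand arises from optionally reabsorbing the $\sqrt W$ factor into the Dirichlet energy via the subdomain Poincar\'e inequality. Plugging into $\mcl R_\eps(u) = D/(W+C)$ produces the first branch $\Lambda(\eps)(1 - \Xi_1\eps^t)/(1 + \Xi_2 \Lambda(\eps)\eps^s)$ of the minimum, while skipping the reabsorption and keeping only the pure $\eps^{\min\{s,t\}}W$ bound yields the simpler branch $\Lambda(\eps)(1 - \Xi_3 \eps^{\min\{s,t\}})$. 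The hard part will be the careful bookkeeping in the transition annulus when estimating $\mcl R$: this is where \eqref{eqn:Depssize}, the pointwise vanishing of $\varrho_\eps$ off the clusters, and the rate $|b_\eps^\pm - 1| = \mcl O(\eps^{\min\{1,p+r\}})$ from Lemma~\ref{b-min-max-are-bounded} must be combined to extract strictly positive exponents $s, t > 0$.
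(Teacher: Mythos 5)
Your proposal is correct in spirit but takes a genuinely different route from the paper's proof, and the comparison is worth spelling out. The paper applies the max--min principle \eqref{max-min-principle} with $M=\text{span}\{\varphi_{F,\eps}\}\subset V^1(\Z,\varrho_\eps)$ and orthogonality in the \emph{$V$-inner product}; because $\Langle u,\varphi_{F,\eps}\Rangle_V=0$ contains a Dirichlet term, Cauchy--Schwarz on $\int\varrho_\eps^q\nabla(u/\varrho_\eps^r)\cdot\nabla\xi_\eps^\pm\,dx$ (display \eqref{cauchy-schwartz-intermediate-display}) introduces a factor $\mcl R_\eps(u)^{1/2}\eps^{q/2-\beta}$. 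This produces a quadratic inequality in $\mcl R_\eps^{1/2}$, and the subsequent case split $\mcl R_\eps\ge1$ vs.\ $\mcl R_\eps<1$ is precisely what generates the two branches in the minimum. Your proof instead uses the \emph{$L^2(\Z,\varrho_\eps^{p-r})$-orthogonality} form of Courant max--min with $M=\text{span}\{\varphi_{1,\eps},\varphi_{F,\eps}\}$, avoiding the gradient term and the case analysis entirely: the mean-plus-mean-zero decomposition $u=c_{A,B}\varrho_\eps^r+w_{A,B}$ on the two subdomains cleanly reduces everything to bounding $C$ in terms of $W$ via the two linear constraints, and the remainder from the transition annulus is controlled by $\eps^{(p+r)/2}\sqrt{W}$ because $\varrho_\eps^{p+r}=K_2^{p+r}\eps^{p+r}$ on $A\setminus\Z^+_\eps$. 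This route buys considerable simplification of the algebra and in fact tends to give a \emph{stronger} bound (only the second branch $\Lambda(\eps)(1-\Xi\eps^{\min\{s,t\}})$, without $\Lambda(\eps)\eps^s$ in the denominator), from which the stated minimum trivially follows.

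Two caveats you should address. First, be aware that the $L^2$-orthogonality version of the max--min is not literally what the paper's Proposition~\ref{thm:max-min} states (there $M^\bot$ is taken in $V^1(\Omega,\varrho)$), so citing it as written is a mismatch for a non-eigenfunction test vector like $\varphi_{F,\eps}$; you need the standard form-theoretic Courant--Fischer theorem on $H=L^2(\Z,\varrho_\eps^{p-r})$, which is valid but a different statement. Second, the containment $\text{supp}\,\chi_\eps^\pm\subset\Z^+_{\eps_0}$ and $\Z\setminus\Z^+_{\eps_0}$ requires $\eps+\eps^\beta<\eps_0$, which fails for $\eps$ near $\eps_0$; this is harmless (shrink the admissible $\eps$-range after fixing $\beta$), but it must be stated. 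Finally, the description of how ``optionally reabsorbing the $\sqrt W$ factor'' generates the first branch $\Lambda(1-\Xi_1\eps^t)/(1+\Xi_2\Lambda\eps^s)$ is a bit loose --- tracing the constants, replacing $\sqrt W$ by $\sqrt{D/\Lambda(\eps)}$ yields $C\lesssim\eps^sD/\Lambda(\eps)$ rather than $\eps^sD$, and the $\Lambda$ factor then cancels in the Rayleigh quotient; but since your direct estimate $C\lesssim\eps^{p+r}W$ already yields the second branch, which implies the proposition, this imprecision does not affect the conclusion.
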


\begin{proof}
  Note that it is possible that $\Lambda(\eps)=0$ if the spectral gap condition in Definition~\ref{Assumption-subset-gap} is not satisfied in
  $\Z_{\eps_0}^+$ or $\Z \setminus \Z_{\eps_0}^+$. If this happens for some $\eps\in (0,\eps_0)$, then the proposition trivially holds. Therefore, we assume from now on that $\Lambda(\eps)>0$ for all $\eps\in (0,\eps_0)$.
  
  Let $u \in V^1(\Z,\varrho_\eps)$ and $u \bot  \varphi_{F,\eps}$ with
    respect to the $\langle \cdot, \cdot \rangle_V$-inner product. Without loss of
    generality assume $\| u \|_{L^2(\Z, \varrho_\eps^{p-r})} = 1$. We will prove the
    desired lower bound for $\mcl R_\eps(u)$ and use the max-min principle (Theorem~\ref{thm:max-min}) to  infer the
   lower bound of $\sigma_{3,\eps}$.

   By definition of $ \Lambda_{\eps}$ we have 
   \begin{align*}
      \int_\Z  \left| \nabla \left( \frac{u}{\varrho_\epsilon^r} \right) \right|^2 \varrho_\epsilon^q dx
     &  =  \int_{\Z_{\eps_0}^+}  \left| \nabla \left( \frac{u}{\varrho_\epsilon^r} \right) \right|^2 \varrho_\epsilon^q dx
     +  \int_{ \Z \setminus \Z_{\eps_0}^+}  \left| \nabla \left( \frac{u}{\varrho_\epsilon^r} \right) \right|^2 \varrho_\epsilon^q dx\\
     &\quad \ge \Lambda_{\eps}(\Z_{\eps_0}^+)
       \int_{\Z_{\eps_0}^+}  \left|  \frac{u}{\varrho_\eps^r} - \bar{u}_{\Z_{\eps_0}^+} \right|^2 \varrho_\epsilon^{p+r} dx\\
      & \quad 
        + \Lambda_{\eps}( \Z \setminus \Z_{\eps_0}^+) \int_{ \Z \setminus \Z_{\eps_0}^+}
        \left|  \frac{u}{\varrho_\eps^r} - \bar{u}_{\Z \setminus \Z_{\eps_0}^+} \right|^2
        \varrho_\epsilon^{p+r}dx,
   \end{align*}
   where for subsets $\Omega \subseteq \Z$  we  used  the notation (recall \eqref{measure-of-sets}) 
   \begin{equation}
     \label{bar-u-definition}
     \bar{u}_{\Omega} := \frac{1}{|\Omega|_{\varrho_\eps^{p+r}}}\int_\Omega \left(\frac{ u}{\varrho_\eps^r}\right) \varrho_\eps^{p+r} dx.
   \end{equation}
   After expanding the squared absolute values and rearrangement we get
   \begin{align}\label{Rayleigh-bound}
     \frac{1}{\Lambda(\eps)} \int_\Z
     &\left| \nabla \left( \frac{u}{\varrho_\epsilon^r} \right) \right|^2\varrho_\epsilon^q dx
       \ge  \int_\Z  \left| \frac{u}{\varrho_\eps^r} \right|^2 \varrho_\epsilon^{p+r} dx \notag\\
     & \quad
       + \bar{u}_{\Z_{\eps_0}^+}^2 |\Z_{\eps_0}^+|_{\varrho_\epsilon^{p+r}}
       + \bar{u}_{ \Z \setminus \Z_{\eps_0}^+}^2 | \Z \setminus \Z_{\eps_0}^+|_{\varrho_\epsilon^{p+r}}\\
     & \quad
       - 2\bar{u}_{\Z_{\eps_0}^+} \int_{\Z_{\eps_0}^+} u\varrho_\eps^{p} dx
       - 2\bar{u}_{ \Z \setminus \Z_{\eps_0}^+} \int_{\Z \setminus  \Z_{\eps_0}^+} u\varrho_\eps^{p}  dx.\notag
   \end{align}
   We further discard the terms in the second line as they are positive, which leaves us with the
   lower bound:
   \begin{align*}
     \frac{1}{\Lambda(\eps)} \int_\Z \left| \nabla \left( \frac{u}{\varrho_\epsilon^r} \right) \right|^2 \varrho_\epsilon^q dx
       &\ge  \int_\Z  \left|  \frac{u}{\varrho_\eps^r} \right|^2 \varrho_\epsilon^{p+r} dx\\
      &- 2\bigg( \bar{u}_{\Z_{\eps_0}^+} \int_{\Z_{\eps_0}^+}  u\varrho_\eps^{p} dx
+ \bar{u}_{\Z \setminus \Z_{\eps_0}^+} \int_{\Z \setminus \Z_\eps^+} u\varrho_{\eps_0}^{p} dx
        \bigg).
   \end{align*}
   Using 
   H{\"o}lder's inequality and the normalization
   $\| u\|_{L^2(\Z,\varrho_\eps^{p-r})} = 1$ we obtain
   \begin{align*}
     \frac{1}{\Lambda(\eps)} \int_\Z & \left| \nabla \left( \frac{u}{\varrho_\epsilon^r} \right) \right|^2 \varrho_\epsilon^q dx  \\
&\ge 
  \left[ 1 - 2\bigg( \bar{u}_{\Z_{\eps_0}^+} |\Z_{\eps_0}^+|_{\varrho_\eps^{p+r}}^{1/2}
        + \bar{u}_{\Z \setminus \Z_{\eps_0}^+} | \Z \setminus \Z_{\eps_0}^+|_{\varrho_\eps^{p+r}}^{1/2}
\bigg) \right]
     \int_\Z \left| \frac{u}{\varrho_\eps^r}\right|^2 \varrho_\eps^{p+r}dx\\
     & =: \left[ 1 - 2(T_1 + T_2) \right] .
   \end{align*}
   It remains to 
   bound the $T_1$ and $T_2$ terms.


 Recall that $\langle u, \varrho_\eps^r \rangle_V = 0$, implying that $\int_\Z u \varrho_\eps^{p}dx =0$
 and so
      \begin{align}\label{u-orth-to-phi-1}
        \int_{\Z_{\eps_0}^+} u \varrho^p_\epsilon  dx + \int_{\Z_{\eps_0}^-} u \varrho^p_\eps dx
        = - \int_{\Z \setminus \Z_{\eps_0}'} u \varrho^p_\eps dx.
      \end{align}
On the other hand since $\langle u, \varphi_{F,\epsilon} \rangle_V =0$ as well we have that
      \begin{align*}
        0  = &b_\eps^+ \int_\Z u \xi_{\epsilon}^+ \varrho_\epsilon^p dx
            + b_\eps^+ \int_\Z \varrho_\eps^q \nabla \left( \frac{u}{\varrho_\eps^r} \right)
            \cdot \nabla \xi_\eps^+ dx \\
        & - b_\eps^- \int_\Z u \xi_{\epsilon}^- \varrho_\epsilon^p dx
        - b_\eps^- \int_\Z \varrho_\eps^q \nabla \left( \frac{u}{\varrho_\eps^r} \right)
            \cdot \nabla \xi_\eps^- dx.
      \end{align*}
Using the definition of $\xi_\eps^\pm$ we can write
      \begin{equation}\label{u-orth-to-phi-F}
        \begin{aligned}
       & \int_{\Z_{\eps_0}^+} u \varrho_\epsilon^p dx - \int_{\Z_{\eps_0}^-} u \varrho_\epsilon^p dx
        =   \int_{\Z^+_{\eps_0} \setminus \Z^+_{\eps}} u \varrho_{\eps}^p dx- \int_{\Z^-_{\eps_0} \setminus \Z^-_{\eps}} u \varrho_{\eps}^p dx \\
        &\qquad 
       -b_\eps^+ \int_{\Z^+_{\epsilon_1} \setminus \Z_\eps^+} u \varrho_\epsilon^p \xi_{\epsilon}^+ dx
          + b_\eps^-\int_{\Z^-_{\epsilon_1} \setminus \Z_\eps^-} u \varrho_\epsilon^p \xi_{\epsilon}^- dx \\
        &\qquad +  (1 - b_\eps^+) \int_{\Z_\eps^+} u \varrho_\epsilon^p dx - (1 - b_\eps^-)
        \int_{\Z_\eps^-} u \varrho_\epsilon^p dx\\
        &\qquad - b_\eps^+ \int_{\Z^+_{\eps_1} \setminus \Z^+_\eps} \varrho_\eps^q \nabla \left( \frac{u}{\varrho_\eps^r} \right)
        \cdot \nabla \xi_\eps^+ dx
         + b_\eps^- \int_{\Z^-_{\eps_1} \setminus \Z^-_\eps} \varrho_\eps^q \nabla \left( \frac{u}{\varrho_\eps^r} \right)
        \cdot \nabla \xi_\eps^- dx.
        \end{aligned}
      \end{equation}
      Furthermore,
      by
      the Cauchy-Schwartz inequality,  and
      the bound on the derivative of $\xi_\eps^\pm$  we obtain
      \begin{equation}
        \label{cauchy-schwartz-intermediate-display}
        \left|  \int_{\Z^\pm_{\eps_1} \setminus \Z^\pm_\eps} \varrho_\eps^q
          \nabla \left( \frac{u}{\varrho_\eps^r} \right)
          \cdot \nabla \xi_\eps^\pm dx \right|^2 \le \mcl R_\eps(u)
        \int_{\Z^\pm_{\eps_1} \setminus \Z_\eps^\pm} | \nabla \xi_\eps^\pm|^2 \varrho_\eps^q dx
        \le \Xi_1 \mcl R_\eps(u) \eps^{q - 2\beta} 
      \end{equation}
      for a constant $\Xi_1 >0$ independent of $\eps$. 
      Combine  \eqref{u-orth-to-phi-1}, \eqref{u-orth-to-phi-F},
      \eqref{cauchy-schwartz-intermediate-display}, and the fact that
      $0<b_\eps^\pm < 2$ to get
      \begin{equation*}
        \begin{aligned}
          2 \left| \int_{\Z_{\eps_0}^+} u \varrho_\eps^p dx \right|
          &\le  \int_{\Z\setminus \Z'_{\eps}} |u \varrho_{\eps}^p| dx 
+2  \int_{\Z_{\epsilon_1}' \setminus \Z_{\eps}'} |u \varrho_\epsilon^p| dx \\
& \qquad + \max\{|1-b_\eps^+|,|1-b_\eps^-|\}  \int_{\Z_\eps'} |u \varrho_\eps^p| dx
+ 4\sqrt{ \Xi_1} \mcl R_\eps(u)^{1/2} \eps^{\frac{q}{2}-\beta}.
      \end{aligned}
      \end{equation*}
      Multiple applications of H{\"o}lder's inequality  along with
    Lemma~\ref{b-min-max-are-bounded} then give
      \begin{align*}
        2 \left| \int_{\Z_{\eps_0}^+} u \varrho_\eps^p dx \right|
        & \le
         |\Z\setminus \Z_\eps'|_{\varrho^{p+r}_\eps}^{1/2}
		+2 \left|{\Z_{\epsilon_1}' \setminus \Z_\eps'} \right|_{\varrho_\eps^{p +r}}^{1/2} \\
        &\qquad+ \Xi\eps^{\min\{1,p+r\}} | \Z_\eps'|_{\varrho^{p+r}_\eps}^{1/2}
          + 4 \sqrt{\Xi_1} \mcl R_\eps(u)^{1/2} \eps^{\frac{q}{2}-\beta}.
       \end{align*}
       Furthermore, by Assumption~\ref{Assumptions-on-rho-eps-3}(d) and \eqref{eqn:Depssize},
       \begin{align*}
         |\Z\setminus \Z_\eps'|_{\varrho^{p+ r}_\eps} &= K_2^{p+r}\eps^{p+r}
         | \Z \setminus \Z_\eps'|
         \le \Xi_2 \eps^{p +r}\,,\\
          \left|{\Z_{\epsilon_1}' \setminus \Z_\eps'} \right|_{\varrho_\eps^{p +r}} &=
          K_2^{p+r}\eps^{p+r}
         | \Z_{\epsilon_1}' \setminus \Z_\eps'|
         \le \Xi_3 \eps^{p+r+\beta}\,.
       \end{align*}
        We can repeat the
       above calculation by replacing $\Z^+$ with $\Z^-$ and vice versa to get the bound
       \begin{equation*}
          \left|\int_{\Z_{\eps_0}^\pm} u\varrho^p_\eps dx\right| \le \Xi_4
          \eps^{\frac{1}{2}\min\left\{2, p +r\right\}}
          +  4 \sqrt{\Xi_1} \mcl R_\eps(u)^{1/2}  \eps^{\frac{q}{2}-\beta}
,     \end{equation*}
     for some constant $\Xi_4>0$.  Note that by \eqref{u-orth-to-phi-1}, we also have 
       \begin{equation*}
         \left|\int_{\Z \setminus \Z_{\eps_0}'}u\varrho_\eps^p dx \right| \le 2\Xi_4
          \eps^{\frac{1}{2}\min\left\{2, p +r\right\}}
          +  8 \sqrt{\Xi_1} \mcl R_\eps(u)^{1/2}  \eps^{\frac{q}{2}-\beta},     \end{equation*}
We conclude that
\begin{align*}
|T_1|+|T_2|
&=
\frac{|\Z_{\eps_0}^+|_{\varrho_\eps^{p+r}}^{1/2}}{|\Z_{\eps_0}^+|_{\varrho_\eps^{p+r}}} \left|\int_{\Z_{\eps_0}^+}u\varrho_\eps^p dx\right|
+
              \frac{|\Z \setminus \Z_{\eps_0}^+|_{\varrho_\eps^{p+r}}^{1/2}}{| \Z \setminus \Z_{\eps_0}^+|_{\varrho_\eps^{p+r}}} \left|\int_{\Z_{\eps_0}^-}u\varrho_\eps^p dx + \int_{\Z \setminus \Z_{\eps_0}'}u\varrho_\eps^p dx \right|  \notag\\
  & \le 
\frac{1}{|\Z_{\eps_0}^+|_{\varrho_\eps^{p+r}}^{1/2}}\left|\int_{\Z_{\eps_0}^+}u\varrho_\eps^p dx\right|
+
    \frac{1}{|\Z \setminus \Z_{\eps_0}^+|_{\varrho_\eps^{p+r}}^{1/2}} \left(
    \left|\int_{\Z_{\eps_0}^-}u\varrho_\eps^p dx \right|  +  \left|\int_{\Z \setminus \Z_{\eps_0}'}u\varrho_\eps^p dx \right| \right) 
  \notag\\
&\le\Xi_5
         \eps^{\frac{1}{2}\min\left\{2, p +r\right\}} + \Xi_6 \mcl R_{\eps}^{1/2}  \eps^{\frac{q}{2}-\beta}.
\end{align*}
Thus, we obtain
   \begin{align*}
     \mcl R_\eps(u) + \Lambda (\eps) \Xi_6 \mcl R_\eps^{1/2}(u)   \eps^{\frac{q}{2}-\beta}
       \ge \Lambda(\eps) \left[ 1 - 2\Xi_5 \eps^{\frac{1}{2}\min\left\{2, p +r\right\}}\right].
   \end{align*}
   Now if $\mcl R_\eps(u) \ge 1$ then $\mcl R_\eps(u) \ge \mcl R_\eps^{1/2}(u)$ and we have
    \begin{align*}
     \mcl R_\eps(u)  
      \ge \frac{ \Lambda(\eps) \left[ 1 - 2\Xi_5 \eps^{\frac{1}{2}\min\left\{2, p +r \right\}}\right]}
      {1 + 2 \Lambda (\eps) \Xi_6 \eps^{\frac{q}{2}-\beta}}.
    \end{align*}
    Alternatively, if $\mcl R_\eps(u) < 1$ then $\mcl R_\eps^{1/2}(u) < 1$ 
    and we instead obtain
    \begin{align*}
     \mcl R_\eps(u)  
      \ge   \Lambda(\eps) \left[ 1 - 2\Xi_8 \eps^{\frac{1}{2}\min\left\{2, p +r ,q-2\beta \right\} }\right].
    \end{align*}
    Combining these two bounds we get the desired result 
so  long as $p+r>0$, $q>0$, and $\beta$ and $\eps_0$ are small enough.
    {}
  \end{proof}

    Next, we investigate the consequences of Proposition~\ref{subset-spectral-gap} for different parameter choices $p$, $q$ and $r$.
    The main point of interest here is to analyze how the parameter $\Lambda(\eps)$ in
    \eqref{Lambda-infimum-spectral-gap} is controlled by $\eps$.
    We will show in Propositions~\ref{spectral-gap-general} that
    the choice of $q$ in relation to $p$ and $r$ plays a major role in whether  $\Lambda(\eps)$
    is uniformly bounded away from zero and hence, whether  a uniform
    spectral gap exists between $\sigma_{2,\eps}$ and $\sigma_{3,\eps}$.

    Our method of proof relies on isoperimetric-type inequalities for
    general Dirichlet forms as in  \cite[Sec.~8.5.1]{bakry2013analysis}
    viewed as a generalized form of  Cheeger's inequality. 
    For open  $\Omega \subset \Z$   define the $\varrho_\eps^q$ weighted
    Minkowski boundary measure of $\Omega$ as follows
    \begin{equation}
      \label{minkowski-outer-meas}
      |\partial \Omega|_{\varrho_\eps^q} := \liminf_{\delta \downarrow 0}
      \frac{1}{\delta} \left[ |\Omega_\delta|_{\varrho_\eps^q} - | \Omega|_{\varrho_\eps^q} \right].
    \end{equation}
    Furthermore, given $p,q,r$ we fix a subset $\Omega'\subseteq \Z$ and consider any $\Omega\subset\Omega' \subseteq \Z$.
    Define the isoperimetric function
    \begin{equation}
      \label{weighted-isoperimetric-function}
      \mcl J( \Omega, \varrho_\eps) := \frac{ | \partial \Omega |_{\varrho_\eps^q}}{\min \{ | \Omega|_{\varrho_{\eps}^{p+r}},
        |\Omega' \setminus \Omega|_{\varrho_{\eps}^{p+r}}\}}.
    \end{equation}
    The following lemma is proven in Appendix~\ref{sec:proof-isoperim-ineqa} similarly to \cite[Prop.~8.5.2]{bakry2013analysis}.
    
    \begin{lemma}\label{weighted-isoperimetric-inequality}
Let $(p,q,r)\in\R^3$, and suppose Assumptions~\ref{Assumptions-on-D}, \ref{Assumptions-on-rho} and \ref{Assumptions-on-rho-eps-3}
    hold.      
Let $\Omega' \subseteq \Z$.  Fix $\eps\in(0,\eps_0)$.
      Assume there exist  $h(\eps)>0$ so that
      \begin{equation}\label{iso-perimetric-lower-bound}
        h(\eps) \le  \inf_\Omega \mcl J( \Omega, \varrho_\eps),
      \end{equation}
      where the infimum is over open subsets $\Omega \subset \Omega' \subseteq \Z$
      such that $|\Omega |_{\varrho_\eps^{p+r}} \le \frac{1}{2} | \Omega'|_{\varrho_\eps^{p+r}}$.
      Then  $\mcl L_\eps$ has a spectral gap on $\Omega'$ according to Definition~\ref{Assumption-subset-gap} and
      \eqref{spectral-gap-on-subsets} holds with 
\as{
\begin{equation*}
\Lambda_{\eps}(\Omega') \ge \frac{ h(\eps)^2}{4}\,\left(\inf_{\Omega'} \varrho_\eps^{p+r-q}\right)\,.
\end{equation*}
}
    \end{lemma}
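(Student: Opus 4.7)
The plan is to adapt the classical Cheeger inequality to the present two-weight setting in three steps: (a) reduce the claim to a pure Poincar\'e-type inequality via the substitution $v:=u/\varrho_\eps^r$ and a median shift; (b) perform the Cheeger step, combining the weighted co-area formula with the assumed isoperimetric lower bound \eqref{iso-perimetric-lower-bound} to obtain an $L^1$ inequality; (c) upgrade to $L^2$ via a Cauchy--Schwarz argument that converts the weight $\varrho_\eps^{2q-(p+r)}$ into $\varrho_\eps^q$ at the cost of the factor $\inf_{\Omega'}\varrho_\eps^{p+r-q}$ appearing in the conclusion.

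For step (a), setting $v:=u/\varrho_\eps^r$ turns the target inequality \eqref{spectral-gap-on-subsets} into a weighted Poincar\'e inequality for $v$ on $\Omega'$ with weights $\varrho_\eps^q$ on the Dirichlet side and $\varrho_\eps^{p+r}$ on the mass side. The orthogonality constraint $u\in V^1(\Omega',\varrho_\eps)$ becomes $\int_{\Omega'} v\,\varrho_\eps^{p+r}\,dx=0$. Expanding and using this zero-mean property gives, for any constant $c$, $\int_{\Omega'}(v-c)^2\varrho_\eps^{p+r}\,dx = \int_{\Omega'} v^2\varrho_\eps^{p+r}\,dx + c^2|\Omega'|_{\varrho_\eps^{p+r}}$, so it is enough to prove the Poincar\'e inequality with $v$ replaced by $v-c$. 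I pick $c$ to be the $\varrho_\eps^{p+r}$-median of $v$ and decompose $v-c = w_+ - w_-$, where $w_\pm:=(\pm(v-c))_+\ge 0$ have disjoint supports and each satisfies $|\,\supp w_\pm|_{\varrho_\eps^{p+r}} \le \tfrac12|\Omega'|_{\varrho_\eps^{p+r}}$.

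For step (b), I apply the weighted co-area formula to $w_\pm^2$ to obtain
\begin{equation*}
\int_{\Omega'}|\nabla w_\pm^2|\,\varrho_\eps^q\,dx \;=\; \int_0^\infty |\partial\{w_\pm^2>t\}|_{\varrho_\eps^q}\,dt.
\end{equation*}
Each super-level set $\{w_\pm^2>t\}$ lies in $\supp w_\pm$ and hence meets the measure constraint of \eqref{iso-perimetric-lower-bound}, so the hypothesis yields $|\partial\{w_\pm^2>t\}|_{\varrho_\eps^q}\ge h(\eps)\,|\{w_\pm^2>t\}|_{\varrho_\eps^{p+r}}$. Integrating in $t$, invoking the layer-cake representation of $\int w_\pm^2\varrho_\eps^{p+r}\,dx$, summing over the two sign pieces and exploiting their disjoint supports gives the $L^1$-Cheeger bound
\begin{equation*}
\int_{\Omega'}|\nabla(v-c)^2|\,\varrho_\eps^q\,dx \;\ge\; h(\eps)\int_{\Omega'}(v-c)^2\varrho_\eps^{p+r}\,dx.
\end{equation*}

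For step (c), I write $|\nabla(v-c)^2|=2|v-c|\,|\nabla(v-c)|$ and apply Cauchy--Schwarz after the weight split $\varrho_\eps^q = \varrho_\eps^{(p+r)/2}\cdot\varrho_\eps^{q-(p+r)/2}$ to get
\begin{equation*}
\int|\nabla(v-c)^2|\varrho_\eps^q\,dx \;\le\; 2\Bigl(\int(v-c)^2\varrho_\eps^{p+r}\,dx\Bigr)^{1/2}\Bigl(\int|\nabla(v-c)|^2\varrho_\eps^{2q-(p+r)}\,dx\Bigr)^{1/2}.
\end{equation*}
The decisive weight conversion is $\varrho_\eps^{2q-(p+r)} = \varrho_\eps^q\cdot\varrho_\eps^{q-(p+r)} \le (\inf_{\Omega'}\varrho_\eps^{p+r-q})^{-1}\varrho_\eps^q$, which is precisely where the factor $\inf_{\Omega'}\varrho_\eps^{p+r-q}$ enters. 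Chaining the bounds from (b) and (c), dividing by $(\int(v-c)^2\varrho_\eps^{p+r})^{1/2}$, squaring, and using $\int(v-c)^2\varrho_\eps^{p+r}\ge\int v^2\varrho_\eps^{p+r}$ yields the stated lower bound $\Lambda_\eps(\Omega')\ge\tfrac14 h(\eps)^2\inf_{\Omega'}\varrho_\eps^{p+r-q}$. The main obstacle is technical rather than conceptual: the co-area identity and the identification of the Minkowski measure \eqref{minkowski-outer-meas} with the weighted surface integral $\int_{\{f=t\}}\varrho_\eps^q\,d\mcl H^{d-1}$ must be justified at the $H^1$ level, which is handled by smooth approximation of $v$ using the regularity of $\varrho_\eps$ from Assumption~\ref{Assumptions-on-rho-eps-3}, exactly as in \cite[Sec.~8.5.1]{bakry2013analysis}.
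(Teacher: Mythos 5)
Your proof is correct and follows essentially the same approach as the paper: a weighted Cheeger argument combining the co-area inequality, the isoperimetric hypothesis applied to super-level sets of $F_\pm^2$ about a median, and a Cauchy--Schwarz upgrade from $L^1$ to $L^2$. The paper organizes this through an abstract two-measure Cheeger inequality (Proposition~\ref{thm:cheeger}, specialized with $d\mu=\varrho_\eps^q\,dx$ and $d\nu=\varrho_\eps^{p+r}\,dx$) and introduces the ratio $\sup\varrho_\eps^{q-p-r}=(\inf_{\Omega'}\varrho_\eps^{p+r-q})^{-1}$ after Cauchy--Schwarz, while you work directly with the weights and fold the ratio into the weight split inside Cauchy--Schwarz — cosmetic reorderings that yield the same constant $\tfrac{h^2}{4}\inf_{\Omega'}\varrho_\eps^{p+r-q}$.
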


\begin{proposition}\label{spectral-gap-general}
  Let   $r \in \mbb R$, $q>0$, $p+r>0$,
    and suppose Assumptions~\ref{Assumptions-on-D}, \ref{Assumptions-on-rho} and \ref{Assumptions-on-rho-eps-3}
    hold.
 Then there exists $\Xi>0$ independent of $\eps\in (0,\eps_0]$ so that
\as{
   $$
   {\sigma}_{3, \epsilon} \ge \Xi\eps^{\max\{ p+r-q, 2(q - p -r)\}}.
   $$
}

\end{proposition}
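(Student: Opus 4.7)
The plan is to combine the reduction from Proposition~\ref{subset-spectral-gap} with the weighted Cheeger-type inequality of Lemma~\ref{weighted-isoperimetric-inequality}, applied separately to each of the two sets $\Omega' \in \{\Z_{\eps_0}^+,\, \Z\setminus\Z_{\eps_0}^+\}$. Proposition~\ref{subset-spectral-gap} reduces the claim to establishing that the quantity $\Lambda(\eps)$ defined in \eqref{Lambda-infimum-spectral-gap} satisfies
\begin{equation*}
\Lambda(\eps) \ge \Xi\,\eps^{\max\{p+r-q,\,2(q-p-r)\}}.
\end{equation*}
Indeed, once this lower bound is available, the prefactors $\Lambda(\eps)\eps^s$ and $\eps^{\min\{s,t\}}$ appearing in the conclusion of that proposition are small for small $\eps$, and a short case split on whether $\Lambda(\eps)\eps^s \le 1$ shows that both entries of the minimum in Proposition~\ref{subset-spectral-gap} are bounded below by a fixed multiple of $\Lambda(\eps)$, thereby preserving the claimed rate.

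For each fixed $\Omega' \in \{\Z_{\eps_0}^+,\, \Z\setminus\Z_{\eps_0}^+\}$, Lemma~\ref{weighted-isoperimetric-inequality} bounds $\Lambda_\eps(\Omega')$ below by $\tfrac{1}{4}h(\eps)^2 \inf_{\Omega'}\varrho_\eps^{p+r-q}$, so it suffices to produce
\begin{equation*}
\inf_{\Omega'}\varrho_\eps^{p+r-q} \ge \Xi\,\eps^{\max\{p+r-q,\,0\}}, \qquad h(\eps) \ge \Xi\,\eps^{\max\{q-p-r,\,0\}}\,.
\end{equation*}
The first bound is immediate from the two-sided estimate $K_2\eps \le \varrho_\eps \le \bar C$ provided by Assumptions~\ref{Assumptions-on-rho} and \ref{Assumptions-on-rho-eps-3}: the infimum of $t\mapsto t^{p+r-q}$ on $[K_2\eps,\bar C]$ is attained at $K_2\eps$ when $p+r \ge q$ and at $\bar C$ otherwise. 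The product of the two resulting exponents satisfies the arithmetic identity $2\max\{q-p-r,0\} + \max\{p+r-q,0\} = \max\{p+r-q,\,2(q-p-r)\}$, by a direct case split on the sign of $q-p-r$, giving the claimed rate uniformly over both choices of $\Omega'$.

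The main obstacle is the isoperimetric estimate $h(\eps) \ge \Xi\eps^{\max\{q-p-r,\,0\}}$, where $h(\eps)$ is the infimum of $\mcl J(A,\varrho_\eps)$ over admissible $A \subset \Omega'$. A crude estimate that simply replaces $\varrho_\eps$ by $K_2\eps$ in the numerator and by $\bar C$ in the denominator, and then applies the standard Cheeger constant of the bounded Lipschitz domain $\Omega'$, only produces $h(\eps)\ge\Xi\eps^q$, which is too weak. The sharp rate is obtained by exploiting the two-plateau structure of $\varrho_\eps$ through a case analysis on the location of $\partial A$: when $\partial A$ lies predominantly in the low-density region $\{\varrho_\eps \asymp \eps\}$, one factors $\eps^{q-p-r}$ out of $\mcl J(A,\varrho_\eps)$ and applies a Cheeger-type inequality for the measure $\varrho_\eps^{p+r}\,dx$ restricted to that region; when $\partial A$ intersects the high-density region $\{\varrho_\eps \asymp 1\}$, the numerator gains an $\mcl O(1)$ contribution that dominates, while any $A$ whose $\varrho_\eps^{p+r}$-mass concentrates inside $\Z^\pm$ automatically has $|A|_{\varrho_\eps^{p+r}}\gtrsim 1$, keeping the denominator bounded. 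Gluing these regimes with the standard isoperimetric inequality on $\Omega'$ produces the required exponent and completes the argument.
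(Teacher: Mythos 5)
Your proposal follows the paper's route exactly: reduce via Proposition~\ref{subset-spectral-gap} to a lower bound on $\Lambda(\eps)$, apply the weighted Cheeger inequality (Lemma~\ref{weighted-isoperimetric-inequality}) on $\Z^+_{\eps_0}$ and on $\Z\setminus\Z^+_{\eps_0}$, and establish the isoperimetric bound by a case analysis according to whether the competitor set lies in the low-density region or meets a cluster. The only cosmetic difference is that you package the final exponent via the identity $2\max\{q-p-r,0\}+\max\{p+r-q,0\}=\max\{p+r-q,2(q-p-r)\}$ rather than the paper's explicit three-way case split on the sign of $q-(p+r)$, but the underlying estimates are the same.
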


\begin{proof}
  By Proposition~\ref{prop:lambda23lowerbound} we only need to find a lower bound
  on
  $\Lambda(\eps)$  which in turn
  requires us to find a lower bound on $ \Lambda_{\eps}(\Z^+_{\eps_0})$
  and $\Lambda_{\eps}(\Z \setminus \Z_{\eps_0}^+)$ separately.
We only consider $ \Lambda_{\eps}(\Z^+_{\eps_0})$ and note that the same
argument can be repeated for $\Lambda_\eps(\Z \setminus \Z^+_{\eps_0})$ possibly with different constants.

  We will find a lower bound on 
  $\inf_\Omega \mcl J( \Omega, \varrho_\eps)$ and use Lemma~\ref{weighted-isoperimetric-inequality} with $\Omega'\equiv\Z_{\eps_0}^+$
 to extend that lower bound to $\Lambda_{\eps}(\Z^+_{\eps_0})$.
For fixed $\eps$ 
let $\Omega$ be a subset of $\Z^+_{\eps_0}$ satisfying $ | \Omega |_{\varrho_\eps^{p+r}}
\le \frac{1}{2} |\Z^+_{\eps_0}|_{\varrho_\eps^{p+r}}$. First,
suppose  $ | \Omega \cap  \Z^+  |_{\varrho_\eps^{p +r}} >0$, i.e.,
part of $\Omega$ lies inside $\Z^+$. Then since 
 $\varrho_\eps$ 
 is uniformly bounded from above in $\Z^+$ and for sufficiently small $\eps_0$
 (recall \eqref{uniform-rho-bound}) we have 
\begin{align*}
  \mcl J ( \Omega, \varrho_\eps)
  &\ge \frac{ (\varrho_{\eps_0}^-)^q  | \partial \Omega \cap \Z^+| }
    {\min\{  |\Omega|_{\varrho_\eps^{p+r}}, |\Z_{\eps_0}^+ \setminus \Omega|_{\varrho_\eps^{p+r}} \} }\\
  &\ge  \frac{ (\varrho_{\eps_0}^-)^q  | \partial \Omega \cap \Z^+| }
    { (\varrho_{\eps_0}^+)^{p+r} |\Omega \cap \Z^+| +
    |\Omega \cap \left(\Z_{\eps_0}^+ \setminus \Z^+\right)|_{\varrho_\eps^{p+r}}  }\\
  & \ge \frac{ (\varrho_{\eps_0}^-)^q  | \partial \Omega \cap \Z^+| }
    { (\varrho_{\eps_0}^+)^{p+r} |\Omega \cap \Z^+|} +  \mcl O(|\Z_{\eps_0}^+ \setminus \Z^+|_{\varrho_\eps^{p+r}}) \ge \Xi_1\,,
\end{align*}
where we used Taylor expansions to write the last line.
The first ratio is uniformly bounded away from zero independent of $\eps$
by the standard isoperimetric inequality
for the set $\Omega \cap \Z^+$ while the second term is small following our
assumptions on $\varrho_\eps$. Thus, in this case $\mcl J$ is uniformly bounded
from below. 

Now consider the case where $|\Omega \cap \Z^+|_{\varrho_\eps^{p+r}} =0$, and so
$\Omega$ lies entirely in the strip $\Z^+_{\eps_0} \setminus \Z^+$ but
$|\Omega \cap \Z^+_\eps |_{\varrho_\eps^{p+r}} >0$.
Then it is possible to have  $| \partial \Omega \cap \partial \Z^+ |_{\varrho_\eps^q} >0$
or for the boundary of $\Omega$ to  touch the boundary $\partial \Z^+$ on a null set.
Then 
similar
calculations to the above yield
\begin{align*}
  \mcl J ( \Omega, \varrho_\eps)
  &= \frac{  | \partial \Omega |_{\varrho_\eps^q}}
    {\min\{  |\Omega|_{\varrho_\eps^{p+r}}, |\Z_{\eps_0}^+ \setminus \Omega|_{\varrho_\eps^{p+r}} \} }\\
  & \ge \frac{  (\varrho_{\eps_0}^-)^q | \partial \Omega \cap\Z^+ |}
    { | \Omega|_{\varrho_\eps^{p+r}} }\\
  & = \frac{  (\varrho_{\eps_0}^-)^q  | \partial \Omega \cap  \Z^+ | }
    { |\Omega \cap \Z_\eps^+|_{\varrho_\eps^{p+r}}
    + K_2^{p+r} \eps^{p+r}|\Omega \cap \left(\Z_{\eps_0}^+ \setminus \Z^+_\eps\right) | } \\
  & \ge \frac{  (\varrho_{\eps_0}^-)^q  | \partial \Omega \cap  \Z^+ | }
    { \Xi_3 \eps |\partial \Omega \cap  \bar{\Z}^+|
    + K_2^{p+r} \eps^{p+r}|\Omega \cap \left(\Z_{\eps_0}^+ \setminus \Z^+_\eps\right) | }\,,
\end{align*}
and so the lower bound on $\mcl J$ blows up as $\eps\to 0$.

Finally, we consider the case where
$|\Omega \cap \Z_\eps^+|_{\varrho_\eps^{p+r}} =  0$, and so
$\partial \Omega$ is far from $\partial \Z^+$.
Proceeding as above, we write
\begin{align*}
  \mcl J ( \Omega, \varrho_\eps)
  &\ge \frac{  | \partial \Omega |_{\varrho_\eps^q}}
    { |\Omega|_{\varrho_\eps^{p+r}} }
  =  \frac{  (K_2 \eps)^q | \partial \Omega  |}
    {  (K_2 \eps)^{p +r} |\Omega | } \\
  & \ge \Xi_4  \eps^{q - p -r},
\end{align*}
where $\Xi_4$ depends on $K_2^{q- p -r}$ and the standard isoperimetric constant.

\changed{Summarizing, if $q\le p+r$, then $\mcl J$ is bounded away from zero by a uniform constant
independent of $\epsilon$, implying that \eqref{iso-perimetric-lower-bound} holds with a uniform constant $h >0$. 
Note that $\inf_{\Z_{\eps_0}^+}\varrho_\eps^{p+r-q}=K_2^{p+r-q}\eps^{p+r-q}$ by Assumption~\ref{Assumptions-on-rho-eps-3}(d). We now investigate the different cases of $(p,q,r)$ separately:
\begin{itemize}
\item if $q=p+r$,  we obtain a uniform lower bound on $\Lambda_{\eps}(\Z_{\eps_0}^+)$ 
by Lemma~\ref{weighted-isoperimetric-inequality};
\item  if $q<p+r$ on the other hand, the lower bound on $\Lambda_{\eps}(\Z_{\eps_0}^+)$ is of order $\eps^{p+r-q}$;
\item if $q>p+r$, then we have the lower bound $ \mcl J  \ge \Xi_4  \eps^{q - p -r}$ and
Lemma~\ref{weighted-isoperimetric-inequality} implies
$ \Lambda_{\eps}(\Z^+_{\eps_0})\ge \Xi_4^2  \eps^{2(q - p -r)}/4$. 
\end{itemize}

Note that in the final bullet the factor
$\inf_{\Z_{\eps_0}^+} \varrho_\eps^{p+r-q}$ does not play a role here thanks to the uniform upper bound on $\varrho_\eps$ guaranteed in Assumption~\ref{Assumptions-on-rho-eps-3}(c).
The exact same reasoning can be applied for the set $\Omega'=\Z \setminus \Z_{\eps_0}^+$, where $\Z^-$ plays the role of $\Z^+$, and the region around $\Z^-$ where $\varrho_\eps$ is of order $\eps$ is simply extended up to the boundary of $\Z_{\eps_0}^+$. Therefore, similar bounds also hold for $\Lambda_{\eps}(\Z \setminus \Z_{\eps_0}^+)$ in each case.
By combining all the above lower bounds into one expression, Proposition~\ref{prop:lambda23lowerbound} yields the existence of a constant $\Xi>0$ so that $\sigma_{3,\eps}\ge \Xi \eps^{\max\{ p+r-q, 2(q - p -r)\}}$ as claimed.
}
{}
\end{proof}

The last proposition suggests that when \as{$q\neq p+r$} we cannot hope for a spectral gap.
Indeed, we are able to obtain a vanishing upper bound on $\sigma_{3,\eps}$ for $q>p+r$ and
quantify how fast it approaches zero in that case and ultimately obtain a
spectral ratio  gap. 

\changed{

\begin{proposition}\label{spectral-gap-unbalanced-case}
    Suppose the conditions of Proposition~\ref{spectral-gap-general}
  are satisfied.
\begin{itemize}
\item   If $q >p + r$ and $\eps_0>0$ is sufficiently small,
    then there exists a constant $\Xi_1 >0$ depending only on $\Lambda_\Delta( \Z \setminus \Z'_{\eps_0})$
    so that $\forall (\eps, \beta) \in (0, \eps_0)\times  (0, 1)$,
  \begin{equation*}
\sigma_{3,\epsilon} \le \Xi_1  \eps^{q - p-r - 2\beta}.
\end{equation*}
\item  If $q \le p + r$ and $\eps_0>0$ is sufficiently small,
    then there exists a constant $\Xi_2 >0$ depending only on $\Lambda_\Delta( \Z \setminus \Z'_{\eps_0})$
    so that $\forall \eps\in (0, \eps_0)$,
  \begin{equation*}
\sigma_{3,\epsilon} \le \Xi_2\,.
\end{equation*}
\end{itemize}
\end{proposition}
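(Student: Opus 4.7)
The plan is to apply the min-max principle (Proposition~\ref{thm:max-min}) with a carefully chosen two-dimensional trial subspace $S \subset V^1(\Z,\varrho_\eps)$. In both cases the first trial vector is $\varphi_{F,\eps}$ from \eqref{approxFiedler}, which by the proof of Proposition~\ref{prop:speceps} (see \eqref{R-eps-bound-for-phi-F-eps}) satisfies $\mcl R_\eps(\varphi_{F,\eps}) \le \Xi \eps^{q-\beta}$. The second trial vector is constructed differently in the two regimes, being concentrated either in the low-density region $\Z \setminus \Z'_{\eps_0}$ (case $q > p+r$) or inside a single cluster (case $q \le p+r$).

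\textbf{Case $q > p + r$.} Following the construction illustrated in Figure~\ref{fig:epszones}, I would introduce a smooth cutoff $\tilde\xi_\eps \in C^\infty(\bar\Z)$ with $\tilde\xi_\eps \equiv 0$ on $\Z'_{\eps_0 + \eps^\beta}$, $\tilde\xi_\eps \equiv 1$ on $\Z \setminus \Z'_{\eps_0 + 2\eps^\beta}$, and $|\nabla\tilde\xi_\eps| \le \vartheta \eps^{-\beta}$, and define the second trial vector
\begin{equation*}
  \tilde\varphi_{F,\eps} := \tilde c_\eps\, \varrho_\eps^r \bigl(\tilde\xi_\eps - \bar c_\eps\bigr),\qquad
  \bar c_\eps := \frac{\int_\Z \varrho_\eps^{p+r}\tilde\xi_\eps\, dx}{\int_\Z \varrho_\eps^{p+r}\, dx}\,,
\end{equation*}
with a normalization constant $\tilde c_\eps > 0$. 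The shift by $\bar c_\eps$ enforces $\tilde\varphi_{F,\eps} \in V^0(\Z,\varrho_\eps)$, while $\nabla(\tilde\varphi_{F,\eps}/\varrho_\eps^r) = \tilde c_\eps \nabla\tilde\xi_\eps$ is supported on the thin shell $\Z'_{\eps_0+2\eps^\beta}\setminus\Z'_{\eps_0+\eps^\beta}$, which lies in $\Z\setminus\Z'_{\eps_0}$ where $\varrho_\eps \equiv K_2\eps$ by Assumption~\ref{Assumptions-on-rho-eps-3}(d). Using $|\Z'_{\eps_0+2\eps^\beta}\setminus\Z'_{\eps_0+\eps^\beta}| \le \Xi\eps^\beta$ together with the derivative bound yields a numerator of order $\eps^{q-\beta}$ in $\mcl R_\eps(\tilde\varphi_{F,\eps})$, while $\tilde\xi_\eps \equiv 1$ on $\Z\setminus\Z'_{\eps_0+2\eps^\beta}$ and $p+r>0$ give a denominator bounded below by $\Xi'\eps^{p+r}$. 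Absorbing the $\beta$-dependent corrections from $\bar c_\eps$ and the normalization then yields $\mcl R_\eps(\tilde\varphi_{F,\eps}) \le \Xi_1 \eps^{q-p-r-2\beta}$. Because the supports of $\nabla(\varphi_{F,\eps}/\varrho_\eps^r)$ and $\nabla(\tilde\varphi_{F,\eps}/\varrho_\eps^r)$ are disjoint for $\eps$ small enough, the cross Dirichlet term vanishes; a $2\times 2$ generalized eigenvalue computation on $S = \mathrm{span}\{\varphi_{F,\eps}, \tilde\varphi_{F,\eps}\}$ shows the top generalized eigenvalue is dominated by $\mcl R_\eps(\tilde\varphi_{F,\eps})$, and the min-max principle yields the claimed bound.

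\textbf{Case $q \le p + r$.} The outside-of-cluster trial from the previous case no longer decays in $\eps$, so I would instead use a trial function concentrated on a single cluster. Let $\phi^+$ be the first non-constant eigenfunction of $\mcl L_0$ restricted to $\Z^+$ with eigenvalue $\tilde\sigma^+>0$, positive thanks to the $\mcl L_0$ spectral gap on $\Z^+$ (cf.\ Remark~\ref{rmk:gaplimit}). Let $\psi\in C^\infty(\bar\Z)$ agree with $\phi^+/\varrho_0^r$ on $\Z^+$ and vanish outside $\Z^+_{\eps_0}$, with the transition occurring over a fixed-width strip independent of $\eps$, and set $u := \varrho_\eps^r\psi$. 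Since $|\nabla\psi|$ is uniformly bounded and $\varrho_\eps$ is uniformly bounded above and below on $\Z^+_{\eps_0}$ by Assumption~\ref{Assumptions-on-rho-eps-3}(c) and \eqref{uniform-rho-bound}, both the numerator and the denominator of $\mcl R_\eps(u)$ are bounded by constants independent of $\eps$, so $\mcl R_\eps(u)\le \Xi_2$. After subtracting a small multiple of $\varrho_\eps^r$ to enforce $u\in V^1(\Z,\varrho_\eps)$, the min-max principle applied to $S = \mathrm{span}\{\varphi_{F,\eps}, u\}$ gives $\sigma_{3,\eps}\le \Xi_2$, since $\mcl R_\eps(\varphi_{F,\eps}) \to 0$ is dominated by $\mcl R_\eps(u)$.

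\textbf{Main obstacle.} The principal technical step will be the $2\times 2$ generalized eigenvalue analysis in the case $q\le p+r$, where $\varphi_{F,\eps}$ and $u$ have overlapping supports on $\Z^+$ and the cross Dirichlet and mass matrix entries do not vanish as cleanly as in the case $q>p+r$. One must verify that these cross terms remain small relative to the diagonal entries, uniformly in $\eps$, so that the top generalized eigenvalue stays bounded. A secondary bookkeeping issue is checking that the projections onto $V^1(\Z,\varrho_\eps)$ introduce only lower-order corrections, which follows from $p+r>0$ and $q>0$ by arguments similar to those in Lemma~\ref{b-min-max-are-bounded}.
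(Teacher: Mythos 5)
Your approach is structurally the same as the paper's for $q > p+r$ (min--max with $\varphi_{F,\eps}$ plus an outside trial vector supported in the low-density region), but your outside trial vector is simpler: a shifted, normalized cutoff $\varrho_\eps^r(\tilde\xi_\eps-\bar c_\eps)$, whereas the paper multiplies the cutoff by the second Neumann eigenfunction $\tilde\varphi_2$ of the standard Laplacian on $\Z\setminus\Z'_{\eps_0}$ and shifts to make it orthogonal to $\varphi_{1,\eps}$. Both constructions give a numerator of order $\eps^{q-\beta}$ and a denominator bounded below by $\eps^{p+r}$, hence the same conclusion; the $\tilde\varphi_2$ factor in the paper buys nothing essential for this upper bound and is there chiefly so the constant $\Xi_1$ can be expressed through $\Lambda_\Delta(\Z\setminus\Z'_{\eps_0})$. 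Your version is cleaner and your exponent $\eps^{q-p-r-\beta}$ is actually slightly sharper than the stated $\eps^{q-p-r-2\beta}$.

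For $q\le p+r$ you take a genuinely different route, and for good reason. The paper reuses the same outside trial $\tilde\varphi_{F,\eps}$ and concludes from $\mcl R_\eps(\tilde\varphi_{F,\eps})\le\Xi\eps^{q-p-r-2\beta}$; but this quantity blows up as $\eps\to 0$ once $q\le p+r$, so the argument as written does not deliver the claimed uniform bound $\sigma_{3,\eps}\le\Xi_2$. (The blow-up comes from the shrinking transition width $\eps^\beta$ of the cutoff: its Dirichlet energy scales like $\eps^{-\beta}$, and the $\eps^{q-p-r}$ prefactor no longer compensates when $q\le p+r$.) Your single-cluster trial $u=\varrho_\eps^r\psi$, with $\psi$ extending $\phi^+/\varrho_0^r$ off $\Z^+$ via a \emph{fixed-width} ramp, sidesteps this because both the Dirichlet integral and the $L^2(\Z,\varrho_\eps^{p-r})$ norm are controlled uniformly in $\eps$. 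This is the right fix and arguably repairs a gap in the paper's own argument. One imprecision to correct: you invoke \eqref{uniform-rho-bound} to claim $\varrho_\eps$ is ``uniformly bounded above and below on $\Z^+_{\eps_0}$''; the lower bound holds only on $\bar\Z'$ (on $\Z^+_{\eps_0}\setminus\Z'_\eps$ one has $\varrho_\eps=K_2\eps\to 0$). This does not break the argument---the denominator only needs $\inf_{\Z^+}\varrho_\eps^{p+r}\int_{\Z^+}|\phi^+/\varrho_0^r|^2\,dx>0$, i.e.\ the lower bound restricted to $\Z^+$---but the phrasing should be tightened. Finally, note that with your construction the constant $\Xi_2$ naturally depends on $\phi^+$ and $\Lambda_0(\Z^+)$ rather than solely on $\Lambda_\Delta(\Z\setminus\Z'_{\eps_0})$ as the proposition asserts; this is a harmless deviation in the dependence structure rather than a mathematical error. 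The cross-term bookkeeping you flag is real but benign: the Dirichlet cross term lives on the shell $\Z^+_{\eps_1}\setminus\Z^+_\eps$ where $\varrho_\eps^q\sim\eps^q$ compensates the $\eps^{-\beta}$ gradient of $\xi_\eps^+$, and the mass cross term vanishes to order $\eps^{\min\{1,p+r\}}$ thanks to the $L^2(\Z^+,\varrho_0^{p-r})$-orthogonality of $\phi^+$ and $\varrho_0^r$.
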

}

Note that according to Definition~\ref{Assumption-standard-gap}, $\Lambda_{\Delta}(\Z \setminus \Z'_{\eps_0})$ is the second eigenvalue of the standard Laplacian on $\Z \setminus \Z'_{\eps_0}$.

\begin{proof}
  \changed{
We apply a similar  argument to
  the proof of Proposition~\ref{prop:speceps}}  using the min-max principle. 
  Let $\tilde \varphi_2\in H^1(\Z \setminus \Z'_{\eps_0})$ denote the second eigenfunction of the standard
  Laplacian on $\Z \setminus \Z_{\epsilon_0}'$, i.e.,
  $\tilde \varphi_2 \bot \mbf{1}_{\Z \setminus \Z_{\epsilon_0}'}$ and
\begin{equation*}
  {\int_{\Z \setminus \Z_{\epsilon_0}'} |\nabla \tilde \varphi_2 |^2 dx }
  = \Lambda_\Delta(\Z \setminus \Z'_{\eps_0}) \| \tilde \varphi_2\|_{L^2(\Z \setminus \Z'_{\eps_0})}^2. 
\end{equation*}
We proceed by constructing a suitable approximation to $\tilde \varphi_2$. 
  Let 
  $\epsilon_2 := \epsilon_0 + \epsilon^\beta$
  and $\epsilon_3 := \epsilon_0 + 2\epsilon^\beta$ for $0<\beta<1$.  
  In a similar manner to \eqref{approxFiedler}, we define a function $\tilde \xi_\eps$
  (see Figure~\ref{fig:epszones})
\begin{align*}
   &\tilde \xi_\eps(x) = 1, \qquad &x \in \Z \setminus \Z_{\epsilon_3}', \\
   &0< \tilde \xi_\eps(x) < 1, \quad |\nabla\tilde \xi_\eps(x)| \le \vartheta
     \eps^{- \beta}, \qquad &x \in \Z_{\epsilon_3}'
                                                               \setminus \Z_{\epsilon_2}',
                                                              \\
  &\tilde \xi_\eps (x) = 0, \qquad &x \in \Z_{\epsilon_2}'.
\end{align*}
This allows us to define the function
\begin{equation}\label{tilde-phi-outside-def}
  \tilde \varphi_{F, \eps} := \tilde \xi_\epsilon
      \tilde \varphi_2 - \frac{\varrho_\eps^r}{|\Z \setminus \Z_{\eps_2}'|_{\varrho_\eps^{p+r}}} \int_{\Z\setminus \Z_{\eps_2}'}  \tilde \xi_\epsilon
      \tilde \varphi_2\varrho_\eps^p\, dx.
    \end{equation}
    The  shift ensures that
    $\tilde \varphi_{F, \eps} \in V^1(\Z \setminus \Z_{\eps_2}', \varrho_\eps)$.
The choice of $\eps_2$ and $\eps_3$ guarantee that the supports of
$\tilde \varphi_{F,\eps}$ and $\varphi_{F,\eps}$ are disjoint, and so they
are  orthogonal in $V^1(\Z,\varrho_\eps)$. 
Now let $u \in \text{span} \left\{ \varphi_{F,\epsilon}, \tilde \varphi_{F,\epsilon} \right\}$.
We wish to bound $\mcl{R}_\eps(u)$. A straightforward calculation shows that
since $\varphi_{F,\eps} \bot \tilde \varphi_{F, \eps}$ it suffices to
bound $\mcl R_\eps(\varphi_{F,\eps})$ and $\mcl R_\eps (\tilde \varphi_{F,\eps})$ separately.

For $\varphi_{F, \epsilon}$ 
 we showed in the proof of Proposition~\ref{prop:speceps} the existence of $\Xi_1 >0$ 
so that 
\begin{equation*}
  \mcl R_\eps (\varphi_{F,\eps}) \le \Xi_1 \eps^{ q -\beta},
\end{equation*}
for any $\beta \in (0, q)$.
To estimate $R_\eps(\tilde  \varphi_{F,\eps})$, observe that
    for $\epsilon \in (0, \epsilon_0]$ the function $\tilde \xi_\eps \tilde \varphi_2$ is in $H^1(\Z
    \setminus \Z_{\eps_2}')$. Thus, following our assumptions on $\varrho_\epsilon$ we can
    write
\begin{align}\label{R-eps-outside-upper-bound}
      \|\tilde  \varphi_{F,\eps}\|_{L^2(\Z,\varrho_\eps^{p-r})}^2 &\mcl R_\eps(\tilde  \varphi_{F,\eps})
  =\int_{\Z\setminus \Z_{\eps_2}'}  \left| \nabla \left( \frac{\tilde \varphi_{F,\eps}}{\varrho^r_{\epsilon}} \right) \right|^2 \varrho^q_\epsilon dx\notag \\
  & = K_2^{q- 2r} \eps^{q- 2r} \int_{\Z\setminus \Z_{\eps_2}'}  \left| \nabla \left( \tilde \xi_\eps \tilde \varphi_2\right) \right|^2 dx\notag\\        
  & \le2  K_2^{q- 2r}  \eps^{q- 2r}\left( \int_{\Z\setminus \Z_{\eps_2}'}  \left|\tilde \xi_\eps \nabla   \tilde \varphi_2\right|^2 dx
       +  \int_{\Z\setminus \Z_{\eps_2}'}  \left| \tilde \varphi_2\nabla \tilde \xi_\eps \right|^2 dx\right)\notag\\                       
  & \le  2 K_2^{q- 2r} \eps^{q- 2r} \left( \int_{\Z\setminus \Z_{\eps_0}'}  \left| \nabla  \tilde \varphi_2 \right|^2 dx +  \int_{\Z_{\eps_3}' \setminus \Z_{\eps_2}'}
    \left|  \tilde \varphi_2 \nabla  \tilde \xi_\eps  \right|^2 dx\right) \notag\\
  & \le  2  K_2^{q- 2r}  \eps^{q- 2r} \left( \int_{\Z\setminus \Z_{\eps_0}'}  \left| \nabla  \tilde \varphi_2 \right|^2 dx + \vartheta^2 \eps^{-2\beta}   \int_{\Z \setminus \Z_{\eps_0}'}
        \left|  \tilde \varphi_2  \right|^2 dx\right)\notag\\
  & \le 
    2 K_2^{q- 2r}  \eps^{q- 2r} \left(  \Lambda_\Delta(\Z \setminus \Z'_{\eps_0})  + \vartheta^2 \eps^{-2\beta}  \right)
    \| \tilde \varphi_2\|_{L^2(\Z \setminus \Z'_{\eps_0})}^2.
\end{align}
Next, we bound $ \|\tilde  \varphi_{F,\eps}\|_{L^2(\Z,\varrho_\eps^{p-r})}^2$ from below. We have
  \begin{align*}
     \| \tilde \xi_\eps \tilde \varphi_2 \|_{L^2(\Z, \varrho_\eps^{p-r})}^2  
     &= K_2^{p-r}\eps^{p-r} \int_{\Z\setminus \Z_{\eps_0}'} |\tilde \xi_\eps \tilde \varphi_2|^2\, dx\\
 & \ge K_2^{p-r}\eps^{p-r} \left(\int_{\Z\setminus \Z_{\eps_0}'} |\tilde \varphi_2|^2\,dx - \int_{\Z_{\eps_3}'\setminus\Z_{\eps_0}'} |\tilde \varphi_2|^2\,dx\right) \,,
  \end{align*}
and for any $k\ge 2$ by H{\"o}lder's inequality,
\begin{align*}
\int_{\Z_{\eps_3}'\setminus\Z_{\eps_0}'}  | \tilde \varphi_2|^2\, dx
\le \|  \tilde \varphi_2\|^2_{L^k(\Z\setminus \Z_{\eps_0}')} |\Z_{\eps_3}'\setminus\Z_{\eps_0}'|^{\frac{k-2}{k}}\,.
\end{align*}
  By the Sobolev embedding theorem~\cite[Thm.~4.12]{adams2003sobolev},
  $\tilde \varphi_2 \in L^k( \Z \setminus \Z'_{\eps_0})$ for $k \in[2, 2d/(d-2))$ if $d>2$ and $k\in[2,\infty)$ if $d\le 2$;
  and so using Sobolev inequalities,
 and the fact that $\| \tilde \varphi_2 \|_{L^2(\Z \setminus \Z'_{\eps_0})} \le 1$,
\begin{align*}
 \|  \tilde \varphi_2\|^2_{L^k(\Z\setminus \Z_{\eps_0}')} 
&\le \Xi_2  \|  \tilde \varphi_2\|^2_{H^1(\Z\setminus \Z_{\eps_0}')}
=  \Xi_2 \left( 1+\Lambda_\Delta( \Z\setminus \Z_{\eps_0}'))\right) \|  \tilde \varphi_2\|^2_{L^2(\Z\setminus \Z_{\eps_0}')}\\
&\le  \Xi_2 \left( 1+\Lambda_\Delta( \Z\setminus \Z_{\eps_0}'))\right)\,.
\end{align*}
Since $|\Z_{\eps_3}'\setminus\Z_{\eps_0}'|\le \Xi_3\eps^\beta |\partial \Z_{\eps_0}'|$, we can write
  \begin{align}\label{xivarphi2}
    \| \tilde \xi_\eps \tilde \varphi_2 \|_{L^2(\Z, \varrho_\eps^{p-r})}^2
    & \ge K_2^{p-r}\eps^{p-r}\left( \| \tilde \varphi_2\|_{L^2(\Z \setminus \Z'_{\eps_0})}^2
                           - \Xi_2 \left( 1+\Lambda_\Delta( \Z\setminus \Z_{\eps_0}'))\right)| \Z'_{\eps_3} \setminus \Z'_{\eps_0}|^{\frac{k-2}{k}}\right)\notag \\
    & \ge K_2^{p-r}\eps^{p-r} \left(\| \tilde \varphi_2\|_{L^2(\Z \setminus \Z'_{\eps_0})}^2
                            - \Xi_4 \eps^{\frac{\beta(k-2)}{k}}\right)\,.
  \end{align}
Furthermore,
    using Assumption~\ref{Assumptions-on-rho-eps-3}(d), the fact that
    $\tilde \varphi_2 \bot \mbf 1_{\Z \setminus \Z_{\eps_0}'}$ in $L^2(\Z \setminus \Z'_{\eps_0})$, H{\"o}lder's inequality, $\| \tilde \varphi_2 \|_{L^2(\Z \setminus \Z'_{\eps_0})} \le 1$, and the estimate \eqref{eqn:Depssize}, in that order, we can write
    \begin{align}\label{shiftbound}
      \frac{\varrho_\eps^r}{|\Z \setminus \Z_{\eps_2}'|_{\varrho_\eps^{p+r}}}
      &  \Bigg| \int_{\Z\setminus \Z_{\eps_2}'}  \tilde \xi_\epsilon
      \tilde \varphi_2\varrho_\eps^p\, dx \Bigg| \notag\\
      &= \frac{1}{|\Z \setminus \Z_{\eps_2}'|} \left| \int_{\Z\setminus \Z_{\eps_2}'}  \tilde \xi_\epsilon
        \tilde \varphi_2dx  \right|\notag \\
      & = \frac{1}{|\Z \setminus \Z_{\eps_2}'|}  \left|\int_{\Z\setminus \Z_{\eps_0}'}  
        \tilde \varphi_2dx - \int_{\Z'_{\eps_2} \setminus \Z'_{\eps_0}} \tilde \varphi_2 dx
         + \int_{\Z_{\eps_3}'\setminus \Z_{\eps_2}'}  
        (\tilde \xi_\eps - 1) \tilde \varphi_2dx \right| \notag\\
      & \le \frac{1}{|\Z \setminus \Z_{\eps_2}'|}  \left( \int_{\Z'_{\eps_2} \setminus \Z'_{\eps_0}} |\tilde \varphi_2|\, dx
         + \int_{\Z_{\eps_3}'\setminus \Z_{\eps_2}'}  
        |\tilde \xi_\eps - 1| \,|\tilde \varphi_2|\,dx \right) \notag\\
      & \le \frac{1}{|\Z \setminus \Z_{\eps_2}'|}  \int_{\Z_{\eps_3}' \setminus \Z_{\eps_0}'}  
        |\tilde \varphi_2|\,dx 
 \le \frac{| \Z_{\eps_3}' \setminus \Z_{\eps_0}' |^{1/2}}
        {| \Z \setminus \Z'_{\eps_0}|-| \Z'_{\eps_2} \setminus \Z'_{\eps_0}|} \le \Xi_5 \eps^{\beta/2}.
    \end{align}
To bound $\tilde \varphi_{F, \eps}$ on the outside set, we write explicitly
\begin{align*}
 &\|\tilde  \varphi_{F,\eps}\|_{L^2(\Z,\varrho_\eps^{p-r})}^2\\
  &\qquad= \int_\Z \left| 
    \tilde \xi_\epsilon(x) \tilde \varphi_2(x)
    - \frac{\varrho_\eps^r}{|\Z \setminus \Z_{\eps_2}'|_{\varrho_\eps^{p+r}}} \int_{\Z\setminus \Z_{\eps_2}'}  \tilde \xi_\epsilon(y)
      \tilde \varphi_2(y)\varrho_\eps^p(y)\, dy
\right|^2\varrho_\eps^{p-r}(x)\,dx\\
  &\qquad
    \ge 
    \|\tilde \xi_\eps \tilde \varphi_2\|^2_{L^2(\Z,\varrho_\eps^{p-r})} 
    -  \frac{2}{|\Z \setminus \Z_{\eps_2}'|_{\varrho_\eps^{p+r}}}
    \left|\int_{\Z\setminus \Z_{\eps_2}'}  \tilde \xi_\epsilon
    \tilde \varphi_2\varrho_\eps^p\, dy\right|^2\\
  &\qquad =
    \|\tilde \xi_\eps \tilde \varphi_2\|^2_{L^2(\Z,\varrho_\eps^{p-r})} 
    - \frac{|\Z \setminus \Z_{\eps_2}'|_{\varrho_\eps^{p+r}}^{p+r}}{\varrho_\eps^{2r}}
    \left(\frac{2 \varrho_\eps^r}{|\Z \setminus \Z_{\eps_2}'|_{\varrho_\eps^{2(p+r)}}}
    \left|\int_{\Z\setminus \Z_{\eps_2}'}  \tilde \xi_\epsilon
    \tilde \varphi_2\varrho_\eps^p\, dy  \right|^2 \right)\\
  & \qquad =
    \|\tilde \xi_\eps \tilde \varphi_2\|^2_{L^2(\Z,\varrho_\eps^{p-r})} 
    - |\Z \setminus \Z_{\eps_2}'| K_2^{p-r} \eps^{p-r}
    \left(\frac{2 \varrho_\eps^r}{|\Z \setminus \Z_{\eps_2}'|_{\varrho_\eps^{2(p+r)}}}
    \left|\int_{\Z\setminus \Z_{\eps_2}'}  \tilde \xi_\epsilon
    \tilde \varphi_2\varrho_\eps^p\, dy  \right|^2 \right)\,.
\end{align*}
Together with the bounds \eqref{xivarphi2} and \eqref{shiftbound},
we obtain for small enough $\eps_0$,
\begin{align*}
 &\|\tilde  \varphi_{F,\eps}\|_{L^2(\Z,\varrho_\eps^{p-r})}^2\\
  &\qquad\ge K_2^{p-r}\eps^{p-r} \left(\| \tilde \varphi_2\|_{L^2(\Z \setminus \Z'_{\eps_0})}^2
    - \Xi_4  \eps^{\frac{\beta(k-2)}{k}}
    - \Xi_6 \eps^{ \beta}  \right)\\
&\qquad \ge \Xi_7 \eps^{p-r}\| \tilde \varphi_2\|_{L^2(\Z \setminus \Z'_{\eps_0})}^2\,.
\end{align*}
Finally, following from \eqref{R-eps-outside-upper-bound}, we  infer the existence of  a constant
    $\Xi$, independent of $\eps \in (0, \eps_0)$, so that
    \begin{equation*}
      \mcl R_\eps( \tilde  \varphi_{F,\eps})   \le
      \Xi \eps^{q- p - r - 2\beta},
\end{equation*}
which concludes the proof.
\end{proof}


\subsection{Proof of Theorem~\ref{thm:geometry-of-fiedler-vector}
  (Geometry Of The Second Eigenfunction)}
\label{ssec:t39}
  
First, we prove a key result, that allows us to translate our bounds on
the third eigenvalue  $\sigma_{3,\eps}$ into an upper bound on the error between the
second eigenfunction $\varphi_{2,\eps}$ and the approximate Fiedler vector $\varphi_{F,\eps}$.
  
\changed{
  \begin{proposition}\label{varphi-F-is-close-to-fiedler-vector}
  Suppose there exist constants $\Xi_1, \Xi_2, \Xi_3 \ge 0$, so that
  for all $\eps \in (0, \eps_0]$,
  \begin{align*}
    \sigma_{3,\eps}  \ge \Xi_1 +\Xi_2 \eps^{q - \vartheta} + \Xi_3 \eps^{\theta-q}.
  \end{align*}
  Then
  for every 
 $0<\beta <q$,
there exists $\Xi>0$ so that
  \begin{equation*}
    \left|1 - \left\langle \frac{\varphi_{2,\eps}}{\varrho_\eps^r},
      \frac{\varphi_{F,\eps}}{\varrho_\eps^r} \right\rangle_{\varrho_\eps^{p+r}}^2\right|
    \le \frac{\Xi\eps^{ q - \beta}}{\Xi_1+\Xi_2\eps^{q-\vartheta}+ \Xi_3 \eps^{\theta-q}}\,.
  \end{equation*}
\end{proposition}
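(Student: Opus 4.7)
The plan is to apply a standard Davis--Kahan style spectral expansion argument, combined with the upper bound on the Rayleigh quotient $\mcl{R}_\eps(\varphi_{F,\eps})$ already established in the proof of Proposition~\ref{prop:speceps}. Since $\varphi_{F,\eps} \in V^0(\Z,\varrho_\eps)$ is orthogonal to $\varphi_{1,\eps}$ in $L^2(\Z,\varrho_\eps^{p-r})$, and $\{\varphi_{j,\eps}\}_{j\ge 2}$ forms an orthonormal basis of $V^0(\Z,\varrho_\eps)$ in this inner product, I will expand
\[
\varphi_{F,\eps} = \sum_{j\ge 2} c_j \, \varphi_{j,\eps}, \qquad c_j := \Langle \varphi_{F,\eps}, \varphi_{j,\eps}\Rangle_{\varrho_\eps^{p-r}},
\]
with $\sum_{j\ge 2} c_j^2 = \|\varphi_{F,\eps}\|_{L^2(\Z,\varrho_\eps^{p-r})}^2 = 1$. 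The key observation is that the weighted inner product in the statement reduces to the $L^2(\Z,\varrho_\eps^{p-r})$ inner product, so $\langle \varphi_{2,\eps}/\varrho_\eps^r, \varphi_{F,\eps}/\varrho_\eps^r\rangle_{\varrho_\eps^{p+r}} = c_2$, and the target quantity is $1-c_2^2 = \sum_{j\ge 3} c_j^2 \ge 0$.

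Next, using the bilinear form identity \eqref{eq:BLF} together with the fact that $\{\varphi_{j,\eps}\}$ diagonalizes $\mcl L_\eps$, I will rewrite the Dirichlet numerator of the Rayleigh quotient as
\[
\int_\Z \varrho_\eps^q \left| \nabla\!\left(\frac{\varphi_{F,\eps}}{\varrho_\eps^r}\right) \right|^2 dx \;=\; \sum_{j\ge 2} \sigma_{j,\eps}\, c_j^2 \;\ge\; \sigma_{3,\eps} \sum_{j\ge 3} c_j^2 \;=\; \sigma_{3,\eps}\,(1 - c_2^2),
\]
where in the inequality I drop the nonnegative $j=2$ term and use monotonicity $\sigma_{j,\eps}\ge \sigma_{3,\eps}$ for $j\ge 3$. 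Since $\varphi_{F,\eps}$ has unit $L^2(\Z,\varrho_\eps^{p-r})$ norm, the left-hand side equals $\mcl{R}_\eps(\varphi_{F,\eps})$.

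The final step is to invoke the bound $\mcl{R}_\eps(\varphi_{F,\eps}) \le \Xi\, \eps^{q-\beta}$ that was already derived in \eqref{R-eps-bound-for-phi-F-eps} in the proof of Proposition~\ref{prop:speceps}, valid for any $\beta \in (0,q)$. Rearranging yields
\[
1 - c_2^2 \;\le\; \frac{\mcl{R}_\eps(\varphi_{F,\eps})}{\sigma_{3,\eps}} \;\le\; \frac{\Xi\, \eps^{q-\beta}}{\sigma_{3,\eps}},
\]
and substituting the hypothesized lower bound $\sigma_{3,\eps}\ge \Xi_1 + \Xi_2\eps^{q-\vartheta} + \Xi_3\eps^{\theta-q}$ gives exactly the asserted inequality. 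No step presents a real obstacle: the argument is a clean orthogonal expansion, and all quantitative inputs (orthonormality of eigenfunctions, the Rayleigh bound on $\varphi_{F,\eps}$, the hypothesized lower bound on $\sigma_{3,\eps}$) are already available from earlier in the paper.
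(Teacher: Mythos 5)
Your argument is correct and coincides with the paper's own proof: both expand $\varphi_{F,\eps}$ in the $L^2(\Z,\varrho_\eps^{p-r})$-orthonormal eigenbasis of $\mcl L_\eps$, observe $1-c_2^2=\sum_{j\ge 3}c_j^2\le \mcl R_\eps(\varphi_{F,\eps})/\sigma_{3,\eps}$, and then invoke the Rayleigh bound \eqref{R-eps-bound-for-phi-F-eps} together with the hypothesized lower bound on $\sigma_{3,\eps}$. The only cosmetic difference is that you discard the $j=2$ term immediately rather than at the end.
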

}

\begin{proof}
  Since
  $\left\langle \frac{\varphi_{2,\eps}}{\varrho_\eps^r},
    \frac{\varphi_{F,\eps}}{\varrho_\eps^r} \right\rangle_{\varrho_\eps^{p+r}}
  \equiv \langle \varphi_{2,\eps}, \varphi_{F,\eps} \rangle_{\varrho_\eps^{p-r}}$
  we will work with the $L^2(\Z, \varrho_\eps^{p-r})$ inner product for brevity.
  It follows from the spectral theorem \cite[Thm.~D.7]{Evans} that $\varphi_{j,\eps}$ form
  an orthonormal basis in $L^2(\Z, \varrho_\eps^{p-r})$. Let $\varphi_{F,\eps} = \sum_{j=1}^\infty h_j \varphi_{j,\eps}$ where $h_j = \langle \varphi_{j,\eps}, \varphi_{F,\eps} \rangle_{\varrho_\eps^{p-r}}$. Note that $h_1=0$ since $\varphi_{F,\eps} \bot \varphi_{1,\eps}$.
  It follows from the calculation in \eqref{R-eps-bound-for-phi-F-eps} that for $\beta \in (0, q)$,
  \begin{align*}
  \Xi \eps^{q-\beta} \ge \mcl{R}_{\eps}(\varphi_{F, \eps}) &= \langle \mcl L_{\varrho_\eps} \varphi_{F,\eps}, \varphi_{F,\eps} \rangle_{\varrho_\eps^{p-r}} 
     = \sigma_{2, \eps} h_2^2 + \sum_{j=3}^\infty \sigma_{j,\eps}  h_j^2 \,,
  \end{align*}
  and hence
 \begin{align*}
    \sigma_{3,\eps} \sum_{j=3}^\infty h_j^2
     \le \sum_{j=3}^\infty \sigma_{j,\eps} h_j^2
     \le \Xi \eps^{q-\beta}-\sigma_{2, \eps} h_2^2\,.
 \end{align*}
 Since $\varphi_{F,\eps}$ is normalized, it follows that $h_j^2 \le 1$ for all $j\ge 1$ and
\changed{
 \begin{align*}
     1-h_2^2=\sum_{j=3}^\infty h_j^2 \le \frac{\Xi \eps^{q-\beta}-\sigma_{2, \eps} h_2^2}{\sigma_{3,\eps}}
     \le \frac{\Xi\eps^{q-\beta}}{\Xi_1+\Xi_2\eps^{q-\vartheta}+\Xi_3\eps^{\theta-q}}.
 \end{align*}
}
  {}
\end{proof}

Now consider 
$$
\bar \varphi_{2,0}(x):= b_\eps^0 \varrho_0^r(x)\left[\mbf 1_{\Z^+}(x)       - \mbf 1_{\Z^-}(x)\right] 
\in L^2(\Z, \varrho_\eps^{p-r})\,,
$$
obtained by zero extension of $\varphi_{2,0}$ to all of $\Z$, where
\begin{equation}\label{b-eps-0-expression}
b_\eps^0:=1/\| \varrho_0^r(x)\left[\mbf 1_{\Z^+} -
  \mbf 1_{\Z^-}\right]\|_{L^2(\Z,\varrho_\eps^{p-r})}
\end{equation}
is a normalization constant. Similarly, we denote
$$
\varphi_{F,\eps}= b_\eps^F \varrho_\epsilon^r(x) \left[  \chi_{\epsilon}^+(x) - \chi_\epsilon^-(x) \right]
\in L^2(\Z, \varrho_\eps^{p-r})\,,
$$
with the normalization constant 
$$
b_\eps^F:=  1/\|\varrho_\epsilon^r \left[ \chi_{\epsilon}^+ - \chi_\epsilon^- \right]\|_{L^2(\Z,\varrho_\eps^{p-r})}>0\,.
$$
We begin by providing bounds on the normalization constants $b_\eps^0$ and $b_\eps^F$.
\begin{lemma}\label{lem:normalization-bound}
Let $(p,q,r)\in\R^3$ satisfying  $p+r>0$, and suppose Assumptions~\ref{Assumptions-on-D}, \ref{Assumptions-on-rho} and
  \ref{Assumptions-on-rho-eps-3} hold. Let $\eps_0>0$ small enough.
  Then there exist constants $\Xi_1,\Xi_2 >0$, independent of $\eps$ so that for all $\eps \in (0,\eps_0)$,
\begin{align*}
    &\left|b_\eps^0-\left(\int_{\Dp}\varrho_0^{p+r}\,dx\right)^{-1/2}\right| \le \Xi_1 \eps\,,\qquad
    \left|b_\eps^F-\left(\int_{\Dp}\varrho_0^{p+r}\,dx\right)^{-1/2}\right| \le \Xi_2 \eps^{\min\{1,p+r\}}\,.
\end{align*}
\end{lemma}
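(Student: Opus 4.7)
The plan is to compute the squared norms appearing in the denominators of $b_\eps^0$ and $b_\eps^F$, expand them in $\eps$, and then invert the square root via Taylor expansion. Since $\int_{\Z'}\varrho_0^{p+r}\,dx$ is strictly positive and both squared norms converge to this quantity as $\eps\to 0$, the square-root inverse is Lipschitz near the limit, so any error of order $\eps^s$ in the squared norm translates directly into an error of order $\eps^s$ in the normalization constant.

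For $b_\eps^0$ I will first write
\[
(b_\eps^0)^{-2}=\int_{\Z^+}\varrho_0^{2r}\varrho_\eps^{p-r}\,dx+\int_{\Z^-}\varrho_0^{2r}\varrho_\eps^{p-r}\,dx,
\]
noting that the support of $\varrho_0^r[\mbf{1}_{\Z^+}-\mbf{1}_{\Z^-}]$ is contained in $\bar\Z'$ so the integral over $\Z\setminus\Z'$ drops. On $\bar\Z'$, Assumption~\ref{Assumptions-on-rho-eps-3}(c) yields $|\varrho_\eps-\varrho_0|\le K_1\eps$ uniformly, and by Assumption~\ref{Assumptions-on-rho}(c) $\varrho_0$ is bounded away from zero on $\bar\Z'$, so the map $\varrho\mapsto \varrho^{p-r}$ is Lipschitz on the range of $\varrho_\eps|_{\bar\Z'}$ uniformly in $\eps$. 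This gives
\[
(b_\eps^0)^{-2}=\int_{\Z'}\varrho_0^{p+r}\,dx+\mcl O(\eps),
\]
and the stated bound on $b_\eps^0$ follows from the Lipschitz property of $t\mapsto t^{-1/2}$ near $t_\ast:=\int_{\Z'}\varrho_0^{p+r}\,dx>0$.

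For $b_\eps^F$ the calculation is more delicate because one must track the geometry introduced by the cut-off $\xi_\eps^\pm$ and the scaling constants $b_\eps^\pm$. Using the disjoint supports of $\chi_\eps^+$ and $\chi_\eps^-$, I will write
\[
(b_\eps^F)^{-2}=(b_\eps^+)^2\int_{\Z^+_{\eps_1}}\varrho_\eps^{p+r}(\xi_\eps^+)^2\,dx+(b_\eps^-)^2\int_{\Z^-_{\eps_1}}\varrho_\eps^{p+r}(\xi_\eps^-)^2\,dx,
\]
and split each inner integral according to the three regions $\Z^\pm$, $\Z^\pm_\eps\setminus\Z^\pm$, and $\Z^\pm_{\eps_1}\setminus\Z^\pm_\eps$. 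On $\Z^\pm$, $\xi_\eps^\pm\equiv 1$ and the same Lipschitz argument used for $b_\eps^0$ (together with the elementary bound $|(a+h)^{p+r}-a^{p+r}|\le C|h|$ valid because $\varrho_0$ is bounded away from zero) gives $\int_{\Z^\pm}\varrho_\eps^{p+r}\,dx=\int_{\Z^\pm}\varrho_0^{p+r}\,dx+\mcl O(\eps)$. On $\Z^\pm_\eps\setminus\Z^\pm$, the uniform upper bound \eqref{uniform-rho-bound} combined with $|\Z^\pm_\eps\setminus\Z^\pm|\le \theta\eps|\partial\Z^\pm|$ from \eqref{eqn:Depssize} yields an $\mcl O(\eps)$ contribution. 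On $\Z^\pm_{\eps_1}\setminus\Z^\pm_\eps$, Assumption~\ref{Assumptions-on-rho-eps-3}(d) gives $\varrho_\eps=K_2\eps$ and $|\Z^\pm_{\eps_1}\setminus\Z^\pm_\eps|=\mcl O(\eps^\beta)$, so this contribution is $\mcl O(\eps^{p+r+\beta})$. Combining, and invoking Assumption~\ref{Assumptions-on-rho}(e) to replace $\int_{\Z^+}\varrho_0^{p+r}\,dx+\int_{\Z^-}\varrho_0^{p+r}\,dx$ with $\int_{\Z'}\varrho_0^{p+r}\,dx$, I obtain
\[
\int_{\Z^\pm_{\eps_1}}\varrho_\eps^{p+r}(\xi_\eps^\pm)^2\,dx=\int_{\Z^\pm}\varrho_0^{p+r}\,dx+\mcl O(\eps^{\min\{1,p+r\}}),
\]
where any $\beta\in(0,1)$ is admissible. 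Multiplying by $(b_\eps^\pm)^2=1+\mcl O(\eps^{\min\{1,p+r\}})$ from Lemma~\ref{b-min-max-are-bounded} and summing the $\pm$ contributions yields $(b_\eps^F)^{-2}=t_\ast+\mcl O(\eps^{\min\{1,p+r\}})$, and one more application of the Lipschitz property of $t\mapsto t^{-1/2}$ near $t_\ast$ delivers the claimed bound.

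The main obstacle is the bookkeeping on the transition strips $\Z^\pm_\eps\setminus\Z^\pm$ and $\Z^\pm_{\eps_1}\setminus\Z^\pm_\eps$, where one has only qualitative control of $\varrho_\eps$ and $\xi_\eps^\pm$; the correct weighting (Minkowski measure against $\varrho_\eps^{p+r}$) must be paired with the right pointwise bound in each region so as not to lose a power of $\eps$. For small $p+r$ one also has to be careful that the error term coming from the outer strip, which is of order $\eps^{p+r+\beta}$, is not worse than the advertised $\eps^{p+r}$; this is ensured simply by taking $\beta>0$, which is already available by construction.
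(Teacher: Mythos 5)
Your proposal is correct and follows essentially the same route as the paper: compute $(b_\eps^0)^{-2}$ and $(b_\eps^F)^{-2}$ by decomposing the domain into $\Z^\pm$, $\Z^\pm_\eps\setminus\Z^\pm$, and $\Z^\pm_{\eps_1}\setminus\Z^\pm_\eps$, use \eqref{rho-eps-est}, \eqref{uniform-rho-bound}, \eqref{eqn:Depssize}, Assumption~\ref{Assumptions-on-rho-eps-3}(d), and Lemma~\ref{b-min-max-are-bounded} exactly as the paper does, and then invert the square root via a Lipschitz/Taylor argument near $\int_{\Z'}\varrho_0^{p+r}\,dx>0$. The observation that the outer-strip contribution is $\mcl O(\eps^{p+r+\beta})$ rather than $\mcl O(\eps^{p+r})$ is also present in the paper's $\Xi_5\eps^{\min\{1,p+r+\beta\}}$ estimate, with the final rate $\eps^{\min\{1,p+r\}}$ dominated by the $b_\eps^\pm$ correction, so nothing is lost.
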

\begin{proof}
 
Using the explicit expression \eqref{b-eps-0-expression} write 
\begin{equation*}
(b_\eps^0)^{-2}
= \int_{\Dp} \varrho_0^{2r}\varrho_\eps^{p-r} dx,
\end{equation*}
It follows from Assumption~\ref{Assumptions-on-rho-eps-3}(c) that
\begin{align}\label{rho-eps-est}
\varrho_0(x)-K_1\eps\le  \varrho_\eps(x) \ \le \varrho_0(x)+K_1\eps\qquad \forall \, x\in \Dp\,.
\end{align}
Combining with Assumption~\ref{Assumptions-on-rho}(c), we can find a constant
$\Xi_3>0$ so that
\begin{align}\label{b0bound}
    \left|(b_\eps^0)^{-2}-\int_{\Dp}\varrho_0^{p+r}\,dx\right| \le \Xi_3 \eps\,.
\end{align}
 Let $b_\eps^\pm$ be as in \eqref{chi-definition}.
Using Assumption~\ref{Assumptions-on-rho-eps-3}(d), and the
definition of the $\chi_\eps^\pm$, we can write
\begin{align*}
(b_\eps^F)^{-2}
  &= \int_{\Dp_{\eps_1}} \varrho_\epsilon^{p+r}(x) \left[ (b_\eps^+)^2 \xi_{\epsilon}^+(x)
    +  (b_\eps^-)^2 \xi_\epsilon^-(x) \right]\,dx\\
   &=(b_\eps^+)^2 \int_{\Z_\eps^+ }\varrho_\epsilon^{p+r}\,dx +(b_\eps^-)^2 \int_{\Z_\eps^- } \varrho_\epsilon^{p+r}\,dx  \\
  &\quad+ K_2^{p+r}\eps^{p+r}\int_{\Dp_{\eps_1}\setminus{\Dp_\eps}} \left[ \chi_{\epsilon}^+ - \chi_\epsilon^- \right]^2\,dx \\
  & = \int_{\Dp} \varrho_\eps^{p+r} dx  \\
  & \quad + ((b_\eps^+)^2 -1) \int_{\Z^+} \varrho_\eps^{p+r} dx +
    ((b_\eps^-)^2 -1) \int_{\Z^-} \varrho_\eps^{p+r} dx \\
   &\quad +(b_\eps^+)^2 \int_{\Z_\eps^+\setminus \Z^+ }\varrho_\epsilon^{p+r}\,dx +(b_\eps^-)^2 \int_{\Z_\eps^-\setminus \Z^- } \varrho_\epsilon^{p+r}\,dx  \\
  & \quad + K_2^{p+r}\eps^{p+r}\int_{\Dp_{\eps_1}\setminus{\Dp_\eps}} \left[ \chi_{\epsilon}^+ - \chi_\epsilon^- \right]^2\,dx. 
\end{align*}
The first term is close to $\int_{\Z'} \varrho_0^{p+r} dx $ using \eqref{rho-eps-est},
whereas the terms in the second line can be controlled using Lemma~\ref{b-min-max-are-bounded} and the fact that $0<b_\eps^\pm<2$,
\begin{equation*}
  \left|(b_\eps^\pm- 1)(b_\eps^\pm+ 1)  \int_{\Z^\pm} \varrho_\eps^{p+r} dx \right|
\le   3|b_\eps^\pm - 1|\left| \int_{\Z^\pm} \varrho_\eps^{p+r} dx \right|
\le \Xi_4 \eps^{\min\{1,p+r\}} \,.
\end{equation*}
Finally, the last two lines can be estimated using  \eqref{eqn:Depssize},
\begin{align*}
    &0
\le   (b_\eps^+)^2 \int_{\Z_\eps^+\setminus \Z^+ }\varrho_\epsilon^{p+r}\,dx +(b_\eps^-)^2 \int_{\Z_\eps^-\setminus \Z^- } \varrho_\epsilon^{p+r}\,dx  \\
  & \qquad + K_2^{p+r}\eps^{p+r}\int_{\Dp_{\eps_1}\setminus{\Dp_\eps}} \left[ \chi_{\epsilon}^+ - \chi_\epsilon^- \right]^2\,dx\\
  &\quad \le 
  4|\Z_{\eps}'\setminus{\Dp}|\left(\varrho_{\eps_0}^+\right)^{p+r}
  +4 |\Z_{\eps_1}'\setminus{\Z_\eps'}|K_2^{p+r}\eps^{p+r}
 \le \Xi_5 \eps^{\min\{1,p+r+\beta\}}
\end{align*}
for some $\Xi_5>0$. Putting the above estimates together, we obtain
\begin{align}\label{bFbound}
    \left|(b_\eps^F)^{-2}-\int_{\Dp}\varrho_0^{p+r}\,dx\right| \le \Xi_6 \eps^{\min\{1,p+r\}}\,.
\end{align}
The lemma then follows from \eqref{b0bound} and \eqref{bFbound}.
\end{proof}

In order to prove Theorem~\ref{thm:geometry-of-fiedler-vector}, we aim to derive an error bound on the difference between $\bar \varphi_{2,0}$ and $\varphi_{2,\eps}$. To this end, we first estimate $ \langle \varphi_{F,\eps}, \bar \varphi_{2,0} \rangle_{\varrho_\eps^{p-r}}$ using the explicit expressions for $\varphi_{F,\eps}$ and $\bar \varphi_{2,0}$. 
\begin{proposition}\label{varphi-F-is-close-to-varphi-2}
Let $(p,q,r)\in\R^3$ satisfying  $p+r>0$, and suppose Assumptions~\ref{Assumptions-on-D}, \ref{Assumptions-on-rho} and
  \ref{Assumptions-on-rho-eps-3} hold. Let $\eps_0>0$ small enough.
  Then there exists a constant $\Xi >0$, independent of $\eps$ so that for all $\eps \in (0,\eps_0)$,
\begin{align*}
    \| \bar \varphi_{2,0}-\varphi_{F,\eps}\|^2_{L^2(\Z,\varrho_\eps^{p-r})}
    \le \Xi \eps^{\min\{1,p+r\}}\,.
\end{align*}
\end{proposition}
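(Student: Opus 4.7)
The plan is to prove the bound by decomposing the integration domain into pieces on which the comparison between $\bar\varphi_{2,0}$ and $\varphi_{F,\eps}$ takes a simple form. Since both functions are supported on $\Z'_{\eps_1}$, the natural decomposition is
\[
\Z = \Z^+ \cup \Z^- \cup (\Z_{\eps_1}^+ \setminus \Z^+) \cup (\Z_{\eps_1}^- \setminus \Z^-) \cup (\Z \setminus \Z'_{\eps_1}),
\]
where the last piece contributes nothing. On $\Z^+$, by construction of $\xi_\eps^\pm$ we have $\chi_\eps^+ = b_\eps^+$ and $\chi_\eps^- = 0$, so $\varphi_{F,\eps} = b_\eps^F b_\eps^+ \varrho_\eps^r$, while $\bar\varphi_{2,0} = b_\eps^0 \varrho_0^r$. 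Likewise on $\Z^-$ both functions equal the analogous expressions with a minus sign. Set $c := (\int_{\Dp} \varrho_0^{p+r} dx)^{-1/2}$, which is the common leading order of all normalization constants.

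On $\Z^+$, I would write
\[
b_\eps^0 \varrho_0^r - b_\eps^F b_\eps^+ \varrho_\eps^r
= (b_\eps^0 - c)\varrho_0^r - (b_\eps^F b_\eps^+ - c)\varrho_\eps^r + c(\varrho_0^r - \varrho_\eps^r).
\]
By Lemma~\ref{lem:normalization-bound} we have $|b_\eps^0 - c| \le \Xi\eps$ and $|b_\eps^F - c| \le \Xi\eps^{\min\{1,p+r\}}$, and by Lemma~\ref{b-min-max-are-bounded}, $|b_\eps^+ - 1| \le \Xi\eps^{\min\{1,p+r\}}$; combining these via triangle inequality yields $|b_\eps^F b_\eps^+ - c| \le \Xi\eps^{\min\{1,p+r\}}$. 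For the remaining term, Assumption~\ref{Assumptions-on-rho-eps-3}(c) gives $\|\varrho_\eps - \varrho_0\|_{C^\infty(\bar\Z')} \le K_1 \eps$, and since $\varrho_0$ is bounded below on $\bar\Z^+$ by Assumption~\ref{Assumptions-on-rho}(c), a Taylor expansion yields $|\varrho_0^r - \varrho_\eps^r| \le \Xi \eps$ uniformly on $\Z^+$. Together with the uniform upper bound on $\varrho_\eps$ from \eqref{uniform-rho-bound}, these estimates give
\[
\int_{\Z^+} |\bar\varphi_{2,0} - \varphi_{F,\eps}|^2 \varrho_\eps^{p-r} dx \le \Xi \eps^{2\min\{1,p+r\}} \le \Xi \eps^{\min\{1,p+r\}},
\]
and the integral over $\Z^-$ is handled identically.

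On the transition region $\Z_{\eps_1}^+ \setminus \Z^+$ only $\varphi_{F,\eps}$ contributes, and using $0 \le \chi_\eps^+ \le b_\eps^+ \le 2$ together with the fact that $b_\eps^F$ is uniformly bounded, I would estimate
\[
\int_{\Z_{\eps_1}^+ \setminus \Z^+} |\varphi_{F,\eps}|^2 \varrho_\eps^{p-r} dx \le \Xi \int_{\Z_{\eps_1}^+ \setminus \Z^+} \varrho_\eps^{p+r} dx.
\]
I split this region as $(\Z_\eps^+ \setminus \Z^+) \cup (\Z_{\eps_1}^+ \setminus \Z_\eps^+)$. On the first piece $\varrho_\eps$ is bounded above by \eqref{uniform-rho-bound} while $|\Z_\eps^+ \setminus \Z^+| \le \theta \eps |\partial \Z^+|$ by \eqref{eqn:Depssize}, yielding a contribution $\le \Xi\eps$. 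On the second piece Assumption~\ref{Assumptions-on-rho-eps-3}(d) gives $\varrho_\eps = K_2 \eps$, while $|\Z_{\eps_1}^+ \setminus \Z_\eps^+| \le \Xi \eps^\beta$, so this piece contributes $\le \Xi \eps^{p+r+\beta}$. Since $\eps \le \eps^{\min\{1, p+r\}}$ and $\eps^{p+r+\beta} \le \eps^{\min\{1,p+r\}}$ for all $\eps \in (0,1)$, both contributions are $\le \Xi \eps^{\min\{1,p+r\}}$; the estimate over $\Z_{\eps_1}^- \setminus \Z^-$ is analogous. Summing the four contributions yields the claim.

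The main technical care point is the juggling of the exponents $1$, $p+r$, and $p+r+\beta$ so that the final bound cleanly reduces to $\eps^{\min\{1,p+r\}}$; in particular one must verify that the transition region contribution, which naively could scale worse than $\eps^{\min\{1,p+r\}}$ when $p+r$ is small, is in fact controlled by the product of the density size $(K_2\eps)^{p+r}$ and the strip measure $\eps^\beta$, which is subdominant to $\eps^{p+r}$.
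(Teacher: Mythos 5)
Your proof is correct, but it takes a genuinely different route from the paper. The paper exploits the fact that both $\bar\varphi_{2,0}$ and $\varphi_{F,\eps}$ are unit vectors in $L^2(\Z,\varrho_\eps^{p-r})$, so it suffices to estimate the cross term:
\[
\|\bar\varphi_{2,0}-\varphi_{F,\eps}\|^2_{L^2(\Z,\varrho_\eps^{p-r})} = 2\left(1 - \langle\varphi_{F,\eps},\bar\varphi_{2,0}\rangle_{\varrho_\eps^{p-r}}\right).
\]
Because $\bar\varphi_{2,0}$ is supported on $\Z'$ and $\xi_\eps^\pm\equiv 1$ there, the inner product collapses to $b_\eps^+\int_{\Z^+}\varrho_0^r\varrho_\eps^p\,dx + b_\eps^-\int_{\Z^-}\varrho_0^r\varrho_\eps^p\,dx$, and the entire estimate reduces to Taylor-expanding $\varrho_\eps$ about $\varrho_0$ on the clusters together with Lemmas~\ref{b-min-max-are-bounded} and \ref{lem:normalization-bound}. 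In particular the transition strips $\Z_{\eps_1}^\pm\setminus\Z^\pm$ never enter, since they lie outside the support of $\bar\varphi_{2,0}$; the normalization has already absorbed the mass $\varphi_{F,\eps}$ carries there. Your approach is a direct pointwise decomposition of the domain, so you must estimate the transition-strip contribution by hand, splitting it into $\Z_\eps^+\setminus\Z^+$ (density $O(1)$, measure $O(\eps)$) and $\Z_{\eps_1}^+\setminus\Z_\eps^+$ (density $K_2\eps$, measure $O(\eps^\beta)$), and you correctly verify both contributions are subdominant to $\eps^{\min\{1,p+r\}}$. The payoff of the paper's approach is brevity; the payoff of yours is a more transparent, piece-by-piece picture of where the $\eps^{\min\{1,p+r\}}$ rate comes from. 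One small note: on the clusters you invoke only the uniform upper bound from \eqref{uniform-rho-bound} when bounding the weight $\varrho_\eps^{p-r}$, but depending on the sign of $p-r$ you also need the lower bound; both hold on $\bar\Z'$, so the argument goes through, but the justification should mention it.
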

\begin{proof}
Since $\Z^+\cap \Z^-_{\eps_1}=\emptyset$ and $\Z^-\cap \Z^+_{\eps_1}=\emptyset$, we have
\begin{align*}
   & \langle \varrho_\epsilon^r(x) \left[  \chi_{\epsilon}^+(x) - \chi_\epsilon^-(x) \right], \varrho_0^r\left[\mbf 1_{\Z^+}(x)       - \mbf 1_{\Z^-}(x)\right] \rangle_{\varrho_\eps^{p-r}}\\
   &\quad = \int_\Z \varrho_0^r(x) \varrho_\eps^p(x) \left[ b_\eps^+ \xi_{\epsilon}^+(x)\mbf 1_{\Z^+}(x)
     - b_\eps^+ \xi_\epsilon^+(x)\mbf 1_{\Z^-}(x) \right]\,dx \\
   & \qquad - \int_\Z \varrho_0^r(x) \varrho_\eps^p(x) \left[ b_\eps^-\xi_{\epsilon}^-(x)\mbf 1_{\Z^+}(x) -
     b_\eps^- \xi_\epsilon^-(x)\mbf 1_{\Z^-}(x) \right]\,dx \\
   & \quad  =  b_\eps^+ \int_{\Z^+}  \varrho_0^r \varrho_\eps^p dx  + 
     b_\eps^- \int_{\Z^-}  \varrho_0^r \varrho_\eps^p  dx \\
   & \quad  = \int_{\Dp}\varrho_0^r \varrho_\eps^p\,dx + (b_\eps^+ - 1) \int_{\Z^+}  \varrho_0^r \varrho_\eps^p dx
     + (b_\eps^- - 1)  \int_{\Z^-}  \varrho_0^r \varrho_\eps^p dx\,.
\end{align*}
If $p\ge 0$ (and by a similar argument with the order of inequalities
  reversed if $p<0$), \eqref{rho-eps-est} implies
\begin{align*}
    \varrho_0^p(x)-\eps K_1 p\varrho_0^{p-1}(x) +O(\eps^2) \le \varrho_\eps^p(x)
    \le  \varrho_0^p(x)+\eps K_1 p\varrho_0^{p-1}(x)+O(\eps^2)\,.
\end{align*}
By Assumption~\ref{Assumptions-on-rho}(c), we conclude that there exists a constant $\Xi_1>0$ such that
\begin{align*}
    \left|\int_{\Dp}\varrho_0^r \varrho_\eps^p\,dx-\int_{\Dp}\varrho_0^{p+r}\,dx \right|\le \Xi_1 \eps\,.
\end{align*}
The above estimate together with Lemma~\ref{b-min-max-are-bounded} implies
\begin{equation*}
\left| \int_{\Dp}\varrho_0^{p+r}\,dx- \langle \varrho_\epsilon^r(x) \left[  \chi_{\epsilon}^+(x) - \chi_\epsilon^-(x) \right], \varrho_0^r\left[\mbf 1_{\Z^+}(x)       - \mbf 1_{\Z^-}(x)\right] \rangle_{\varrho_\eps^{p-r}} \right|  \le
  \Xi_2 \eps^{\min\{1, p +r\}}
\end{equation*}
for some constant $\Xi_2 >0$.
Combining this bound with Lemma~\ref{lem:normalization-bound}, and writing
\begin{align*}
\langle \varphi_{F,\eps}, \bar \varphi_{2,0} \rangle_{\varrho_\eps^{p-r}}
= b_\eps^0 b_\eps^F \langle \varrho_\epsilon^r(x) \left[  \chi_{\epsilon}^+(x) - \chi_\epsilon^-(x) \right], \varrho_0^r\left[\mbf 1_{\Z^+}(x)       - \mbf 1_{\Z^-}(x)\right] \rangle_{\varrho_\eps^{p-r}}\,,
\end{align*}
we conclude that there exists a constant $\Xi_3>0$ so that
\begin{align*}
\left|1-  \langle \varphi_{F,\eps}, \bar \varphi_{2,0} \rangle_{\varrho_\eps^{p-r}}\right|\le \Xi_3  \eps^{\min\{1,p+r\}}\,.
\end{align*}
Finally, we obtain
\begin{align*}
    \| \bar \varphi_{2,0}-\varphi_{F,\eps}\|^2_{L^2(\Z,\varrho_\eps^{p-r})}
    &=    \int_\Z\left|\bar \varphi_{2,0} -\varphi_{F,\eps}\right|^2\varrho_\eps^{p-r}\,dx\\
    &= \|\bar\varphi_{2,0}\|^2_{L^2(\Z,\varrho_\eps^{p-r})} + \|\varphi_{F,\eps}\|^2_{L^2(\Z,\varrho_\eps^{p-r})}
    -2 \langle \varphi_{F,\eps}, \bar \varphi_{2,0} \rangle_{\varrho_\eps^{p-r}}\\
    &= 2\left(1-\langle \varphi_{F,\eps}, \bar \varphi_{2,0} \rangle_{\varrho_\eps^{p-r}}\right)
    \le \Xi \eps^{\min\{1,p+r\}}\,.
\end{align*}
\end{proof}

We are now ready to provide a quantitative estimate on how close the perturbed second eigenfunction $\varphi_{2,\eps}$ is to $\bar \varphi_{2,0}$ by comparing both eigenfunctions to the approximate Fiedler vector $\varphi_{F,\eps}$.

\changed{
\begin{proof}[Proof of Theorem~\ref{thm:geometry-of-fiedler-vector}]
  We apply Proposition~\ref{varphi-F-is-close-to-fiedler-vector}
with the  eigenvalue bounds in Theorem~\ref{thm:geometry-of-fiedler-vector}(ii, iii). Depending on $(p,q,r)$, we have different lower bounds on $\sigma_{3,\eps}$. Writing the bounds from Theorem~\ref{thm:geometry-of-fiedler-vector} in the notation of Proposition~\ref{varphi-F-is-close-to-fiedler-vector}, we have
\begin{itemize}
\item If $q>p+r$, then $\Xi_1=0$, $\Xi_2>0$, $\Xi_3=0$ and $\vartheta= -q+2(p+r)$;
\item If $q=p+r$, then $\Xi_1>0$, $\Xi_2=\Xi_3=0$;
\item If $q<p+r$, then $\Xi_1=\Xi_2=0$, $\Xi_3>0$, and $\theta=p+r$.
\end{itemize} 
 We obtain that there exists a constant $\Xi_4>0$ so that
\begin{equation}\label{Fiedler-to-pert-est}
    \left|1 - \langle \varphi_{2,\eps}, \varphi_{F,\eps} \rangle_{\varrho_\eps^{p-r}}^2\right|
    \le \Xi_4 \eps^{-|q-p-r|+\min\{q,p+r\}-\beta},
  \end{equation}
for any $(p,q,r)\in\R^3$ with $q>0$ and $p+r>0$.
Combining estimate \eqref{Fiedler-to-pert-est} with Proposition~\ref{varphi-F-is-close-to-varphi-2} gives
\begin{align*}
     &\left|1 - \langle \varphi_{2,\eps},\bar \varphi_{2,0} \rangle_{\varrho_\eps^{p-r}}^2\right|\\
&\quad=  \left|1 - \left(\langle \varphi_{2,\eps}, \varphi_{F,\eps} \rangle_{\varrho_\eps^{p-r}}
    +\langle \varphi_{2,\eps},\bar \varphi_{2,0} -\varphi_{F,\eps} \rangle_{\varrho_\eps^{p-r}}\right)^2\right|\\
&\quad\le \left|1 - \langle \varphi_{2,\eps}, \varphi_{F,\eps} \rangle^2_{\varrho_\eps^{p-r}}\right|
    +\left|\langle \varphi_{2,\eps},\bar \varphi_{2,0} -\varphi_{F,\eps} \rangle_{\varrho_\eps^{p-r}}\right|\, 
    \left| \langle \varphi_{2,\eps},\bar \varphi_{2,0} +\varphi_{F,\eps} \rangle_{\varrho_\eps^{p-r}}\right|\\
&\quad\le \left|1 - \langle \varphi_{2,\eps}, \varphi_{F,\eps} \rangle^2_{\varrho_\eps^{p-r}}\right|\\
&\qquad
+ \|\varphi_{2,\eps}\|^2_{L^2(\Z,\varrho_\eps^{p-r})}
\|\bar \varphi_{2,0}-\varphi_{F,\eps}\|_{L^2(\Z,\varrho_\eps^{p-r})}
\left(\|\bar\varphi_{2,0}\|_{L^2(\Z,\varrho_\eps^{p-r})}+\|\varphi_{F,\eps}\|_{L^2(\Z,\varrho_\eps^{p-r})}\right)
  \\
&\le 
 \Xi_4\eps^{-|q-p-r|+\min\{q,p+r\}-\beta}+ \Xi_5  \eps^{\frac{1}{2} \min\{ 1,p+r\}}\\
     &\le \Xi \eps^{\min\{\frac{1}{2},\frac{p+r}{2}, q - 2(q- p -r) - \beta, q-\beta, 2q-(p+r)-\beta \}}     
\end{align*}
for some $\Xi>0$ since $\|\varphi_{2,\eps}\|_{L^2(\Z,\varrho_\eps^{p-r})}=\|\bar\varphi_{2,0}\|_{L^2(\Z,\varrho_\eps^{p-r})}=\|\varphi_{F,\eps}\|_{L^2(\Z,\varrho_\eps^{p-r})}=1$.
{}
\end{proof}
}

\section{Conclusions}
\label{sec:con}
\changed{
We have studied a three-parameter family of weighted
elliptic differential operators, motivated by spectral clustering and
semi-supervised learning problems in
the analysis of large data sets.

We analyzed the perturbative properties of the family \eqref{general-weighted-Laplacian} of 
elliptic operators $\L$, characterizing the sensitive dependence of its low-lying spectrum 
  with respect to the parameters $p, q,r$ in cases where the
  density $\varrho$ concentrates on two clusters. In particular, the theory suggests that
  there is a major change in the behavior of the spectrum of $\L$ when $q = p +r$ versus $q \neq p +r$.
  In the former regime, $\L$ has a uniform spectral gap between the third and second
  eigenvalues  indicating that two clusters are present in $\varrho$,
  while in the latter regime only a  spectral ratio gap may manifest.

In addition, we provided numerical evidence that exemplified and extended our analysis.
Most notably, our numerics show that our bounds on the second eigenvalue are sharp
and that a uniform spectral gap exists between the third and second eigenvalues of $\L$
when $q \le p +r$, whereas only a ratio spectral gap is present when $q>p+r$.
Therefore, in the $q >p +r$ and $q < p +r$ regimes, comparing with our theoretical predictions, our numerics indicate that our 
lower bounds on the third eigenvalues, and hence on 
the  spectral ratio gap, can be sharpened. 
The question of spectral gaps is of interest from a practical point of view as the low-lying
spectral properties govern many unsupervised and semi-supervised
clustering tasks.

Further, we demonstrated a rigorous connection between the geometry of the low-lying
eigenfunctions of $\L$ and the geometry of the density $\varrho$. We showed that
as $\varrho$ concentrates on two clusters, the span of the first
two eigenfunctions of $\L$ approaches certain weighted set functions on the clusters.

In fact, the family of operators $\L$ arises naturally as
  continuum limits of graph Laplacians $L_N$ of the form \eqref{defLN-intro}. We provided a roadmap for rigorous proof of convergence of $L_N$ to $\L$ as $N \to \infty$ in the framework of \cite{trillos2016variational}, but for the more general family of any parameter choices $(p,q,r)$; the full proof is the subject of future research. To support this analysis, we presented numerical evidence in the discrete graphical settings showing the
  manifestation of our continuum spectral analysis on discrete graph Laplacians that are
weighted appropriately with respect to the continuum limits, and this can be observed even in the case of more general data densities $\varrho$ than our theory provides for.

Finally, we provided numerical evidence
that extends our analysis from the binary cluster case to three or five clusters, showing
strong evidence that similar results can be proven in the setting where $\varrho$
concentrates on any number of finitely many clusters.

Our work may be of independent interest within the spectral theory of
elliptic operators.
Furthermore it will be used in our upcoming publication  \cite{HHOS2}
to build on the paper \cite{HHRS1},
which studies consistency of semi-supervised learning on
graphs, to develop a  consistency theory for semi-supervised learning in
the continuum limit.
\vspace{0.1in}

}

\noindent{\bf Acknowledgments} The authors are grateful to
Nicol{\'a}s Garc{\'i}a Trillos for helpful discussions regarding
  the results in Section~\ref{app:AA} concerning
  various graph Laplacians and their  continuum limits.
  \changed{We are also thankful to the anonymous reviewers whose comments and suggestions
    helped us improve an earlier version of this article.}
AMS is grateful to AFOSR (grant FA9550-17-1-0185)
and NSF (grant DMS 18189770) for financial support. 
FH was partially supported by Caltech's von K{\'a}rm{\'a}n postdoctoral instructorship.
BH was partially supported by an NSERC PDF fellowship. 

\bibliographystyle{abbrv}
\bibliography{references}

\begin{thebibliography}{10}

\bibitem{adams2003sobolev}
R.~A. Adams and J.~J. Fournier.
\newblock {\em Sobolev spaces}, volume 140.
\newblock Elsevier, 2003.

\bibitem{bakry2013analysis}
D.~Bakry, I.~Gentil, and M.~Ledoux.
\newblock {\em Analysis and geometry of Markov diffusion operators}, volume
  348.
\newblock Springer Science \& Business Media, New York, 2013.

\bibitem{petsc}
S.~Balay, S.~Abhyankar, M.~F. Adams, J.~Brown, P.~Brune, K.~Buschelman,
  L.~Dalcin, A.~Dener, V.~Eijkhout, W.~D. Gropp, D.~Karpeyev, D.~Kaushik, M.~G.
  Knepley, D.~A. May, L.~C. McInnes, R.~T. Mills, T.~Munson, K.~Rupp, P.~Sanan,
  B.~F. Smith, S.~Zampini, H.~Zhang, and H.~Zhang.
\newblock {PETS}c users manual.
\newblock Technical Report ANL-95/11 - Revision 3.11, Argonne National
  Laboratory, 2019.

\bibitem{belkin2003laplacian}
M.~Belkin and P.~Niyogi.
\newblock Laplacian eigenmaps for dimensionality reduction and data
  representation.
\newblock {\em Neural computation}, 15(6):1373--1396, 2003.

\bibitem{Belkin2006ConvergenceOL}
M.~Belkin and P.~Niyogi.
\newblock Convergence of laplacian eigenmaps.
\newblock In {\em NIPS}, 2006.

\bibitem{belkin2008towards}
M.~Belkin and P.~Niyogi.
\newblock Towards a theoretical foundation for laplacian-based manifold
  methods.
\newblock {\em Journal of Computer and System Sciences}, 74(8):1289--1308,
  2008.

\bibitem{berry2016variable}
T.~Berry and J.~Harlim.
\newblock Variable bandwidth diffusion kernels.
\newblock {\em Applied and Computational Harmonic Analysis}, 40(1):68--96,
  2016.

\bibitem{bertozzi2012diffuse}
A.~L. Bertozzi and A.~Flenner.
\newblock Diffuse interface models on graphs for classification of high
  dimensional data.
\newblock {\em Multiscale Modeling and Simulation}, 10(3):1090--1118, 2012.

\bibitem{bertozzi2017uncertainty}
A.~L. Bertozzi, X.~Luo, A.~M. Stuart, and K.~C. Zygalakis.
\newblock Uncertainty quantification in graph-based classification of high
  dimensional data.
\newblock {\em SIAM/ASA Journal on Uncertainty Quantification}, 6(2):568--595,
  2018.

\bibitem{bovier2004metastability}
A.~Bovier, M.~Eckhoff, V.~Gayrard, and M.~Klein.
\newblock Metastability in reversible diffusion processes i: Sharp asymptotics
  for capacities and exit times.
\newblock {\em Journal of the European Mathematical Society}, 6(4):399--424,
  2004.

\bibitem{bovier2005metastability}
A.~Bovier, V.~Gayrard, and M.~Klein.
\newblock Metastability in reversible diffusion processes ii: Precise
  asymptotics for small eigenvalues.
\newblock {\em Journal of the European Mathematical Society}, 7(1):69--99,
  2005.

\bibitem{calder2019improved}
J.~Calder and N.~G. Trillos.
\newblock Improved spectral convergence rates for graph {L}aplacians on
  $\epsilon$-graphs and $k$-{NN} graphs.
\newblock {\em arXiv preprint:1910.13476}, 2019.

\bibitem{CoifmanLafon2006}
R.~R. Coifman and S.~Lafon.
\newblock Diffusion maps.
\newblock {\em Appl. Comput. Harmon. Anal.}, 21(1):5--30, 2006.

\bibitem{de2020consistency}
H.-L. de~Kergorlay and D.~J. Higham.
\newblock Consistency of anchor-based spectral clustering.
\newblock {\em arXiv preprint arXiv:2006.13984}, 2020.

\bibitem{deuflhard1999computation}
P.~Deuflhard, M.~Dellnitz, O.~Junge, and C.~Sch{\"u}tte.
\newblock Computation of essential molecular dynamics by subdivision
  techniques.
\newblock In {\em Computational molecular dynamics: challenges, methods,
  ideas}, pages 98--115. Springer, 1999.

\bibitem{deuflhard2000identification}
P.~Deuflhard, W.~Huisinga, A.~Fischer, and C.~Sch{\"u}tte.
\newblock Identification of almost invariant aggregates in reversible nearly
  uncoupled {M}arkov chains.
\newblock {\em Linear Algebra and its Applications}, 315(1-3):39--59, 2000.

\bibitem{stuart-zeronoiseSSL}
M.~M. Dunlop, D.~Slep{\v{c}}ev, A.~M. Stuart, and M.~Thorpe.
\newblock Large data and zero noise limits of graph-based semi-supervised
  learning algorithms.
\newblock {\em Applied and Computational Harmonic Analysis}, 2019.

\bibitem{Evans}
L.~C. Evans.
\newblock {\em Partial differential equations}, volume~19 of {\em Graduate
  Studies in Mathematics}.
\newblock AMS, Providence, RI, second edition, 2010.

\bibitem{trillos2018error}
N.~Garc\'{\i}a~Trillos, M.~Gerlach, M.~Hein, and D.~Slep\v{c}ev.
\newblock Error estimates for spectral convergence of the graph laplacian on
  random geometric graphs towards the {L}aplace--{B}eltrami operator.
\newblock {\em arXiv preprint arXiv:1801.10108}, 2018.

\bibitem{NGTFHBH}
N.~Garc\'{\i}a~Trillos, F.~Hoffmann, and B.~Hosseini.
\newblock Geometric structure of graph laplacian embeddings.
\newblock {\em arXiv preprint arXiv:1901.10651}, 2019.

\bibitem{trillos2016variational}
N.~Garc\'{\i}a~Trillos and D.~Slep\v{c}ev.
\newblock A variational approach to the consistency of spectral clustering.
\newblock {\em Applied and Computational Harmonic Analysis}, 45(2):239--281,
  2018.

\bibitem{gine2006empirical}
E.~Gin{\'e}, V.~Koltchinskii, et~al.
\newblock Empirical graph laplacian approximation of laplace--beltrami
  operators: Large sample results.
\newblock In {\em High dimensional probability}, pages 238--259. Institute of
  Mathematical Statistics, 2006.

\bibitem{HHOS2}
F.~Hoffmann, B.~Hosseini, A.~Oberai, and A.~Stuart.
\newblock Consistency of graphical semi-supervised learning algorithms in the
  continuum limit: The probit method.
\newblock In preparation, 2019.

\bibitem{HHRS1}
F.~Hoffmann, B.~Hosseini, Z.~Ren, and A.~M. Stuart.
\newblock Consistency of semi-supervised learning algorithms on graphs: probit
  and one-hot methods.
\newblock {\em arXiv preprint:1906.07658}, 2019.

\bibitem{huisinga2004phase}
W.~Huisinga, S.~Meyn, and C.~Sch\"{u}tte.
\newblock Phase transitions and metastability in {M}arkovian and molecular
  systems.
\newblock {\em The Annals of Applied Probability}, 14(1):419--458, 2004.

\bibitem{kato}
T.~Kato.
\newblock {\em Perturbation theory for linear operators}.
\newblock Classics In Mathematics. Springer, New York, second edition, 1995.

\bibitem{loftsgaarden1965nonparametric}
D.~O. Loftsgaarden, C.~P. Quesenberry, et~al.
\newblock A nonparametric estimate of a multivariate density function.
\newblock {\em The Annals of Mathematical Statistics}, 36(3):1049--1051, 1965.

\bibitem{fenics}
A.~Logg, K.-A. Mardal, and G.~Wells.
\newblock {\em Automated solution of differential equations by the finite
  element method: The FEniCS book}, volume~84 of {\em Lecture Notes in
  Computational Science and Engineering}.
\newblock Springer Science \& Business Media, 2012.

\bibitem{mclean2000strongly}
W.~McLean.
\newblock {\em Strongly elliptic systems and boundary integral equations}.
\newblock Cambridge University Press, Cambridge, 2000.

\bibitem{Ng01onspectral}
A.~Y. Ng, M.~I. Jordan, and Y.~Weiss.
\newblock On spectral clustering: Analysis and an algorithm.
\newblock In {\em Proceedings of the 14th International Conference on Neural
  Information Processing Systems: Natural and Synthetic}.

\bibitem{ng2002spectral}
A.~Y. Ng, M.~I. Jordan, and Y.~Weiss.
\newblock On spectral clustering: Analysis and an algorithm.
\newblock In {\em Advances in neural information processing systems}, pages
  849--856, 2002.

\bibitem{pavliotis2014stochastic}
G.~A. Pavliotis.
\newblock {\em Stochastic processes and applications: diffusion processes, the
  Fokker-Planck and Langevin equations}, volume~60 of {\em Texts in Applied
  Mathematics}.
\newblock Springer, New York, 2014.

\bibitem{schiebinger2015geometry}
G.~Schiebinger, M.~J. Wainwright, B.~Yu, et~al.
\newblock The geometry of kernelized spectral clustering.
\newblock {\em The Annals of Statistics}, 43(2):819--846, 2015.

\bibitem{schutte2001transfer}
C.~Sch{\"u}tte, W.~Huisinga, and P.~Deuflhard.
\newblock Transfer operator approach to conformational dynamics in biomolecular
  systems.
\newblock In F.~Bernold, editor, {\em Ergodic theory, analysis, and efficient
  simulation of dynamical systems}, pages 191--223. Springer, Berlin, 2001.

\bibitem{shimalik2000}
J.~Shi and J.~Malik.
\newblock Normalized cuts and image segmentation.
\newblock {\em IEEE Transactions on Pattern Analysis and Machine Intelligence},
  22(8):888--905, Aug. 2000.

\bibitem{shi2009data}
T.~Shi, M.~Belkin, B.~Yu, et~al.
\newblock Data spectroscopy: Eigenspaces of convolution operators and
  clustering.
\newblock {\em The Annals of Statistics}, 37(6B):3960--3984, 2009.

\bibitem{slepvcev2017analysis}
D.~Slep\v{c}ev and M.~Thorpe.
\newblock Analysis of {$p$}-{L}aplacian regularization in semisupervised
  learning.
\newblock {\em SIAM Journal on Mathematical Analysis}, 51(3):2085--2120, 2019.

\bibitem{spielmat1996spectral}
D.~A. Spielmat and S.-H. Teng.
\newblock Spectral partitioning works: Planar graphs and finite element meshes.
\newblock In {\em Proceedings of 37th Conference on Foundations of Computer
  Science}, pages 96--105. IEEE, 1996.

\bibitem{terrell1992variable}
G.~R. Terrell and D.~W. Scott.
\newblock Variable kernel density estimation.
\newblock {\em The Annals of Statistics}, 20(3):1236--1265, 1992.

\bibitem{vLuxburg2007}
U.~von Luxburg.
\newblock A tutorial on spectral clustering.
\newblock {\em Statistics and Computing}, 17(4):395--416, 2007.

\bibitem{vLBB2008}
U.~von Luxburg, M.~Belkin, and O.~Bousquet.
\newblock Consistency of spectral clustering.
\newblock {\em The Annals of Statistics}, 36(2):555--586, 2008.

\bibitem{wormell2020spectral}
C.~L. Wormell and S.~Reich.
\newblock Spectral convergence of diffusion maps: improved error bounds and an
  alternative normalisation.
\newblock 2020.

\bibitem{zelnik2005self}
L.~Zelnik-Manor and P.~Perona.
\newblock Self-tuning spectral clustering.
\newblock In {\em Advances in neural information processing systems}, pages
  1601--1608, 2005.

\bibitem{zhu2003semi}
X.~Zhu, Z.~Ghahramani, and J.~D. Lafferty.
\newblock Semi-supervised learning using {G}aussian fields and harmonic
  functions.
\newblock In {\em Proceedings of the 20th International conference on Machine
  learning}, pages 912--919, 2003.

\end{thebibliography}


\appendix


\section{Diffusion maps and weighted graph Laplacians}\label{sec:diffmaps}

  We note from Remark~\ref{rem:sde} that when $p=q$ and $r=0$ the limiting
graph Laplacian $\mathcal{L}$ is the generator of a reversible
diffusion process with invariant density proportional to $\varrho^q$.
The connection between the graph Laplacian $L_N$ in \eqref{defLN-intro}
and diffusions was first
established in the celebrated paper \cite{CoifmanLafon2006}
by Coifman and Lafon, through
the diffusion maps introduced therein. In this appendix we further
elucidate these connections.

We fix a probability density $\varrho\in L^1(\Omega)$ for any set $\Omega\subset \R^d$ and introduce the following functions for $x,y\in\Omega$:
\begin{align*}
\tilde W(x,y) = \eta_\delta(|x-y|)
\end{align*}
where $\eta$ is a rotation-invariant normalized kernel, $\int_\Omega \eta_\delta(|x|)\,dx =1$, with a fixed 
scale parameter $\delta$, and with associated degree function
\begin{align*}
\tilde d(x)=\int_\Omega \tilde W(x,y)\varrho(y)\,dy\,.
\end{align*}
Note that $\tilde d(x)$ approximates $\varrho(x)$ as
  $\eta_\delta$ converges weakly to the Dirac delta distribution.
We suppress the dependence of $\tilde d$ and $\tilde W$ on $\delta$ for brevity.
Given a parameter $\alpha\in\R$, we now construct the weighted kernel
\begin{align*}
W(x,y)=\frac{\tilde W(x,y)}{\tilde d(x)^\alpha \tilde d(y)^\alpha}
\end{align*}
with associated degree function
\begin{align*}
d(x)=\int_\Omega W(x,y)\varrho(y)\,dy\,.
\end{align*}
The kernel $W$ gives rise to an integral operator $\mcl K:L^1(\Omega)\to L^1(\Omega)$,
\begin{align*}
 \mcl Kf(x)=\int_{\Omega} W(x,y)f(y)\varrho(y)\,dy\,.
\end{align*}
Then $d(x)=\mcl K\mbf{1}_\Omega(x)$. Normalizing $\mcl K$ gives a Markov operator $\mcl P:L^1(\Omega)\to L^1(\Omega)$,
\begin{align*}
\mcl Pf(x):=\frac{1}{\mcl K\mbf{1}_\Omega(x)} \mcl Kf(x)
= \int_\Omega p(x,y)f(y)\varrho(y)\,dy
\end{align*}
with anisotropic Markov transition kernel
\begin{align*}
p(x,y)=\frac{W(x,y)}{d(x)}\,.
\end{align*}
Observe that $\mcl P\mbf{1}_\Omega=\mbf{1}_\Omega$, and so $\mcl P$ leaves constants unchanged.\\
\noindent
\textbf{Discrete setting}. Given $N$ samples $x_j\sim \varrho$, we define analogously to the above the matrix $\tilde W_N$ with entries
$$
\tilde W_{ij} = \tilde W (x_i,x_j)
$$
with associated degree matrix $\tilde D_N$,
$$
\tilde D_{ij}=diag\left(\tilde d_i\right)\,,\qquad
\tilde d_i= \sum_{k=1}^N \tilde W_{ik}\,.
$$
From the above, we construct the weighted similarity matrix $W_N$ with entries
$$
W_{ij}=\frac{\tilde W_{ij}}{\tilde d_i^\alpha \tilde d_j^\alpha}
$$
with associated degree matrix $D_N$,
$$
 D_{ij}=diag\left( d_i\right)\,,\qquad
 d_i= \sum_{k=1}^N W_{ik}\,.
$$
To make the connection between this discrete setting and the continuous analogue above, we use the degree functions of Subsection~\ref{sec:cvEnergies},
\begin{align*}
\tilde d^N(x)&=\frac{1}{N}\sum_{j=1}^N\tilde W(x,x_j)\\
d^N(x)&=\frac{1}{N}\sum_{j=1}^N W(x,x_j)
=\frac{1}{N}\sum_{j=1}^N \frac{\tilde W(x,x_j)}{\left(\tilde d^N(x)\right)^\alpha \left(\tilde d^N(x_j)\right)^\alpha}\,.
\end{align*}
They correspond exactly to
$d(x)$ and $\tilde d(x)$ with $\varrho$ substituted by the empirical density $\mu_N:=\tfrac{1}{N}\sum_{i=1}^N \delta_{x_i}$. Then
$$
\tilde d_i=N\tilde d^N(x_i)\,,\qquad 
 d_i=N^{1-2\alpha} d^N(x_i)\,,\qquad 
W_{ij}=\frac{1}{N^{2\alpha}}W(x_i,x_j)\,,
$$
and so $\tilde d_i/N$ approximates $\varrho(x_i)$ as $\eta_\delta$ converges to the Dirac delta distribution for large $N$.
Finally, the operators $\mcl K$ and $\mcl P$ are approximated empirically by matrices $W_N/N$ and $P_N$, where $P_N$ has entries
\begin{align*}
P_{ij}=\frac{W(x_i,x_j)}{Nd^{N}(x_i)} = \frac{N^{2\alpha}W_{ij}}{N^{2\alpha}d_i}\,,
\end{align*}
and so
$$
P_N=D_N^{-1}W_N \,.
$$
In \cite{CoifmanLafon2006}, the graph Laplacian matrix $\bar L_N$ is defined as
\begin{align*}
\bar L_N=\frac{I_N-P_N}{\delta}= \frac{1}{\delta } D_N^{-1}\left( D_N-W_N\right)
= \frac{1}{\delta} L_N\,,
\end{align*}
where $I_N$ denotes the identity matrix, and $L_N$ is our graph Laplacian matrix as defined in \eqref{defLN-intro} with $p=q=2(1-\alpha)$ and $r=0$. Note that $\bar L_N$ is not symmetric.
\\

\noindent
\textbf{Generator of a diffusion semi-group}. Taking $\delta\to 0$, we see that 
\begin{gather*}
\tilde W(x,y)\to \delta_{x=y}\,,\\
\tilde d(x)\to \varrho(x)\,,\qquad   d(x)=\mcl K\mbf{1}_\Omega(x) \to \varrho(x)^{1-2\alpha}\,,
\end{gather*}
and so $\mcl P$ converges to the identity operator $\rm{Id}$. Defining the operator
\begin{align*}
\mcl G = \frac{{\rm Id}-\mcl P}{\delta} 
\end{align*}
analogously to the discrete setting, it was shown in \cite[Thm.~2]{CoifmanLafon2006} that 
\begin{align*}
  \lim_{\delta\to 0} \mcl G f= -\mcl L f
\end{align*}
for $f$ in any finite span of the eigenfunctions of the Laplace-Beltrami operator on a compact submanifold of $\Omega$. Here, $\mcl G$ is the
infinitesimal generator of a Markov chain, and $\mcl L$ is the weighted elliptic operator defined in \eqref{general-weighted-Laplacian} for the parameter choices $p=q=2(1-\alpha)$ and $r=0$. 
In this sense, the operator $\mcl P$ is an approximation to the semi-group
$$
e^{\delta\mcl L}= {\rm Id}+\delta \mcl L +\mcl O(\delta^2)
$$
associated with the infinitesimal generator $\mcl L$,
\begin{align*}
  -\mcl L f
  &= \frac{1}{\varrho^{2(1-\alpha)}}\nabla\cdot
    \left(\varrho^{2(1-\alpha)} \nabla f\right)\\
&= \Delta f + 2(1-\alpha)\varrho^{-1}\nabla\varrho\cdot\nabla f\\
  &= \Delta f +\nabla\log\left(\varrho^{ 2(1-\alpha)}\right)\cdot\nabla f\,.
\end{align*}
More precisely, the operator $\mcl L$ is the infinitesimal generator of the reversible diffusion process
\begin{align*}
  dX_t = -\nabla \Psi(X_t) dt + \sqrt{2}\, dB\,,
\end{align*}
where $B$ denotes a  Brownian motion in $\R^d$ with associated potential 
$$\Psi(x)=-\log\left(\varrho(x)^{ 2(1-\alpha)}\right)$$
and invariant measure proportional to $\varrho^{ 2(1-\alpha)}$
satisfying $\mcl L^* e^{-\Psi}=\mcl  L^* \rho^{2(1-\alpha)}=0$. In this sense, the discrete graph Laplacian matrix $\bar L_N$ introduced above serves as an approximation of the generator $-\mcl L$.

  In \cite{CoifmanLafon2006}, Coifman and Lafon discuss the cases (i) $\alpha =0$ ($q=2$) when the graph Laplacian has isotropic weights and $W=\tilde W$, (ii) $\alpha=1/2$ ($q=1$) when the Dirichlet energy of $\mcl L$ is linear in $\varrho$, and (iii) $\alpha=1$ ($q=0$), when $-\mcl L f = \Delta f$, and so the Markov chain corresponding to $\mcl G$ converges (as $\delta \to 0$) to the Brownian motion in $\Omega$ with reflecting boundary conditions.


There is a well-known connection between the generator of reversible
diffusion processes and Schr{\"o}dinger operators \cite{pavliotis2014stochastic}.
Following the above connections between limiting graph Laplacians
and generators of diffusion processes with invariant measures proportional
to $\varrho^{(1 - 2 \alpha)}$, we connect the  operator $\mcl L$ to certain Schr\"odinger operators as follows. Define

\begin{align*}
\mcl S u := \Delta u -u  \frac{\Delta\left(\varrho^{1-\alpha}\right)}{\varrho^{1-\alpha}} \,,
\end{align*}
then we can write for $u=f\varrho^{1-\alpha}$,
\begin{align*}
-\mcl L f
  =  \frac{\Delta\left(f\varrho^{1-\alpha}\right)}{\varrho^{1-\alpha}}
  - \frac{\Delta\left(\varrho^{1-\alpha}\right)}{\varrho^{1-\alpha}} f
= \frac{\mcl S u}{\varrho^{1-\alpha}}\,.
\end{align*}




\section{Function Spaces}\label{app:AC}
Throughout this section $\varrho$ is taken to be a smooth probability density function 
with full support on a bounded open set $\Omega \subset \mbb R^d$
with $C^1$ boundary
which is bounded from above and below by positive constants
as in \eqref{conditions-on-rho}, i.e.,
\begin{equation}\label{upper-lower-bound-assumption-on-rho}
  0<\varrho^- \le \varrho(x) \le \varrho^+ < +\infty, \qquad \forall x \in \bar{\Omega}. 
\end{equation}
Our first task is to establish the equivalence between regular $L^p(\Omega)$ spaces
and the weighted spaces $L^p(\Omega, \varrho)$. In fact, a straightforward calculation
using \eqref{upper-lower-bound-assumption-on-rho} implies the following lemma.

\begin{lemma}\label{equivalence-of-Lp-spaces}
  Let $\varrho$ be a smooth \PDF on $\Omega$ satisfying \eqref{upper-lower-bound-assumption-on-rho}
  and let $u \in L^p(\Omega)$ for $p \ge 0$. Then
  \begin{equation*}
    \varrho^- \|   u \|_{L^p(\Omega)}^p \le \| u\|_{L^p(\Omega, \varrho)}^p \le
    \varrho^+ \| u \|_{L^p(\Omega)}^p,
  \end{equation*}
  i.e.,  $L^p(\Omega) = L^p(\Omega, \varrho)$. 
\end{lemma}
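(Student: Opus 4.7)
The plan is to read off the inequality directly from the pointwise two-sided bound on $\varrho$, with essentially no machinery required. First I would unpack the definitions: by the weighted-$L^p$ definition analogous to \eqref{L2-rho-definition} (with $s=1$), we have
\begin{equation*}
\|u\|_{L^p(\Omega,\varrho)}^p = \int_\Omega |u(x)|^p \, \varrho(x)\, dx,
\qquad
\|u\|_{L^p(\Omega)}^p = \int_\Omega |u(x)|^p \, dx.
\end{equation*}
So the claim is just a comparison between two integrals of the nonnegative integrand $|u(x)|^p$ against the measures $\varrho(x)\,dx$ and $dx$ respectively.

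Next I would invoke the hypothesis \eqref{upper-lower-bound-assumption-on-rho}, which says $\varrho^- \le \varrho(x) \le \varrho^+$ for every $x \in \bar{\Omega}$. Multiplying through by the nonnegative quantity $|u(x)|^p$ gives the pointwise inequality
\begin{equation*}
\varrho^-\, |u(x)|^p \;\le\; |u(x)|^p\, \varrho(x) \;\le\; \varrho^+\, |u(x)|^p, \qquad \text{a.e. } x \in \Omega.
\end{equation*}
Integrating this chain of inequalities over $\Omega$ (using monotonicity of the Lebesgue integral) yields exactly
\begin{equation*}
\varrho^-\, \|u\|_{L^p(\Omega)}^p \;\le\; \|u\|_{L^p(\Omega,\varrho)}^p \;\le\; \varrho^+\, \|u\|_{L^p(\Omega)}^p,
\end{equation*}
which is the desired estimate. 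The equivalence $L^p(\Omega) = L^p(\Omega,\varrho)$ as sets then follows because both norms are finite together, and their topologies coincide by taking $p$-th roots of the inequality (with comparable constants $(\varrho^\pm)^{1/p}$).

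There is really no main obstacle here; the only points of care are (i) to ensure the pointwise inequality holds almost everywhere (guaranteed since $\varrho$ is smooth and the bounds are global on $\bar{\Omega}$), and (ii) to note that the nonnegativity of $|u(x)|^p$ is what permits multiplying through the inequality without reversing it. The hypothesis that $\varrho$ is a probability density and that $\Omega$ has $C^1$ boundary plays no role in the proof; only the positive, uniform upper and lower bounds on $\varrho$ are used.
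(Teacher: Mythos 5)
Your proof is correct and is exactly the "straightforward calculation" the paper alludes to without spelling out: multiply the pointwise two-sided bound $\varrho^- \le \varrho(x) \le \varrho^+$ by the nonnegative integrand $|u(x)|^p$ and integrate. No further comparison is needed.
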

Given constants $(p, q, r) \in  \R^3$ we  consider the weighted Sobolev spaces
$H^1(\Omega, \varrho)$ introduced in section~\ref{ssec:prelim}.
We now have:

\begin{lemma}\label{equivalence-of-H1-spaces}
  Let $\varrho\in C^\infty(\bar{\Omega})$ be a smooth \PDF  satisfying \eqref{upper-lower-bound-assumption-on-rho}
  and let $u \in H^1(\Omega, \varrho)$ with parameters $(p,q,r) \in \R^3$. Then
  there exist constants $C^\pm(q, \varrho^\pm) >0$
  so that
  \begin{equation*}
    C^- \left\|  \frac{u}{\varrho^r} \right\|_{H^1(\Omega)}^2 \le \| u\|_{H^1(\Omega, \varrho)}^2 \le C^+ \left\| \frac{u}{\varrho^r} \right\|_{H^1(\Omega)}^2.
  \end{equation*}
\end{lemma}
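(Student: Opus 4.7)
The argument will be a direct pointwise comparison of the integrands against the weights. First I would expand both sides explicitly using the definition \eqref{V1-inner-prod}: the unweighted norm satisfies $\|u/\varrho^r\|_{H^1(\Omega)}^2 = \int_\Omega |\nabla(u/\varrho^r)|^2\,dx + \int_\Omega |u/\varrho^r|^2\,dx$, while the weighted norm is
\begin{equation*}
\|u\|_{H^1(\Omega,\varrho)}^2 = \int_\Omega \left|\nabla\!\left(\frac{u}{\varrho^r}\right)\right|^2 \varrho^q\,dx + \int_\Omega \left|\frac{u}{\varrho^r}\right|^2 \varrho^{p+r}\,dx.
\end{equation*}
The statement of the lemma therefore reduces to sandwiching the two weights $\varrho^q$ and $\varrho^{p+r}$ between positive constants, uniformly in $x \in \bar\Omega$.

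Next I would invoke the elementary observation that, under the hypothesis \eqref{upper-lower-bound-assumption-on-rho}, for any real exponent $s\in\R$ one has
\begin{equation*}
m(s) := \min\bigl\{(\varrho^-)^s,(\varrho^+)^s\bigr\} \;\le\; \varrho(x)^s \;\le\; \max\bigl\{(\varrho^-)^s,(\varrho^+)^s\bigr\} =: M(s),
\end{equation*}
for all $x\in\bar\Omega$, with both $m(s)$ and $M(s)$ strictly positive. Which of $(\varrho^-)^s$ and $(\varrho^+)^s$ plays the role of the lower (resp.\ upper) bound depends on the sign of $s$, but this does not affect the argument since only the min and max are used.

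Applying this twice, once with $s=q$ to the gradient term and once with $s=p+r$ to the zeroth-order term, and summing yields the two-sided inequality with the explicit choices
\begin{equation*}
C^- := \min\{m(q),\,m(p+r)\}, \qquad C^+ := \max\{M(q),\,M(p+r)\}.
\end{equation*}
Both constants are strictly positive and depend only on $q$, $p+r$, $\varrho^-$, and $\varrho^+$, as claimed in the statement.

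There is no serious obstacle here — the lemma is essentially a reformulation of the pointwise bounds on $\varrho$. The one subtlety worth flagging is the handling of negative exponents, since raising a number less than one to a negative power flips the ordering; this is precisely what necessitates the $\min/\max$ formulation above rather than a naive $(\varrho^\pm)^s$ substitution. Once that point is acknowledged, the equivalence of norms — and in particular the identification $H^1(\Omega,\varrho) = \{u : u/\varrho^r \in H^1(\Omega)\}$ in the sense of equivalent norms — is immediate.
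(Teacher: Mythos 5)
Your proposal is correct and takes essentially the same approach as the paper: both reduce the claim to pointwise sandwiching of the weights $\varrho^q$ (on the gradient term) and $\varrho^{p+r}$ (on the zeroth-order term) between positive constants determined by $\varrho^\pm$. Your version is in fact slightly more careful than the paper's, which simply writes $(\varrho^-)^q \le \varrho(x)^q \le (\varrho^+)^q$ without remarking that this ordering reverses when $q<0$; your $\min/\max$ formulation handles arbitrary real exponents cleanly, and your explicit constants correctly reflect the dependence on both $q$ and $p+r$.
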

\begin{proof}
  Since $\varrho$ satisfies \eqref{upper-lower-bound-assumption-on-rho} then 
  \begin{equation*}
    (\varrho^-)^q \left| \nabla \left(  \frac{u}{\varrho^r} \right) \right|^2 dx
   \le   \int_\Omega \varrho^q \left| \nabla \left(  \frac{u}{\varrho^r} \right) \right|^2 dx  \le  (\varrho^+)^q \int_\Omega \left| \nabla \left(  \frac{u}{\varrho^r} \right) \right|^2 dx.
  \end{equation*}
  Then the desired result follows immediately by  Lemma~\ref{equivalence-of-Lp-spaces} applied to  $L^2$ norms..
{}
\end{proof}

With the equivalence between the weighted and regular $L^p$ and $H^1$ spaces
established. We can present the  following compact embedding
as a consequence of 
the 
Rellich-Kondrachov Theorem \cite[Ch.~5.7, Thm~1]{Evans}:
\begin{proposition}\label{V1-compact-embedding}
  Let $\varrho \in C^\infty(\bar{\Omega})$
  be a \PDF satisfying \eqref{upper-lower-bound-assumption-on-rho}
  and
  fix $(p, q, r) \in  \R^3$. Then $H^1(\Omega, \varrho)$
  is compactly embedded in $L^2(\Omega, \varrho^{p-r})$.
\end{proposition}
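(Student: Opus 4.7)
The plan is to reduce the weighted compact embedding to the classical Rellich--Kondrachov theorem by exploiting the two equivalence lemmas that immediately precede the proposition. First I would take a sequence $\{u_n\}$ bounded in $H^1(\Omega,\varrho)$ and set $v_n := u_n/\varrho^r$. By Lemma~\ref{equivalence-of-H1-spaces}, the sequence $\{v_n\}$ is bounded in the standard space $H^1(\Omega)$. Since $\Omega$ is bounded with $C^1$ boundary, the classical Rellich--Kondrachov theorem \cite[Ch.~5.7, Thm~1]{Evans} gives a subsequence $\{v_{n_k}\}$ converging in $L^2(\Omega)$ to some $v \in L^2(\Omega)$.

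Next I would recover convergence of $u_{n_k}=\varrho^r v_{n_k}$ itself. Because $\varrho\in C^\infty(\bar\Omega)$ satisfies the two-sided bound \eqref{upper-lower-bound-assumption-on-rho}, the weight $\varrho^r$ is bounded above and below by positive constants on $\bar\Omega$ for every $r\in\R$. Consequently multiplication by $\varrho^r$ is a bounded linear operator on $L^2(\Omega)$, so $u_{n_k}\to \varrho^r v$ in $L^2(\Omega)$. Equivalently, by Lemma~\ref{equivalence-of-Lp-spaces} applied to the weight $\varrho^{p-r}$ (which is likewise bounded above and below by positive constants for any $p,r\in\R$), the norms $\|\cdot\|_{L^2(\Omega)}$ and $\|\cdot\|_{L^2(\Omega,\varrho^{p-r})}$ are equivalent, and hence $u_{n_k}\to \varrho^r v$ in $L^2(\Omega,\varrho^{p-r})$, which is exactly the conclusion required for compactness of the embedding.

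There is no genuine obstacle here: the whole content is packaged into the two preceding equivalence lemmas, and the only point that warrants a line of justification is the observation that $\varrho^s$ is uniformly bounded above and below on $\bar\Omega$ for every real exponent $s$, so that the argument works for the full three-parameter family $(p,q,r)\in\R^3$ without any sign restriction. One could alternatively present the same reasoning as the composition of three continuous/compact maps, $H^1(\Omega,\varrho)\xrightarrow{\,u\mapsto u/\varrho^r\,} H^1(\Omega)\xrightarrow{\,\text{R-K}\,} L^2(\Omega)\xrightarrow{\,v\mapsto \varrho^r v\,} L^2(\Omega,\varrho^{p-r})$, where only the middle arrow is compact and the two outer arrows are bounded isomorphisms; the composition is then compact, giving the claim.
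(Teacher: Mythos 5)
Your proof is correct and takes essentially the same approach as the paper: the paper presents the proposition as an immediate consequence of the equivalence lemmas and the classical Rellich--Kondrachov theorem (citing \cite[Ch.~5.7, Thm~1]{Evans} without spelling out the details), and your argument simply fills in those details explicitly via the factorization $H^1(\Omega,\varrho)\to H^1(\Omega)\to L^2(\Omega)\to L^2(\Omega,\varrho^{p-r})$, correctly noting that the uniform two-sided bound $0<\varrho^-\le\varrho\le\varrho^+$ makes $\varrho^s$ bounded above and below for every real $s$, so no sign restriction on $p,q,r$ is needed.
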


\section{Min-Max Principle}\label{app:AB}

The min-max principle \cite[Ch.~1 Sec.~6.10]{kato}
is readily applied to our specific
setting to obtain the following:

\begin{proposition}\label{thm:max-min}
Fix $(p,q,r) \in \R^3$.
For any open bounded set $\Omega\subset\mbb{R}^d$ with $\partial \Omega\in C^{1,1}$,
and for a given density $\varrho\in C^\infty(\overline{\Omega})$ satisfying Assumption~\ref{Assumptions-on-rho}, let $\sigma_1\le \sigma_2\leq...\leq \sigma_j\leq ...$ be the sequence of eigenvalues of the Neumann operator 
 $$
 \mcl{L}= -\frac{1}{\varrho^p}\nabla\cdot\left(\varrho^q\nabla\left(\frac{\cdot}{\varrho^r}\right)\right)
 $$ in $V^1(\Omega, \varrho)$, repeated in accordance with their multiplicities, and let $\left\{\varphi_{j}\right\}_{j\in\mbb{N}}$ be a corresponding Hilbertian basis of eigenvectors in $V^1(\Omega, \varrho)$; then 
\begin{equation*}
  \Langle \varrho^{q} \nabla \left( \frac{\varphi_{j}}{\varrho^r} \right),
  \nabla \left( \frac{v}{\varrho^r}\right) \Rangle =  \sigma_{j} \Langle
  \varrho^{p-r} \varphi_{j} , v \Rangle,
    \qquad \varphi_{j}, v \in V^1(\Omega, \varrho).
    \end{equation*}
Define the Rayleigh quotient of $\mcl{L}$  by
   \begin{equation*}\label{rayleigh-quotient-app}
     \mcl R(u):= \frac{\Langle \mcl{L} u,u\Rangle_{\varrho^{p-r}}}{\Langle u,u\Rangle_{\varrho^{p-r}}}
     = \frac{\int_{\Omega}  \left| \nabla \left( \frac{u}{\varrho^r} \right)\right|^2\varrho^q dx }{ \int_{\Omega}
     |u|^2 \varrho^{p-r} dx}, \qquad u \in V^1(\Omega, \varrho)\,.
 \end{equation*}
 Denote by $\mcl{S}_n$ the class of all $n$-dimensional linear subspaces in $V^1(\Omega, \varrho)$,
 and by $M^\bot$ the orthogonal subspace of $M$ in $V^1(\Omega,\varrho)$.
Then we have
\begin{align}
 \sigma_n
 &=\min_{M\in\mcl{S}_n}\,\,\max_{v\in M,v\neq 0}\,\, \mcl{R}(v)\label{min-max-principle}\\
  &=\max_{M\in\mcl{S}_{n-1}}\,\,\min_{v\in M^\bot,v\neq 0}\,\, \mcl{R}(v)\,.\label{max-min-principle}
\end{align}

\end{proposition}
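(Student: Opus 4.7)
The proposition is the classical Courant--Fischer--Weyl variational characterization of eigenvalues, adapted to the weighted setting. Since all the analytic heavy lifting is already contained in Lemma~\ref{L-rho-is-self-adjoint-symmetric} and Proposition~\ref{L-has-discrete-spectrum}, the proof will be essentially a spectral expansion argument together with a dimension count. The plan is to reduce the min-max statement to a purely algebraic fact about sequences of nonnegative reals indexed by the eigenvalues $\sigma_j$.

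\textbf{Step 1 (Setup via the spectral decomposition).} First I will invoke Proposition~\ref{L-has-discrete-spectrum} to obtain the eigenfunctions $\{\varphi_j\}_{j \ge 1}$ forming an orthogonal basis of both $V^0(\Omega,\varrho)$ and $V^1(\Omega,\varrho)$. After normalizing so that $\Langle \varphi_j,\varphi_k \Rangle_{\varrho^{p-r}} = \delta_{jk}$, I will expand an arbitrary $v \in V^1(\Omega,\varrho)$ as $v = \sum_{j\ge 1} c_j \varphi_j$ with $c_j = \Langle v,\varphi_j \Rangle_{\varrho^{p-r}}$. Using the bilinear form identity \eqref{eq:BLF} and the eigenfunction equation, this yields the two basic formulas
\begin{equation*}
\Langle v, v\Rangle_{\varrho^{p-r}} = \sum_{j \ge 1} c_j^2, \qquad \Langle \mcl{L}v, v\Rangle_{\varrho^{p-r}} = \sum_{j \ge 1} \sigma_j c_j^2,
\end{equation*}
so that the Rayleigh quotient becomes $\mcl{R}(v) = \bigl(\sum_j \sigma_j c_j^2\bigr)\big/\bigl(\sum_j c_j^2\bigr)$, valid for every nonzero $v \in V^1(\Omega,\varrho)$. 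The convergence of both series is guaranteed because $v \in V^1(\Omega,\varrho) \subset V^0(\Omega,\varrho)$ and because the $\varphi_j$ are orthogonal in the $\Langle\cdot,\cdot\Rangle_V$ inner product.

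\textbf{Step 2 (Proof of the min-max formula \eqref{min-max-principle}).} To get the upper bound $\sigma_n \ge \min_{M \in \mcl{S}_n}\max_{v \in M\setminus\{0\}} \mcl{R}(v)$, I will take the concrete trial subspace $M_* := \mathrm{span}\{\varphi_1,\dots,\varphi_n\}$. For $v = \sum_{j=1}^n c_j \varphi_j \in M_*$ the formula from Step 1 truncates and gives $\mcl{R}(v) \le \sigma_n$, with equality at $v = \varphi_n$; hence $\max_{v \in M_*\setminus\{0\}} \mcl{R}(v) = \sigma_n$. For the matching lower bound, I will fix an arbitrary $M \in \mcl{S}_n$ and use a dimension count: the linear map sending $v \in M$ to $(c_1,\dots,c_{n-1}) \in \R^{n-1}$ has nontrivial kernel, so there exists a nonzero $v \in M$ with $c_1=\cdots=c_{n-1}=0$; for such a $v$ the spectral formula gives $\mcl{R}(v) = \sum_{j \ge n}\sigma_j c_j^2 / \sum_{j\ge n}c_j^2 \ge \sigma_n$, because $\sigma_j \ge \sigma_n$ for $j \ge n$. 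This forces $\max_{v \in M\setminus\{0\}} \mcl{R}(v) \ge \sigma_n$, completing the min-max formula.

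\textbf{Step 3 (Proof of the max-min formula \eqref{max-min-principle}).} This is dual to Step 2. I will again take the distinguished subspace $M_\star := \mathrm{span}\{\varphi_1,\dots,\varphi_{n-1}\} \in \mcl{S}_{n-1}$; its orthogonal complement in $V^1(\Omega,\varrho)$ consists precisely of vectors whose expansion only uses $\varphi_j$ with $j \ge n$, so by Step 1 the infimum of $\mcl{R}$ over $M_\star^\perp\setminus\{0\}$ equals $\sigma_n$, attained at $\varphi_n$. For the reverse inequality, given any $M \in \mcl{S}_{n-1}$, I will use another dimension count: $M^\perp$ intersects $\mathrm{span}\{\varphi_1,\dots,\varphi_n\}$ in a subspace of dimension at least $1$ (since $M$ can eliminate at most $n-1$ linear conditions on an $n$-dimensional space), and for any nonzero $v$ in this intersection the spectral formula gives $\mcl{R}(v) \le \sigma_n$. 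This yields $\min_{v \in M^\perp\setminus\{0\}} \mcl{R}(v) \le \sigma_n$, completing the argument.

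\textbf{Main obstacle.} The algebraic skeleton above is entirely routine once the spectral theorem is in hand. The only subtle point is consistency of the several inner products in play: the orthogonal complement $M^\perp$ in the statement is taken in $V^1(\Omega,\varrho)$ with its $\Langle\cdot,\cdot\Rangle_V$ inner product, whereas the Rayleigh quotient, spectral expansion and orthonormalization naturally live in $V^0(\Omega,\varrho)$ with the $\Langle\cdot,\cdot\Rangle_{\varrho^{p-r}}$ inner product. The fact that $\{\varphi_j\}$ is simultaneously orthogonal in both spaces (Proposition~\ref{L-has-discrete-spectrum}) is exactly what makes the two dimension counts in Steps~2--3 compatible with the statement of the proposition; I will point this out carefully but no new analytic input is needed beyond what has already been proved.
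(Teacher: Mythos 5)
The paper does not prove this proposition: Appendix~\ref{app:AB} cites Kato \cite[Ch.~1 Sec.~6.10]{kato} and asserts the min-max principle applies to the weighted setting without giving an argument. Your proof is correct and is precisely the classical Courant--Fischer argument, worked out explicitly in this setting: the spectral expansion of Step~1, the distinguished trial subspaces $\mathrm{span}\{\varphi_1,\dots,\varphi_n\}$ and $\mathrm{span}\{\varphi_1,\dots,\varphi_{n-1}\}$, and the rank--nullity dimension counts are all standard. The one genuine subtlety here — that $M^\bot$ in the statement is the orthogonal complement with respect to $\langle\cdot,\cdot\rangle_V$, whereas the Rayleigh quotient and expansion live naturally in $L^2(\Omega,\varrho^{p-r})$ — you correctly identify, and it is resolved exactly as you indicate: for eigenfunctions, $\langle v,\varphi_j\rangle_V=(1+\sigma_j)\langle v,\varphi_j\rangle_{\varrho^{p-r}}$ with $1+\sigma_j>0$, so the two notions of orthogonality to a span of $\varphi_j$'s agree, and Proposition~\ref{L-has-discrete-spectrum}'s statement that the $\varphi_j$ form an orthogonal basis in both $V^0$ and $V^1$ makes the dimension counts match the statement of the proposition. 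Nothing is missing.
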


\section{Weighted Cheeger's inequality}\label{sec:proof-isoperim-ineqa}
 Given positive measures $\mu$, $\nu$ on $\Omega'\subset \R^d$, define the isoperimetric function $\mcl J$ for any subset $\Omega\subset \Omega'$ by
    \begin{equation*}
      \mcl J( \Omega, \mu,\nu) := \frac{ | \partial \Omega |_{\mu}}{\min\{ | \Omega|_{\nu},
        |\Omega' \setminus \Omega|_{\nu}\}}.
    \end{equation*}
Here, we use the notation
$$
|\Omega|_\nu:=\nu(\Omega)\,,
$$
and define the $\mu$-weighted Minkowski boundary measure of $\Omega$ by
    \begin{equation*}
      |\partial \Omega|_\mu := \liminf_{\delta \downarrow 0}
      \frac{1}{\delta} \left[ |\Omega_\delta|_\mu - | \Omega|_\mu \right]\,,
    \end{equation*}
with $\Omega_\delta$ as defined in \eqref{D-eps-definition},
\begin{equation*}
\Omega_\delta := \{ x : \dist(x, \Omega) \le \delta \}\,.
\end{equation*}
We show the following weighted version of Cheeger's inequality.
\begin{proposition}[Weighted Cheeger's inequality]\label{thm:cheeger}
Let $\mu$, $\nu$ be absolutely continuous measures with respect to the Lebesgue measure with $C^\infty$ densities that are uniformly bounded above and below with positive constants on $\Omega'$. Suppose there exists a constant $h>0$ so that
      \begin{equation}\label{cheegerhyp}
        h \le  \inf_\Omega \mcl J( \Omega, \mu,\nu),
      \end{equation}
      where the infimum is over open subsets $\Omega \subset \Omega'$
      such that $|\Omega |_{\nu} \le \frac{1}{2} | \Omega'|_{\nu}$. Then the following Poincar\'e inequality holds:
\changed{
\begin{equation*}
  \left(\sup_{x\in\Omega'}\left|\frac{d\mu}{d\nu}(x)\right|\right)^{-1}\,\frac{h^2}{4} \int_{\Omega'} |f - \bar f_{\Omega'} |^2 d\nu 
  \le \int_{\Omega'} \left| \nabla f \right|^2 d\mu\,,
\end{equation*}
}
where $\bar f_{\Omega'}$ denotes the average of $f$ with respect to $\nu$,
\begin{equation*}
\bar f_{\Omega'} := \frac{\int_{\Omega'} f\,d\nu}{|\Omega'|_\nu}\,.
\end{equation*}
\end{proposition}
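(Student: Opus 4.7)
The plan is to follow the classical Cheeger strategy adapted to two measures, reducing first to a median-centered function, then obtaining an $L^1$ weighted isoperimetric bound by the coarea formula, and finally upgrading it to the desired $L^2$ Poincar\'e inequality by Cauchy--Schwarz against the Radon--Nikodym derivative $\phi := d\mu/d\nu$. The hypotheses on $\mu,\nu$ guarantee that $\|\phi\|_{L^\infty(\Omega')} < \infty$ and that the densities are smooth, which will be what allows me to replace the Minkowski exterior measure $|\partial \Omega|_\mu$ by the $\mu$-weighted hypersurface measure on level sets of smooth test functions.

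First I would assume without loss of generality that $f \in C^\infty(\overline{\Omega'})$ by a standard density argument, and introduce a $\nu$-median $m$ of $f$, so that both sets $\{f > m\}$ and $\{f < m\}$ have $\nu$-mass at most $\tfrac12|\Omega'|_\nu$. Setting $g := f - m$ and $g_\pm := \max(\pm g, 0)$, the variational characterization of the mean yields
\begin{equation*}
\int_{\Omega'} |f - \bar f_{\Omega'}|^2\,d\nu \le \int_{\Omega'} g^2\,d\nu = \int_{\Omega'} g_+^2\,d\nu + \int_{\Omega'} g_-^2\,d\nu,
\end{equation*}
so it suffices to control each $\int g_\pm^2\, d\nu$ by $\int |\nabla g_\pm|^2\, d\mu$. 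Since $g_\pm^2$ is nonnegative with level sets $\{g_\pm^2 > t\} = \{g_\pm > \sqrt{t}\}$ contained in $\{g>0\}$ (resp.\ $\{g<0\}$), each level set has $\nu$-measure at most $\tfrac12|\Omega'|_\nu$, so the hypothesis \eqref{cheegerhyp} applies to it.

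Next I would apply the coarea formula to $g_\pm^2$ (valid on smooth functions and easily extended to $g_\pm^2$ by Sard's theorem and approximation, with $|\partial \{g_\pm^2 > t\}|_\mu$ given almost everywhere in $t$ by the $\mu$-weighted $(d{-}1)$-Hausdorff measure of the level hypersurface $\{g_\pm^2 = t\}$):
\begin{equation*}
\int_{\Omega'} |\nabla g_\pm^2|\,d\mu = \int_0^\infty |\partial \{g_\pm^2 > t\}|_\mu\,dt \ge h \int_0^\infty |\{g_\pm^2 > t\}|_\nu\,dt = h \int_{\Omega'} g_\pm^2\,d\nu,
\end{equation*}
where the inequality uses \eqref{cheegerhyp}. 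Using $|\nabla g_\pm^2| = 2 g_\pm |\nabla g_\pm|$ and Cauchy--Schwarz with respect to $\nu$ together with the identification $d\mu = \phi\, d\nu$, I obtain
\begin{equation*}
2\int g_\pm |\nabla g_\pm|\,d\mu \le 2 \left(\int g_\pm^2\,d\mu\right)^{1/2}\!\left(\int |\nabla g_\pm|^2\,d\mu\right)^{1/2} \le 2\|\phi\|_\infty^{1/2}\left(\int g_\pm^2\,d\nu\right)^{1/2}\!\left(\int |\nabla g_\pm|^2\,d\mu\right)^{1/2}.
\end{equation*}

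Combining the two displays, dividing by $\bigl(\int g_\pm^2\,d\nu\bigr)^{1/2}$, and squaring yields
\begin{equation*}
\frac{h^2}{4\|\phi\|_\infty}\int_{\Omega'} g_\pm^2\,d\nu \le \int_{\Omega'} |\nabla g_\pm|^2\,d\mu.
\end{equation*}
Summing the $+$ and $-$ contributions and using $|\nabla g|^2 = |\nabla g_+|^2 + |\nabla g_-|^2$ a.e.\ together with $|\nabla g| = |\nabla f|$ delivers the claimed Poincar\'e inequality. The main subtleties will be the rigorous version of the coarea identity on a bounded domain with the Minkowski boundary measure (which is the one appearing in the statement), and the justification of the $L^\infty$ bound on $\phi$; both are settled by the smoothness and uniform bounds assumed on the densities of $\mu$ and $\nu$. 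The rest is essentially a two-measure generalization of the classical Cheeger--Maz'ya argument.
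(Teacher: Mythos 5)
Your proof is correct and follows essentially the same route as the paper's: split at a $\nu$-median, apply the coarea inequality to $g_\pm^2$ together with the isoperimetric hypothesis, then use Cauchy--Schwarz and $\sup |d\mu/d\nu|$ to pass from an $L^1$ bound to the $L^2$ Poincar\'e inequality, and finally sum the $\pm$ contributions using disjoint supports. The only (minor, favorable) difference is that you cancel $\bigl(\int g_\pm^2\,d\nu\bigr)^{1/2}$ before summing, which yields the $h^2/4$ constant cleanly; the paper bounds $\int F_\pm^2\,d\mu$ by $\int|f-m|^2\,d\mu$ first and then sums, which requires a slightly more careful combination step.
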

This is a generalization of the weighted Cheeger's inequality as here we may take different measures $\mu$ and $\nu$, whereas $\mu=\nu$ in \cite{bakry2013analysis}. The proof can readily be generalized  from \cite[Prop.~8.5.2]{bakry2013analysis} to this setting.
\begin{proof}
It follows from the  co-area formula \cite[Thm.~8.5.1]{bakry2013analysis}  that
for every Lipschitz function $f$ on $\Omega'$,
\begin{equation}
  \label{co-area-formula}
  \int_{-\infty}^\infty |\partial S(f,t) |_{\mu} dt \le \int_{\Omega'} |\nabla f | \,d\mu\,,
\end{equation}
where $S(f, t) := \{ x \in \Omega' : f(x) >t \}$ for $t \in \mbb R$. Now
let $g$ be a positive Lipschitz function on $\Omega$ such that $| S(g,  t) |_{\nu} \le \frac{1}{2} | \Omega' |_{\nu}$. Then 
by the hypothesis \eqref{cheegerhyp}  we have for $t \ge0$,
\begin{equation*}
  h \min \{ | S(g, t) |_{\nu},
        |\Omega' \setminus S(g,t)|_{\nu}\} \le |\partial S(g,t) |_{\mu},
\end{equation*}
which together with \eqref{co-area-formula} gives
\begin{equation}\label{iso-perim-proof-interim-display-1}
   h \int_0^\infty \min \{ | S(g, t) |_{\nu},
        |\Omega' \setminus S(g,t)|_{\nu}\} dt \le \int_{\Omega'} |\nabla g | \,d\mu\,.
\end{equation}
Now let $f: \Omega' \to \mbb R$ be Lipschitz and denote by $m$ a median of $f$
with respect to $\nu$, i.e., $m \in \mbb R$ such that
\begin{equation*}
  | \{x \in \Omega' : f(x) \ge m\} |_{\nu} \le \frac{1}{2} | \Omega' |_{\nu},
  \qquad \text{and} \qquad
  | \{x \in \Omega' : f(x) \le m\} |_{\nu} \le \frac{1}{2} | \Omega' |_{\nu}.
\end{equation*}
Proceeding in the same way as in proof of \cite[Prop.~8.5.2]{bakry2013analysis}
we define $F_+ = \max\{ f - m, 0\}$ and $F_- = \max\{ m - f, 0\}$ and by definition of
the median we have for $t >0$,
  \begin{equation*}
  | S(F_+^2,  t) |_{\nu} \le \frac{1}{2} | \Omega' |_{\nu},
  \qquad \text{and} \qquad
  | S(F_-^2, t) |_{\nu} \le \frac{1}{2} | \Omega' |_{\nu}.
\end{equation*}
Applying \eqref{iso-perim-proof-interim-display-1} with $g  = F_+^2$ and $g = F_-^2$
and adding the two inequalities yields
\begin{align*}
h\int_{\Omega'}|f-m|^2\,d\nu
&=h\int_{\Omega'}F_+^2\,d\nu +h\int_{\Omega'}F_-^2\,d\nu \\
&=  h \int_0^\infty | S(F_+^2, t) |_{\nu}\, dt+h \int_0^\infty | S(F_-^2, t) |_{\nu}\, dt\\
&\le  \int_{\Omega'} |\nabla (F_+^2) | \,d\mu+ \int_{\Omega'} |\nabla (F_-^2) | \,d\mu\,.
\end{align*}
By the Cauchy-Schwartz inequality,
\changed{
\begin{align*}
\int_{\Omega'} |\nabla (F_\pm^2) | \,d\mu
&=2 \int_{\Omega'} F_\pm |\nabla F_\pm| \,d\mu
\le 2\left(\int_{\Omega'} |F_\pm|^2 \,d\mu\right)^{1/2}
\left(\int_{\Omega'} |\nabla F_\pm|^2 \,d\mu\right)^{1/2}\\
&\le 2\left(\int_{\Omega'} |f-m|^2 \,d\mu\right)^{1/2}
\left(\int_{\Omega'} |\nabla F_\pm|^2 \,d\mu\right)^{1/2}\\
&\le 2\left(\sup_{x\in\Omega'}\left|\frac{d\mu}{d\nu}(x)\right|\right)^{1/2} \left(\int_{\Omega'} |f-m|^2 \,d\nu\right)^{1/2}
\left(\int_{\Omega'} |\nabla F_\pm|^2 \,d\mu\right)^{1/2}\,.
\end{align*}
The previous estimate with the fact that $F_\pm$ have disjoint support, gives
\begin{equation*}
 \left(\sup_{x\in\Omega'}\left|\frac{d\mu}{d\nu}(x)\right|\right)^{-1} \frac{h^2}{4} \int_{\Omega'} |f - m |^2 \,d\nu
  \le \int_{\Omega'} | \nabla f|^2\,d\mu\,.
\end{equation*}
}
for any median of $f$. Finally, minimizing the left-hand side over $m$ gives the desired lower bound with $m=\bar f_{\Omega'}$, which concludes the proof.
\end{proof}

\begin{proof}[Proof of Lemma~\ref{weighted-isoperimetric-inequality}]
Apply Theorem~\ref{thm:cheeger} with $d\mu(x)=\varrho_\eps^q(x)dx$ and $d\nu(x)=\varrho_\eps^{p+r}(x)dx$.
 Setting $u = f \varrho_\eps^r$ yields
\begin{equation*}
  \changed{
    \left(\sup_{x\in\Omega'} \varrho_\epsilon^{q- p -r} \right)^{-1} \frac{h^2}{4} \int_{\Omega'} \left| u -\bar{u}_{\Omega'} \varrho_\eps^r  \right|^2 \varrho_\eps^{p-r} dx
    \le \int_{\Omega'} \left| \nabla \left( \frac{u}{\varrho_\eps^r} \right) \right|^2 \varrho_\eps^q dx,
    }
\end{equation*}
which concludes the proof for Lipschitz functions $u$.
The desired result on $V^1(\Omega', \varrho_\eps)$ then follows by a density argument,
and noting that $\bar{u}=0$ in that case.
\end{proof}


\renewcommand\thefigure{4.\arabic{figure}} 
\renewcommand\thetable{4.\arabic{table}} 

 \begin{figure}[!ht]
   \centering
\subfloat[]{%
   \includegraphics[width=.46\textwidth]{./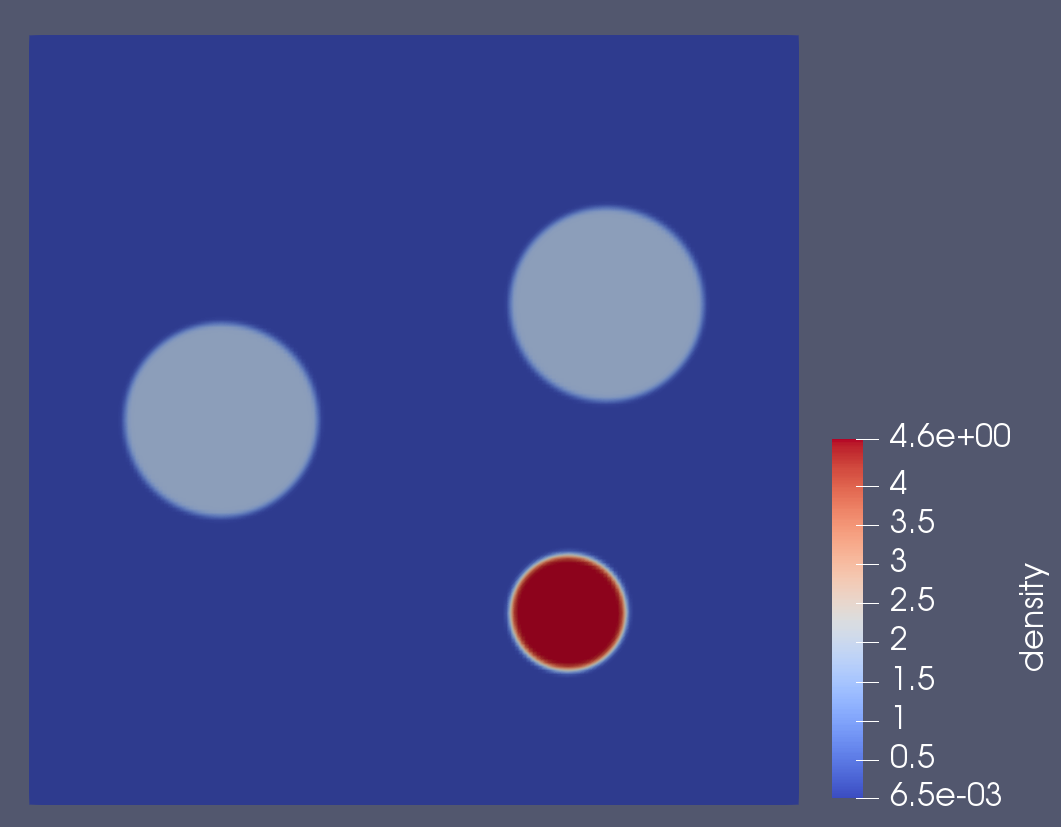}
 }
 \subfloat[]{%
   \includegraphics[width=.45\textwidth]{./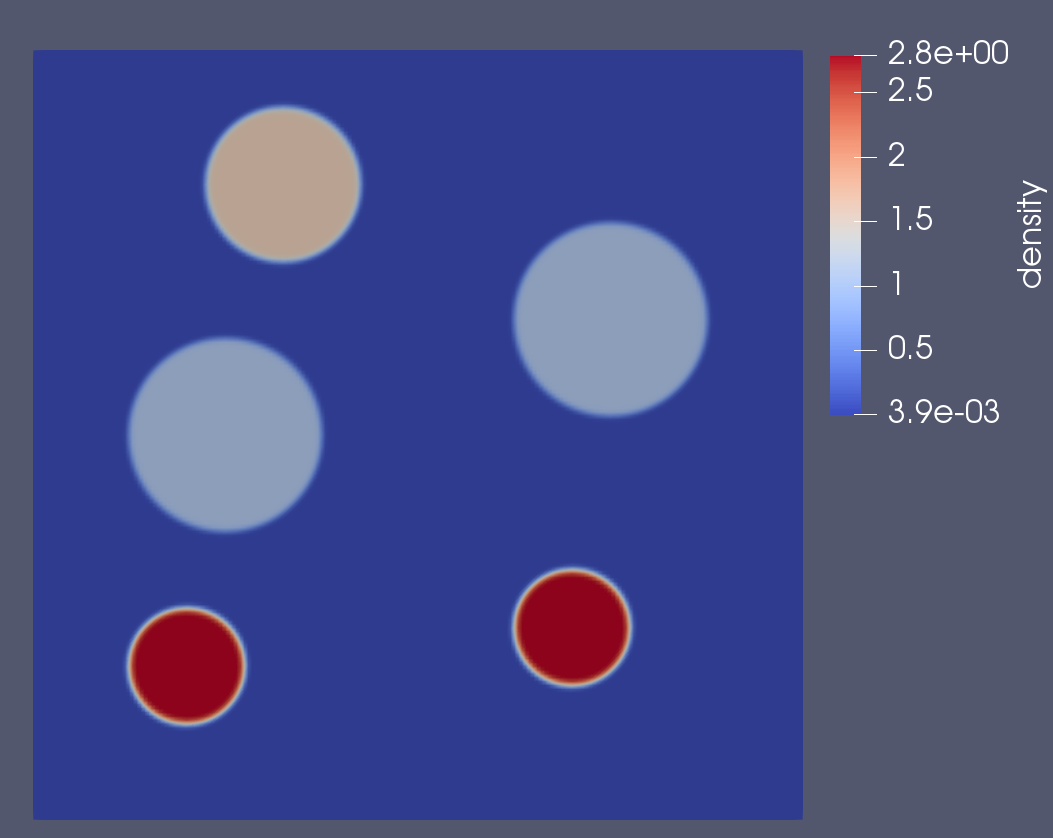}
     }
     \caption{Plot of the densities $\varrho_\eps$ of the form \eqref{numerics-rho-def} with three and five clusters for $\eps= 0.0125$.}
     \label{fig:density-plots-three-five-clusters}
 \end{figure}

   \begin{figure}[!ht]
     \subfloat[]{%
       \includegraphics[width=0.49\textwidth, clip=true, trim =1cm 6cm 2cm 5cm]{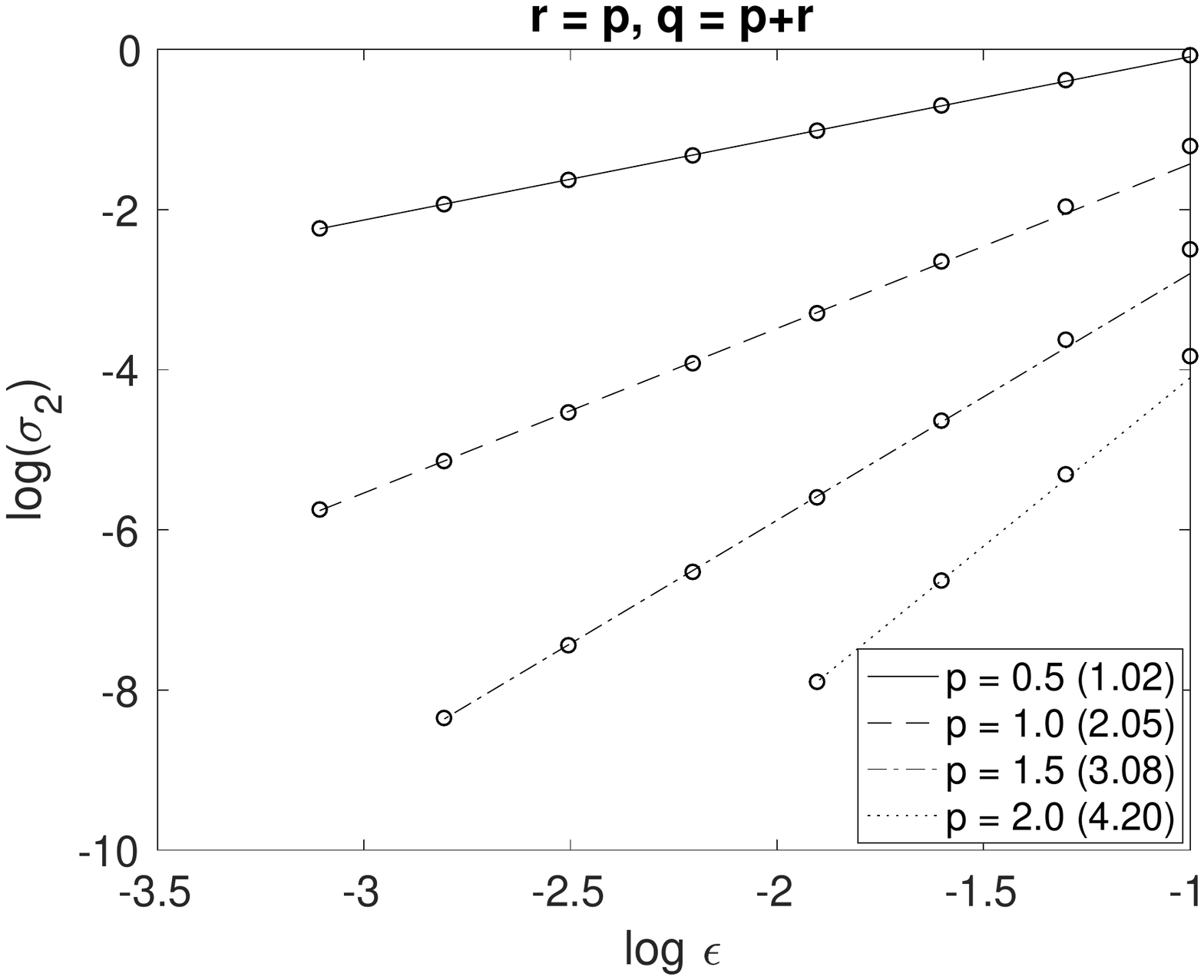}
     }
     \hfill
     \subfloat[]{%
       \includegraphics[width=0.49\textwidth, clip=true, trim =1cm 6cm 2cm 5cm]{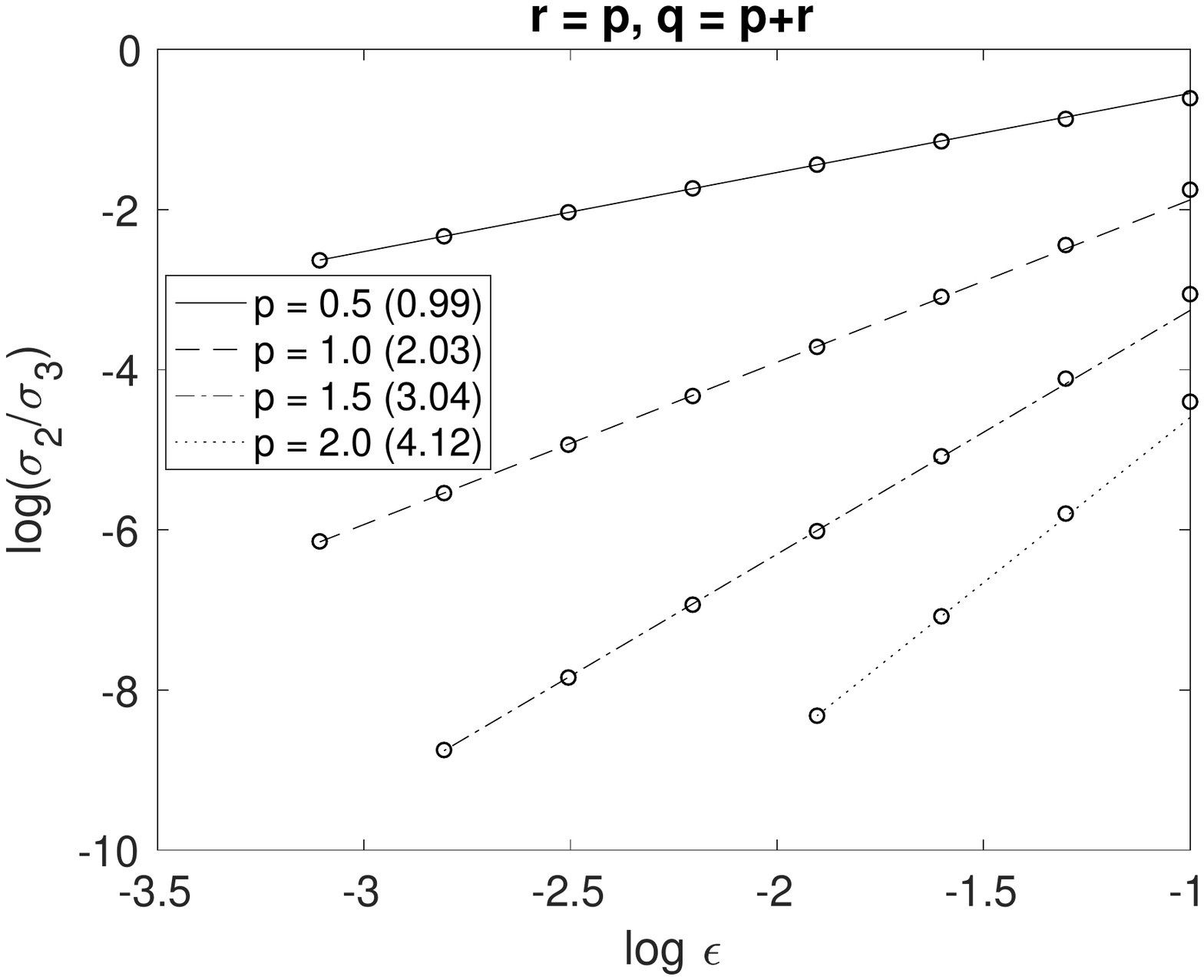}
     } \\
     \subfloat[]{%
       \includegraphics[width=0.49\textwidth, clip=true, trim =1cm 6cm 2cm 5cm]{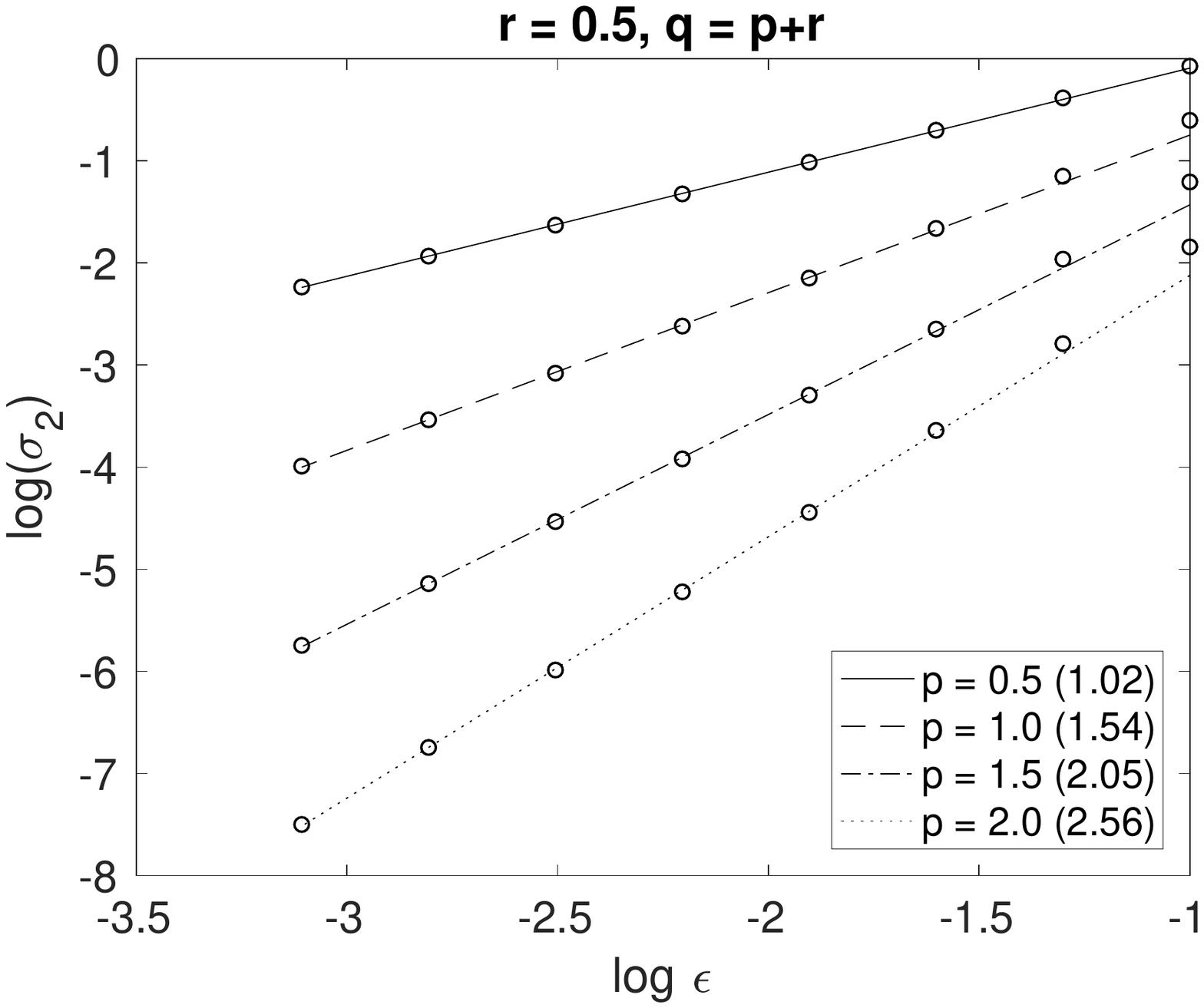}
     }
     \hfill
     \subfloat[]{%
       \includegraphics[width=0.49\textwidth, clip=true, trim =1cm 6cm 2cm 5cm]{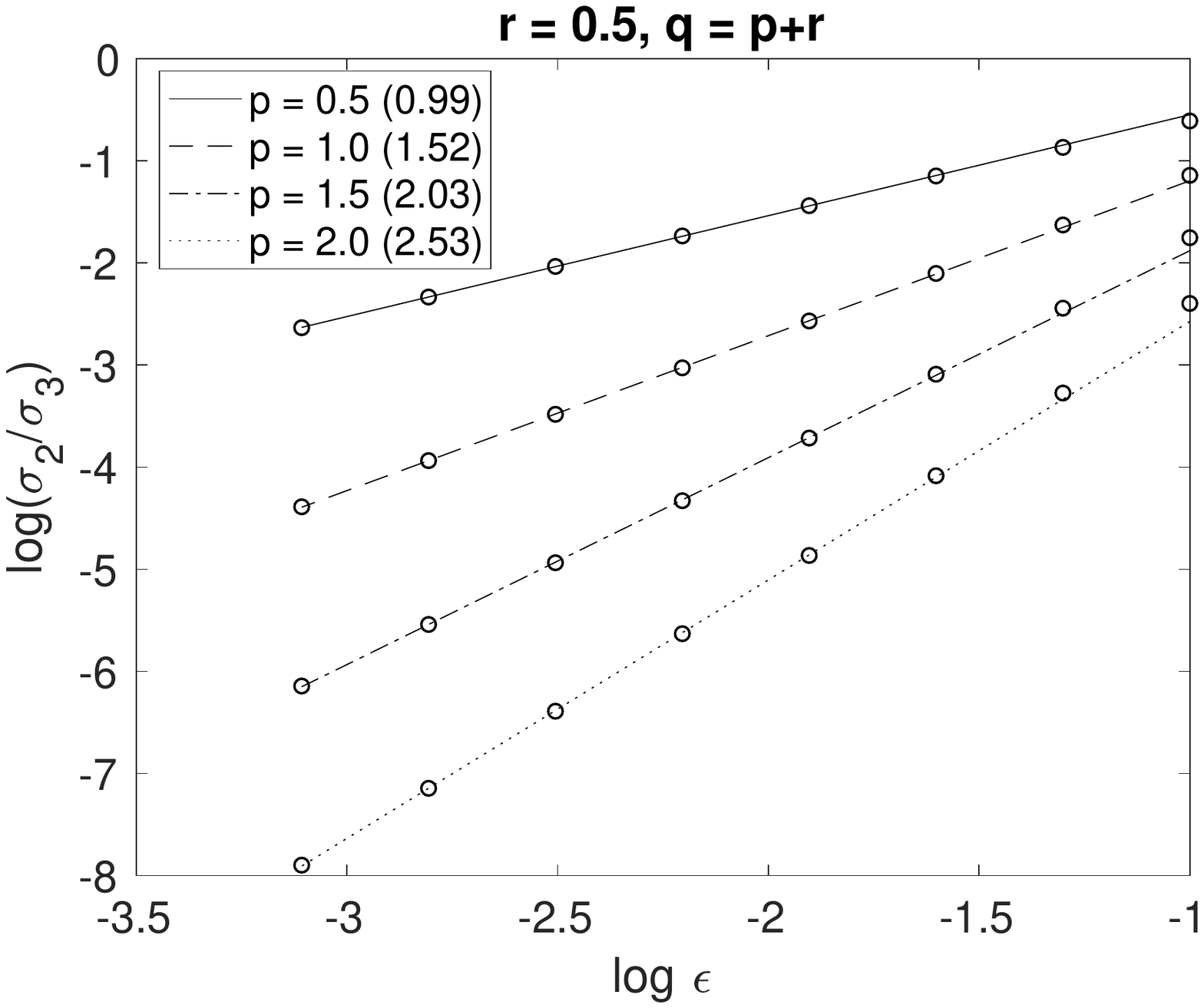}
     }
     \caption{Variation of the second and third eigenvalues of $\mcl L_\eps$ in the balanced case
       with $q = p + r$ and for various values of $p \in [0.5, 2]$. 
(a, b) consider $r=p$; (c, d) consider fixed $r=0.5$.
(a, c) show $\log(\sigma_{2,\eps})$ vs $\log(\eps)$ while (b, d) show
       $\log(\sigma_{2,\eps}/\sigma_{3,\eps})$ vs $\log(\eps)$. The values reported
       in the brackets in the legends are numerical approximations to the slope of the
       lines for different values of $p$.
}
     \label{fig:balanced01}
   \end{figure}

   \begin{table}[htp]
     \centering
     \begin{tabular}{|c |c  | c | c | c| c|}
       \hline
       & & \multicolumn{2}{c |}{$\frac{\log(\sigma_{2,\eps})}{\log{\eps}}$} &
       \multicolumn{2}{c |}{$\frac{\log(\sigma_{2,\eps}) - \log(\sigma_{3,\eps})}{\log{\eps}}$}\\
       \hline
       $p$ & $r$  & Analytic & Numerical & Analytic & Numerical \\
       \hline
       $0.5$& $0.5$ & 1.00  & 1.02   & 1.00 & 0.99  \\
       \hline
       $1.0$& $1.0$ & 2.00  & 2.05  & 2.00  & 2.03 \\
       \hline
       $1.5$& $1.5$ & 3.00  & 3.08   & 3.00 & 3.04  \\
       \hline
       $2.0$& $2.0$ & 4.00 & 4.20 &4.00 & 4.12 \\
       \hline
       $1.0$& $0.5$ & 1.50  & 1.54  & 1.50  & 1.52 \\
       \hline
       $1.5$& $0.5$ & 2.00  & 2.05   & 2.00 & 2.03  \\
       \hline
       $2.0$& $0.5$ & 2.50 & 2.56  &2.50 & 2.53 \\
       \hline
     \end{tabular}
     \caption{Comparison between  numerical approximation of
       the rate of decay of $ \log(\sigma_{2,\eps})$ and $\log(\sigma_{2,\eps}/\sigma_{3,\eps})$ as
       functions of $\log(\eps)$ and the analytic predictions in
       Theorem~\ref{thm:L-eps-low-eigenvals} and Corollary~\ref{cor:ratiogap}
        for the balanced case with $q = p +r $ and different choices of $p$ and $r$.}
      \label{tab:balanced-r-eq-p}
   \end{table}

   \begin{figure}[!ht]
     \subfloat[]{%
       \includegraphics[width=0.49\textwidth, clip=true, trim =1cm 6cm 2cm 5cm]{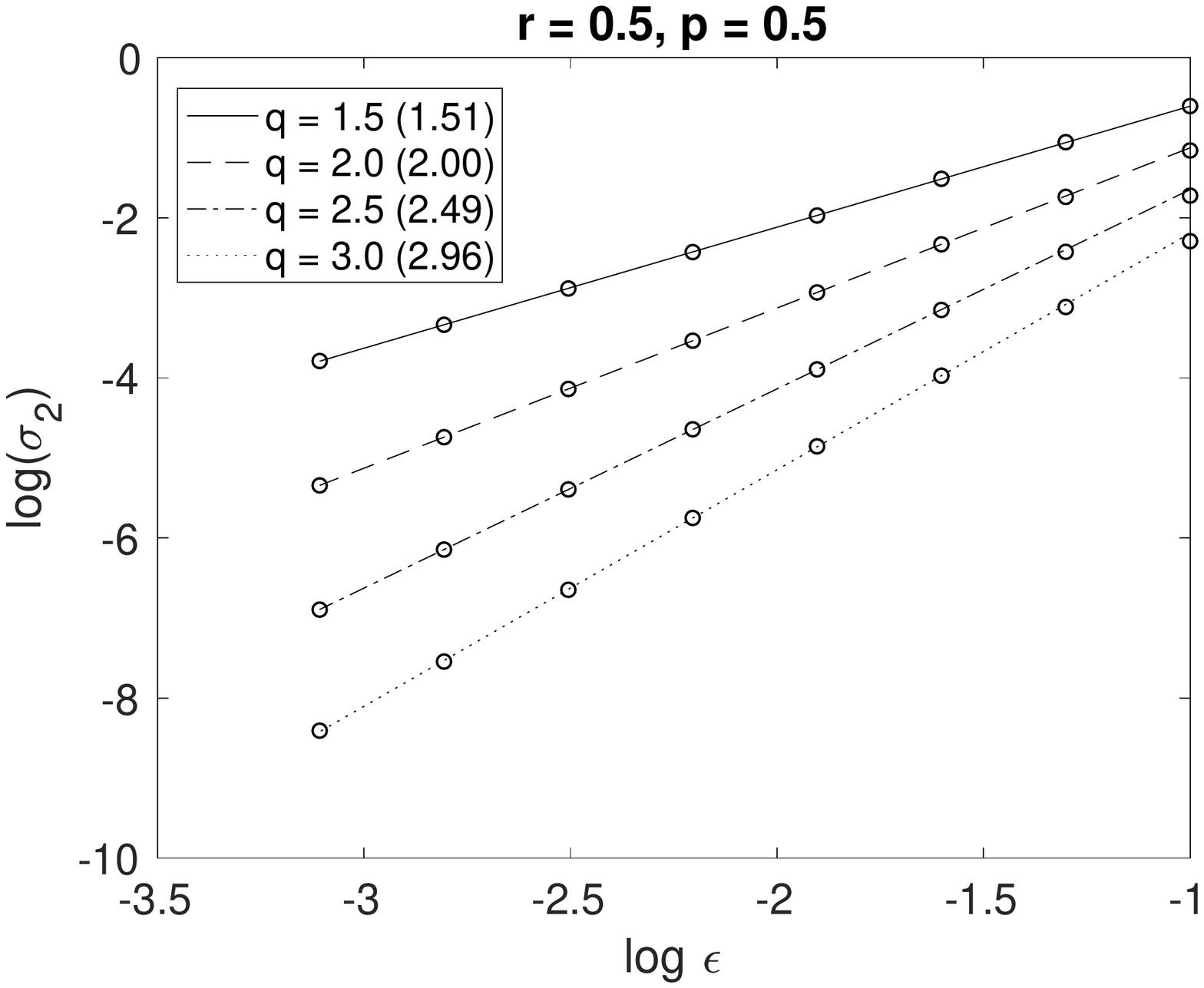}
     }
     \hfill
     \subfloat[]{%
       \includegraphics[width=0.49\textwidth, clip=true, trim =1cm 6cm 2cm 5cm]{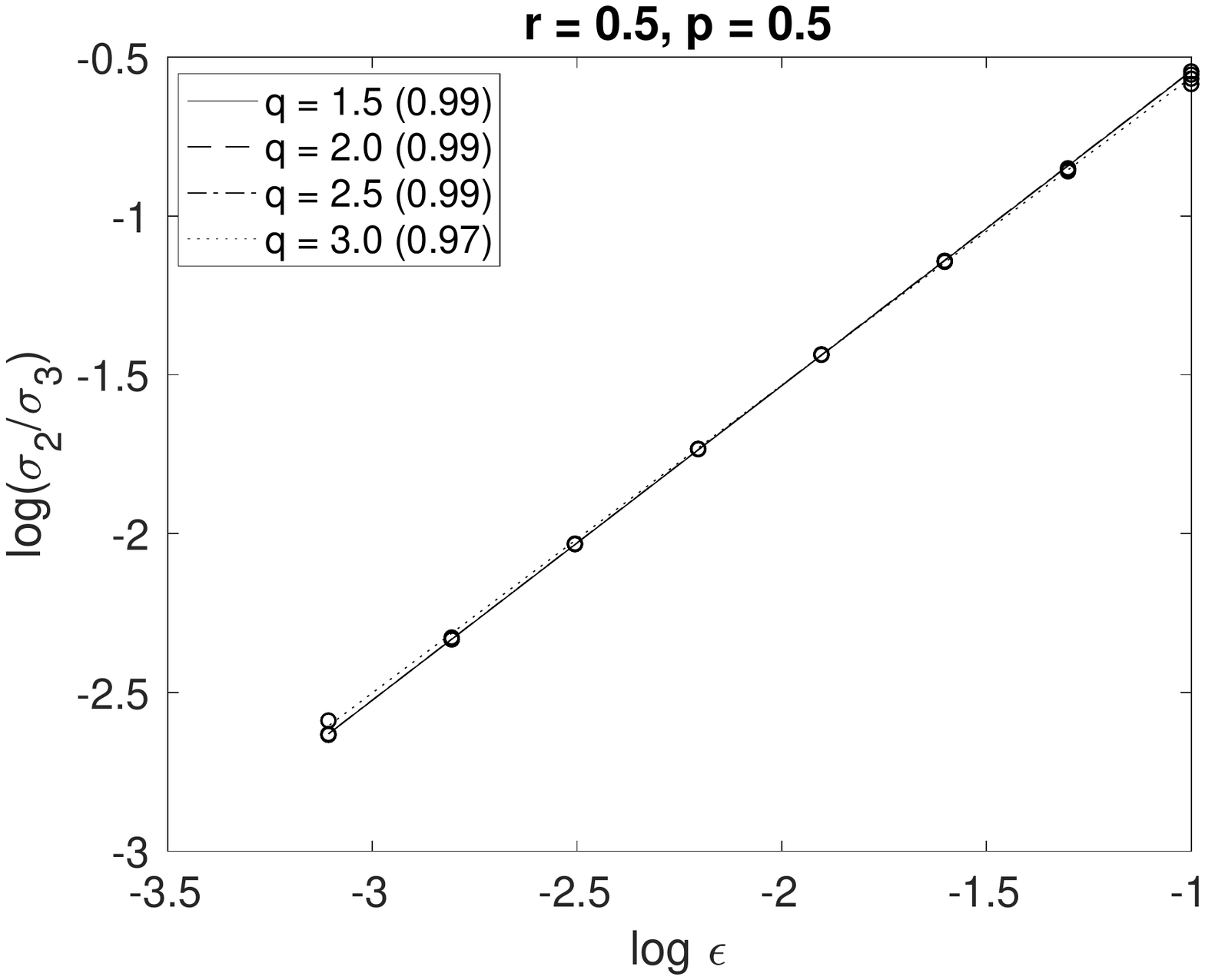}
     }\\
      \subfloat[]{%
     \includegraphics[width=0.49\textwidth, clip=true, trim =1cm 6cm 2cm 5cm]{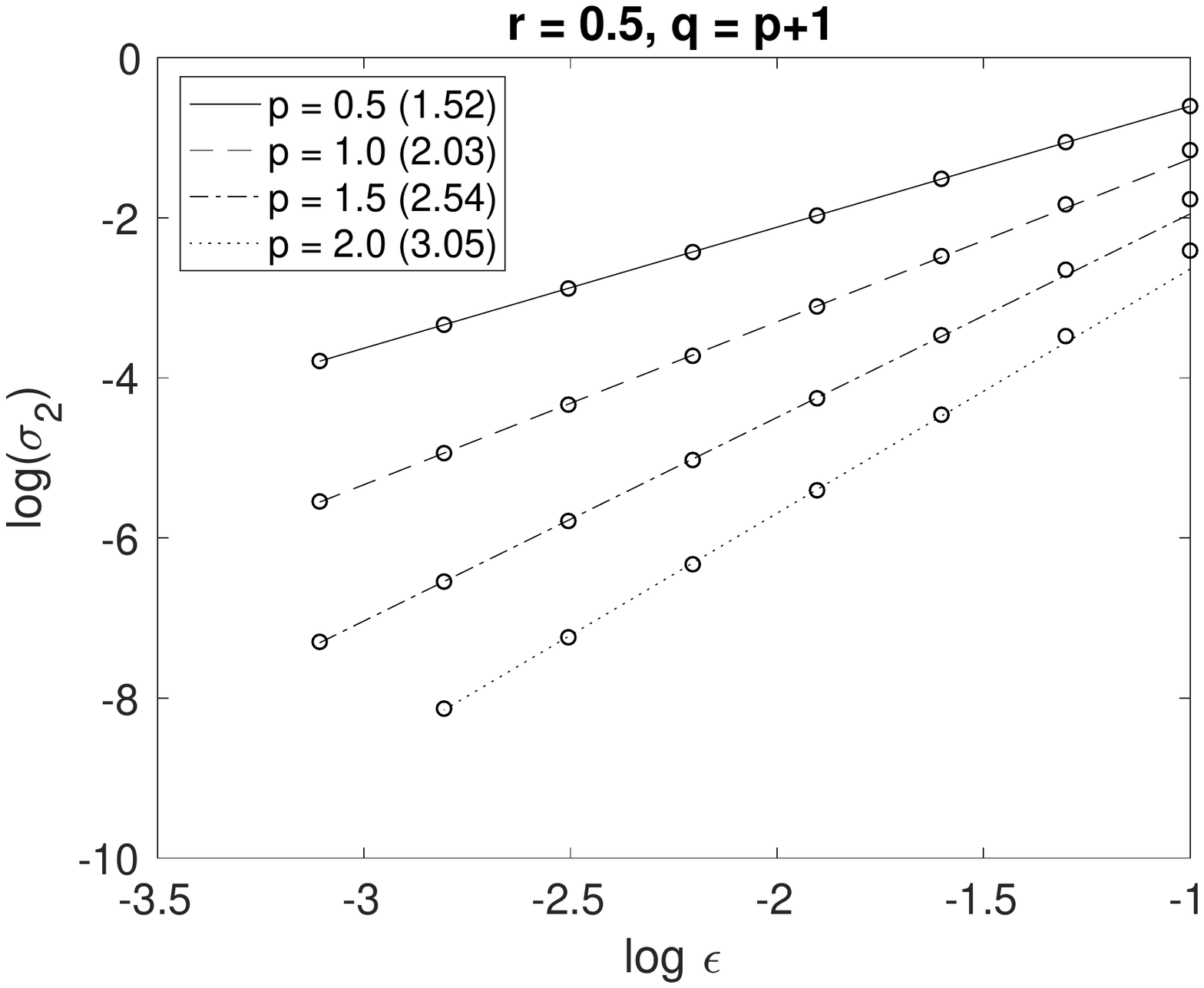}
     }
     \hfill
     \subfloat[]{%
       \includegraphics[width=0.49\textwidth, clip=true, trim =1cm 6cm 2cm 5cm]{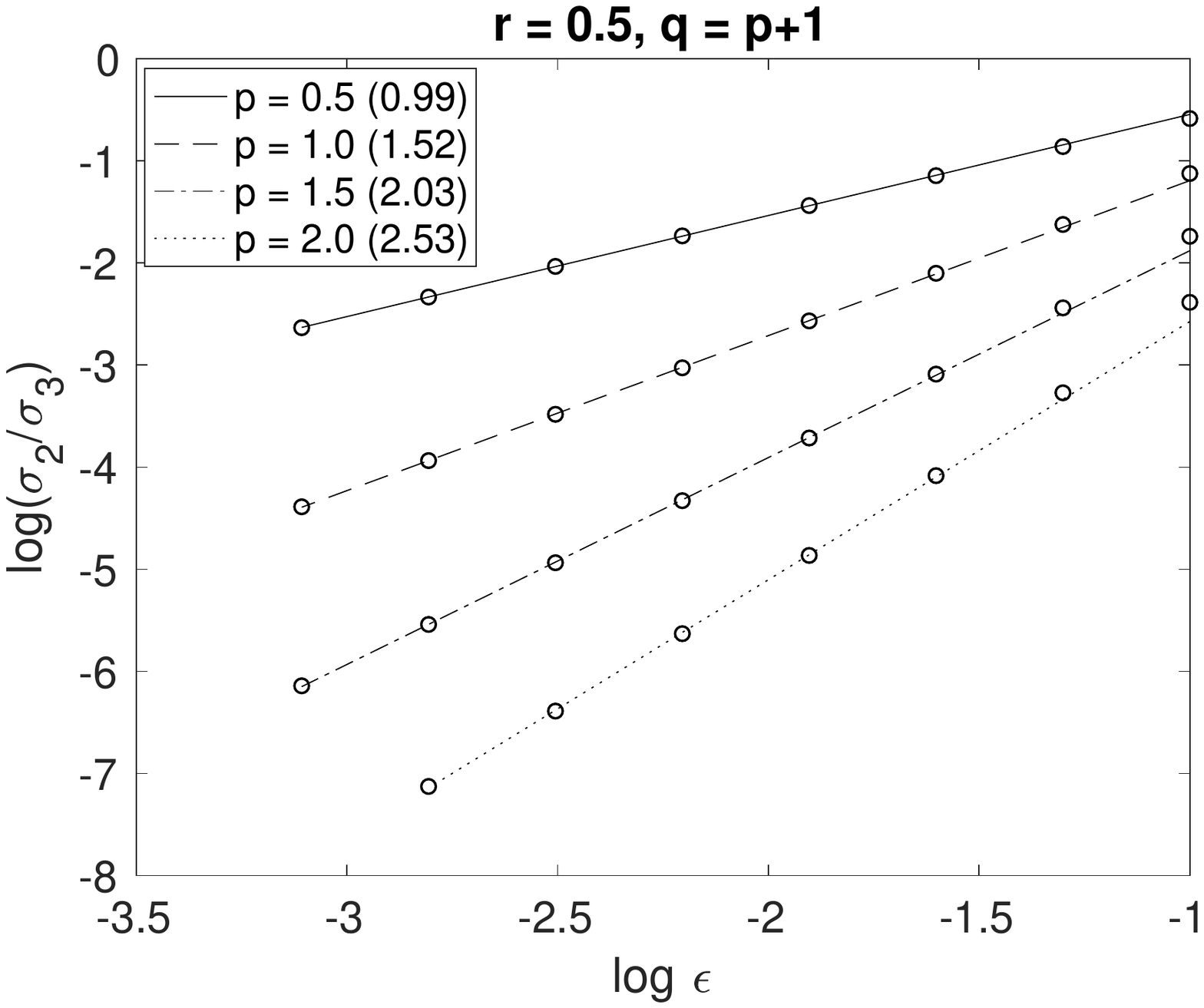}
     }
     \caption{Variation of the second and third eigenvalues of $\mcl L_\eps$ in the unbalanced case
       with $q > p + r$, and for various values of $p, q$ and $r$. In (a, b)
       we fix $p = r = 0.5$ and vary $q \in [1.5, 3]$. In (c, d) we fix $r= 0.5$,
       $q = p +1$ and vary $p \in [0.5, 2]$. 
(a, c) show $\log(\sigma_{2,\eps})$ vs $\log(\eps)$ while (b, d) show
       $\log(\sigma_{2,\eps}/\sigma_{3,\eps})$ vs $\log(\eps)$.
The values reported
       in the brackets in the legends are numerical approximations to the slope of the
       lines.}
     \label{fig:unbalanced01}
   \end{figure}


         \begin{figure}[!ht]
     \subfloat[]{%
       \includegraphics[width=0.49\textwidth, clip=true, trim =1cm 6cm 2cm 5cm]{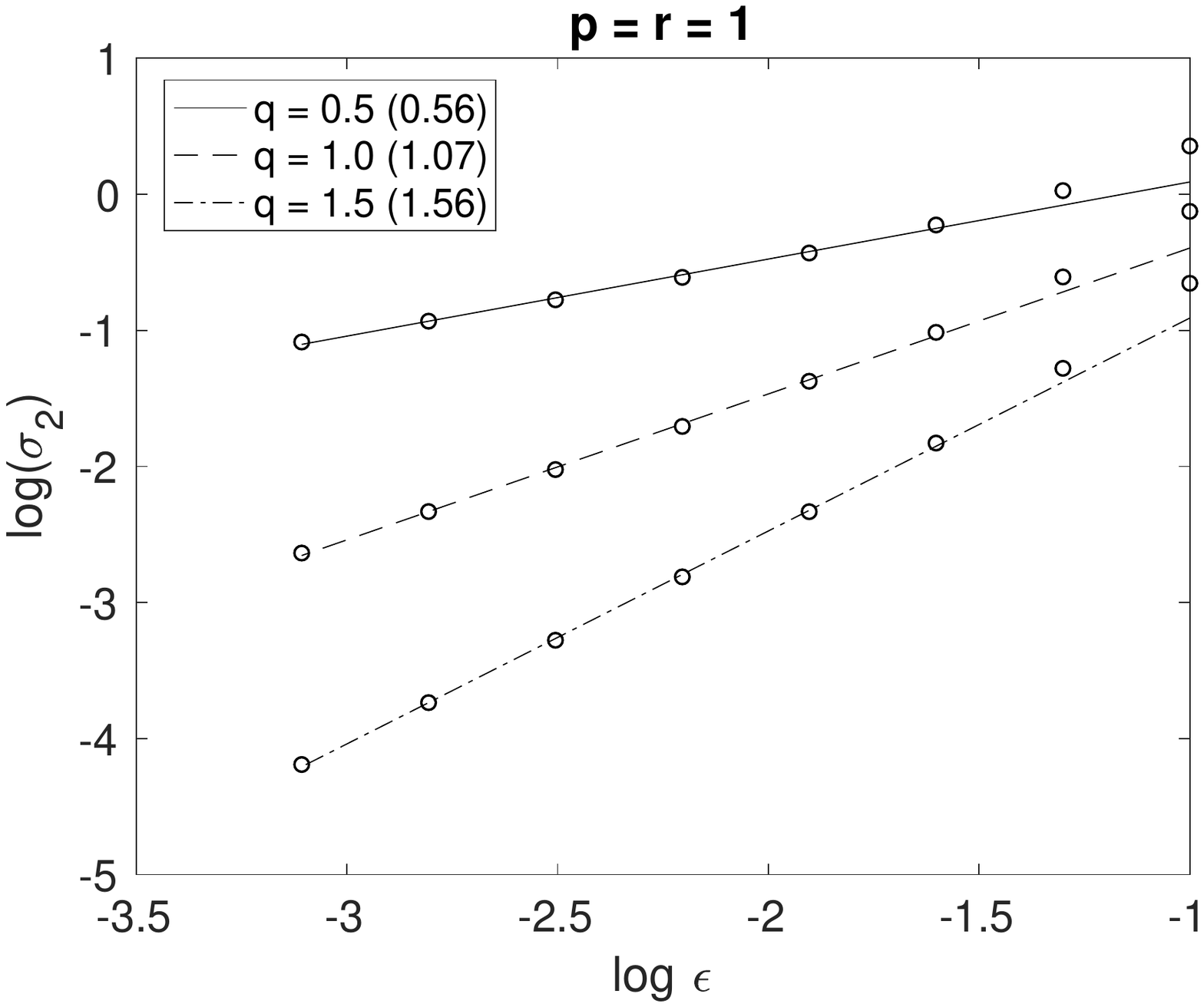}
     }
     \hfill
     \subfloat[]{%
       \includegraphics[width=0.49\textwidth, clip=true, trim =1cm 6cm 2cm 5cm]{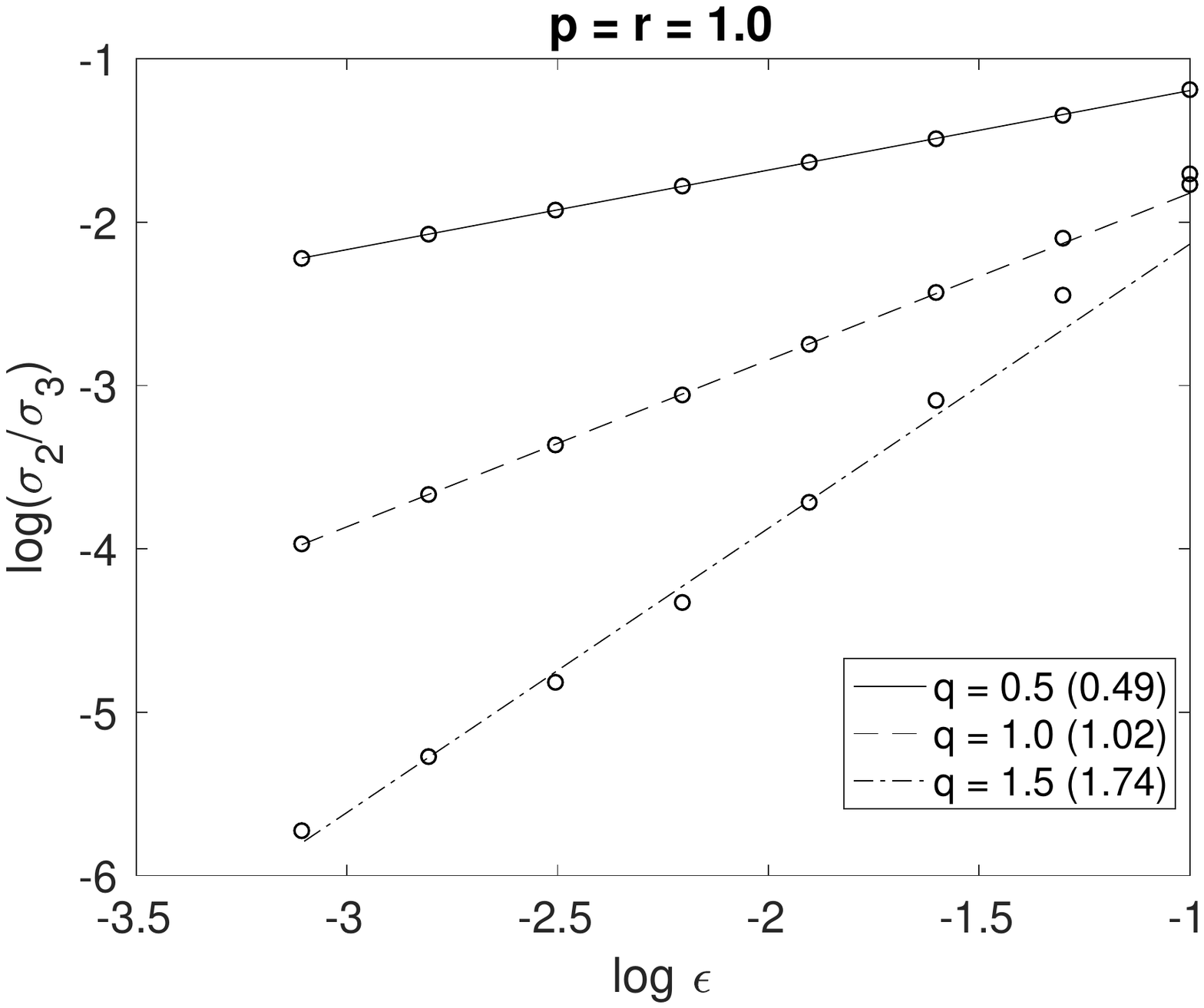}
     }\\
      \subfloat[]{%
     \includegraphics[width=0.49\textwidth, clip=true, trim =1cm 6cm 2cm 5cm]{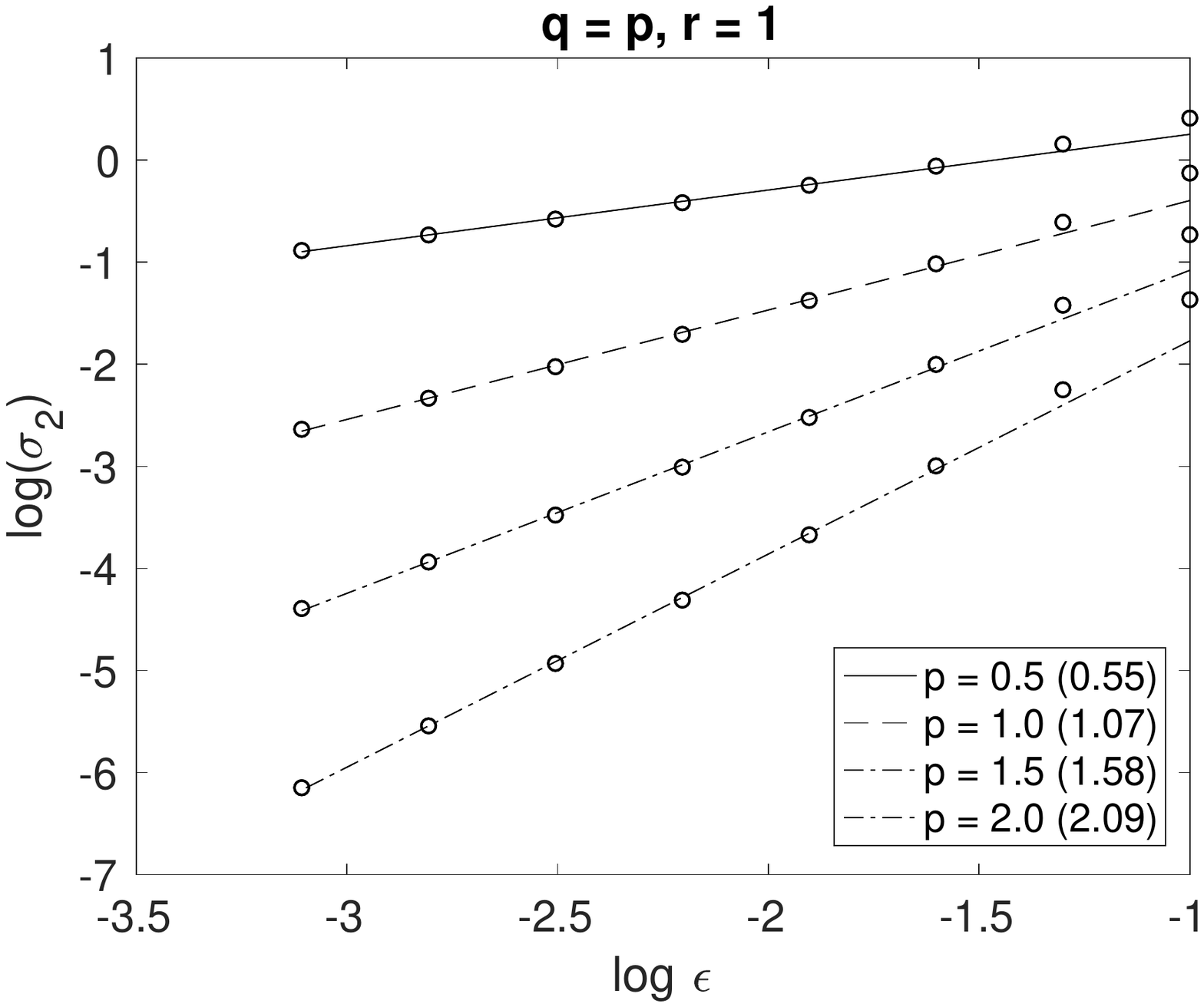}
     }
     \hfill
     \subfloat[]{%
       \includegraphics[width=0.49\textwidth, clip=true, trim =1cm 6cm 2cm 5cm]{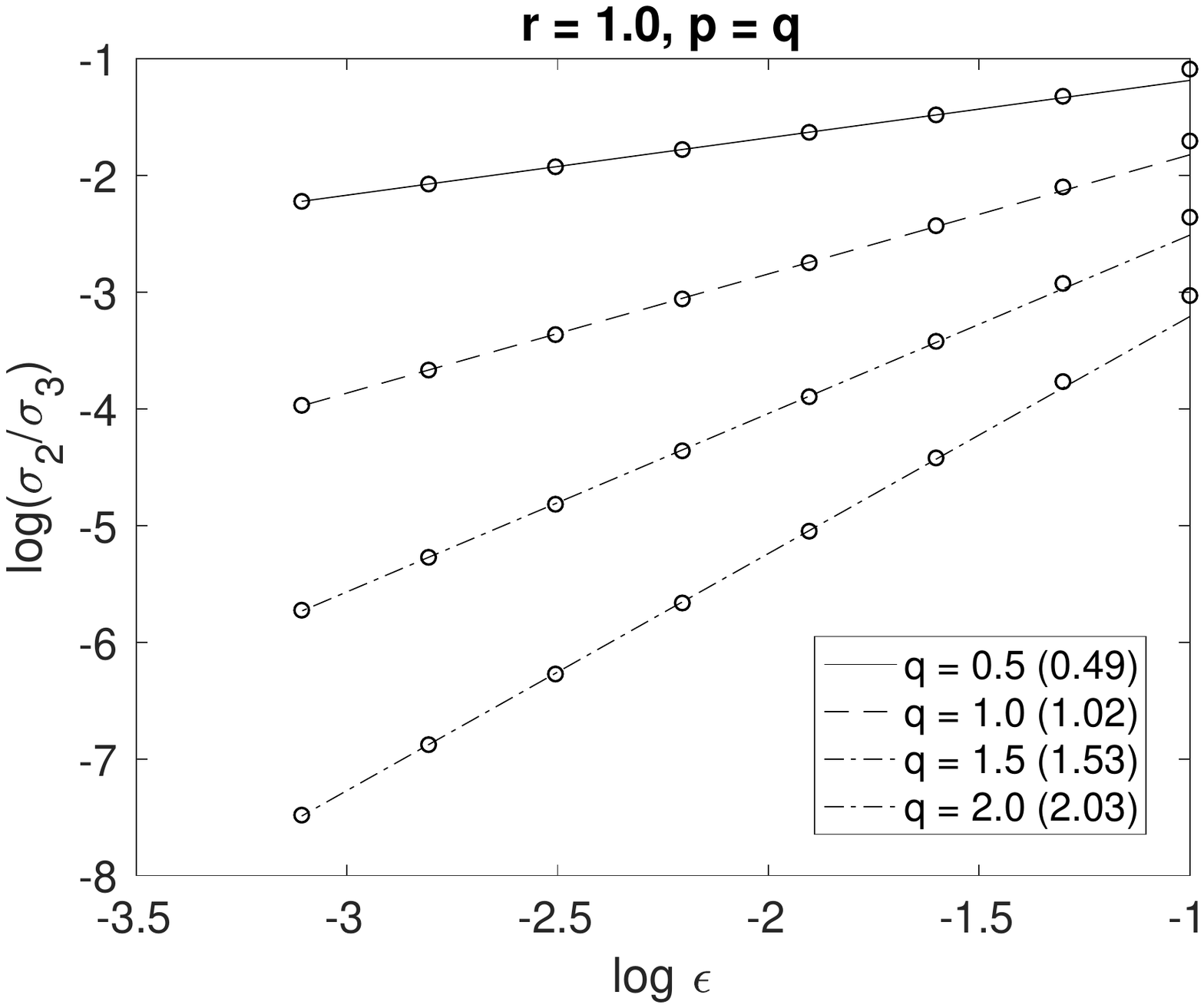}
     }
     \caption{Variation of the second and third eigenvalues of $\mcl L_\eps$ in the unbalanced case
       with $q < p + r$, and for various values of $p, q$ and $r$. In (a, b)
       we fix $p = r = 1$ and vary $q \in [0.5, 1.5]$. In (c, d) we fix $r= 1.0$,
       $q = p$ and vary $p \in [0.5, 2]$. 
(a, c) show $\log(\sigma_{2,\eps})$ vs $\log(\eps)$ while (b, d) show
       $\log(\sigma_{2,\eps}/\sigma_{3,\eps})$ vs $\log(\eps)$.
The values reported
       in the brackets in the legends are numerical approximations to the slope of the
       lines.}
     \label{fig:unbalanced01-qlepr}
   \end{figure}

   \begin{table}[htp]
     \centering
     \begin{tabular}{|c |c |c  | c | c | c| c| c|}
       \hline
       & & & \multicolumn{2}{c |}{$\frac{\log(\sigma_{2,\eps})}{\log{\eps}}$} &
       \multicolumn{3}{c |}{$\frac{\log(\sigma_{2,\eps}) - \log(\sigma_{3,\eps})}{\log{\eps}}$}\\
       \hline
       $p$ & $q$ & $r$  & Analytic & Numerical & Analytic & Numerical  & $p +r$ \\
       \hline
       $0.5$& 1.50 & $0.5$ & 1.50  & 1.51   & 0.50 & 0.99 & 1.00 \\
       \hline
       $0.5$& 2.0 & $0.5$ & 2.00  & 2.00  & 0.00  & 0.99 & 1.00\\
       \hline
       $0.5$& 2.5 & $0.5$ & 2.49  & 2.57   & - & 0.99 & 1.00 \\
       \hline
       $0.5$& 3.0 &$0.5$ & 2.96 & 3.06  & - & 0.97 & 1.00\\
       \hline
       $1.0$& 2 &$0.5$ & 2.03  & 2.11  & 1.00  & 1.52& 1.50\\
       \hline
       $1.5$& 2.5 &$0.5$ & 2.54  & 2.64   & 1.50 & 2.03 & 2.00 \\
       \hline
       $2.0$& 3.0 &$0.5$ & 3.05 & 3.20  &2.00 & 2.53 & 2.50 \\
       \hline
     \end{tabular}
     \caption{Comparison between  numerical approximation of
       the rate of decay of $ \log(\sigma_{2,\eps})$ and $\log(\sigma_{2,\eps}/\sigma_{3,\eps})$ as
       functions of $\log(\eps)$ and the analytic predictions in Theorem~\ref{thm:L-eps-low-eigenvals} and Corollary~\ref{cor:ratiogap}. The last
       column denotes the conjectured slope of $p +r$  for $\log(\sigma_{2,\eps}/\sigma_{3,\eps})$ for the unbalanced case $q > p +r$.}
      \label{tab:unbalanced}
    \end{table}

   \begin{table}[htp]
     \centering
     \begin{tabular}{|c |c |c  | c | c | c| c|}
       \hline
       & & & \multicolumn{2}{c |}{$\frac{\log(\sigma_{2,\eps})}{\log{\eps}}$} &
       \multicolumn{2}{c |}{$\frac{\log(\sigma_{2,\eps}) - \log(\sigma_{3,\eps})}{\log{\eps}}$}\\
       \hline
       $p$ & $q$ & $r$  & Analytic & Numerical & Analytic & Numerical \\
       \hline
       1& 0.5 & 1 &  0.5  & 0.56   & - & 0.49  \\
       \hline
       1& 1.0 & 1 &  1.0  & 1.07  & 0 & 1.02 \\
       \hline
       1& 1.5 & 1 &  1.5  & 1.56   & 1.0 & 1.74 \\
       \hline
       0.5& 0.5 & 1 & 0.5 & 0.5  & - & 0.49\\
       \hline
       1.5& 1.5 & 1 & 1.5  & 1.58  & 0.5  & 1.53\\
       \hline
       2.0& 2.0 & 1 & 2.0 & 2.09   & 1.0 & 2.03 \\
       \hline
     \end{tabular}
     \caption{Comparison between  numerical approximation of
       the rate of decay of $ \log(\sigma_{2,\eps})$ and $\log(\sigma_{2,\eps}/\sigma_{3,\eps})$ as
       functions of $\log(\eps)$ and the analytic predictions in Theorem~\ref{thm:L-eps-low-eigenvals} and Corollary~\ref{cor:ratiogap} for the unbalanced case $q < p +r$. Compare values in the last column with
     the prescribed values of $q$.}
      \label{tab:unbalanced-qlepr}
    \end{table}
    

  \begin{figure}[!ht]
     \subfloat[]{%
       \includegraphics[width=0.49\textwidth, clip=true, trim =1cm 6cm 2cm 5cm]{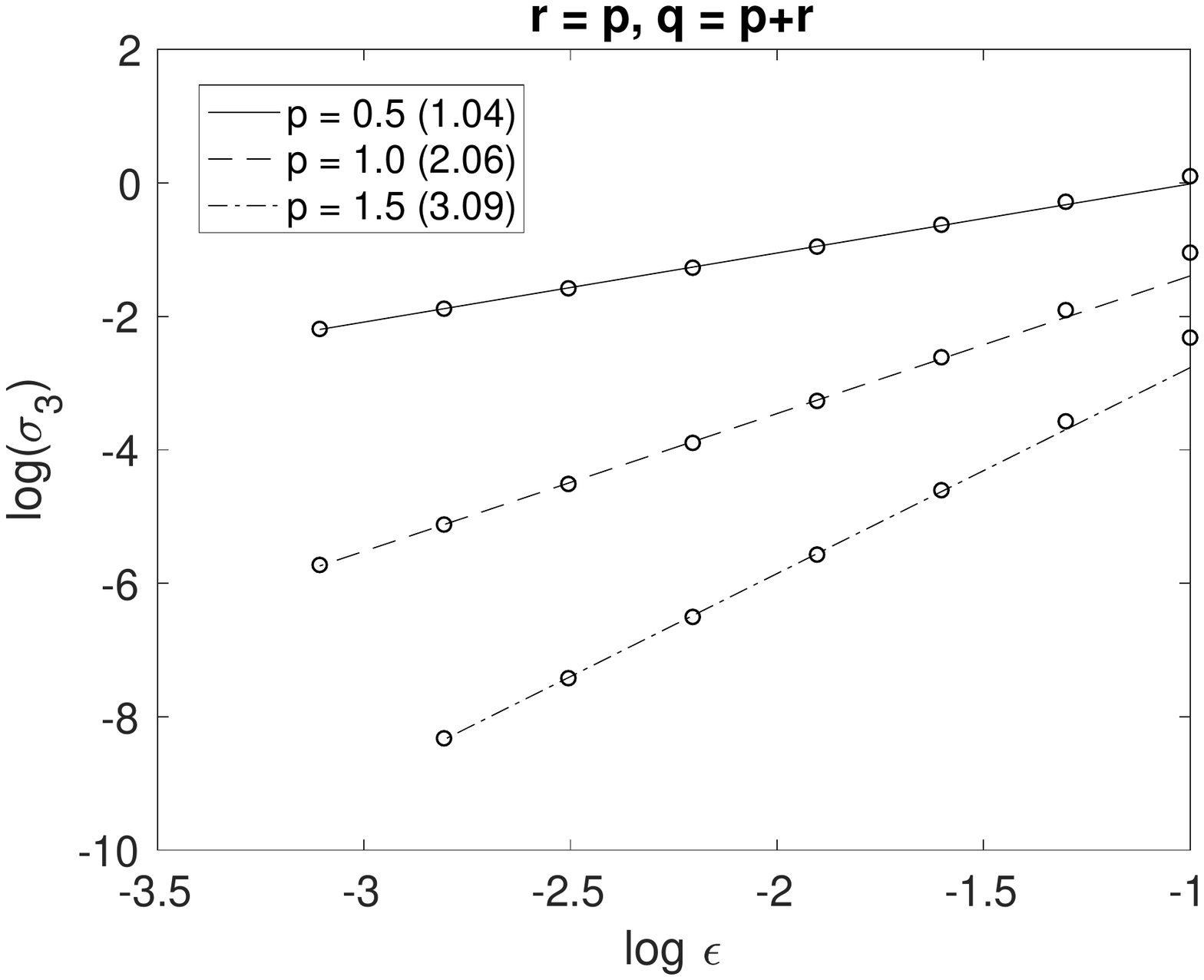}
     }
     \hfill
     \subfloat[]{%
       \includegraphics[width=0.49\textwidth, clip=true, trim =1cm 6cm 2cm 5cm]{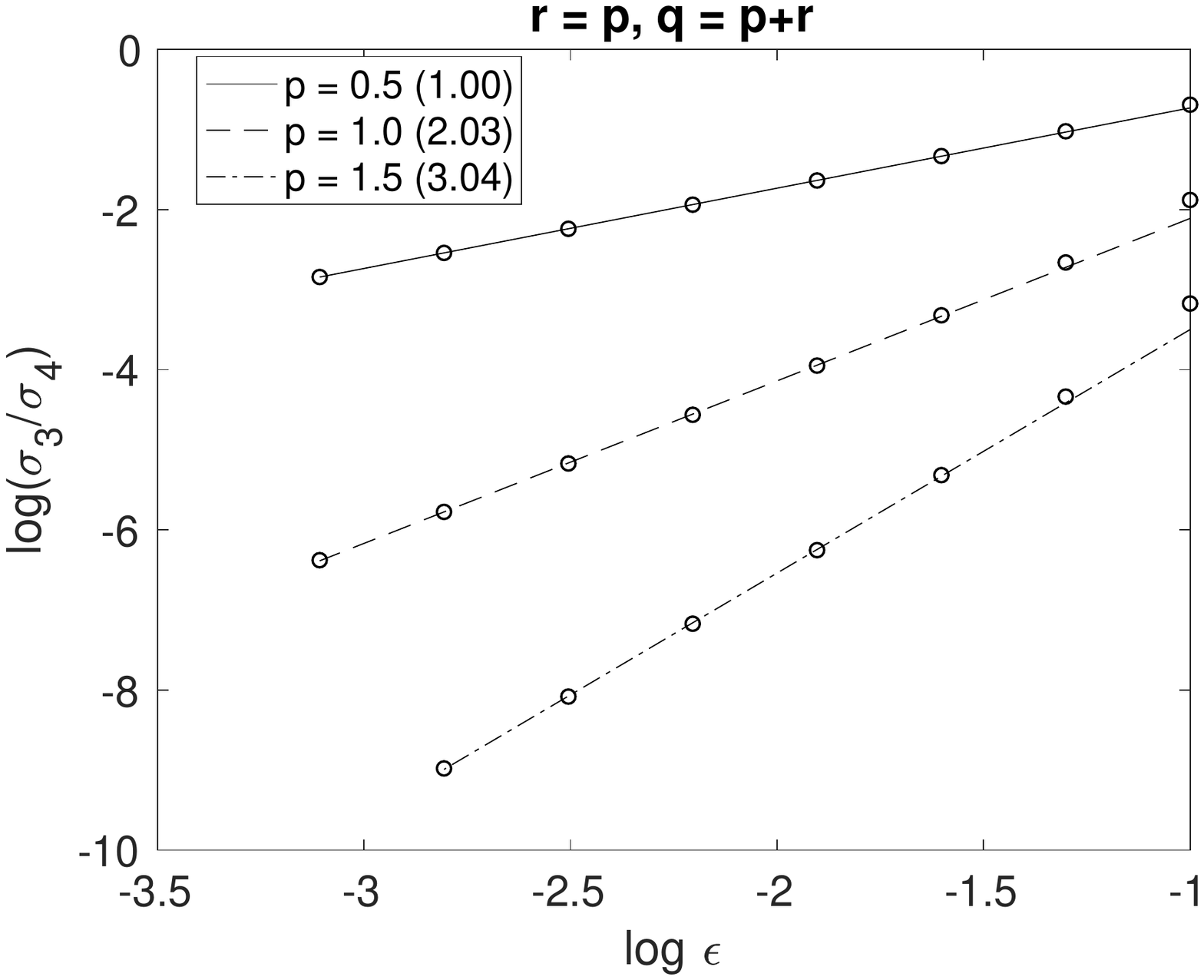}
     }
     \caption{Variation of the third and fourth eigenvalues of $\mcl L_\eps$ in the
       three cluster setting 
       with $q = p + r$, $r = p$ and for $p \in [0.5, 1.5]$. (a)
       shows $\log(\sigma_{3,\eps})$ vs $\log(\eps)$ while (b) shows
       $\log(\sigma_{3,\eps}/\sigma_{4,\eps})$ vs $\log(\eps)$. The values reported
       in the brackets in the legends are numerical approximations to the slope of the
       lines for different values of $p$. 
       }
     \label{fig:balanced01-three-clusters}
   \end{figure}

      \begin{figure}[!ht]
     \subfloat[]{%
       \includegraphics[width=0.49\textwidth, clip=true, trim =1cm 6cm 2cm 5cm]{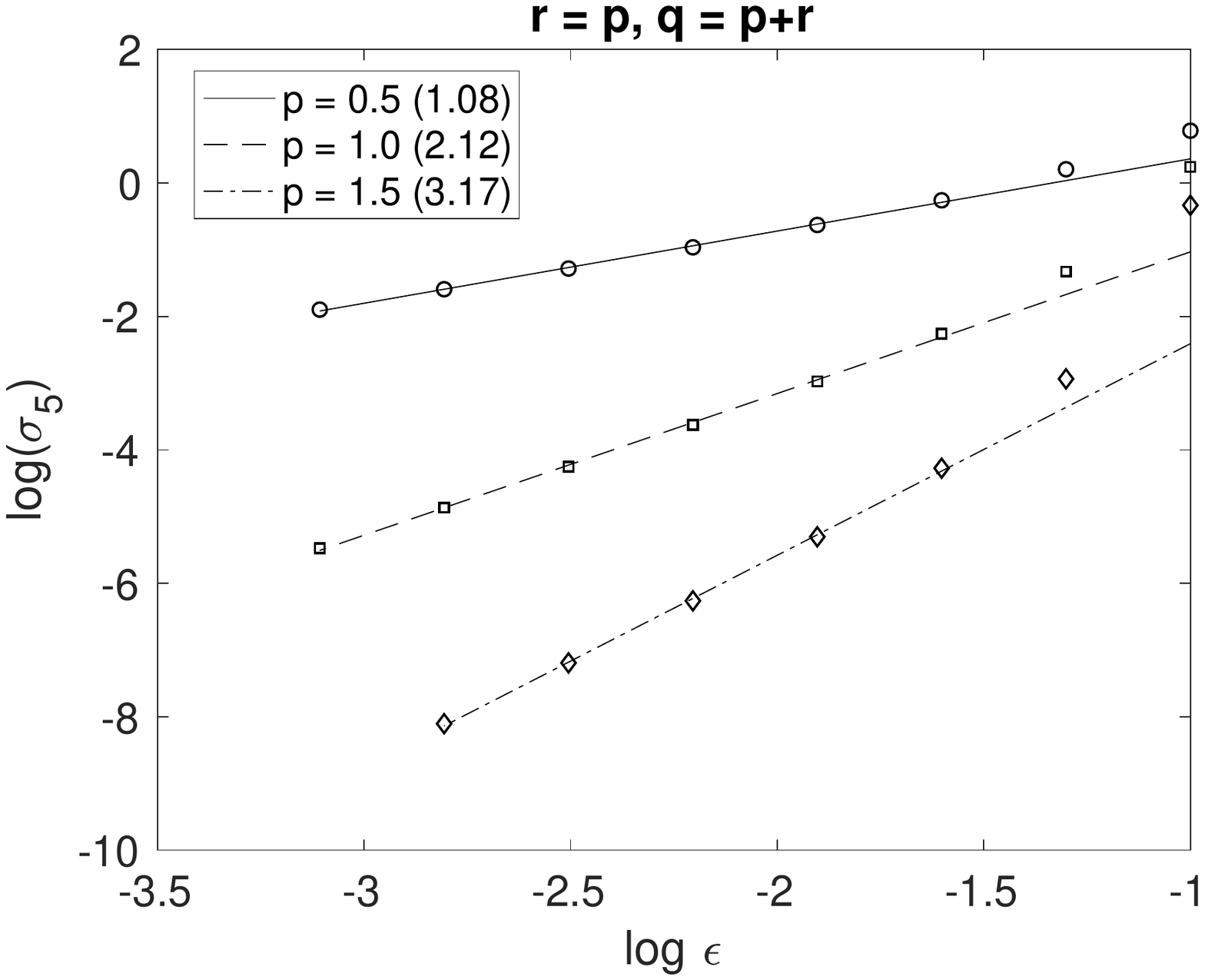}
     }
     \hfill
     \subfloat[]{%
       \includegraphics[width=0.49\textwidth, clip=true, trim =1cm 6cm 2cm 5cm]{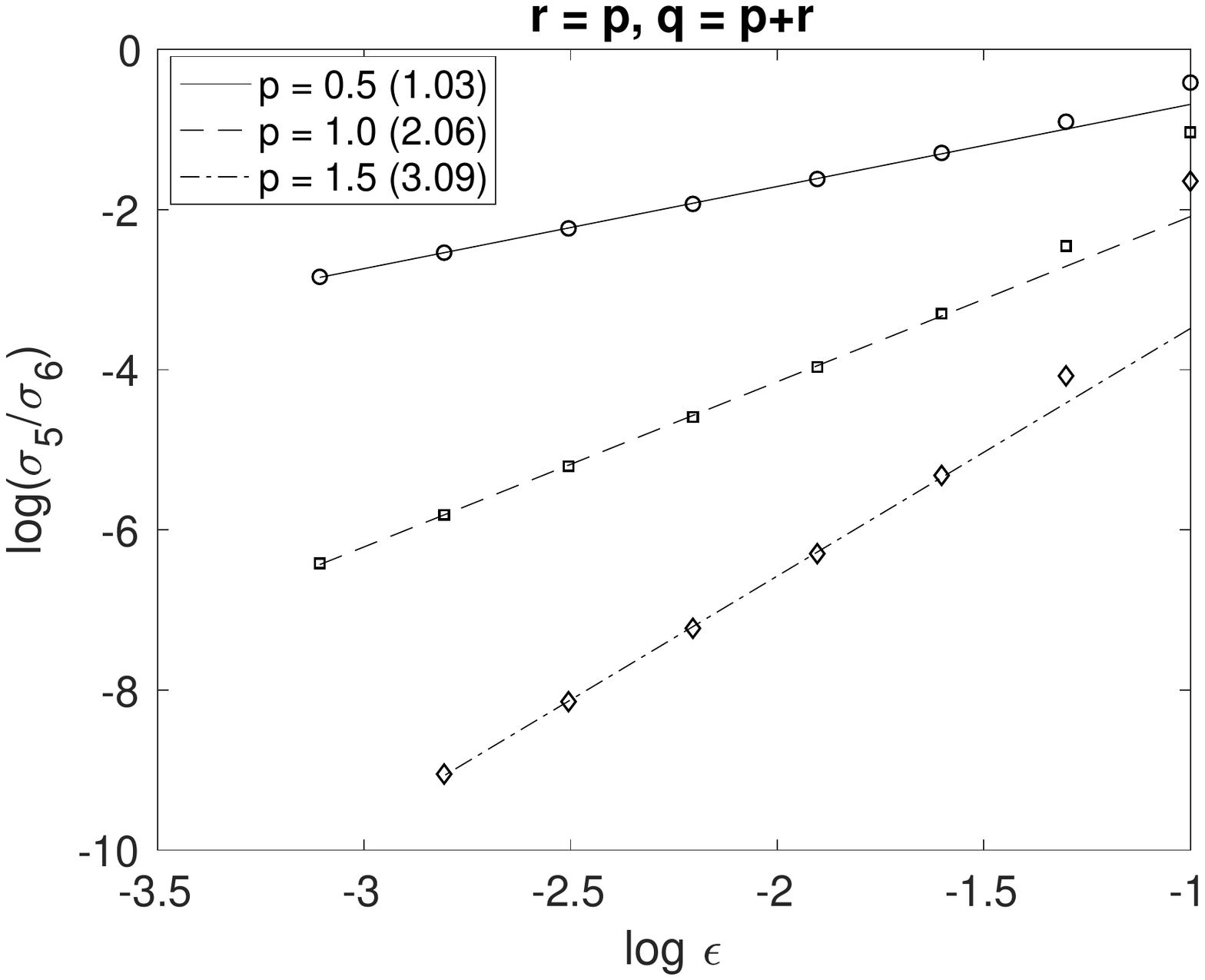}
     }
     \caption{Variation of the fifth and sixth eigenvalues of $\mcl L_\eps$ in the five cluster
       case
       with $q = p + r$, $r = p$ and for  $p \in [0.5, 1.5]$. (a)
       shows $\log(\sigma_{5,\eps})$ vs $\log(\eps)$ while (b) shows
       $\log(\sigma_{5,\eps}/\sigma_{6,\eps})$ vs $\log(\eps)$. The values reported
       in the brackets in the legends are numerical approximations to the slope of the
       lines for different values of $p$.
}
     \label{fig:balanced-five-clusters}
   \end{figure}

      \begin{figure}[!ht]
     \subfloat[]{%
       \includegraphics[width=0.49\textwidth, clip=true, trim =1cm 6cm 2cm 5cm]{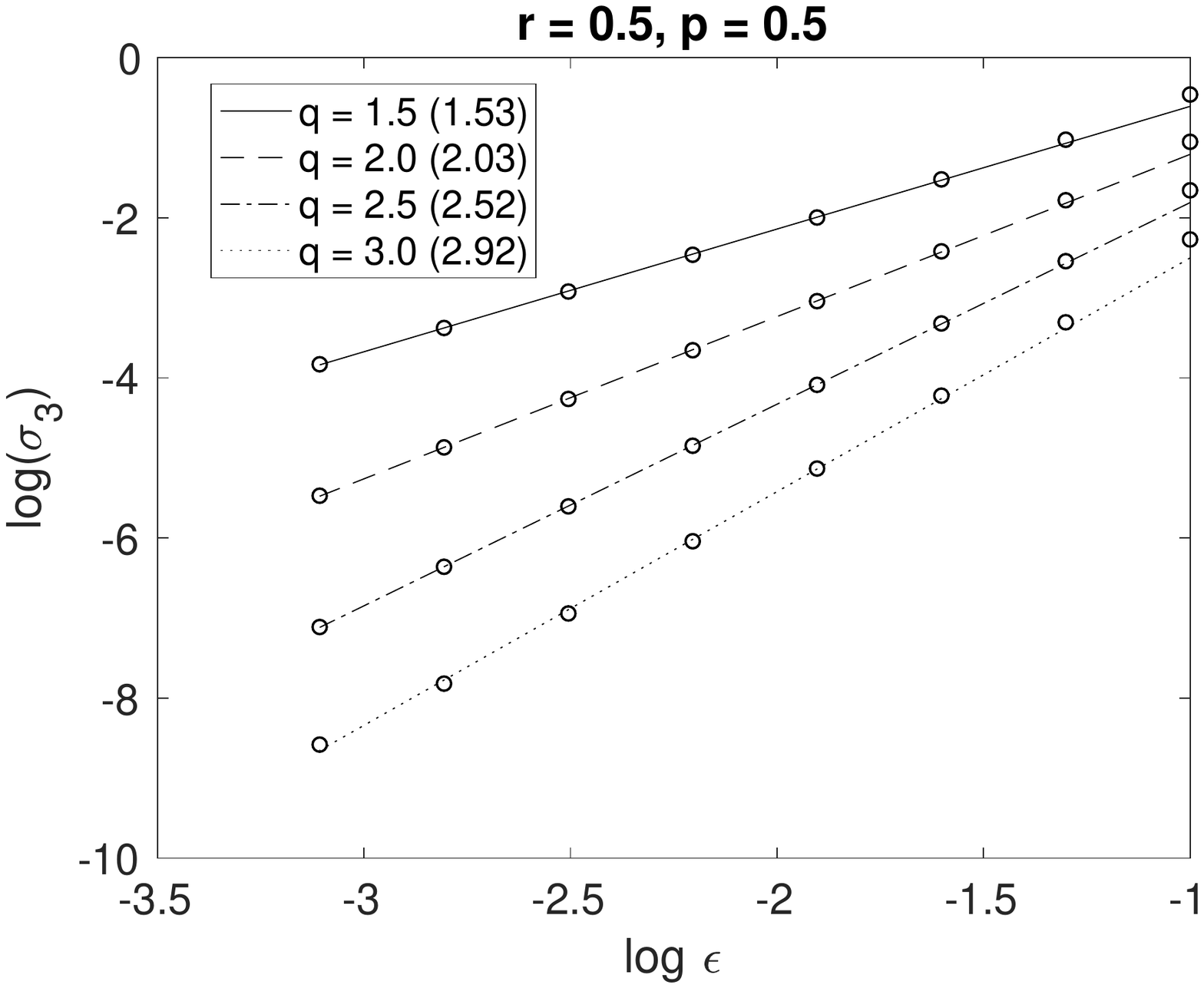}
     }
     \hfill
     \subfloat[]{%
       \includegraphics[width=0.49\textwidth, clip=true, trim =1cm 6cm 2cm 5cm]{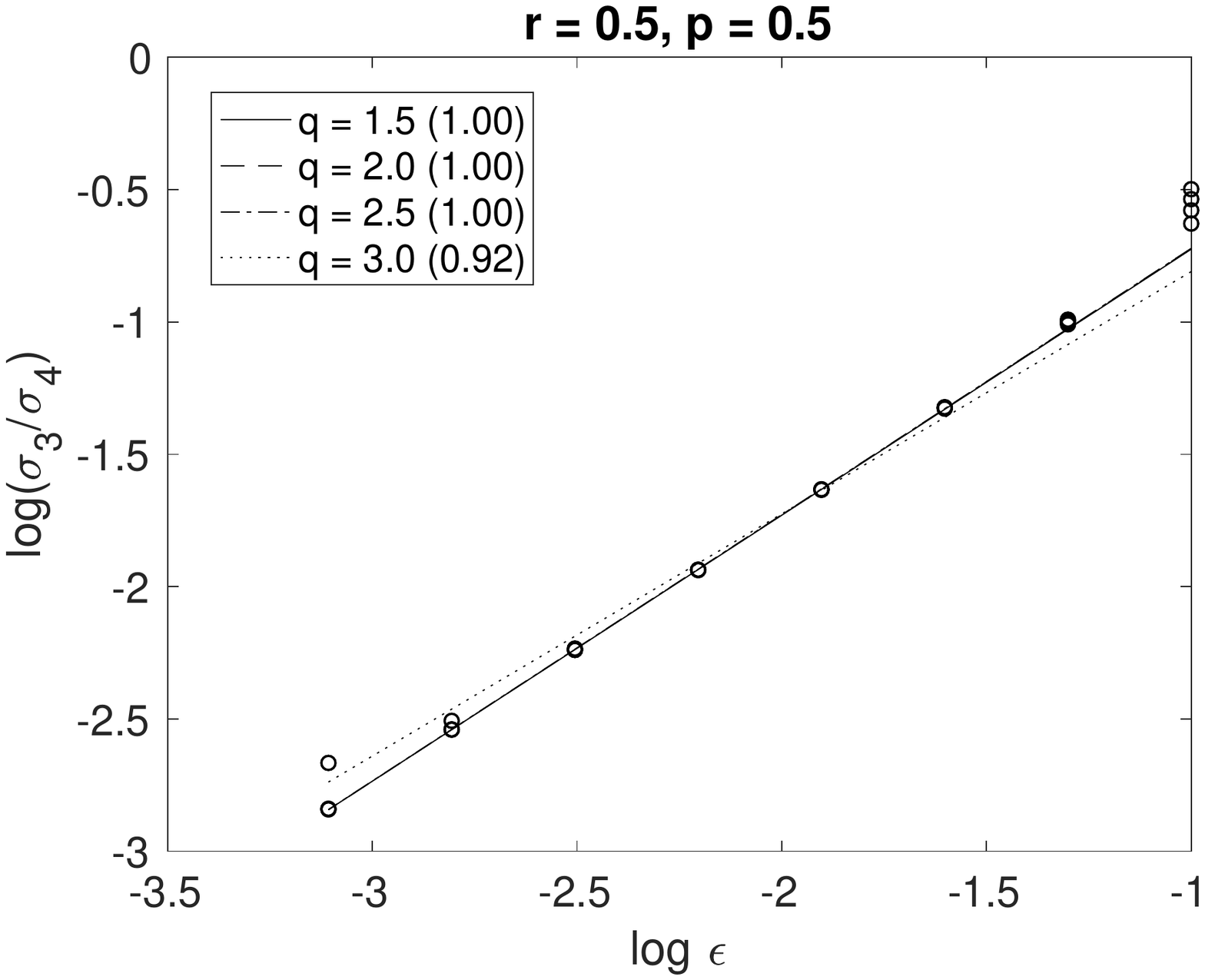}
     }
     \caption{Variation of the third and fourth eigenvalues of $\mcl L_\eps$ in the
       three cluster setting
       with $q > p + r$, $r = p = 0.5$ and for  $q \in [1.5, 3]$. (a)
       shows $\log(\sigma_{3,\eps})$ vs $\log(\eps)$ while (b) shows
       $\log(\sigma_{3,\eps}/\sigma_{4,\eps})$ vs $\log(\eps)$. The values reported
       in the brackets in the legends are numerical approximations to the slope of the
       lines for different values of $q$.
}
     \label{fig:unbalanced01-three-clusters}
   \end{figure}

      \begin{figure}[!ht]
     \subfloat[]{%
       \includegraphics[width=0.49\textwidth, clip=true, trim =1cm 6cm 2cm 5cm]{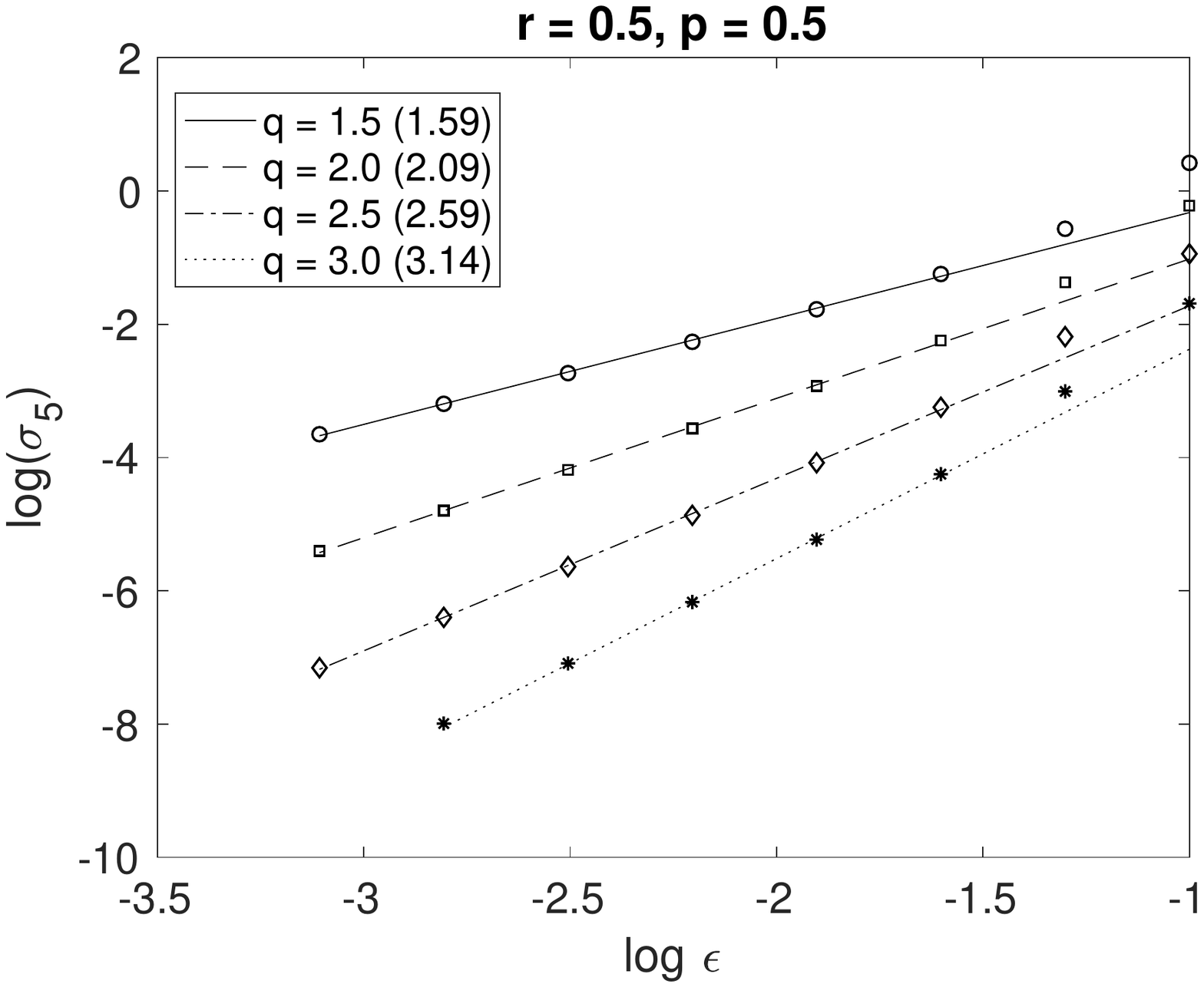}
     }
     \hfill
     \subfloat[]{%
       \includegraphics[width=0.49\textwidth, clip=true, trim =1cm 6cm 2cm 5cm]{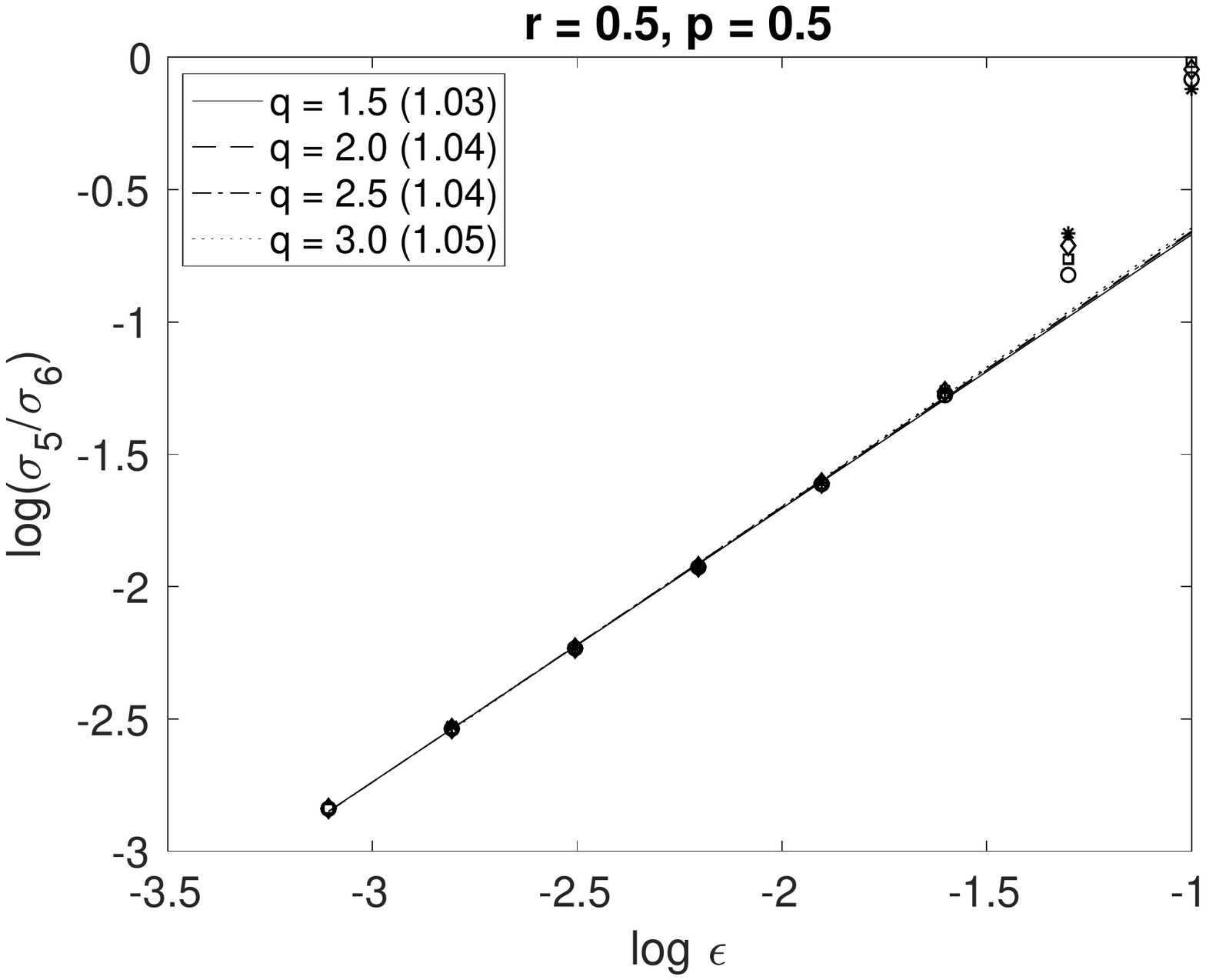}
     }
     \caption{Variation of the fifth and sixth eigenvalues of $\mcl L_\eps$ in the five cluster case
       with $q > p + r$, $r = p= 0.5$ and for  $q \in [1.5, 3]$. (a)
       shows $\log(\sigma_{5,\eps})$ vs $\log(\eps)$ while (b) shows
       $\log(\sigma_{5,\eps}/\sigma_{6,\eps})$ vs $\log(\eps)$. The values reported
       in the brackets in the legends are numerical approximations to the slope of the
       lines for different values of $q$. 
}
     \label{fig:unbalanced-five-clusters}
   \end{figure}

      \begin{figure}[!ht]
     \subfloat[]{%
       \includegraphics[width=0.49\textwidth, clip=true, trim =1cm 6cm 2cm 5cm]{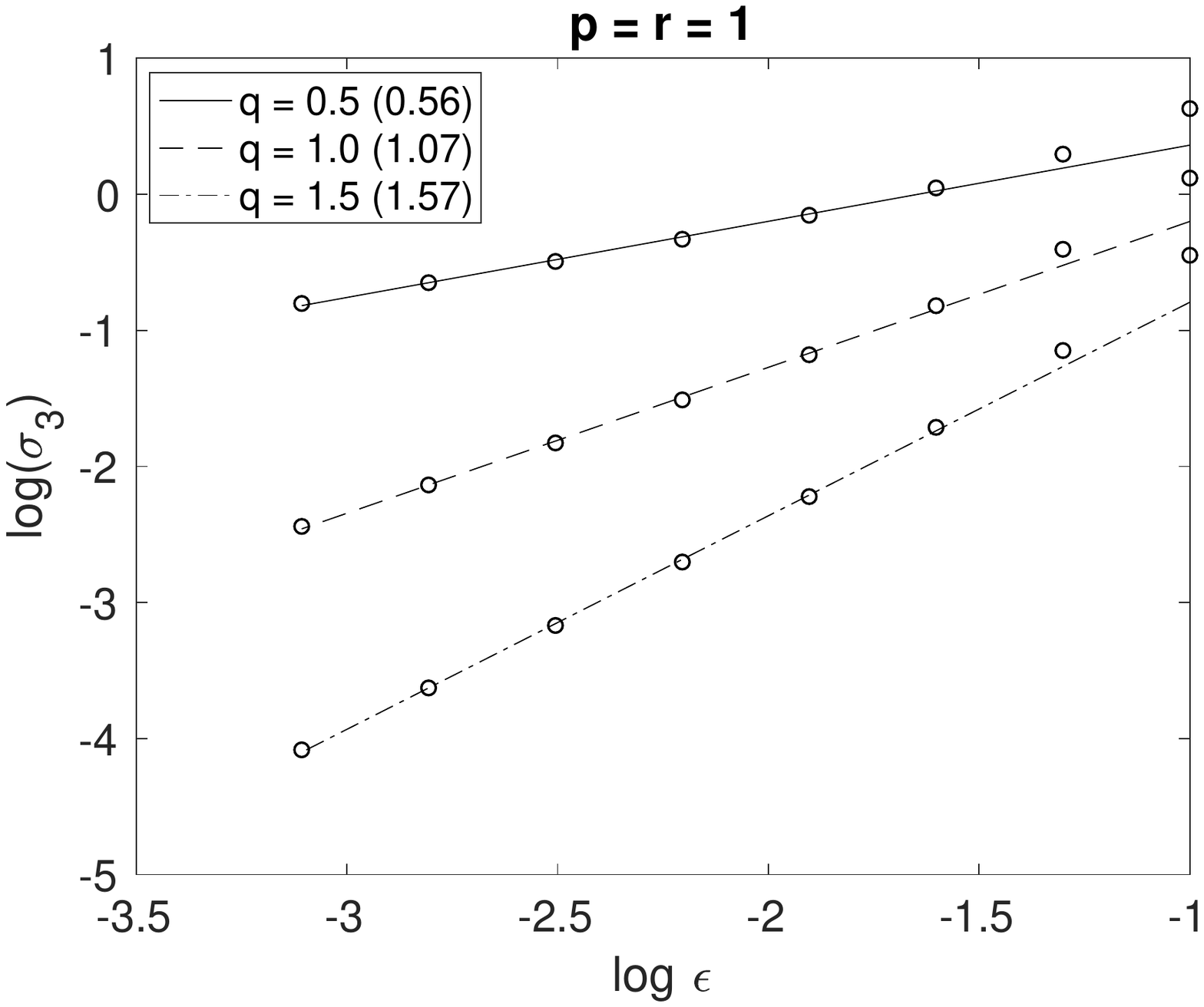}
     }
     \hfill
     \subfloat[]{%
       \includegraphics[width=0.49\textwidth, clip=true, trim =1cm 6cm 2cm 5cm]{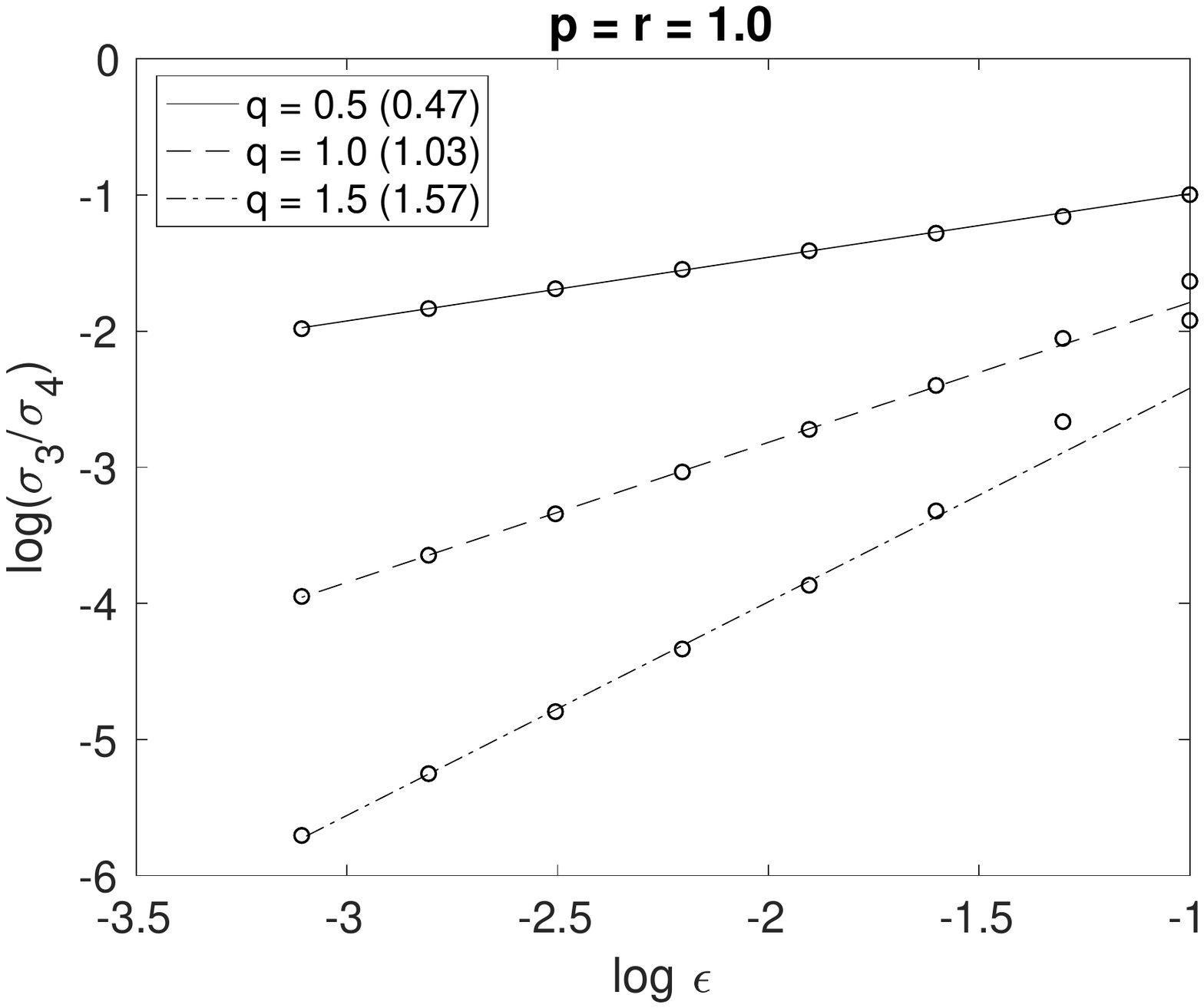}
     }
     \caption{Variation of the third and fourth eigenvalues of $\mcl L_\eps$ in the
       three cluster setting
       with $q < p + r$, $r = p = 1$ and for  $q \in [0.5, 1.5]$. (a)
       shows $\log(\sigma_{3,\eps})$ vs $\log(\eps)$ while (b) shows
       $\log(\sigma_{3,\eps}/\sigma_{4,\eps})$ vs $\log(\eps)$. The values reported
       in the brackets in the legends are numerical approximations to the slope of the
       lines for different values of $q$.
}
     \label{fig:unbalanced01-three-clusters-qlepr}
   \end{figure}

      \begin{figure}[!ht]
     \subfloat[]{%
       \includegraphics[width=0.49\textwidth, clip=true, trim =1cm 6cm 2cm 5cm]{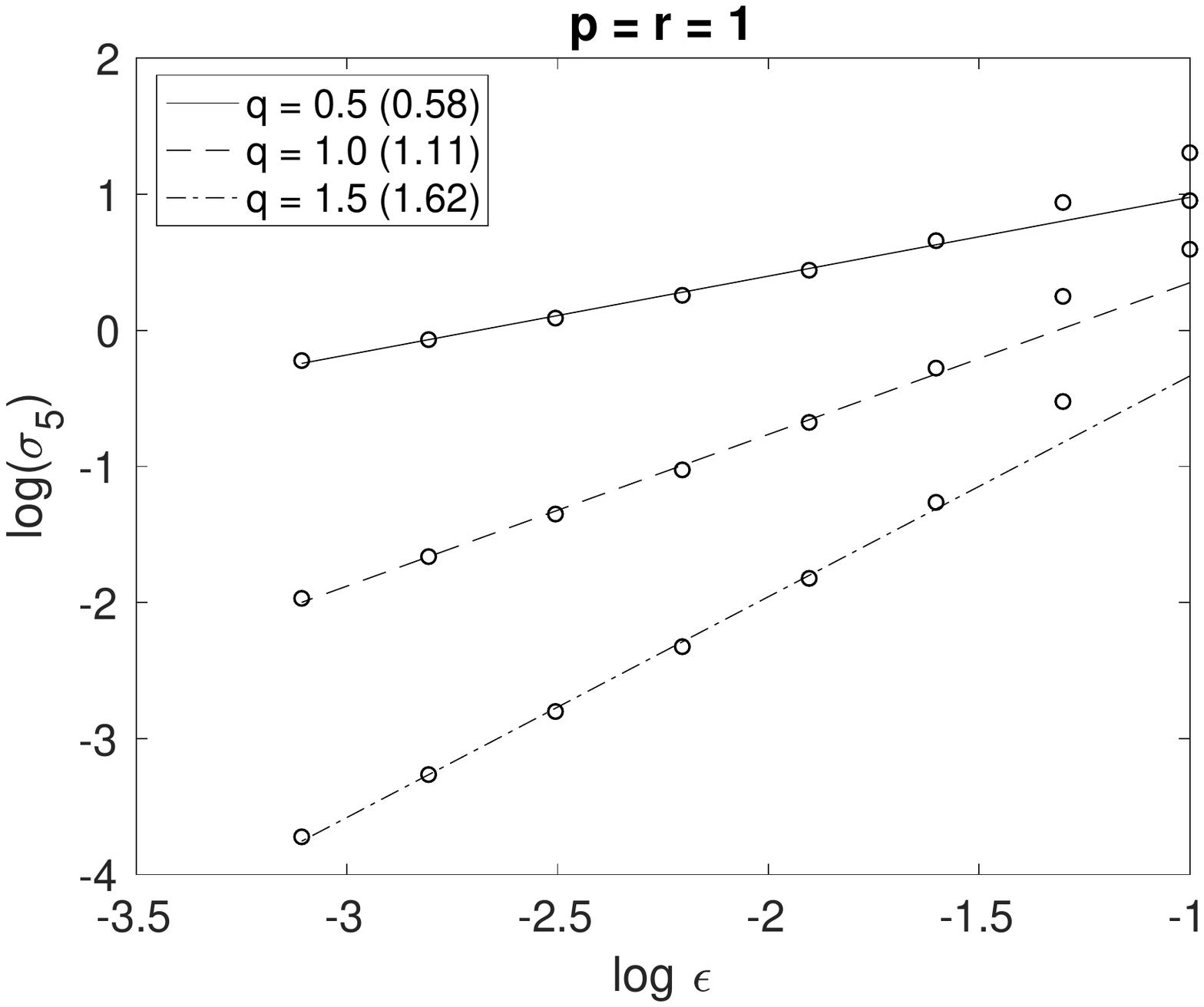}
     }
     \hfill
     \subfloat[]{%
       \includegraphics[width=0.49\textwidth, clip=true, trim =1cm 6cm 2cm 5cm]{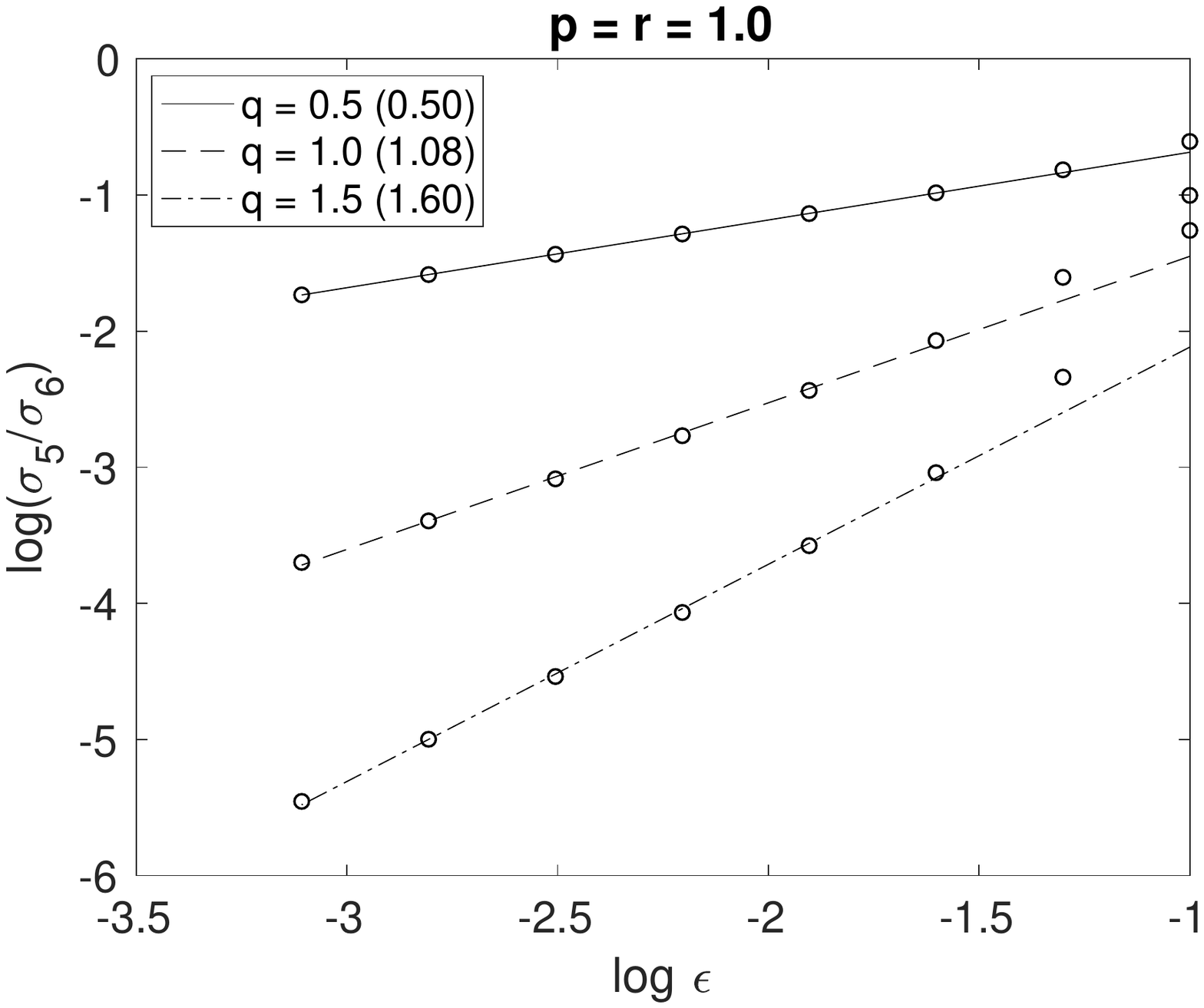}
     }
     \caption{Variation of the fifth and sixth eigenvalues of $\mcl L_\eps$ in the five cluster case
       with $q < p + r$, $r = p= 1$ and for  $q \in [0.5, 1.5]$. (a)
       shows $\log(\sigma_{5,\eps})$ vs $\log(\eps)$ while (b) shows
       $\log(\sigma_{5,\eps}/\sigma_{6,\eps})$ vs $\log(\eps)$. The values reported
       in the brackets in the legends are numerical approximations to the slope of the
       lines for different values of $q$. 
}
     \label{fig:unbalanced-five-clusters-qlepr}
   \end{figure}

       \begin{table}[htp]
     \centering
     \begin{tabular}{| c |c |c | c | c | c| c|}
       \hline
       & $p$ & $q$ & $r$ &  \multicolumn{1}{c |}{$\frac{\log(\sigma_{3,\eps})}{\log{\eps}}$} &
       \multicolumn{1}{c |}{$\frac{\log(\sigma_{4,\eps}) - \log(\sigma_{3,\eps})}{\log{\eps}}$}\\
       \hline
       \multirow{6}{*}{\rotatebox{90}{ $q= p +r$}} & $0.5$& 1.0 & 0.5    & 1.04    & 1.00 \\
       \cline{2-6}
        &$1.0$& 2.0&  $1.0$     & 2.06    & 2.03 \\
       \cline{2-6}
       &$1.5$& 3.0 & $1.5$    & 3.097   & 3.04  \\
       \cline{2-6}
       &$1.0$& 1.5 &$0.5$  & 1.55   & 1.52 \\
       \cline{2-6}
       &$1.5$& 2.0 &$0.5$    & 2.06    & 2.03\\
       \cline{2-6}
       &$2.0$& 2.5 &$0.5$    & 2.57    & 2.53  \\
       \hhline{|=|=|=|=|=|=|}
       \multirow{8}{*}{\rotatebox{90}{ $q> p +r$}}         &$0.5$& 1.5&  $0.5$     & 1.53    & 1.00 \\
       \cline{2-6}
& $0.5$& 1.0 & 0.5    & 1.04    & 1.00 \\
       \cline{2-6}
       &$0.5$& 2.0 & $0.5$    & 2.03   & 1.00  \\
       \cline{2-6}
       &$0.5$& 2.5 &$0.5$  & 2.52   & 1.00 \\
       \cline{2-6}
       &$0.5$& 3.0 &$0.5$    & 2.92    & 0.92\\
       \cline{2-6}
       &$1.0$& 2.0 &$0.5$    & 2.05    & 1.52  \\
       \cline{2-6}
       &$1.5$& 2.5 &$0.5$    & 2.55    & 2.03  \\
       \cline{2-6}
       &$2.0$& 3.0 &$0.5$    & 3.07    & 2.53  \\
              \hhline{|=|=|=|=|=|=|}
       \multirow{8}{*}{\rotatebox{90}{ $q < p +r$}}
        &1.0 & 0.5 &  1.0     & 0.56    & 0.47 \\
       \cline{2-6}
       & 1.0 & 1.0 & 1.0    & 1.07   & 1.03  \\
       \cline{2-6}
       & 1.0 & 1.5 &1.0   &  1.57   & 1.57 \\
       \cline{2-6}
       & 0.5 & 0.5 & 1.0   & 0.54    & 0.49\\
       \cline{2-6}
       & 1.5 & 1.5 & 1.0    &  1.58    & 1.54  \\
       \cline{2-6}
       & 2.0 & 2.0 &1.0     & 2.09    & 2.04  \\
       \hline
     \end{tabular}
     \caption{Numerical approximation of
       the rate of decay of $ \log(\sigma_{3,\eps})$ and $\log(\sigma_{3,\eps}/\sigma_{4,\eps})$ as
       functions of $\log(\eps)$ for different choices of  $p,q,r$ 
        in the three cluster setting.}
      \label{tab:slopes-three-clusters}
   \end{table}


          \begin{table}[htp]
     \centering
     \begin{tabular}{| c |c |c | c | c | c| c|}
       \hline
       & $p$ & $q$ & $r$ &  \multicolumn{1}{c |}{$\frac{\log(\sigma_{4,\eps})}{\log{\eps}}$} &
       \multicolumn{1}{c |}{$\frac{\log(\sigma_{5,\eps}) - \log(\sigma_{4,\eps})}{\log{\eps}}$}\\
       \hline
       \multirow{6}{*}{\rotatebox{90}{ $q= p +r$}} & $0.5$& 1.0 & 0.5    & 1.04    & 1.03 \\
       \cline{2-6}
        &$1.0$& 2.0&  $1.0$     & 2.12    & 2.06 \\
       \cline{2-6}
       &$1.5$& 3.0 & $1.5$    & 3.17   & 3.09  \\
       \cline{2-6}
       &$1.0$& 1.5 &$0.5$  & 1.61   & 1.55 \\
       \cline{2-6}
       &$1.5$& 2.0 &$0.5$    & 2.12    & 2.06\\
       \cline{2-6}
       &$2.0$& 2.5 &$0.5$    & 2.63    & 2.57  \\
       \hhline{|=|=|=|=|=|=|}
       \multirow{8}{*}{\rotatebox{90}{ $q> p +r$}} 
        &$0.5$& 1.5&  $0.5$     & 1.59    & 1.03 \\
       \cline{2-6}
       &$0.5$& 2.0 & $0.5$    & 2.09   & 1.04  \\
       \cline{2-6}
       &$0.5$& 2.5 &$0.5$  & 2.59   & 1.04 \\
       \cline{2-6}
       &$0.5$& 3.0 &$0.5$    & 3.14    & 1.05\\
       \cline{2-6}
       &$1.0$& 2.0 &$0.5$    & 2.11    & 1.56  \\
       \cline{2-6}
       &$1.5$& 2.5 &$0.5$    & 2.62   & 2.07  \\
       \cline{2-6}
       &$2.0$& 3.0 &$0.5$    & 3.16    & 2.59  \\
        \hhline{|=|=|=|=|=|=|}
       \multirow{8}{*}{\rotatebox{90}{ $q < p +r$}}
        &1.0 & 0.5 &  1.0     & 0.58    & 0.50 \\
       \cline{2-6}
       & 1.0 & 1.0 & 1.0    & 1.11   & 1.08  \\
       \cline{2-6}
       & 1.0 & 1.5 &1.0   &  1.62   & 1.60 \\
       \cline{2-6}
       & 0.5 & 0.5 & 1.0    & 0.57    & 0.52\\
       \cline{2-6}
       & 1.5 & 1.5 & 1.0    &  1.63    & 1.59  \\
       \cline{2-6}
       & 2.0 & 2.0 &1.0     & 2.14    & 2.10  \\
       \hline
     \end{tabular}
     \caption{Numerical approximation of
       the rate of decay of $ \log(\sigma_{5,\eps})$ and $\log(\sigma_{5,\eps}/\sigma_{6,\eps})$ as
       functions of $\log(\eps)$ for different choices of  $p,q,r$ 
        in the five cluster setting.}
      \label{tab:slopes-five-clusters}
   \end{table}

\begin{figure}[htp]
  \centering
  \includegraphics[width=.32 \textwidth]{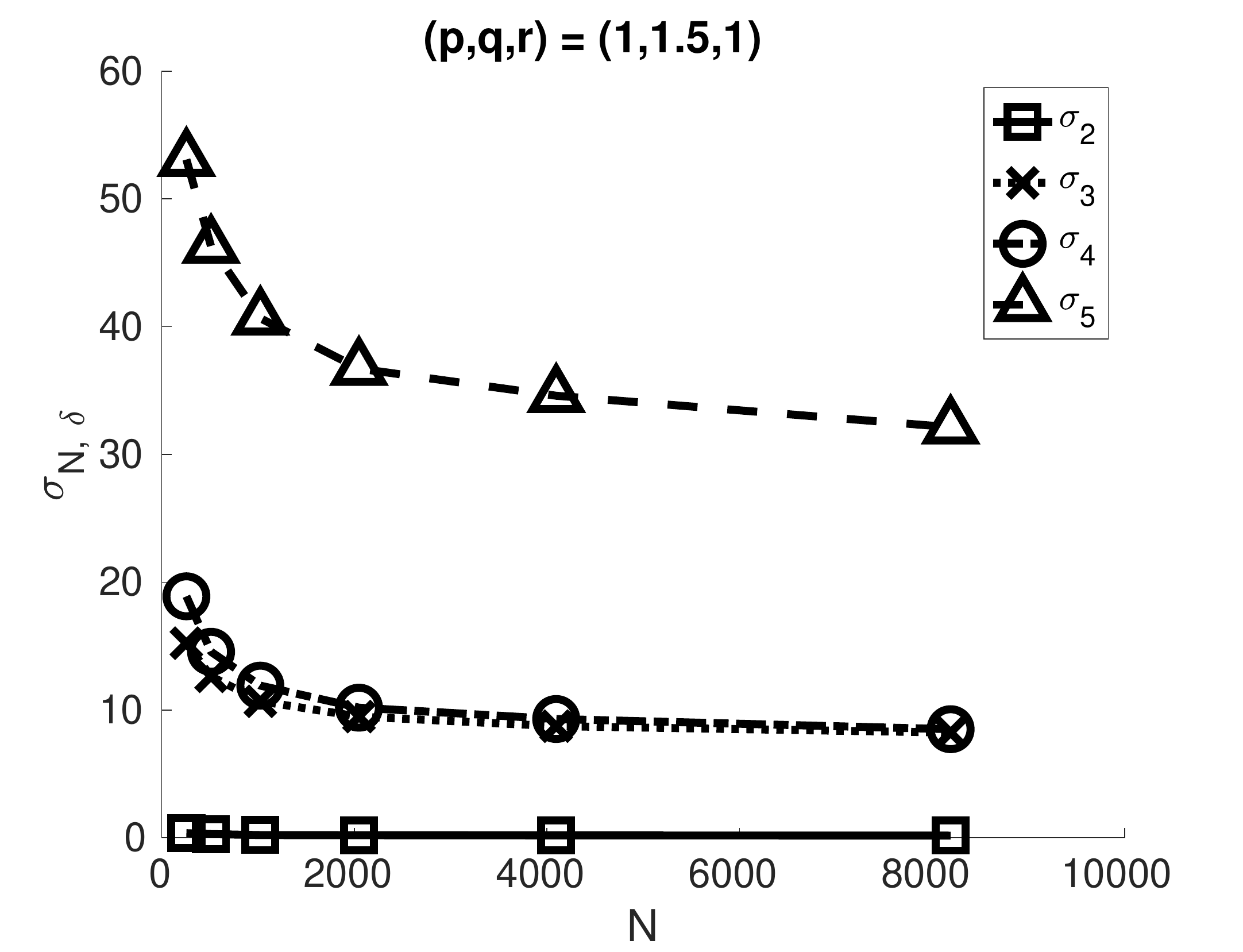}
  \includegraphics[width=.32 \textwidth]{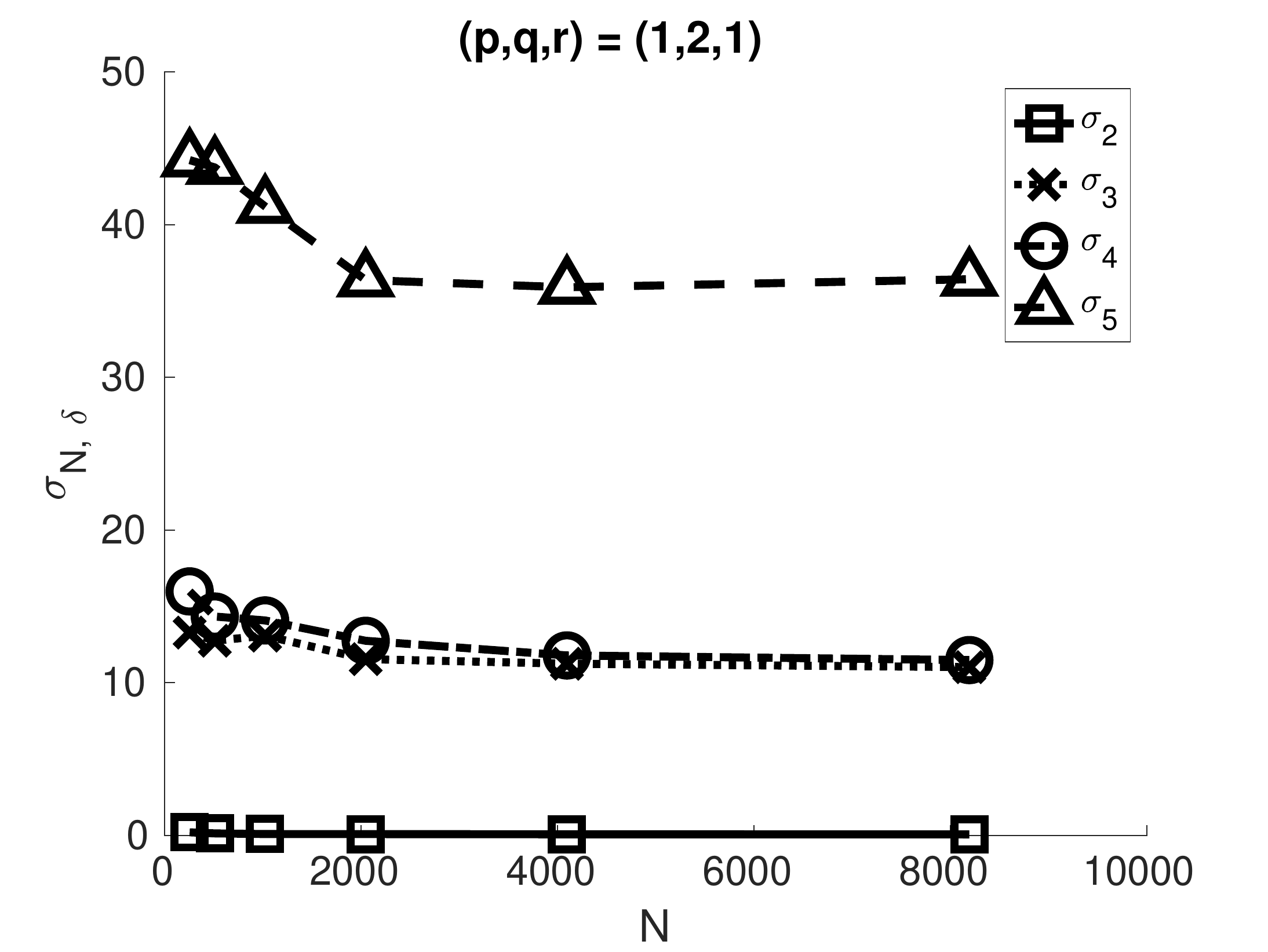}
  \includegraphics[width=.32 \textwidth]{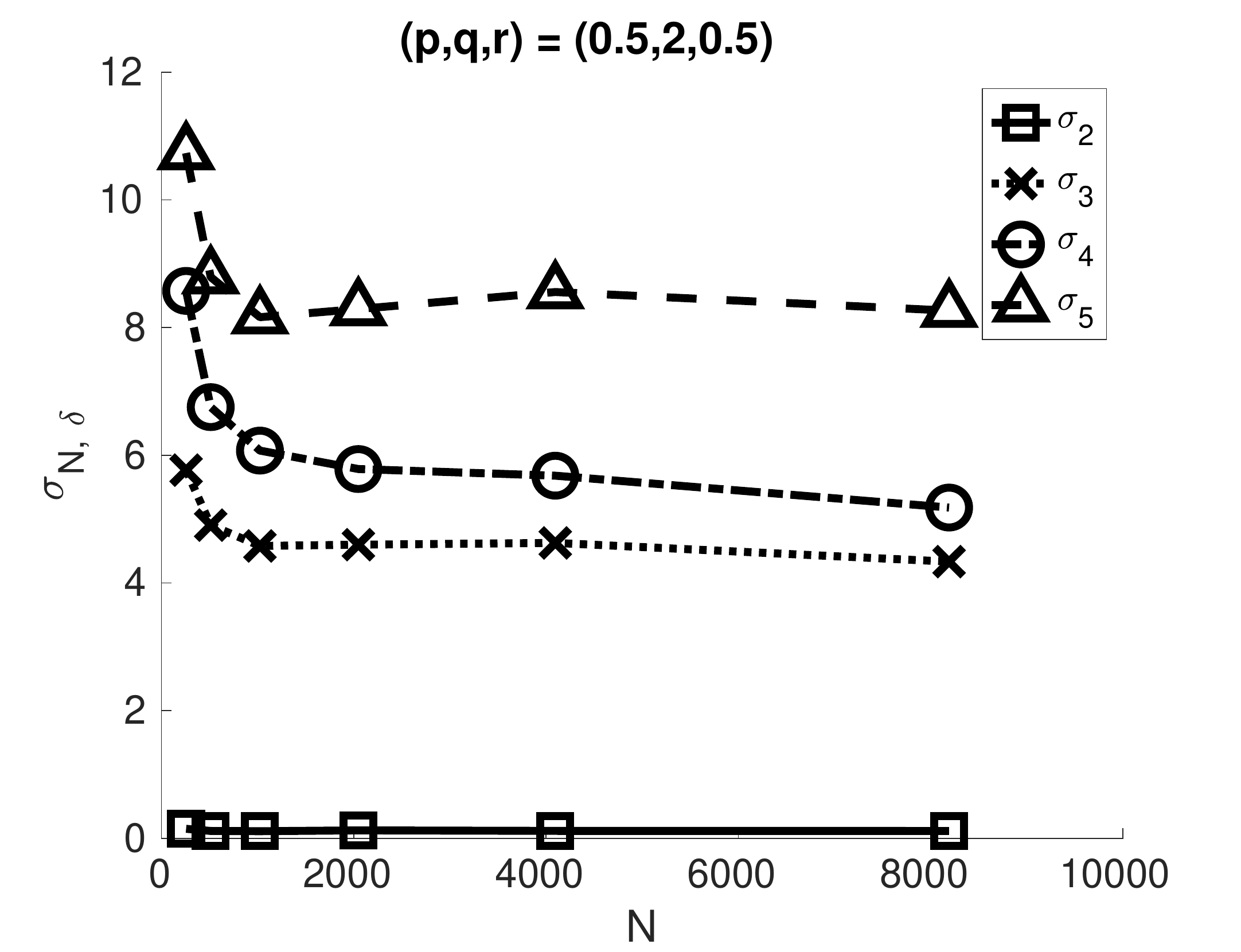}
  \caption{Convergence of the first four non-trivial  discrete eigenvalues $\sigma_{N, \delta}$
    as a function of $N$ for
    different values of $(p,q,r)$ and $\epsilon = 2^{-3}$ with vertices distributed
    according to \eqref{piecewise-constant-mixture}.}
  \label{fig:eval-conv}
\end{figure}

\begin{figure}[htp]
  \centering
  \includegraphics[width=.32 \textwidth]{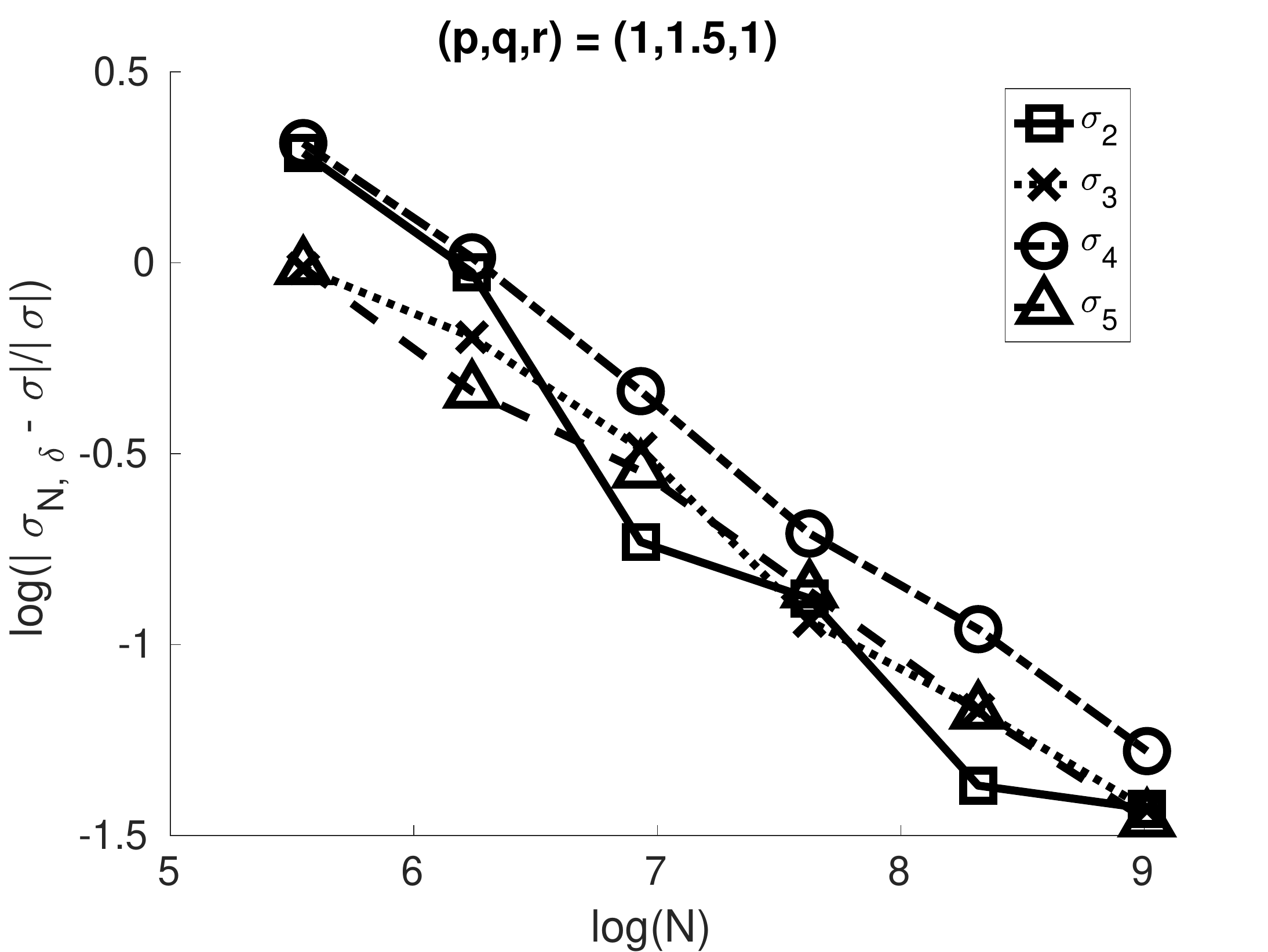}
  \includegraphics[width=.32 \textwidth]{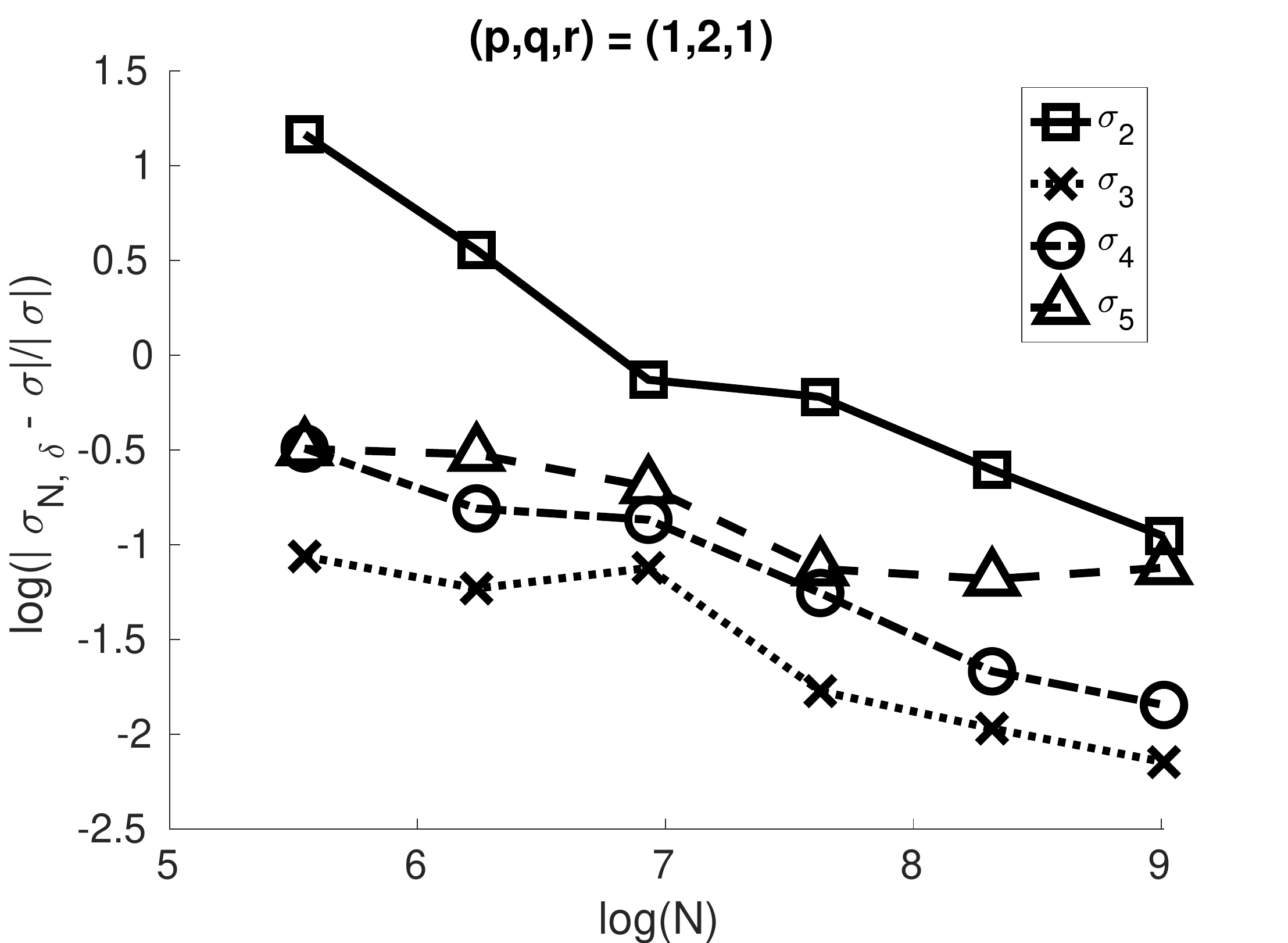}
  \includegraphics[width=.32 \textwidth]{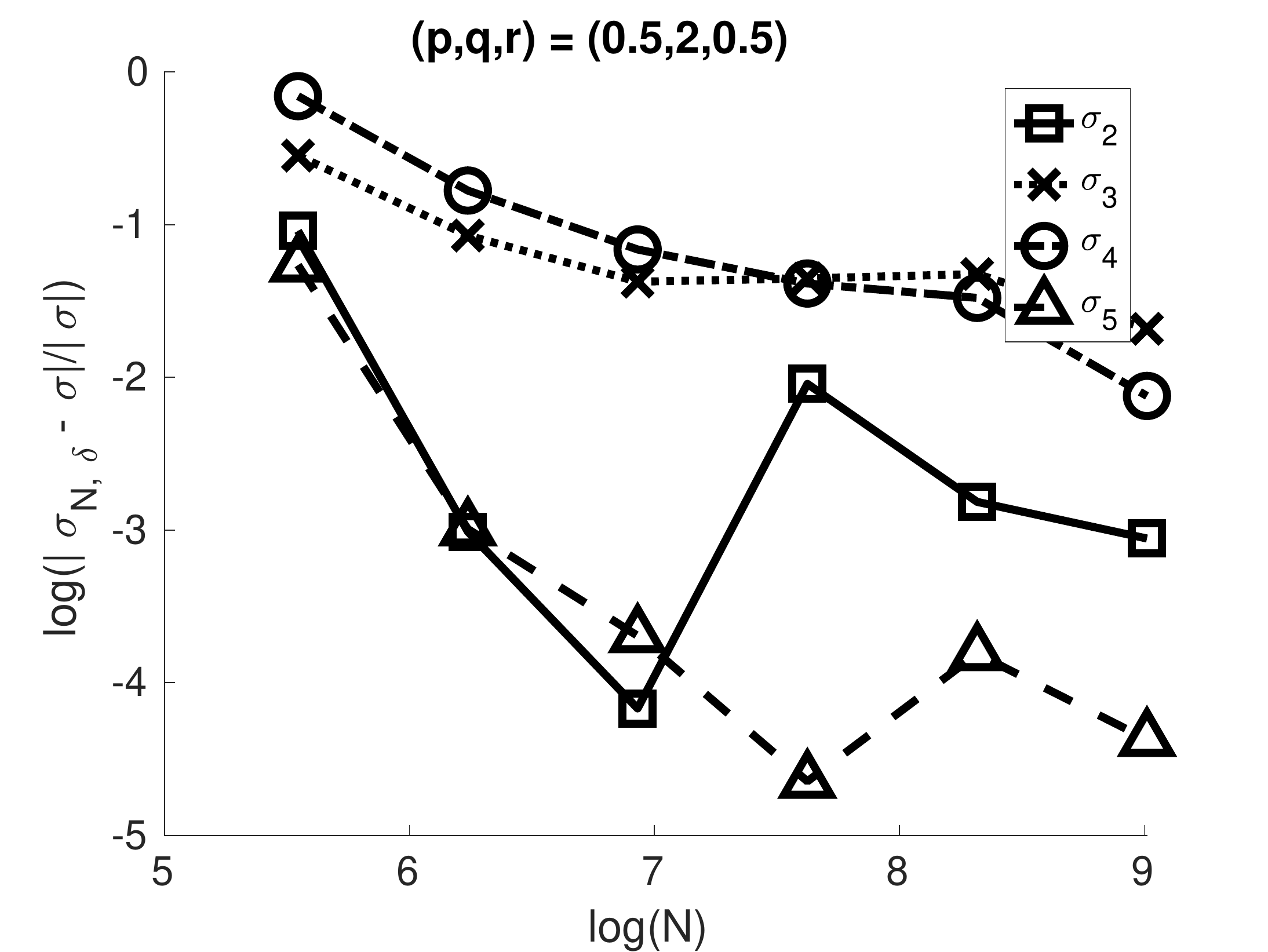}
  \caption{Relative error between the first four non-trivial discrete eigenvalues $\sigma_{N, \delta}$ and
    the continuum eigenvalues $\sigma$ 
    as a function of $N$ for
    different values of $(p,q,r)$ and $\epsilon = 2^{-3}$ with vertices distributed
    according to \eqref{piecewise-constant-mixture}.}
  \label{fig:eval-conv-rel-error}
\end{figure}

\begin{figure}[htp]
  \centering
  \includegraphics[width=.32 \textwidth]{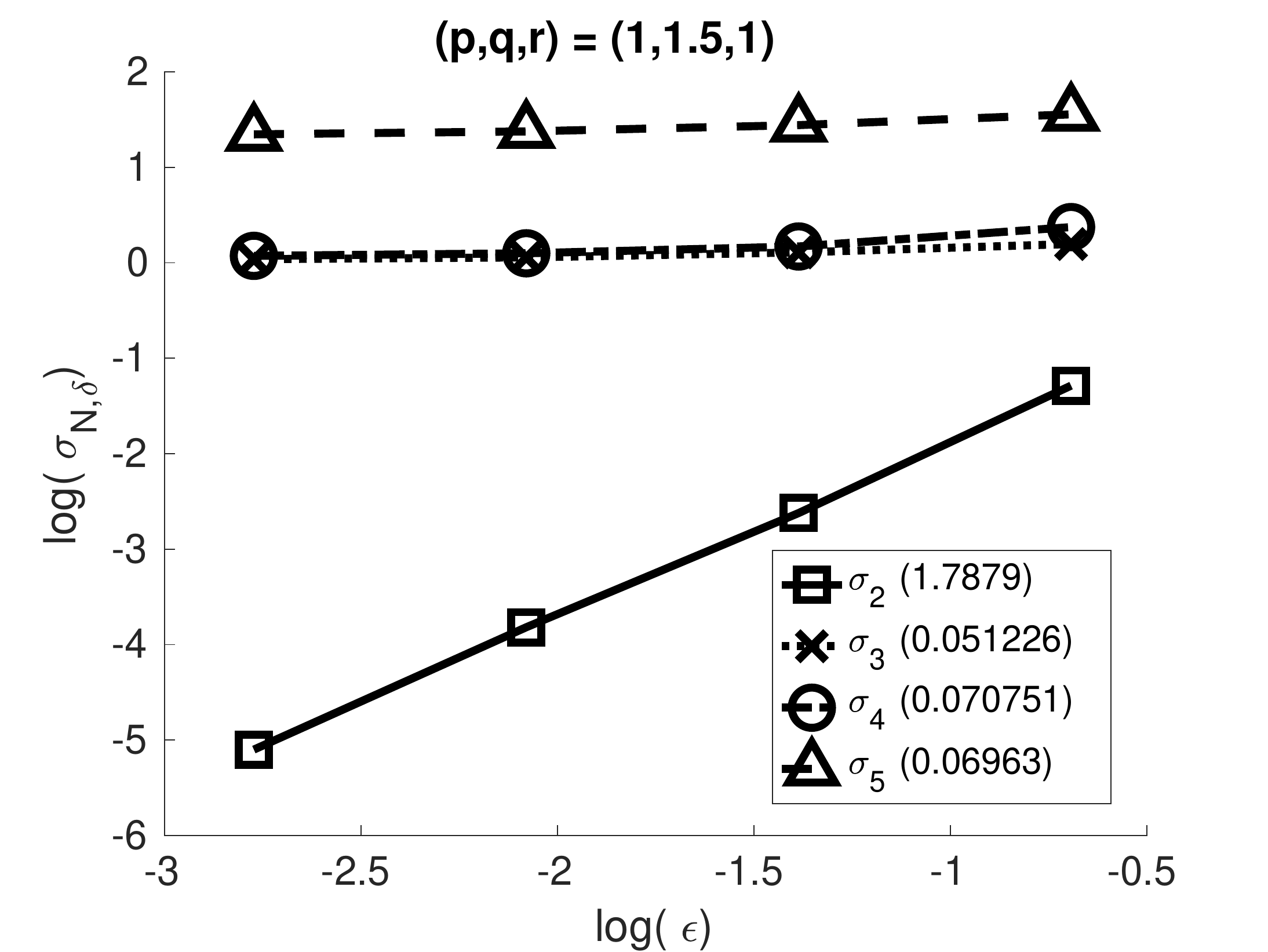}
  \includegraphics[width=.32 \textwidth]{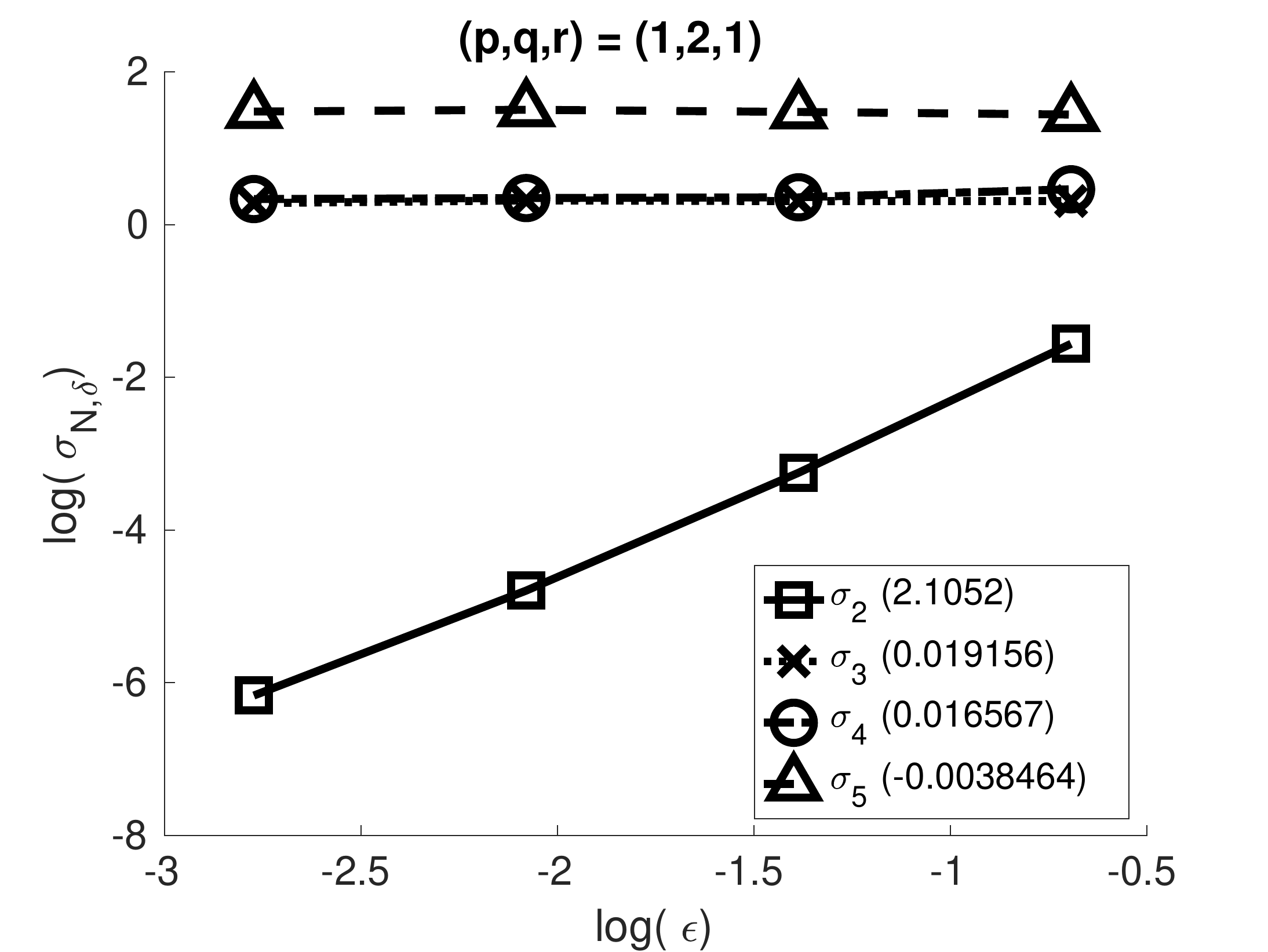}
  \includegraphics[width=.32 \textwidth]{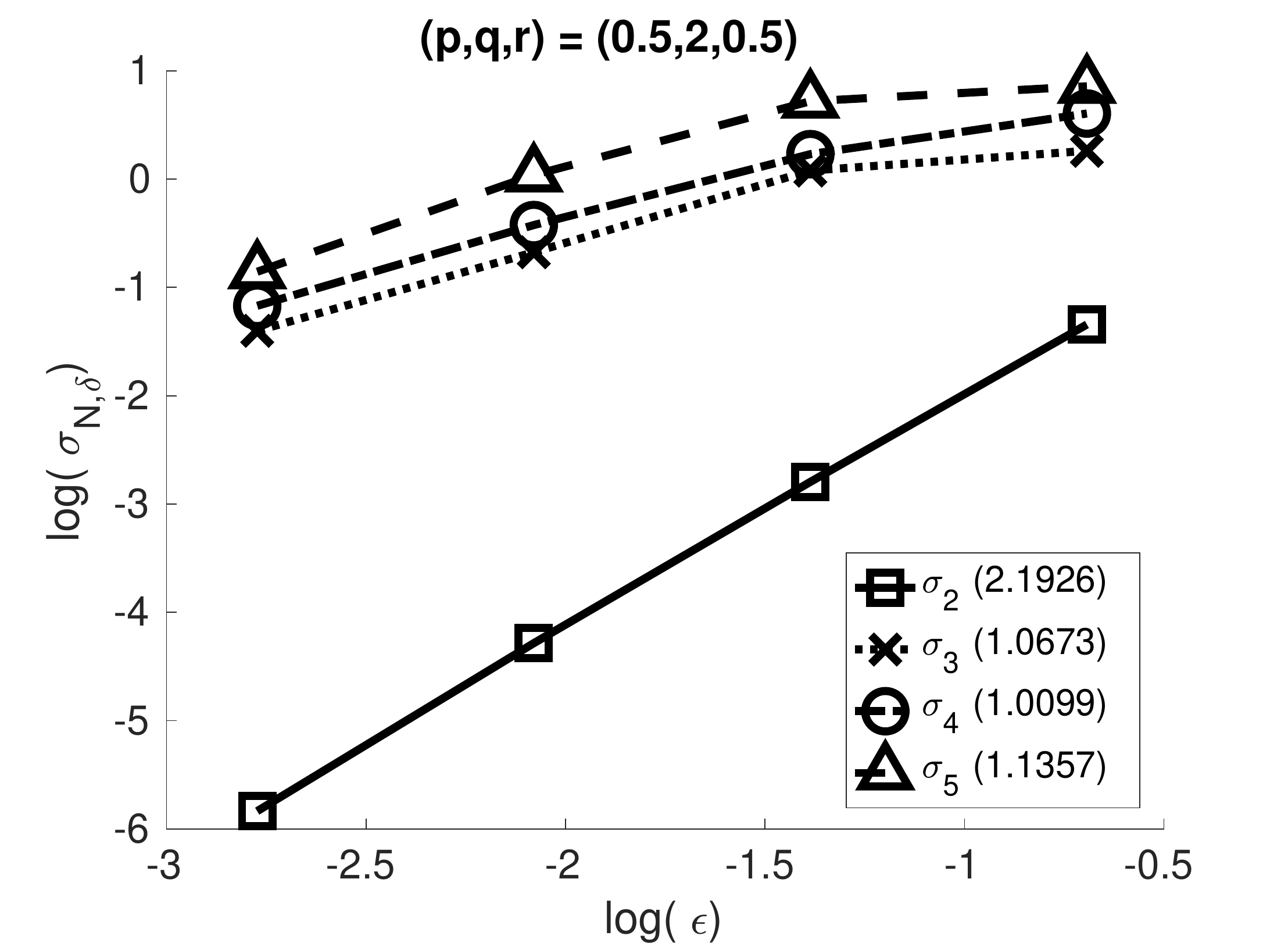}
  \caption{The dependence of the non-trivial discrete eigenvalues $\sigma_{N,\delta}$ as a function
    of $\epsilon$  for
    different values of $(p,q,r)$ and $N = 2^{13}$ with vertices
    drawn from \eqref{piecewise-constant-mixture}. The reported values within the brackets in the legend
    are the slopes of a linear fit to the last three data points indicating the rate at which
  the corresponding eigenvalues vanishes with $\epsilon$.}
  \label{fig:eval-eps-dependence}
\end{figure}

\begin{figure}[htp]
  \centering
  \includegraphics[width=.32 \textwidth]{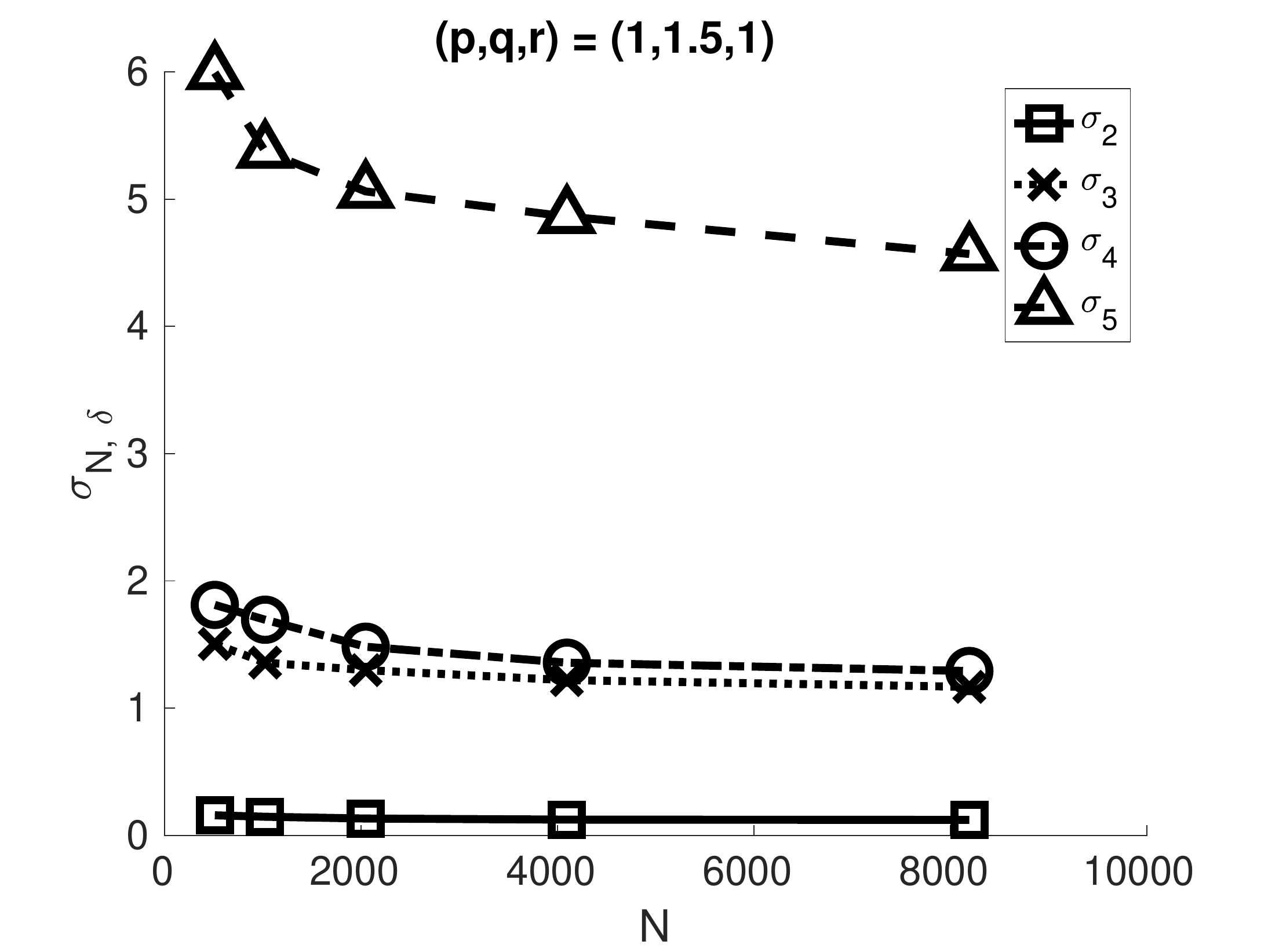}
  \includegraphics[width=.32 \textwidth]{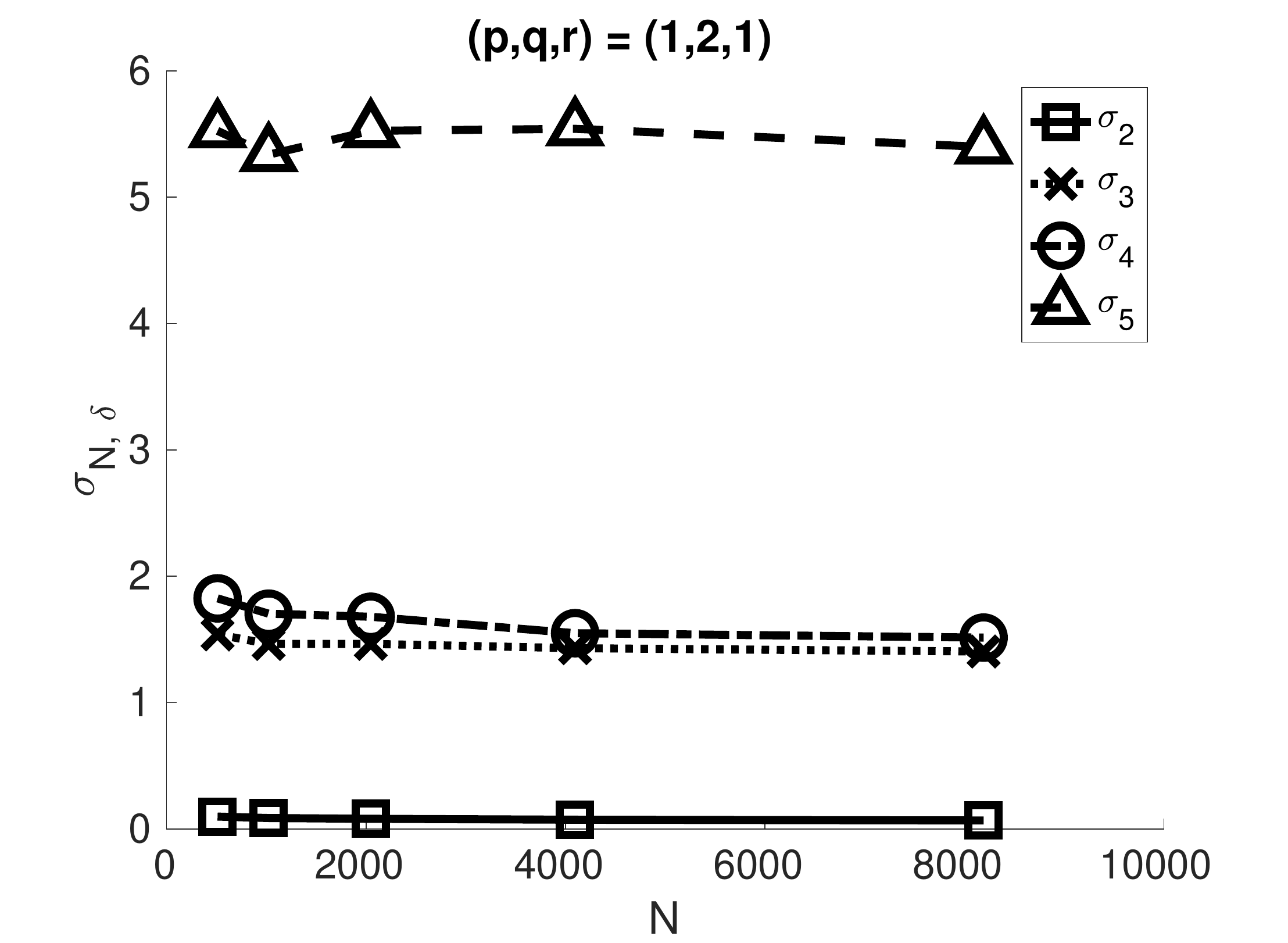}
  \includegraphics[width=.32 \textwidth]{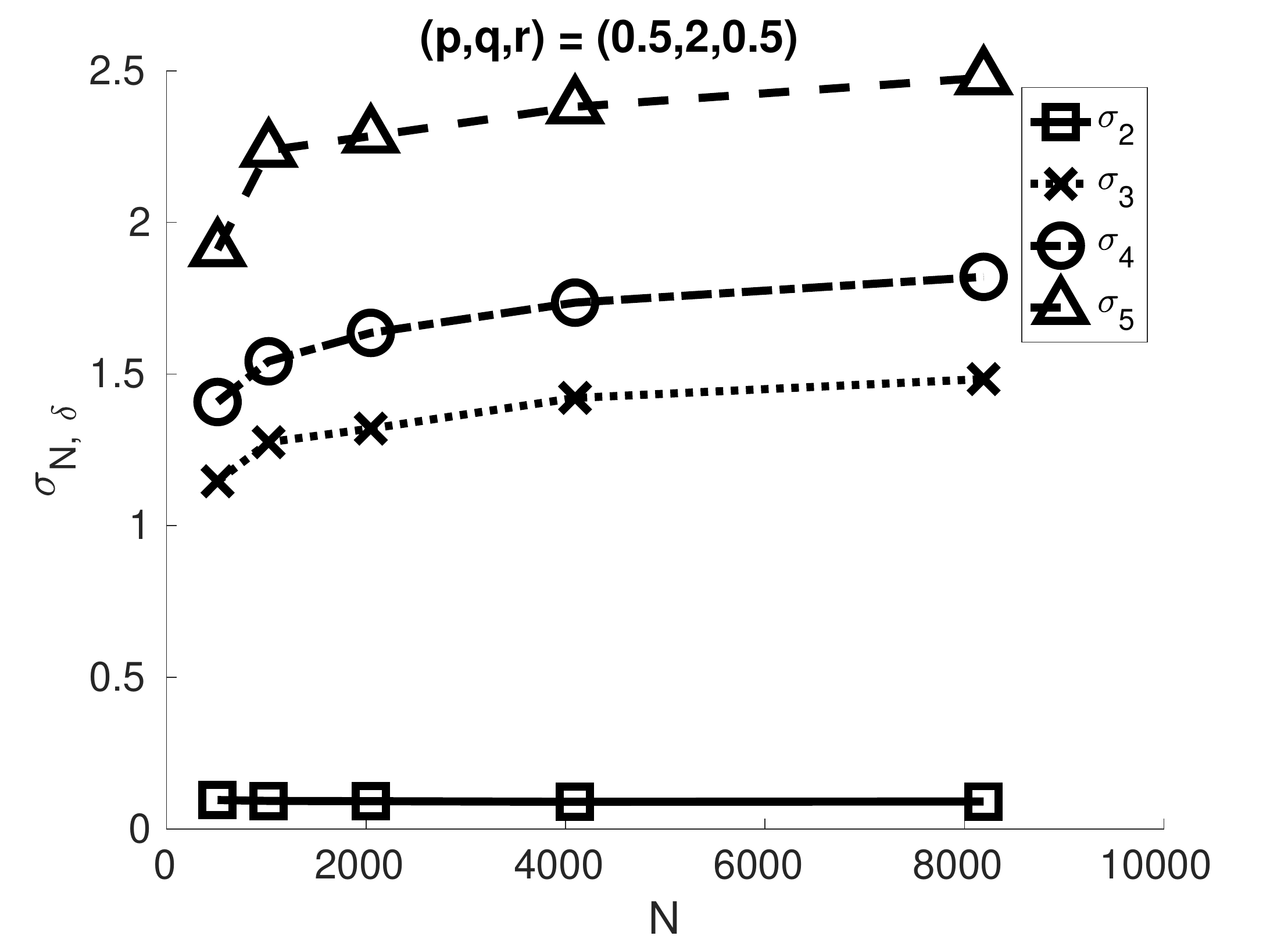}
  \caption{Convergence of the first four scaled discrete eigenvalues $\sigma_{N, \delta}$
    as a function of $N$ for
    different values of $(p,q,r)$ and $\omega = 1.9^{-3}$ with vertices distributed according
    to \eqref{exp-mixture-model}.}
  \label{fig:eval-conv-mixture}
\end{figure}

\end{document}